\newcommand{\Z}{{\mathbb Z}}
\newcommand{\Q}{{\mathbb Q}}
\newcommand{\N}{{\mathbb N}}
\newcommand{\M}{{\mathcal M}}
\newcommand{\f}{{\mathcal F}}
\newcommand{\Spec}{\operatorname{Spec}}
\newcommand{\Sp}{\operatorname{Sp}}
\newcommand{\Spa}{\operatorname{Spa}}
\newcommand{\Spf}{\operatorname{Spf}}
\newcommand{\rg}{\operatorname{R\Gamma}}
\newcommand{\colim}{\operatorname{colim}}
\newcommand{\cond}{\operatorname{cond}}
\newcommand{\Mod}{\operatorname{Mod}}
\newcommand{\et}{\operatorname{\acute{e}t}}
\newcommand{\ket}{\operatorname{k\acute{e}t}}
\newcommand{\proet}{\operatorname{pro\acute{e}t}}
\newcommand{\proket}{\operatorname{prok\acute{e}t}}
\newcommand{\fil}{\operatorname{Fil}}
\newcommand{\gr}{\operatorname{gr}}
\newcommand{\logdr}{\operatorname{logdR}}
\numberwithin{equation}{section}
\newtheorem{theorem}{Theorem}
\numberwithin{theorem}{section}
\newtheorem{thm}[theorem]{Theorem}
\newtheorem{lem}[theorem]{Lemma}
\newtheorem{cor}[theorem]{Corollary}
\newtheorem{prop}[theorem]{Proposition}
\theoremstyle{definition}
\newtheorem{defn}[theorem]{Definition}
\newtheorem{rem}[theorem]{Remark}
	\def\MR#1{}
\numberwithin{equation}{section}
\begin{document}
	
	\title{On the Kummer pro-\'etale cohomology of $\mathbb B_{\operatorname{dR}}$}
	
	\author{Xinyu Shao}
	\address{Institut de Mathématiques de Jussieu-Paris Rive Gauche, Sorbonne Universit\'e, 4 place Jussieu, 75005 Paris, France}
	\email{xshao@imj-prg.fr}
	
	\begin{abstract}
		We investigate $p$-adic cohomologies of log rigid analytic varieties over a $p$-adic field. For a log rigid analytic variety $X$ defined over a discretely valued field, we compute the Kummer pro-étale cohomology of $\mathbb{B}_{\mathrm{dR}}^+$ and $\mathbb{B}_{\mathrm{dR}}$. When $X$ is defined over $\mathbb{C}_p$, we introduce a logarithmic ${B}_{\mathrm{dR}}^+$-cohomology theory, serving as a deformation of log de Rham cohomology. Additionally, we establish the log de Rham-étale comparison in this setting and prove the degeneration of both the Hodge-Tate and Hodge-log de Rham spectral sequences when $X$ is proper and log smooth.  
	\end{abstract}
	
	\maketitle
	\tableofcontents	
	
	\section{Introduction}

	Let $\mathscr{O}_K$ be a complete discrete valuation ting with fraction field $K$ of characteristic 0 and with perfect residue field $k$ of characteristic $p$. Let $W(k)$ be the ring of Witt vectors of $k$ with fraction field $F$ (therefore $W({k})=\mathscr{O}_F$). Let $\bar{K}$ be an algebraic closure of $K$ and $C$ be its $p$-adic completion, and let $\mathscr{O}_{\bar{K}}$ be the integer closure of $\mathscr{O}_K$ in $\bar{K}$. Let $\phi$ be the absolute Frobnius on $W(\bar{k})$. Set $\mathscr{G}=\operatorname{Gal}(\bar{K}/K)$.	
	
	We will denote by $\mathcal{O}_K$ and $\mathcal{O}_K^{\times}$,  depending on the context, the scheme $\Spec(\mathcal{O}_K)$ or the formal scheme $\Spf(\mathcal{O}_K)$ with the trivial log structure and the canonical (i.e., associated to the closed point) log structure respectively.
	
	\subsection{Background}
	
	Before presenting the main theorem, we review some recent advancements in $p$-adic Hodge theory.
	
	The aim of $p$-adic Hodge theory is to establish a Hodge-type decomposition over $p$-adic fields, analogous to the classical Hodge theory developed by Deligne and Hodge. For smooth algebraic varieties, such a decomposition was first achieved in \cite{faltings1988padichodge}. In the case of proper smooth rigid analytic varieties, Scholze extended this framework using perfectoid geometry, leading to the following results in \cite{scholze2013p}.
	
	\begin{thm} \cite[Corollary 1.8]{scholze2013p}
		For any proper smooth rigid-analytic variety $X$ defined over $K$, the Hodge–de Rham spectral sequence $$E^{ij}_1:=H^j(X,\Omega^i_{X})\Rightarrow H^{i+j}_{\operatorname{dR}}(X)$$ degenerates at $E_1$, and there is a Hodge-Tate decomposition $$H^i_{\et}(X_C,\Q_p)\otimes_K C \simeq \bigoplus_{j=0}^i H^{i-j}(X,\Omega_X^j)\otimes_K C(-j).$$ Moreover, the $p$-adic \'etale cohomology $H^i_{\et}(X_C,\Q_p)$ is de Rham, with associated filtered $K$-vector space $H^i_{\operatorname{dR}}(X).$
	\end{thm}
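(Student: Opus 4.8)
The plan is to deduce all three assertions from two spectral sequences computing, respectively, de Rham cohomology and $p$-adic \'etale cohomology after base change to $C$, whose pages involve the same Hodge numbers $h^{a,b}:=\dim_K H^b(X,\Omega^a_X)$ (finite, by Kiehl's finiteness theorem for proper rigid varieties), together with a comparison isomorphism matching the two abutments up to dimension; the two degenerations and the Hodge--Tate decomposition will then fall out of Tate--Sen vanishing and a dimension count.

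\emph{Step 1 (period sheaves and the de Rham comparison).} On the pro-\'etale site of $X_C$ one has Scholze's period sheaves $\widehat{\mathscr O}_X\subset\mathbb B_{\mathrm{dR}}^+\subset\mathbb B_{\mathrm{dR}}$ and their geometric enlargements $\mathcal O\mathbb B_{\mathrm{dR}}^+\subset\mathcal O\mathbb B_{\mathrm{dR}}$, the latter carrying a connection $\nabla$. The essential local input is the Poincar\'e lemma: the complex $\mathcal O\mathbb B_{\mathrm{dR}}\xrightarrow{\nabla}\mathcal O\mathbb B_{\mathrm{dR}}\otimes_{\mathscr O_{X_C}}\Omega^1_{X_C}\to\cdots$ is a filtered resolution of $\mathbb B_{\mathrm{dR}}$. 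Applying $R\nu_*$ along $\nu\colon X_{C,\proet}\to X_{C,\et}$ and analysing this complex locally — where $X$ is \'etale over a torus and the relevant cohomology is computed on a perfectoid toric cover — one obtains a filtered, $\mathscr G$-equivariant isomorphism $H^n_{\proet}(X_C,\mathbb B_{\mathrm{dR}})\cong H^n_{\mathrm{dR}}(X)\otimes_K B_{\mathrm{dR}}$. On the other hand the primitive comparison theorem, available since $X$ is proper, gives $H^n_{\proet}(X_C,\mathbb B_{\mathrm{dR}})\cong H^n_{\et}(X_C,\Q_p)\otimes_{\Q_p}B_{\mathrm{dR}}$. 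Comparing, one gets an isomorphism of filtered $B_{\mathrm{dR}}$-modules with semilinear $\mathscr G$-action
\[
H^n_{\et}(X_C,\Q_p)\otimes_{\Q_p}B_{\mathrm{dR}}\;\cong\;H^n_{\mathrm{dR}}(X)\otimes_K B_{\mathrm{dR}}.
\]
Counting $B_{\mathrm{dR}}$-ranks yields $\dim_{\Q_p}H^n_{\et}(X_C,\Q_p)=\dim_K H^n_{\mathrm{dR}}(X)$, and applying $D_{\mathrm{dR}}(-)=(-\otimes_{\Q_p}B_{\mathrm{dR}})^{\mathscr G}$ (using $B_{\mathrm{dR}}^{\mathscr G}=K$ and the corresponding computation on filtration steps) recovers $H^n_{\mathrm{dR}}(X)$ with its Hodge filtration, so that $H^n_{\et}(X_C,\Q_p)$ is de Rham with associated filtered $K$-vector space $H^n_{\mathrm{dR}}(X)$.

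\emph{Step 2 (the Hodge--Tate spectral sequence and its degeneration).} A similar local computation on perfectoid toric covers gives $R^j\nu_*\widehat{\mathscr O}_X\cong\Omega^j_{X_C}(-j)$, whence the Grothendieck spectral sequence
\[
E_2^{i,j}=H^i(X,\Omega^j_X)\otimes_K C(-j)\;\Longrightarrow\;H^{i+j}_{\proet}(X_C,\widehat{\mathscr O}_X)\;\cong\;H^{i+j}_{\et}(X_C,\Q_p)\otimes_{\Q_p}C,
\]
the last isomorphism being the mod-$p$ primitive comparison passed to the limit and inverted. Each differential $d_r$ ($r\ge 2$) is a $C$-linear $\mathscr G$-equivariant map between a sum of copies of $C(-j)$ and a sum of copies of $C(-j+r-1)$, and $\operatorname{Hom}_{C[\mathscr G]}(C(-j),C(-j+r-1))=H^0(\mathscr G,C(r-1))=0$ by Tate; hence the spectral sequence degenerates at $E_2$. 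As $\operatorname{Ext}^1_{\mathscr G}(C(m'),C(m))=0$ for $m\ne m'$, the induced finite filtration on $H^n_{\et}(X_C,\Q_p)\otimes_{\Q_p}C$ splits $\mathscr G$-equivariantly, giving the stated Hodge--Tate decomposition $H^n_{\et}(X_C,\Q_p)\otimes_{\Q_p}C\cong\bigoplus_j H^{n-j}(X,\Omega^j_X)\otimes_K C(-j)$. In particular $\dim_{\Q_p}H^n_{\et}(X_C,\Q_p)=\sum_{a+b=n}h^{a,b}$.

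\emph{Step 3 (degeneration of the Hodge--de Rham spectral sequence, and conclusion).} The spectral sequence $E_1^{a,b}=H^b(X,\Omega^a_X)\Rightarrow H^{a+b}_{\mathrm{dR}}(X)$ always gives $\dim_K H^n_{\mathrm{dR}}(X)\le\sum_{a+b=n}h^{a,b}$, with equality for all $n$ precisely when it degenerates at $E_1$. By Step 2 the right-hand side equals $\dim_{\Q_p}H^n_{\et}(X_C,\Q_p)$, which by Step 1 equals $\dim_K H^n_{\mathrm{dR}}(X)$; so equality holds for every $n$ and the Hodge--de Rham spectral sequence degenerates at $E_1$, which together with the de Rham statement of Step 1 finishes the proof. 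As for difficulty: the formal part — the two degenerations and the splitting — is cheap, using only Tate--Sen vanishing and a dimension count. The genuine work is Step 1 together with the last isomorphism of Step 2, i.e.\ the two comparison inputs, which require honest perfectoid arguments with no algebraic model to fall back on: (i) the finiteness of $H^n_{\et}(X_C,\Q_p)$ and the primitive comparison theorem, proved by almost mathematics with $\widehat{\mathscr O}^+_X$ on the pro-\'etale site and almost purity; and (ii) the construction of $\mathcal O\mathbb B_{\mathrm{dR}}$, the Poincar\'e lemma, and the local computations of $R\nu_*$ on $\widehat{\mathscr O}_X$ and on the de Rham complex of $\mathcal O\mathbb B_{\mathrm{dR}}$. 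I expect essentially all of the difficulty to lie there.
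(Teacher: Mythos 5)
Your proposal is correct, and it is essentially the argument of the cited source: the paper states this result as background, quoting Scholze's [Corollary 1.8], whose proof proceeds exactly as you outline (Poincaré lemma for $\mathcal O\mathbb B_{\mathrm{dR}}$ plus the primitive comparison theorem to get the de Rham comparison, the Hodge–Tate spectral sequence with Tate–Sen vanishing of $H^0(\mathscr G,C(m))$ and $H^1(\mathscr G,C(m))$ for $m\neq 0$ to get degeneration at $E_2$ and the splitting, and a dimension count to force $E_1$-degeneration of Hodge–de Rham). No gaps beyond the two comparison inputs you correctly flag as the genuine work.
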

	
	Following Scholze’s celebrated work on proper smooth rigid analytic varieties, numerous generalizations have emerged. The work by Hansheng Diao, Kai-Wen Lan, Ruochuan Liu, and Xinwen Zhu in \cite{dllz2023logrh} (see also the work of Shizhang Li and Xuanyu Pan in \cite{lipan2019log}) extended the above comparison theorem to almost proper rigid analytic varieties $X$, which means $ X $ can be written as $ \overline{X} - Z $, with $ \overline{X}$ being a proper smooth variety and $ Z $ a Zariski closed subset in $ \overline{X}$. In \cite{dllz2023logrh}, they further constructed the Simpson and Riemann-Hilbert correspondences for such varieties. Their approach involves developing the theory of log adic spaces, enabling a natural log structure on $ \overline{X} $ induced by the open immersion $ \overline{X} - Z \hookrightarrow \overline{X} $. By employing resolution of singularities, $ Z $ can be assumed to be a strictly normal crossing divisor on $ \overline{X} $, allowing the reduction of the problem on $ X $ to one on $ \overline{X} $, where methods akin to those in \cite{scholze2013p} apply.
	
	Another direction of generalization involves exploring general rigid analytic varieties without requiring $ X $ to be proper or smooth. This was studied in \cite{bosco2023padicproetalecohomologydrinfeld} and \cite{bosco2023rational}. In \cite[Theorem 1.1.8]{bosco2023padicproetalecohomologydrinfeld}, Guido Bosco computed the pro-\'etale cohomology of the period sheaf $ \mathbb{B_{\operatorname{dR}}} $ for smooth rigid analytic varieties over a discretely valued field $ K $. Within the framework of condensed mathematics, he computed the pro-étale cohomology of $ \mathbb{B_{\operatorname{dR}}} $ via the Poincaré lemma, which, for proper smooth rigid analytic varieties over $ K $, reproduces results from \cite[Theorem 7.11]{scholze2013p}. In \cite{bosco2023rational}, by using \'eh-topology, Guido Bosco showed it is also possible to work with non-smooth rigid analytic varieties over an algebraically closed complete field $C$. 
	
	\subsection{Kummer pro-\'etale cohomology of $\mathbb B_{\operatorname{dR}}$}
	
	This article computes the Kummer pro-\'etale cohomology of $\mathbb B_{\operatorname{dR}}$ for log-smooth rigid analytic varieties over a discrete field $ K $, extending the results of \cite[Theorem 1.1.8]{bosco2023padicproetalecohomologydrinfeld}. 
	
	In algebraic geometry, for a given smooth algebraic variety $ X $, we typically consider a compactification $ \overline{X}$ such that $ \overline{X} - X $ is a strict normal crossing divisor. This setup allows $ \overline{X} $ to carry a natural log structure that is log-smooth, thereby reducing many problems to the proper case where cohomology theory behaves more predictably. However, in rigid analytic geometry, compactifying a rigid analytic variety is generally not feasible (while it can be done in the category of adic spaces, the resulting space no longer remains a rigid analytic variety, introducing new complications). This is why, in \cite{dllz2023logrh}, the authors restrict themselves considering almost proper rigid analytic varieties, which can be associated with a smooth proper rigid analytic variety endowed with a well-behaved log structure after resolution of singularities.
	
	To extend the known comparison theorems for rigid analytic varieties, it is natural to introduce log structures on general (not necessarily proper) rigid analytic varieties. For pro-\'etale cohomology, recent works by Pierre Colmez, Gabriel Dospinescu, and Wiesława Nizioł \cite{CNsytntomic}, \cite{CDNstein}, \cite{CNderham}, \cite{CN4.3} provide a detailed description of the pro-étale cohomology for smooth rigid analytic varieties, thanks to the comparison with syntomic cohomology. However, for non-quasi-compact rigid analytic varieties, computing \'etale cohomology remains challenging. By equipping a quasi-compact rigid analytic variety with a log structure derived from a strict normal crossing divisor, we can apply results on pro-\'etale cohomology to compute the étale cohomology of the trivial locus, which is itself not quasi-compact. Further exploration in log geometry, therefore, holds potential to clarify significant aspects of the \'etale cohomology of rigid analytic varieties.
	
	The main theorem of this part is as follows (see Theorem \ref{bdrcoh} for a more general statement).
	
	\begin{thm}
		Let $X$ be a log smooth rigid analytic variety defined over $K$, with fine saturated log-structure. Define $$\rg_{\underline{\operatorname{logdR}}}(X_{B_{\operatorname{dR}}}):=\rg(X,\underline{\Omega_X^{\bullet,\log}}\otimes_K^{\blacksquare}B_{\operatorname{dR}}).$$
		
		(i) We have a $\mathscr{G}_K$-equivariant, compatible with filtrations, natural isomorphisms in $D(\Mod_K^{\cond})$: $$\rg_{\underline{\proket}}(X_C,\mathbb B_{\operatorname{dR}})\simeq \rg_{\underline{\operatorname{logdR}}}(X_{B_{\operatorname{dR}}}).$$
		
		(ii) Assume $X$ is connected and paracompact. Then, for each $r\in \Z$, we have a $\mathscr{G}_K$-equivariant natural quasi-isomorphism in $D(\Mod_K^{\cond})$: $$\rg_{\underline{\proket}}(X_C,\fil^r\mathbb B_{\operatorname{dR}})\simeq \fil^r(\rg_{\underline{\operatorname{logdR}}}(X)\otimes_K^{L_\blacksquare}B_{\operatorname{dR}}),$$ where $\rg_{\underline{\operatorname{logdR}}}(X)$ is the condensed log de Rham cohomology complex in $D(\Mod_K^{\cond})$ (see Definition \ref{deflogdr}).
	\end{thm}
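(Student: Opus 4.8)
\emph{Strategy and Step 1 (the filtered log Poincar\'e lemma).} The plan is to run the Poincar\'e-lemma argument of \cite{scholze2013p} and \cite{bosco2023padicproetalecohomologydrinfeld} in the logarithmic, Kummer pro-\'etale setting of \cite{dllz2023logrh}, treating all period objects as condensed ($\blacksquare$-solid) modules so as to allow non-quasi-compact $X$. On $X_{C,\proket}$ one has the positive log de Rham period sheaf $\mathcal{OB}_{\operatorname{dR}}^{+,\log}$, with its decreasing filtration, its log connection $\nabla$, and its localization $\mathcal{OB}_{\operatorname{dR}}^{\log}$; write $\mathrm{DR}$ for the associated log de Rham complex functor $\mathcal{F}\mapsto[\mathcal{F}\xrightarrow{\nabla}\mathcal{F}\otimes_{\mathcal{O}_X}\Omega^{1,\log}_X\to\cdots]$. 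The first step is the \emph{filtered logarithmic Poincar\'e lemma}: for every $r\in\Z$ the natural map $\fil^r\mathbb{B}_{\operatorname{dR}}\to\fil^r\mathrm{DR}(\mathcal{OB}_{\operatorname{dR}}^{\log})$ is a quasi-isomorphism of sheaves on $X_{C,\proket}$, the target carrying the convolution of the filtration on $\mathcal{OB}_{\operatorname{dR}}^{\log}$ with the stupid filtration on $\Omega^{\bullet,\log}_X$ (and likewise for $\mathbb{B}_{\operatorname{dR}}$ without filtrations). This is \'etale-local: over a log affinoid with a chart one passes to the Kummer/perfectoid cover obtained by adjoining $p$-power roots of the chart monoid and of the remaining \'etale coordinates, after which $\mathcal{OB}_{\operatorname{dR}}^{+,\log}$ becomes a power-series algebra over $\mathbb{B}_{\operatorname{dR}}^{+}$ in the period variables attached to the (log) coordinates and $\nabla$ acts by formal differentiation, so the assertion reduces to the explicit Poincar\'e computation. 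This is the logarithmic counterpart of the computations in \cite{dllz2023logrh} and \cite{bosco2023padicproetalecohomologydrinfeld}; the only additional point is to carry along the condensed/topological structures, which is routine once the quasi-compact statement is in hand.

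\emph{Step 2: pushing forward to the analytic site, and part (i).} Let $\nu\colon X_{C,\proket}\to X_{C,\an}$ be the projection (factoring through $X_{C,\ket}$). The task is to compute $\mathrm{R}\nu_{*}\fil^r\mathrm{DR}(\mathcal{OB}_{\operatorname{dR}}^{\log})$. Filtering by the $\mathcal{OB}_{\operatorname{dR}}^{\log}$-filtration and using the description of its graded pieces from \cite{dllz2023logrh} together with the logarithmic Hodge–Tate decomposition $\mathrm{R}\nu_{*}\widehat{\mathcal{O}}_X\simeq\bigoplus_i\Omega^{i,\log}_X(-i)[-i]$, a spectral-sequence argument — carried out in the solid category, which is exactly what makes it converge without quasi-compactness, as in Bosco's work — should yield the filtered identification $$\mathrm{R}\nu_{*}\fil^r\mathrm{DR}(\mathcal{OB}_{\operatorname{dR}}^{\log})\ \simeq\ \fil^r\bigl(\underline{\Omega^{\bullet,\log}_X}\otimes_{K}^{\blacksquare}B_{\operatorname{dR}}\bigr),$$ where on the right one takes the convolution of the stupid filtration on $\underline{\Omega^{\bullet,\log}_X}$ with the $t$-adic filtration on $B_{\operatorname{dR}}$. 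Combining with Step 1 and applying $\rg(X_{C,\an},-)$ gives $$\rg_{\underline{\proket}}(X_C,\fil^{r}\mathbb{B}_{\operatorname{dR}})\ \simeq\ \rg\bigl(X,\fil^{r}(\underline{\Omega^{\bullet,\log}_X}\otimes_{K}^{\blacksquare}B_{\operatorname{dR}})\bigr);$$ passing to the colimit over $r\to-\infty$ (via $B_{\operatorname{dR}}=\colim_r t^{-r}B_{\operatorname{dR}}^{+}$) yields part (i). The $\mathscr{G}_K$-action, and the compatibility with filtrations, are automatic because every object descends from $C$ to $K$ by construction.

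\emph{Step 3: pulling $B_{\operatorname{dR}}$ out of $\rg$, and part (ii).} Here I would use that $X$ paracompact is an increasing union of quasi-compact (e.g. quasi-Stein) opens $X=\bigcup_n U_n$, so that $\rg(X,-)=\mathrm{R}\lim_n\rg(U_n,-)$. On each $U_n$ the log de Rham cohomology is a nuclear Fr\'echet-type condensed $K$-module; and since $B_{\operatorname{dR}}^{+}$ is a Fr\'echet space and $B_{\operatorname{dR}}$ a countable increasing union of such, both nuclear over $K$ in the solid sense, the functor $-\otimes_K^{L_\blacksquare}B_{\operatorname{dR}}$ is exact and commutes with $\mathrm{R}\lim_n$ and with each $\rg(U_n,-)$. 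This converts the right-hand side of the last display into $\fil^r\bigl(\rg_{\underline{\logdr}}(X)\otimes_K^{L_\blacksquare}B_{\operatorname{dR}}\bigr)$; the hypothesis that $X$ is connected forces $H^0_{\underline{\logdr}}(X)=K$ and controls the bottom step of the Hodge filtration, so that forming $\fil^r$ commutes with the whole construction and one arrives at the stated formula.

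\emph{Main obstacle.} The crux is the condensed functional analysis underpinning Steps 2 and 3: proving that $-\otimes_K^{L_\blacksquare}B_{\operatorname{dR}}$ commutes with $\mathrm{R}\nu_*$, with the derived limit along the exhaustion of $X$, and with the Hodge filtration in the derived sense. This requires the right nuclearity and (derived) flatness properties of $B_{\operatorname{dR}}^{+}$ and $B_{\operatorname{dR}}$ over $K$ in the solid category, together with a filtered strictness input that is closely tied to the degeneration of the Hodge–log de Rham spectral sequence. By comparison, the logarithmic Poincar\'e lemma of Step 1 is a fairly direct logarithmic reprise of computations already available in the non-logarithmic and in the proper logarithmic cases.
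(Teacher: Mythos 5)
Your overall architecture --- filtered logarithmic Poincar\'e lemma, computation of the pushforward of the resulting de Rham complex of $\mathcal{O}\mathbb{B}_{\operatorname{dR,log}}$, and solid functional analysis to pull $B_{\operatorname{dR}}$ out of $\rg$ for the filtered statement --- matches the paper's proof of Theorem \ref{bdrcoh} (the Poincar\'e lemma is quoted from \cite{dllz2023logrh}, and your Step 3 is exactly the role played by \cite[Theorem 5.20]{bosco2023padicproetalecohomologydrinfeld}, which the paper cites rather than reproves). Where you genuinely diverge is the key pushforward step: you collapse the graded de Rham complex via the graded Poincar\'e lemma and then invoke a log Hodge--Tate decomposition $R\nu_*\widehat{\mathcal{O}}\simeq\bigoplus_i\Omega_X^{i,\log}(-i)[-i]$, whereas the paper never uses the Hodge--Tate splitting here: in Proposition \ref{bdrdirectimage} it first constructs the natural filtered map $\underline{\Omega_X^{\bullet,\log}}\otimes_K^{\blacksquare}\underline{B_{\operatorname{dR}}}\to R\lambda_*$ of the $\mathcal{O}\mathbb{B}_{\operatorname{dR,log}}$-de Rham complex (this uses the $K$-structure of $X$, as in Bosco and Scholze) and checks it on graded pieces by reducing to Bosco's non-logarithmic computation, via the identification $R\alpha_*\gr^j\mathcal{O}\mathbb{B}_{\operatorname{dR,log}}\simeq\gr^j\mathcal{O}\mathbb{B}_{\operatorname{dR}}$ along $X_{\proket}\to X_{\proet}$ together with the Cartan--Leray vanishing $H^{>0}_{\proket}(X_C\times S,\gr^j\mathcal{O}\mathbb{B}_{\operatorname{dR,log}})=0$ over a chart. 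The paper's route avoids the lifting/naturality issues attached to the Hodge--Tate splitting (over $C$ it requires a $B_{\operatorname{dR}}^+/t^2$-lift; over $K$ it exists, but you must still construct the comparison map and verify that your abstract graded identification is induced by it --- your ``a spectral-sequence argument should yield'' is exactly where this work is hiding). Your route is viable over $K$, but it is sketchier precisely at the step that carries the content.

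There is, however, a concrete gap in your deduction of part (i): you first apply $\rg(X_{C,\an},-)$ to get the global filtered statement and then ``pass to the colimit over $r\to-\infty$.'' Part (i) carries no quasi-compactness or paracompactness hypothesis, and for non-quasi-compact $X$ neither $\rg_{\underline{\proket}}(X_C,-)$ nor $\rg(X,-)$ commutes with the filtered colimit $\mathbb{B}_{\operatorname{dR}}=\colim_r\fil^{r}\mathbb{B}_{\operatorname{dR}}$; the paper itself stresses (in the first remark after the proof of Theorem \ref{bdrcoh}) that this colimit manipulation is justified only for quasi-compact $X$, using coherence of $X_{\proket}$. The repair is to do what Proposition \ref{bdrdirectimage} does: prove the \emph{unfiltered} comparison already at the level of sheaves on $X_{C,\et}$, i.e. $(R\lambda_*\mathbb{B}_{\operatorname{dR}})^{\blacktriangledown}\simeq\underline{\Omega_X^{\bullet,\log}}\otimes_K^{\blacksquare}\underline{B_{\operatorname{dR}}}$, and apply $\rg$ once, so that no colimit of global sections is ever taken. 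Relatedly, in Step 3 the commutation of $-\otimes_K^{L\blacksquare}B_{\operatorname{dR}}$ with $R\lim$ along a quasi-compact exhaustion is not a formal consequence of nuclearity ($B_{\operatorname{dR}}$ is an LF-type colimit of Fr\'echet spaces, and connectedness enters Bosco's argument in a more specific way than forcing $H^0_{\underline{\operatorname{logdR}}}(X)=K$); you correctly flag this as the main obstacle, but it should be quoted as \cite[Theorem 5.20]{bosco2023padicproetalecohomologydrinfeld} rather than re-derived in one line.
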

	
	This result generalizes \cite[Theorem 1.1.8]{bosco2023padicproetalecohomologydrinfeld}. By comparing these two results, we can relate the pro-Kummer \'etale cohomology and the pro-\'etale cohomology of $\mathbb B_{\operatorname{dR}}$ when the log-structure of $X$ is coming from a strictly normal crossing divisor.
	
	\begin{cor}
		Let $X$ be a smooth rigid analytic variety over $K$, $D \subset X$ is a strictly normal crossing divisor, and $U=X-D$. Endow $X$ with the log-structure coming from $D$. Then we have a (non filtered!) natural quasi-isomorphisms in $D(\Mod_K^{\cond})$: $$\rg_{\underline{\proket}}(X_C,\mathbb B_{\operatorname{dR}}) \simeq \rg_{\underline{\proet}}(U_C,\mathbb B_{\operatorname{dR}}).$$
	\end{cor}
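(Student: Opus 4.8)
The strategy is to compute each side in terms of a (log) de Rham complex with $B_{\operatorname{dR}}$-coefficients and then to reduce to a geometric de Rham comparison across the divisor $D$. Equip $X$ with the fine saturated log structure attached to the strictly normal crossing divisor $D$; then $X$ is log smooth over $K$, and Theorem~\ref{bdrcoh} applies, giving a $\mathscr G_K$-equivariant natural isomorphism
$$\rg_{\underline{\proket}}(X_C,\mathbb B_{\operatorname{dR}})\ \simeq\ \rg_{\underline{\operatorname{logdR}}}(X_{B_{\operatorname{dR}}})\ =\ \rg\bigl(X,\underline{\Omega_X^{\bullet,\log}}\otimes_K^{\blacksquare}B_{\operatorname{dR}}\bigr)$$
in $D(\Mod_K^{\cond})$. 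On the other hand $U=X-D$ is a smooth rigid analytic variety over $K$, so \cite[Theorem~1.1.8]{bosco2023padicproetalecohomologydrinfeld}, in the present notation, yields a $\mathscr G_K$-equivariant natural isomorphism
$$\rg_{\underline{\proet}}(U_C,\mathbb B_{\operatorname{dR}})\ \simeq\ \rg\bigl(U,\underline{\Omega_U^{\bullet}}\otimes_K^{\blacksquare}B_{\operatorname{dR}}\bigr).$$
It therefore suffices to produce a natural isomorphism $\rg\bigl(X,\underline{\Omega_X^{\bullet,\log}}\otimes_K^{\blacksquare}B_{\operatorname{dR}}\bigr)\simeq\rg\bigl(U,\underline{\Omega_U^{\bullet}}\otimes_K^{\blacksquare}B_{\operatorname{dR}}\bigr)$ in $D(\Mod_K^{\cond})$; the evident candidate is the map induced by restriction of differential forms $\Omega_X^{\bullet,\log}\to j_*\Omega_U^{\bullet}$ along the open immersion $j\colon U\hookrightarrow X$.

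The key step is to promote this to a quasi-isomorphism of complexes of sheaves on $X$ (in the analytic topology)
$$\underline{\Omega_X^{\bullet,\log}}\otimes_K^{\blacksquare}B_{\operatorname{dR}}\ \xrightarrow{\ \sim\ }\ Rj_{*}\bigl(\underline{\Omega_U^{\bullet}}\otimes_K^{\blacksquare}B_{\operatorname{dR}}\bigr).$$
As this is local on $X$, the local structure of a strictly normal crossing divisor lets us choose coordinates and assume $X$ is a polydisc over $K$ with $D=\{t_1\cdots t_k=0\}$, so that $U=(\mathbb D^{*})^{k}\times\mathbb D^{n-k}$. By the K\"unneth formula for $Rj_*$ and for the de Rham complex one reduces to the one-dimensional case $j\colon\mathbb D^{*}\hookrightarrow\mathbb D$, where $\Omega_{\mathbb D}^{\bullet}(\log 0)=[\mathcal O_{\mathbb D}\xrightarrow{d}\mathcal O_{\mathbb D}\tfrac{dt}{t}]$; there a direct computation with Laurent expansions on small discs, using that $\mathbb D^{*}$ is quasi-Stein (so that the higher $Rj_*$ of coherent sheaves vanish), shows that the inclusion $\Omega_{\mathbb D}^{\bullet}(\log 0)\hookrightarrow Rj_*\Omega_{\mathbb D^{*}}^{\bullet}$ is a quasi-isomorphism --- both sides having $\mathcal H^0=\underline K$ and $\mathcal H^1$ the rank-one sheaf generated by $\tfrac{dt}{t}$ near $0$, and agreeing away from $0$. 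The same computation carries over verbatim with $B_{\operatorname{dR}}$-coefficients, since $B_{\operatorname{dR}}$ is flat over $K$ for the solid tensor product and the underlying linear algebra is unchanged. Applying $\rg(X,-)$ and using $\rg\bigl(X,Rj_{*}(-)\bigr)=\rg(U,-)$ then gives the required natural isomorphism of de Rham sides, and hence the corollary.

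The main obstacle is precisely this local identification $\Omega_X^{\bullet,\log}\simeq Rj_*\Omega_U^{\bullet}$ in the rigid analytic/condensed framework: although its classical algebraic analogue is due to Deligne, here one must be careful that $U$ is not quasi-compact near $D$ (in the punctured-disc directions), so that the computation of $Rj_*$ of the coherent sheaves $\Omega_U^{q}$ and its compatibility with $-\otimes_K^{\blacksquare}B_{\operatorname{dR}}$ rely on the quasi-Stein behaviour of $\mathbb D^{*}$ together with the exactness of the solid tensor with $B_{\operatorname{dR}}$ over $K$. This also explains why the isomorphism is only one of the full $\mathbb B_{\operatorname{dR}}$-cohomologies and not of the filtered pieces: already for $\mathbb D^{*}$, the Hodge (stupid) filtration $\sigma^{\ge 1}$ produces for $\fil^{1}$ the module $\mathcal O(\mathbb D_{\epsilon})\tfrac{dt}{t}$ in degree $1$ on the log side but $\mathcal O(\mathbb D_{\epsilon}^{*})\tfrac{dt}{t}$ on $U$, and these differ, so the filtered statements of part~(ii) of the main theorem and of Bosco's theorem are incompatible here.
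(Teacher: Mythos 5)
Your overall skeleton --- Theorem \ref{bdrcoh} to rewrite the Kummer pro-\'etale side, Bosco's theorem to rewrite the pro-\'etale side of $U$, and then a comparison of the two de Rham sides across $D$ --- is exactly the paper's strategy. Where you diverge is in how the de Rham comparison is carried out, and this is where your sketch has genuine gaps. The paper never proves a sheaf-level quasi-isomorphism with $\mathbb B_{\operatorname{dR}}$-coefficients: it first reduces to $X$ quasi-compact, where coherence of $X_{\proket}$ gives $\rg_{\underline{\proket}}(X_C,\mathbb B_{\operatorname{dR}})\simeq \rg_{\underline{\operatorname{logdR}}}(X)\otimes_K^{L_\blacksquare}B_{\operatorname{dR}}$ (and \cite[Remark 6.14]{bosco2023padicproetalecohomologydrinfeld} plays the analogous role for $U$), so that $B_{\operatorname{dR}}$ is only ever tensored against the \emph{global} $K$-linear de Rham complexes; the remaining identification $\rg_{\underline{\operatorname{logdR}}}(X)\simeq \rg_{\underline{\operatorname{dR}}}(U)$ is Theorem \ref{logdr}, i.e.\ Kiehl's theorem, which the paper has already promoted to the condensed setting by checking that Kiehl's residue map and null-homotopies are continuous.

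Concretely, the gaps in your route are: (a) you verify the local map $\underline{\Omega_X^{\bullet,\log}}\otimes_K^{\blacksquare}B_{\operatorname{dR}}\to Rj_*(\underline{\Omega_U^{\bullet}}\otimes_K^{\blacksquare}B_{\operatorname{dR}})$ by computing cohomology sheaves ($\mathcal H^0=\underline K$, $\mathcal H^1$ generated by $dt/t$), but for complexes of sheaves valued in condensed $K$-modules a quasi-isomorphism cannot be detected on the underlying stalks; one has to control the condensed structure, e.g.\ exhibit continuous primitives/homotopies uniformly over a basis (note also that $H^1_{\operatorname{dR}}$ of a closed affinoid disc is pathological, so the Laurent argument only works after passing to germs over shrinking radii, which sits awkwardly with section-wise condensed arguments) --- this is precisely why the paper's proof of Theorem \ref{logdr} checks continuity of Kiehl's maps instead of arguing on stalks; (b) ``carries over verbatim with $B_{\operatorname{dR}}$-coefficients by flatness'' conflates exactness with commuting $\otimes_K^{\blacksquare}B_{\operatorname{dR}}$ past the inverse limits hidden in $Rj_*$, in sections over the quasi-Stein opens $V\cap U$, and in $B_{\operatorname{dR}}^+=\varprojlim_n B_{\operatorname{dR}}^+/t^n$ itself; this requires the nuclearity-type results of Bosco and is exactly what the quasi-compact reduction in the paper avoids; (c) the K\"unneth reduction of $Rj_*$ to the one-dimensional punctured disc is asserted, not proved, whereas Kiehl's argument (and hence Theorem \ref{logdr}) handles arbitrary dimension directly via the tubular neighbourhoods and the residue map. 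The most economical repair is simply to invoke Theorem \ref{logdr} together with the quasi-compactness/colimit remark following Theorem \ref{bdrcoh} and Bosco's Remark 6.14, at which point your argument becomes the paper's proof.
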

	
	It is unclear whether this corollary can be proved directly, as computations for Kummer (pro-)\'etale cohomology typically require case-by-case treatment. For instance, if $X$ is quasi-compact and $D \subset X$ is a strictly normal crossing divisor, $U=X-D,$ we then have $$\rg_{\underline{\proket}}(X_C,\Q_p) \simeq \rg_{\underline{\ket}}(X_C,\Q_p) \simeq \rg_{\underline{\et}}(U_C,\Q_p),$$ which follows from \cite[Corollary 6.3.4]{dllz2023logadic}. Note that $\rg_{\underline{\et}}(U_C,\Q_p) \neq \rg_{\underline{\proet}}(U_C,\Q_p)$ in general. For example, when $U=\mathbb A^1,$ $H^1_{\et}(A^1_C,\Q_p)=0$ but $H^1_{\proet}(A^1_C,\Q_p)$ is very large, as indicated by the main theorem of \cite{CDNstein}.
	
	\subsection{Geometric $ B_{\operatorname{dR}}^+$-cohomology and comparison theorem}
	
	Now let $X$ be a log smooth rigid analytic variety over $C $. Since the algebra ${B}_{\operatorname{dR}}^+$ lacks a natural $C$-structure, establishing a meaningful comparison theorem between the log de Rham cohomology and the Kummer étale cohomology necessitates the introduction of a new cohomology theory, $\rg_{B^+_{\operatorname{dR}}}(X)$. This theory serves as a deformation of log de Rham cohomology via the map $ B^+_{\operatorname{dR}} \to C $.  
	
	For rigid analytic varieties without log structures, various approaches have been proposed to construct geometric $ B_{\operatorname{dR}}^+$-cohomology theories. For example:
	
	\begin{itemize}
		\item In \cite{bms1}, Bhargav Bhatt, Matthew Morrow, and Peter Scholze introduce $\rg_{{\operatorname{cris}}}(X/B^+_{\operatorname{dR}})$ using the concept of ``very small objects."
		
		\item In \cite{guo2021crystalline}, Haoyang Guo constructs $\rg(X/\Sigma_{\mathrm{inf}}, \mathcal{O}_{X/\Sigma_{\mathrm{inf}}})$ by studying the infinitesimal site over $B^+_{\operatorname{dR}}$.
		
		\item In \cite{CN4.3}, Pierre Colmez and Wiesława Nizioł define $\rg_{{\operatorname{dR}}}(X/B^+_{\operatorname{dR}})$ via local semistable formal models of $ X$.
	\end{itemize}
	
	Notably, all these definitions are shown to be equivalent by \cite[Proposition 3.27]{CN4.3}. In \cite{bosco2023rational}, by using the $ L\eta_f $-functor, Guido Bosco introduces $\rg_{B^+_{\operatorname{dR}}}(X)$ via period sheaves, and proves that it agrees with the geometric $ B_{\operatorname{dR}}^+$-cohomology.
	
	In this article, we adopt Bosco’s approach to define $\rg_{B_{\operatorname{dR}}^+}(X)$, as we believe it provides the most direct framework for defining logarithmic ${B}_{\operatorname{dR}}^+$-cohomology with minimal prerequisites. After establishing such a theory, we will prove the following comparison theorem.

	\begin{theorem} \label{drcomparison}
		Let $X$ be a proper log smooth rigid analytic varieties over $C$. Then there are cohomology groups $H^i_{B_{\operatorname{dR}}^+}(X)$ which come with a canonical filtered isomorphism $$H^i_{\ket}(X,\Q_p) \otimes_{\Q_p}B_{\operatorname{dR}} \simeq H^i_{B_{\operatorname{dR}}^+}(X) \otimes_{B_{\operatorname{dR}}^+}B_{\operatorname{dR}}.$$ When $X$ comes from $X_0$ over a discrete valued field $K$, this isomorphism agrees with the comparison isomorphism (see \cite[Theorem 1.1]{dllz2023logrh}) $$H^i_{\ket}(X,\Q_p) \otimes_{\Q_p}B_{\operatorname{dR}} \simeq H^i_{\operatorname{logdR}}(X_0) \otimes_{K}B_{\operatorname{dR}},$$under a canonical identification $$H^i_{B_{\operatorname{dR}}^+}(X)= H^i_{\underline{\operatorname{logdR}}}(X_0)\otimes_KB_{\operatorname{dR}}^+.$$ Moreover, $H^i_{\operatorname{logdR}}(X/B_{\operatorname{dR}}^+)$ is a finite free $B_{\operatorname{dR}}^+$-module, and we have
		
		(i) The Hodge–de Rham spectral sequence $$E^{ij}_1:=H^j(X,\Omega^{\log,i}_{X})\Rightarrow H^{i+j}_{\operatorname{logdR}}(X)$$ degenerates at $E_1$.
		
		(ii) The Hodge–Tate spectral sequence
		$$E^{ij}_2:=H^j(X,\Omega^{\log,i}_{X})(-j)\Rightarrow H^{i+j}_{\ket}(X,\Q_p)\otimes_{\Q_p}C$$ degenerates at $E_2$.
	\end{theorem}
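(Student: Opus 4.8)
The plan is to define $\rg_{B_{\operatorname{dR}}^+}(X)$ via the d\'ecalage functor following \cite{bosco2023rational}, to extract the comparison isomorphism from the Kummer \'etale primitive comparison theorem together with Theorem \ref{bdrcoh}, and to obtain finite freeness and the two degeneration statements by descending to a discretely valued field. For the construction, fix a generator $\xi$ of the kernel of the canonical surjection $\theta\colon B_{\operatorname{dR}}^+\to C$ and set
$$\rg_{B_{\operatorname{dR}}^+}(X):=L\eta_\xi\,\rg_{\underline{\proket}}(X,\mathbb B_{\operatorname{dR}}^+),$$
equipped with the filtration induced by $\fil^\bullet\mathbb B_{\operatorname{dR}}^+$, and put $H^i_{B_{\operatorname{dR}}^+}(X):=H^i\big(\rg_{B_{\operatorname{dR}}^+}(X)\big)$. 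Since $L\eta_\xi$ becomes the identity after inverting $\xi$, there is a canonical filtered isomorphism $\rg_{B_{\operatorname{dR}}^+}(X)\otimes_{B_{\operatorname{dR}}^+}B_{\operatorname{dR}}\simeq\rg_{\underline{\proket}}(X,\mathbb B_{\operatorname{dR}})$, which is $\mathscr G$-equivariant whenever $X$ descends to $K$.

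For the comparison isomorphism: for $X$ proper and log smooth, the primitive comparison theorem in Kummer \'etale cohomology (see \cite{dllz2023logadic}) together with its usual consequence for the period sheaf $\mathbb B_{\operatorname{dR}}$ yields a filtered isomorphism $\rg_{\underline{\ket}}(X,\Q_p)\otimes^{L_\blacksquare}_{\Q_p}B_{\operatorname{dR}}\simeq\rg_{\underline{\proket}}(X,\mathbb B_{\operatorname{dR}})$, the filtration on the left-hand side being that of $B_{\operatorname{dR}}$. Composing with the isomorphism of the previous paragraph produces the desired filtered isomorphism $H^i_{\ket}(X,\Q_p)\otimes_{\Q_p}B_{\operatorname{dR}}\simeq H^i_{B_{\operatorname{dR}}^+}(X)\otimes_{B_{\operatorname{dR}}^+}B_{\operatorname{dR}}$.

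For finite freeness and degeneration I would descend. By the theory of formal models and a standard approximation argument (cf. \cite{bosco2023rational}), a proper log smooth rigid analytic variety $X$ over $C$ with fs log structure descends to a proper log smooth rigid analytic variety $X_0$ over a finite extension $K'$ of $\Q_p$ with fs log structure, so that $X\simeq X_{0,C}$. Taking $r=0$ in Theorem \ref{bdrcoh}(ii) identifies $\rg_{\underline{\proket}}(X,\mathbb B_{\operatorname{dR}}^+)$ with $\fil^0\big(\rg_{\underline{\operatorname{logdR}}}(X_0)\otimes^{L_\blacksquare}_{K'}B_{\operatorname{dR}}\big)$. Since the Hodge--log de Rham spectral sequence of $X_0$ degenerates at $E_1$ by \cite[Theorem 1.1]{dllz2023logrh}, the filtered complex $\rg_{\underline{\operatorname{logdR}}}(X_0)$ is filtered quasi-isomorphic to the direct sum of its cohomology endowed with the Hodge filtration, and a direct computation on each $H^i_{\underline{\operatorname{logdR}}}(X_0)$ shows that $L\eta_\xi$ carries $\fil^0\big(-\otimes^{L_\blacksquare}_{K'}B_{\operatorname{dR}}\big)$ to $-\otimes_{K'}B_{\operatorname{dR}}^+$. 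Hence $\rg_{B_{\operatorname{dR}}^+}(X)\simeq\rg_{\underline{\operatorname{logdR}}}(X_0)\otimes_{K'}B_{\operatorname{dR}}^+$, which yields the canonical identification $H^i_{B_{\operatorname{dR}}^+}(X)=H^i_{\underline{\operatorname{logdR}}}(X_0)\otimes_{K'}B_{\operatorname{dR}}^+$ and finite freeness over $B_{\operatorname{dR}}^+$. Base changing along $\theta$ then identifies $\rg_{B_{\operatorname{dR}}^+}(X)\otimes^L_{B_{\operatorname{dR}}^+}C$ with $\rg_{\underline{\operatorname{logdR}}}(X_0)\otimes_{K'}C$, the log de Rham complex of $X$, compatibly with Hodge filtrations; so the Hodge--de Rham spectral sequence of $X$ is the base change of that of $X_0$ and degenerates, proving (i). For (ii), the Hodge--Tate spectral sequence of $X=X_{0,C}$ coincides with the $\mathscr G_{K'}$-equivariant Hodge--Tate spectral sequence of $X_0$, which degenerates at $E_2$ by the Hodge--Tate decomposition of \cite[Theorem 1.1]{dllz2023logrh}. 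Finally, when $X$ already comes from a model $X_0$ over a discretely valued field $K$, the isomorphism of Theorem \ref{bdrcoh} is constructed from the log Poincar\'e lemma and is therefore compatible, by functoriality, with the comparison isomorphism of \cite{dllz2023logrh}; this gives the remaining assertion.

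The main obstacle I anticipate is the descent step: spreading out a formal model, an fs log structure, and log smoothness over $C$ compatibly down to a discretely valued field is plausible from Kato's local classification of log smooth morphisms and an EGA-IV-style approximation, but is technically delicate. If one prefers to avoid descent, the burden shifts to proving intrinsically that $\rg_{B_{\operatorname{dR}}^+}(X)$ is a perfect complex of $B_{\operatorname{dR}}^+$-modules --- by a local computation on log smooth charts, where $\rg_{\underline{\proket}}(-,\mathbb B_{\operatorname{dR}}^+)$ is an explicit Koszul-type complex --- and then running the dimension count of \cite{bosco2023rational} that forces finite freeness and the simultaneous degeneration of both spectral sequences. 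In either approach one must also verify that all the isomorphisms above are strictly compatible with filtrations, which rests on a filtered refinement of the primitive comparison theorem.
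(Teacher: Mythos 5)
Your first two steps (defining the $B_{\operatorname{dR}}^+$-cohomology via $L\eta_t$ and extracting the comparison isomorphism after inverting $t$ from the Kummer-étale primitive comparison theorem) are essentially the paper's Theorem \ref{bdrcomparison}, up to the caveat that the paper applies $L\eta_t$ at the level of sheaves, $\rg_{\et,\operatorname{cond}}(X,L\eta_tR\lambda_*\mathbb B_{\operatorname{dR}}^+)$, and only identifies this with $L\eta_t$ of the global complex for affinoid $X$ (Proposition \ref{bdraffinoid}); this sheaf-level definition is what makes the deformation property $\rg_{B_{\operatorname{dR}}^+}(X)\otimes^{L_\blacksquare}_{B_{\operatorname{dR}}^+}C\simeq\rg_{\underline{\operatorname{logdR}}}(X)$ (Theorem \ref{bdrC}) available, which your dimension count will need.

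The genuine gap is your treatment of finiteness and the two degenerations: you propose to descend a proper log smooth rigid analytic variety over $C$ to a discretely valued field $K'$. This is false in general — there is no such global descent (already for proper smooth rigid spaces over $C$; e.g.\ moduli parameters need not lie in any discretely valued subfield), and the paper's descent lemma is only an étale-local statement for nc-divisor log structures. This is precisely why the paper, following the Bhatt--Morrow--Scholze/Guo strategy, proves instead that a \emph{proper} log smooth $X/C$ admits a log smooth lift $\mathbb X$ over $B_{\operatorname{dR}}^+/t^2$ (a nontrivial deformation-theoretic argument via Raynaud/Temkin formal models and a versal deformation), uses $\mathbb X$ to split the log Hodge--Tate map (a torsor argument), and thereby obtains the decomposition $R\nu_*\widehat{\mathcal O}_{X_{\proket}}\simeq\bigoplus_i\Omega_X^{i,\log}(-i)[-i]$; combined with the primitive comparison theorem this gives degeneration of the Hodge--Tate spectral sequence, and then Hodge--log de Rham degeneration and finite freeness of $H^i_{B_{\operatorname{dR}}^+}(X)$ follow from the dimension count using Theorems \ref{bdrcomparison} and \ref{bdrC} together with the structure of torsion-free $B_{\operatorname{dR}}^+$-modules. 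Your fallback sketch ("perfectness of $\rg_{B_{\operatorname{dR}}^+}(X)$ plus a dimension count forces both degenerations") does not close this gap: the inequalities coming from the spectral sequences and from $B_{\operatorname{dR}}^+$-semicontinuity all point the same way, namely $\dim_{\Q_p}H^n_{\ket}(X,\Q_p)\leq\sum_{i+j=n}\dim_CH^j(X,\Omega_X^{\log,i})$, and without the lifting-induced splitting of $R\nu_*\widehat{\mathcal O}_{X_{\proket}}$ (or an equivalent input) there is nothing forcing the reverse inequality, so neither degeneration follows. In short, the missing idea is the $B_{\operatorname{dR}}^+/t^2$-lifting of $X$ over $C$ and the resulting splitting of the Hodge--Tate map, which is the technical heart of the paper's proof of parts (i) and (ii).
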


	These results serve as an initial step toward the study of rational $p$-adic Hodge theory for logarithmic rigid analytic varieties. In fact, we can define the syntomic cohomology for logarithmic rigid analytic varieties by adopting a similar framework. We will explore these ideas in detail in a forthcoming paper \cite{xslogsyn}.
	
	
	\subsection*{Acknowledgments}
	
	The author would like to sincerely thank Wiesława Nizioł for her insightful discussions, and for posing key questions that shaped this article. The article is part of the author’s Ph.D. thesis. 
	
	\subsection*{Notation and conventions}
	
	In this article, We freely use the language of condensed mathematics as developed in \cite{cscondensed}. To avoid set-theoretic issues, we fix an uncountable strong limit cardinal $\kappa$, which means $\kappa$ is uncountable and for all $\lambda<\kappa,$ also $2^{\lambda}<\kappa.$ For sheaves valued in condensed abelian groups, we refer the reader to \cite{bosco2023padicproetalecohomologydrinfeld} for notations and properties used in this article.
	
	We adopt the language of adic spaces as developed in \cite{huber2013etale}. A rigid analytic variety is defined as a quasi-separated adic space locally of finite type over $\Spa(L,\mathcal O_L)$ for a $p$-adic field $L$. All rigid analytic spaces considered will be over $K$ or $C$. We assume all rigid analytic varieties are separated, taut, and countable at infinity. We denote by $\operatorname{Sm}_L$ the category of smooth rigid analytic varieties (or smooth dagger varieties) over $L$. Moreover, we always assume analytic adic spaces are $\kappa$-small, meaning the cardinality of their underlying topological spaces is less than $\kappa.$
	
	If $\mathcal A$ is an abelian category, unless stated otherwise, we always work with derived stable $\infty$-category $D(\mathcal{A})$.
	
	We also use the theory of log adic space. For definitions and properties of log adic space, we refer the reader to \cite{dllz2023logadic}. A (pre)log-structure on a condensed ring is simply a (pre)log-structure on the underlying ring.
	
	
	
	
	
	

	\section{Logarithmic de Rham cohomology} 
	
	In this section, we study the de Rham cohomology for logarithmic rigid analytic varieties. We will show most of  the known results continue to hold in the condensed mathematics setting.
	
	Suppose $X$ is a log smooth rigid analytic varieties over $L=K$ or $C$, recall that in \cite[Construction 3.3.2]{dllz2023logadic}, one has logarithmic differential \'etale sheaf $\Omega_X^{1,\log}$ (also denote by $\Omega_X^{\log}$), locally defined by the universal objects of derivations. When $X$ is log smooth, by \cite[Lemma 3.3.15]{dllz2023logadic} $\Omega_X^{1,\log}$ is locally free of finite rank. For any $i\geq 1$, let $\Omega_X^{i,\log}:=\bigwedge^i \Omega_X^{1,\log}.$ 
	
	For any affinoid \'etale morphism $U \to X,$ $\Omega_X^{i,\log}(U)$ is a finite $\mathcal{O}_U(U)$-module, therefore it has a natural structure of $K$-Banach space. This leads to the following definition.
	
	\begin{defn}\label{deflogdr}
		Suppose $X$ is a log smooth rigid analytic varieties over $L$, the logarithmic de Rham cohomology of $X$ is defined to be $$\rg_{\underline{\operatorname{logdR}}} (X):=\rg\left(X,\underline{\Omega_X^{\bullet,\log}}\right)$$ in $D(\Mod_K^{\cond}).$
	\end{defn}
	
	Recall that the usual de Rham cohomology for log rigid analytic varieties is defined by $$\rg_{{\operatorname{logdR}}} (X):=\rg\left(X,{\Omega_X^{\bullet,\log}}\right).$$ When $H^i_{\operatorname{logdR}}(X)$ is finite dimensional (for example, when $X$ is log smooth and proper), it carries a natural topology from the topology of $L$. In this case, the structure of $\rg_{\underline{\operatorname{logdR}}} (X)$ is simple.
	
	\begin{lem} \cite[Lemma 5.11]{bosco2023padicproetalecohomologydrinfeld} \label{finitenessderham}
		Suppose $X$ is log smooth and proper, then we have $$\underline{H^i_{\operatorname{logdR}}(X)}=H^i_{\underline{\operatorname{logdR}}}(X)$$ for all $i \geq 0.$
	\end{lem}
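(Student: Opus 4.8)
The plan is to reduce both sides to a single bounded complex of $K$-Banach spaces, and then to exploit the fact that, for such a complex, finite-dimensionality of cohomology forces the differentials to be strict (closed image) — which is exactly the condition under which condensification commutes with passage to cohomology.

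First, since $X$ is proper it is quasi-compact, so I would choose a finite affinoid cover $\mathfrak U=\{U_j\}_j$; as $X$ is separated, all intersections $U_{j_0\cdots j_p}$ are again affinoid. Each $\Omega_X^{i,\log}$ is a coherent $\mathcal O_X$-module (locally free of finite rank, by log smoothness; we work on the analytic site, on which it agrees with the étale sheaf for the purposes of coherent cohomology), so by Tate acyclicity the Čech–de Rham bicomplex satisfies $\operatorname{Tot}\check C^{\bullet}(\mathfrak U,\Omega_X^{\bullet,\log})\simeq\rg_{\operatorname{logdR}}(X)$; write $C^{\bullet}$ for this bounded complex of $K$-Banach spaces, whose terms are finite products of the finite Banach modules $\Omega_X^{i,\log}(U_{j_0\cdots j_p})$. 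By Kiehl finiteness applied to the bounded complex of coherent sheaves $\Omega_X^{\bullet,\log}$ on the proper space $X$, each $H^i(C^{\bullet})=H^i_{\operatorname{logdR}}(X)$ is finite-dimensional over $K$. Next I would check that the same cover computes the condensed cohomology: the finite Čech complex of a coherent sheaf on an affinoid is exact, hence strict exact by the open mapping theorem, and $\underline{(-)}$ preserves strict exact sequences of Banach spaces, so $\underline{\Omega_X^{i,\log}}$ is acyclic on affinoids and $\operatorname{Tot}\check C^{\bullet}(\mathfrak U,\underline{\Omega_X^{\bullet,\log}})\simeq\rg_{\underline{\operatorname{logdR}}}(X)$. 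Since $\underline{(-)}$ commutes with finite products and finite totalizations, this complex is precisely $\underline{C^{\bullet}}$.

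It then suffices to show $H^i(\underline{C^{\bullet}})\simeq\underline{H^i(C^{\bullet})}$. The key input is the classical fact that a continuous $K$-linear map of Banach spaces with finite-dimensional cokernel has closed image (adjoin the finite-dimensional complement of the image to the source and apply the open mapping theorem). Applying this to $d^{i-1}\colon C^{i-1}\to Z^i:=\ker d^i$, whose cokernel is the finite-dimensional space $H^i(C^{\bullet})$ and whose target is closed in $C^i$, shows that every differential of $C^{\bullet}$ is strict. Hence $C^{\bullet}$ splits into short exact sequences of $K$-Banach spaces $0\to Z^i\to C^i\to B^{i+1}\to 0$ and $0\to B^i\to Z^i\to H^i(C^{\bullet})\to 0$. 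Every short exact sequence of Banach spaces is admissible — the surjection is open by the open mapping theorem and admits a continuous section by Bartle--Graves — so applying $C(S,-)$ for a profinite set $S$ preserves exactness; thus $\underline{(-)}$ is exact on these sequences, and reassembling yields $H^i(\underline{C^{\bullet}})\simeq\underline{H^i(C^{\bullet})}$. Finally, the subquotient Banach topology on $H^i(C^{\bullet})$ coincides with the unique topology on the finite-dimensional space $H^i_{\operatorname{logdR}}(X)$, so $\underline{H^i(C^{\bullet})}=\underline{H^i_{\operatorname{logdR}}(X)}$, while by construction $H^i(\underline{C^{\bullet}})=H^i_{\underline{\operatorname{logdR}}}(X)$.

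The main obstacle is not any single computation but making the two Čech descriptions genuinely compatible under $\underline{(-)}$: everything rests on the strict exactness of Tate's finite Čech resolutions of coherent sheaves on affinoids, which is what lets condensification pass through and gives acyclicity of $\underline{\Omega_X^{i,\log}}$. Once that is secured, properness does the rest through the chain ``finite-dimensional cohomology $\Rightarrow$ strict differentials $\Rightarrow$ condensification commutes with cohomology''; the same chain makes transparent why the statement genuinely uses properness and fails for general $X$.
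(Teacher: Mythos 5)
Your proof is correct and is essentially the argument the paper delegates to its citation of Bosco's Lemma 5.11 (the paper supplies no proof of its own): reduction to a finite \v{C}ech complex of $K$-Banach spaces via Tate acyclicity, Kiehl finiteness on the proper $X$, strictness of the differentials forced by finite-dimensional cohomology, and exactness of $\underline{(-)}$ on strict exact sequences of Banach spaces, the only log-specific input being that the sheaves $\Omega^{i,\log}_X$ are coherent (locally free of finite rank by log smoothness), so everything applies verbatim. One cosmetic point: the surjectivity of $\mathscr C(S,V)\to\mathscr C(S,W)$ for a strict surjection of nonarchimedean Banach spaces and $S$ profinite should be justified by successive approximation using density of locally constant functions (or by a bounded linear splitting when the base field is discretely valued), as in Bosco's appendix, rather than by the archimedean Bartle--Graves theorem.
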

	
	We are mainly interested in the case where the log-structure of $X$ comes from a strictly normal crossing divisor $D \subset X$. Following \cite{grothendieck1966derham}, \cite{kiehl1967derham} and \cite{deligne1970equations}, we give the following definition.
	
	\begin{defn}
		Suppose $X$ is a smooth rigid analytic variety over $L$,  $D \subset X$ is a strictly normal crossing divisor, $U=X-D.$ Let $i \geq 1.$
		
		(i) We define $\Omega_X^{i}(*D)$ to be the differential $i$-forms on $X$ with meromorphic poles along $D$. In other words, let $\mathscr{J}$ be the ideal sheaf associated to the closed immersion $D \hookrightarrow X,$ then $$\Omega_X^{i}(*D):=\varinjlim_{n}\mathcal{H}om_{\mathcal{O}_X}(\mathscr{J}^n,\Omega_X^{i}).$$
		
		(ii) We define $\Omega_X^{i}(\log D)$ to be the differential $i$-forms on $X$ with logarithmic poles along $D$. In other words, $\Omega_X^{i}(\log D)$ contains the elements $\omega$ of $\Omega_X^{i}(*D)$ such that $\omega$ and $\operatorname{d}\omega$ have a pole of order at most 1.
	\end{defn}
	
	We can also give a local description of the structure of $\Omega_X^{i}(\log D).$ By the existence of tubular neighborhood, \cite[Theorem 1.18]{kiehl1967derham}, locally $X$ can be written as $V:=\Sp(A\left\langle x_1,...,x_d\right\rangle),$ and $D$ is defined by the equation $x_1\cdots x_d=0,$ where $A$ is a smooth affinoid $L$-algebra. Therefore, $\Omega_X^{i}(\log D)(V)$ are generated by forms $$\omega\cdot(x_{i_1}\cdots x_{i_q})^{-1}dx_{i_1}\cdots dx_{i_q}$$ for $\omega \in \Omega^{i-q}_A$ and $1\leq i_1 < \cdots < i_q \leq d.$
	
	\begin{lem}
		We have natural isomorphisms $$\Omega_X^{i,\log}\xrightarrow{\simeq}\Omega_X^{i}(\log D),$$ for each $i\geq 1,$ where the log-structure of $X$ is given by the open immersion $U \hookrightarrow X.$
	\end{lem}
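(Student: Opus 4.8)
The plan is to prove the statement étale-locally on $X$, using the explicit coordinates recorded above, after first reducing to the case $i = 1$. Both sides commute with exterior powers: on the left this is the definition $\Omega_X^{i,\log} = \bigwedge^i \Omega_X^{1,\log}$, and on the right the local generators $\omega\cdot(x_{i_1}\cdots x_{i_q})^{-1}dx_{i_1}\cdots dx_{i_q}$ over the model $V = \Sp(A\langle x_1,\dots,x_d\rangle)$ with $D = \{x_1\cdots x_d = 0\}$ exhibit $\Omega_V^i(\log D)$ as the free $\mathcal{O}_V(V)$-module on the $i$-fold wedges of a basis $da_1,\dots,da_r$ of $\Omega^1_A$ together with $dx_1/x_1,\dots,dx_d/x_d$; hence $\Omega_X^i(\log D) \cong \bigwedge^i \Omega_X^1(\log D)$. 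So it suffices to produce a natural isomorphism in degree one.

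Next I would construct the comparison map from the universal property of $\Omega_X^{1,\log}$. The log structure $\mathcal{M}_X$ induced by the open immersion $U \hookrightarrow X$ is $\mathcal{M}_X = \mathcal{O}_X \cap j_*\mathcal{O}_U^\times$ with $j \colon U \to X$. The de Rham differential $d \colon \mathcal{O}_X \to \Omega_X^1 \subset \Omega_X^1(\log D)$ together with $d\log \colon \mathcal{M}_X \to \Omega_X^1(\log D)$, $f \mapsto df/f$, forms a log derivation: Zariski-locally on $V$ a section $f$ of $\mathcal{M}_X$ is $f = u\, x_1^{a_1}\cdots x_d^{a_d}$ with $u \in \mathcal{O}_V^\times$ and $a_i \in \Z_{\geq 0}$ (the divisor of $f$ being effective and supported on $D$), so $df/f = du/u + \sum_i a_i\, dx_i/x_i$ indeed lies in $\Omega_X^1(\log D)$ and satisfies $d(df/f)=0$, and the Leibniz rule together with $df = f\cdot(df/f)$ for $f\in\mathcal{M}_X$ holds. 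By \cite[Construction 3.3.2]{dllz2023logadic} this data factors through a canonical $\mathcal{O}_X$-linear map $\Omega_X^{1,\log} \to \Omega_X^1(\log D)$, compatible with étale localization since the construction is functorial.

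Finally I would check this map is an isomorphism on the local model $V$. There the log structure is the one associated to the chart $\N^d \to \mathcal{O}_V$, $e_i \mapsto x_i$, and by log smoothness \cite[Lemma 3.3.15]{dllz2023logadic} the sheaf $\Omega_V^{1,\log}$ is the free $\mathcal{O}_V(V)$-module on $da_1,\dots,da_r, d\log x_1,\dots,d\log x_d$, where $a_1,\dots,a_r$ are étale coordinates on $A$. The comparison map sends $da_k \mapsto da_k$ and $d\log x_i \mapsto dx_i/x_i$, i.e. a basis to the basis of $\Omega_V^1(\log D)$ identified in the first paragraph, so it is an isomorphism over $V$. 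Since the model spaces cover $X$ and the map is a morphism of (étale) sheaves checked to be an isomorphism on a cover, the lemma follows, and the higher-degree cases follow by the exterior-power compatibility.

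The step I expect to require the most care is the identification of $\Omega_X^{1,\log}$ over the local model with the free module on the $da_k$ and the $d\log x_i$: one must verify that the divisorial log structure attached to a strictly normal crossing divisor agrees, étale-locally, with the chart $\N^d \to \mathcal{O}_V$, and then invoke the local structure theory of log smooth morphisms of log adic spaces from \cite{dllz2023logadic}. Once this is in place the rest is a direct computation with the explicit coordinates.
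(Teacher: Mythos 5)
Your proposal is correct and follows essentially the same route as the paper: reduce to degree one, identify $\Omega_X^{1,\log}$ on the local model $\Sp(A\langle x_1,\dots,x_d\rangle)$ via \cite[Proposition 3.2.25]{dllz2023logadic} and log smoothness, and match $d\log x_i$ with $x_i^{-1}dx_i$. The only real difference is that you build the comparison map globally from the universal property of log derivations (checking $df/f\in\Omega_X^1(\log D)$ for $f\in\mathcal O_X\cap j_*\mathcal O_U^\times$) before verifying it is an isomorphism locally, whereas the paper defines the isomorphism directly in coordinates on the local model; your version handles the gluing/naturality a bit more explicitly but is otherwise the same argument.
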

	
	\begin{proof}
		We can check the lemma locally. By base change, we may assume $V=\Sp(L\left\langle x_1,...,x_d\right\rangle)$ as above, and $D$ is defined by the equation $x_1\cdots x_d=0,$ and the log-structure of $V$ is defined by the open immersion $V-D\hookrightarrow V.$ We need to construct a natural isomorphism $$\Omega_X^{1,\log}(V)\xrightarrow{\simeq}\Omega_X^{1}(\log D)(V).$$ According to \cite[Proposition 3.2.25]{dllz2023logadic} (taking $P=\varnothing$ and $Q=\Z_{\geq 0}^d=\bigoplus_{i=1}^d\Z_{\geq 0} e_i$), we have $$\Omega_X^{1,\log}(V)\simeq \bigoplus_{i=1}^dL\left\langle x_1,...,x_d\right\rangle e_i.$$ We can then construct the natural isomorphism $\Omega_X^{1,\log}(V)\xrightarrow{\simeq}\Omega_X^{1}(\log D)(V)$ by sending $e_i$ to $x_i^{-1}dx_i.$
	\end{proof}
	
	Now we endow $X$ with the log-structure defined by the open immersion $U \hookrightarrow X.$ Since $X$ is log smooth, $\Omega_X^{i}(\log D)$ is a coherent sheaf, therefore have a natural structure of $K$-Banach space, and we have a natural condensed sheaf of $\mathcal O_X$-module $\underline{\Omega_X^{i}(\log D)}$. 
	
	Choose a basis $\mathcal B$ of $X$ of the form $V=\Sp(A\left\langle x_1,...,x_d\right\rangle)$ as above, and we endow $$\Omega_X^{i}(*D)(V)\simeq \varinjlim_{n}\mathcal{H}om_{\mathcal{O}_X}(\mathscr{J}^n,\Omega_X^{i})(V)$$ with the direct limit topology. Then $\Omega_X^{i}(*D)$ is a sheaf of topological abelian groups on $\mathcal B.$ We denote by $\underline{\Omega_X^{i}(*D)}$ the condensed sheaf associated to $\Omega_X^{i}(*D).$
	
	\begin{lem}
		We have $$\underline{\Omega_X^{i}(*D)} \simeq \varinjlim_{n}\underline{\mathcal{H}om_{\mathcal{O}_X}(\mathscr{J}^n,\Omega_X^{i})}.$$
	\end{lem}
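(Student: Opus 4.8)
The plan is to reduce the statement to a commutation of two operations on condensed sheaves: passing to the associated condensed sheaf, and taking a filtered colimit over $n$. Since $\underline{(-)}$ is the functor that sends a sheaf of topological abelian groups on the basis $\mathcal B$ to its condensed sheafification, and since on a basis we have defined $\Omega_X^{i}(*D)(V) = \varinjlim_n \mathcal{H}om_{\mathcal{O}_X}(\mathscr{J}^n, \Omega_X^i)(V)$ with the direct limit topology, the real content is that the condensation functor commutes with the particular countable filtered colimit appearing here, \emph{as sheaves} (i.e.\ after sheafification), and that the transition maps behave well enough on sections over each $V$ in $\mathcal B$.

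First I would recall that for a single topological abelian group, $\underline{(-)}$ does not commute with arbitrary filtered colimits, but it does commute with colimits of sequences $M_1 \to M_2 \to \cdots$ in which the colimit is equipped with the direct limit topology \emph{and} each $M_n \to M_{n+1}$ is a closed embedding (or, more weakly, the system is ``nice'' in the sense that the colimit topology is well-behaved); here each $\mathscr{J}^{n+1} \hookrightarrow \mathscr{J}^n$ induces $\mathcal{H}om(\mathscr{J}^n,\Omega^i) \hookrightarrow \mathcal{H}om(\mathscr{J}^{n+1},\Omega^i)$, an injection of finite Banach modules over the affinoid $A\langle x_1,\dots,x_d\rangle$, and one checks it is a closed immersion of Banach spaces (the cokernel is again a finite module, hence the map is strict). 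So over each fixed $V \in \mathcal B$ we get $\underline{\Omega_X^i(*D)(V)} \simeq \varinjlim_n \underline{\mathcal{H}om_{\mathcal{O}_X}(\mathscr{J}^n,\Omega_X^i)(V)}$ at the level of condensed abelian groups; this uses that a countable filtered colimit of condensed sets along closed embeddings is computed sectionwise on any profinite set, which is where the strictness input is consumed.

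Next I would upgrade this from a statement about sections over each $V$ to a statement about the condensed sheaves themselves. The subtlety is that on the right-hand side the colimit $\varinjlim_n \underline{\mathcal{H}om_{\mathcal{O}_X}(\mathscr{J}^n,\Omega_X^i)}$ is a colimit of condensed \emph{sheaves}, which a priori requires sheafification of the presheaf colimit. However, filtered colimits are exact and commute with finite limits, so the presheaf colimit of sheaves restricted to a basis $\mathcal B$ of quasi-compact objects is already a sheaf on $\mathcal B$ (the sheaf condition for a basis of quasi-compacts only involves finite covers, hence finite limits, which commute with filtered colimits). Thus the sheafification is trivial on $\mathcal B$, and comparing the two condensed sheaves reduces to comparing their values on $V \in \mathcal B$, which is exactly the computation of the previous step. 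Concretely: $\bigl(\varinjlim_n \underline{\mathcal{H}om_{\mathcal{O}_X}(\mathscr{J}^n,\Omega_X^i)}\bigr)(V) \simeq \varinjlim_n \underline{\mathcal{H}om_{\mathcal{O}_X}(\mathscr{J}^n,\Omega_X^i)(V)} \simeq \underline{\Omega_X^i(*D)(V)} = \underline{\Omega_X^i(*D)}(V)$, and these identifications are natural in $V$, giving the isomorphism of condensed sheaves.

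The main obstacle I anticipate is the strictness/closed-embedding verification for the transition maps $\mathcal{H}om_{\mathcal{O}_X}(\mathscr{J}^n,\Omega_X^i)(V) \to \mathcal{H}om_{\mathcal{O}_X}(\mathscr{J}^{n+1},\Omega_X^i)(V)$: one must know that in the explicit local model $V = \Sp(A\langle x_1,\dots,x_d\rangle)$ with $\mathscr{J} = (x_1\cdots x_d)$, the map $\Omega_X^i(V) \cdot (x_1\cdots x_d)^{-n} \hookrightarrow \Omega_X^i(V)\cdot(x_1\cdots x_d)^{-(n+1)}$ is a topological embedding with closed image, which follows from the fact that multiplication by $x_1\cdots x_d$ is a closed-range (strict) operator on the finite Banach $A\langle x_1,\dots,x_d\rangle$-module $\Omega_X^i(V)$ — a consequence of the open mapping theorem for Banach modules over affinoid algebras, since the image is a finitely generated, hence closed, submodule. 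Once this is in place, the rest is the formal colimit-commutation argument sketched above, and no genuinely new idea is required.
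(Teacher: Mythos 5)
Your proposal is correct and takes essentially the same route as the paper: both reduce the statement to showing that the transition maps $\mathcal{H}om_{\mathcal{O}_X}(\mathscr{J}^n,\Omega_X^{i})(V)\to \mathcal{H}om_{\mathcal{O}_X}(\mathscr{J}^{n+1},\Omega_X^{i})(V)$ are closed embeddings on sections over the standard affinoid basis, and then use that condensation commutes with sequential colimits along closed immersions (the paper cites \cite[Lemma 4.3.7]{scholze2015proet} for exactly this). The only difference is in the verification of closedness: the paper reduces by base change to multiplication by $x_1\cdots x_d$ on $L\langle x_1,\dots,x_d\rangle$ and checks it by an explicit $p$-adic estimate, whereas you invoke the general fact that $\mathcal{O}$-linear maps of finite Banach modules over an affinoid algebra are strict with closed image; both are valid.
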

	
	\begin{rem}
		This lemma does not hold automatically, as the condensification functor does not commute with colimits in general.
	\end{rem}
	
	\begin{proof}
		Denote by $$\f_n:=\mathcal{H}om_{\mathcal{O}_X}(\mathscr{J}^n,\Omega_X^{i}).$$ By definition, we need to show the natural morphism $$\varinjlim_{n}\underline{\f_n} \xrightarrow{} \underline{\varinjlim_{n}\f_n}=\underline{\Omega_X^{i}(*D)}$$ is an isomorphism. To see this, we use \cite[Lemma 4.3.7]{scholze2015proet}. It suffices to show for a basis $\mathcal B$ of $X$ of the form $V=\Sp(A\left\langle x_1,...,x_d\right\rangle)$ as above, $\f_n(V) \to \f_{n+1}(V)$ is a closed immersion. Since the sheaf of ideal $\mathscr J$ is generated by $x_1\cdots x_d=0,$ and $\Omega_X^{i}$ is locally free of finite rank, by shrinking $V$ (refining $\mathcal{B}$) and base change we are reduced to showing the map of topological abelian groups $$L\left\langle x_1,...,x_d\right\rangle \xrightarrow{\cdot x_1\cdots x_d} L\left\langle x_1,...,x_d\right\rangle$$ is a closed immersion, which is easy to check: in fact, if $f \in L\left\langle x_1,...,x_d\right\rangle$ is not in the ideal $(x_1\cdots x_d),$ then for $N$ suffices large, $f+p^N\mathcal O_L\left\langle x_1,...,x_d\right\rangle \cap (x_1\cdots x_d)=\varnothing.$
	\end{proof}
	
	\begin{rem}
		In fact, denote by $j:U\hookrightarrow X,$ it is easy to see that we have $$\Omega_X^{i}(\log D)(V) \hookrightarrow \Omega_X^{i}(*D)(V) \hookrightarrow j_*\Omega_U^{i}(V)$$ as topological abelian groups.
	\end{rem}
	
	We want to compare the hypercohomology of $\Omega_X^{\bullet}(\log D)$ and the de Rham cohomology of $U$. This leads to the following theorem.
	
	
	
	\begin{thm} \cite[Theorem 2.3]{kiehl1967derham} \label{logdr}
		Suppose $X$ is a smooth rigid analytic varieties over $L$,  $D \subset X$ is a strictly normal crossing divisor, $U=X-D.$ Then we have (non-filtered) quasi-isomorphisms in $D(\Mod_L^{\cond})$: $$\rg(X,\underline{\Omega^{\bullet}(\log D)})\xrightarrow{\simeq} \rg(X,\underline{\Omega^{\bullet}(*D)}) \xrightarrow{\simeq} \rg_{\underline{\operatorname{dR}}} (U).$$
	\end{thm}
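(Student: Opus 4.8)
\emph{Proof proposal.}
The plan is to realise both maps as the images under $\rg(X,-)$ of quasi-isomorphisms of complexes of condensed sheaves on $X$: namely the inclusion $\underline{\Omega_X^{\bullet}(\log D)}\hookrightarrow\underline{\Omega_X^{\bullet}(*D)}$, and --- writing $j\colon U\hookrightarrow X$ and noting $\underline{\Omega_X^{\bullet}(*D)}|_U=\underline{\Omega_U^{\bullet}}$ --- the canonical map $\underline{\Omega_X^{\bullet}(*D)}\to Rj_*\underline{\Omega_U^{\bullet}}$, whose $\rg(X,-)$ is by construction the map landing in $\rg_{\underline{\operatorname{dR}}}(U)$. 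The second map factors as $\underline{\Omega_X^{\bullet}(*D)}\to j_*\underline{\Omega_U^{\bullet}}\to Rj_*\underline{\Omega_U^{\bullet}}$. Everything is local on $X$, so by the tubular neighbourhood theorem \cite[Theorem 1.18]{kiehl1967derham} we may work on the basis $\mathcal B$ of affinoids $V=\Sp(A\langle x_1,\dots,x_d\rangle)$, with $A$ smooth affinoid and $D\cap V=\{x_1\cdots x_d=0\}$. On such a $V$ the locus $U\cap V$ is a $d$-fold relative punctured closed disc over $\Sp A$, hence quasi-Stein, so the condensed analogue of Kiehl's Theorem B (cf.\ \cite{bosco2023padicproetalecohomologydrinfeld}) shows that $j_*\underline{\Omega_U^{\bullet}}\to Rj_*\underline{\Omega_U^{\bullet}}$ is an isomorphism. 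It therefore remains to prove that $\underline{\Omega_X^{\bullet}(\log D)}\hookrightarrow\underline{\Omega_X^{\bullet}(*D)}$ and $\underline{\Omega_X^{\bullet}(*D)}\to j_*\underline{\Omega_U^{\bullet}}$ are quasi-isomorphisms of condensed sheaves on $\mathcal B$.

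Next I would reduce to one variable. By an induction on $d$ --- peeling off one coordinate at a time via the relative logarithmic (resp.\ meromorphic, resp.\ trivial-pole) de Rham complex, and using that the completed tensor products that arise are exact, which is cleanest after base change to solid $L$-modules in the sense of \cite{bosco2023padicproetalecohomologydrinfeld} --- the problem reduces to showing that, over a $\Q$-Banach algebra $R$, the inclusions of two-term complexes
$\bigl[R\langle x\rangle\xrightarrow{d}R\langle x\rangle\tfrac{dx}{x}\bigr]\hookrightarrow\bigl[\varinjlim_n x^{-n}R\langle x\rangle\xrightarrow{d}\varinjlim_n x^{-n}R\langle x\rangle\,dx\bigr]\hookrightarrow\bigl[\mathcal O(U_R)\xrightarrow{d}\mathcal O(U_R)\,dx\bigr]$
are quasi-isomorphisms, $U_R$ being the relative punctured disc and $d$ the relative differential. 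This is a direct Laurent-series computation: one antidifferentiates term by term, throwing away all but the residue term $b_{-1}\tfrac{dx}{x}$. The kernels of $d$ all equal $R$; the cokernels of $d$ are genuinely non-Hausdorff, since $p$-adic term-by-term antidifferentiation introduces $p$-adically unbounded denominators $1/(k+1)$, but in all three complexes the cokernel is the same quotient of the common space of representatives $R\langle x\rangle\tfrac{dx}{x}$ by the same subgroup of exact forms $\{\sum_k b_kx^k\tfrac{dx}{x}:b_0=0,\ |b_k|\,|k|\to0\}$, so the induced maps on cohomology are isomorphisms.

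The remaining point, the one specific to the condensed setting, is to check all of this at the level of condensed sheaves. First, by the Lemma preceding the statement, $\underline{\Omega_X^{i}(*D)}=\varinjlim_n\underline{\mathcal H om_{\mathcal O_X}(\mathscr J^n,\Omega_X^{i})}$ with all transition maps closed immersions, so condensation --- and, on extremally disconnected $S$, the functor $C(S,-)$ --- commutes with the pole-order colimit, and the colimit in the middle complex above is harmless. Second, to verify that the two maps are quasi-isomorphisms of condensed sheaves one checks --- as in \cite[Lemma 4.3.7]{scholze2015proet} --- on $\mathcal B$ and on sections over extremally disconnected $S$; but $\underline{\mathcal F}(V)(S)=C(S,\mathcal F(V))$ for a coherent $\mathcal F$, the complex of $S$-sections is then the Laurent-series de Rham complex with coefficients in the $\Q$-Banach algebra $C(S,\mathcal O_X(V))$ (or rather $C(S,-)$ of the relevant base emerging from the induction), and the computation of the previous paragraph applies to it verbatim. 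Hence the two cones are acyclic as complexes of condensed sheaves, and applying $\rg(X,-)$ gives the asserted quasi-isomorphisms in $D(\Mod_L^{\cond})$; they are manifestly non-filtered, since the inclusions do not respect the Hodge (resp.\ pole-order) filtrations.

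The main obstacle is that $\rg_{\underline{\operatorname{dR}}}$ of an affinoid is pathological: the de Rham differentials are not strict, the cohomology sheaves are non-Hausdorff condensed modules, and there is no contracting homotopy with continuous terms (any candidate homotopy involves division by integers). One cannot therefore prove the theorem by exhibiting an explicit homotopy equivalence; instead one must verify acyclicity of the two cones directly and uniformly in the test object $S$, which is exactly what the explicit one-variable Laurent computation delivers, while invoking the preceding Lemma to prevent the pole-order colimit from interfering with condensation. The only other technical nuisance is the exactness of the completed tensor products in the $d$-variable induction, best handled in the solid framework.
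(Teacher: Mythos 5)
Your strategy is workable but genuinely different from the paper's, and your closing diagnosis of the ``main obstacle'' is not accurate. The paper \emph{does} prove the theorem by an explicit continuous homotopy equivalence: following \cite[Theorem 2.3]{kiehl1967derham}, it works on sections over the open polydiscs $V^{\circ}=\{|x_1|<1,\dots,|x_d|<1\}$ inside the tubular-neighbourhood charts, where Kiehl's residue map $\operatorname{Res}$ and his homotopies are defined and continuous --- term-by-term antidifferentiation is harmless there because the denominators $1/(k+1)$ grow at most linearly while the radii are $<1$, so convergence (and continuity) is preserved. Your assertion that ``there is no contracting homotopy with continuous terms'' is true only on \emph{closed} polydiscs; on the $V^{\circ}$ it fails, and this is exactly how the paper sidesteps everything you then have to fight: no computation of non-Hausdorff cohomology, no condensed Theorem B, no K\"unneth. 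So the comparison is: paper = continuous homotopy equivalence of complexes of condensed sheaves on the basis $\{V^{\circ}\}$, checked only for continuity of $s$, $r$, $\operatorname{Res}$ and the homotopies; you = direct acyclicity of the two cones on closed polydiscs, at the price of extra machinery.

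Within your route, the one-variable Laurent computation is essentially correct (though the displayed exactness condition should be $|b_k/k|\to 0$, i.e.\ $|b_k|\,|1/k|_p\to 0$, not $|b_k|\,|k|\to 0$), and replacing $Rj_*$ by $j_*$ on the basis via quasi-Steinness of $V-D$ and the condensed Theorem B of \cite{bosco2023padicproetalecohomologydrinfeld} is fine. The genuine gap is the induction on the number of variables: ``using that the completed tensor products that arise are exact'' is precisely the statement that needs proof. Concretely, you must (i) identify the $d$-variable log, meromorphic and punctured-polydisc complexes (and their $\mathscr C(S,-)$-versions) with solid tensor products over $A$ of the one-variable complexes --- for the pole-order colimit and for $\mathcal O$ of the punctured closed polydisc this is not formal --- and (ii) know that the terms involved (Banach, LB and Fr\'echet spaces) are flat as solid $L$-modules, so that tensoring the acyclic one-variable cone with the remaining factors preserves acyclicity; solid tensor products are not exact in general, and the one-variable cohomology being a non-closed quotient is exactly the situation where naive topological K\"unneth arguments break. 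These inputs do exist (nuclearity/flatness results in Bosco's appendix), but as written this step is an assertion, and it carries the entire multi-variable case.
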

	
	\begin{proof}
		Take $V=\Sp(A\left\langle x_1,...,x_d\right\rangle)$ as above, define $$V^{\circ}:=\{v\in V \text{ such that } |x_1(v)|<1,...,|x_d(v)|<1\}.$$ Consider the natural maps $$\Omega^{\bullet}(\log D)(V^{\circ}) \xrightarrow{s}\Omega^{\bullet}(*D)(V^{\circ}) \xrightarrow{r} j_*\Omega_U^{\bullet}(V^{\circ}),$$ where $j$ is the open immersion $j:U \hookrightarrow X.$ In \cite[Theorem 2.3]{kiehl1967derham}, Kiehl constructed a map $\operatorname{Res}: j_*\Omega_U^{\bullet}(V^{\circ}) \xrightarrow{} \Omega^{\bullet}(\log D)(V^{\circ})$ and showed that $r \circ s$ and $\operatorname{Res}\circ r$ are both null-homotopic. Therefore, to show the same is true for condensed cohomology groups, it enough for us to check that $s,r$ and Res are continuous, which is clear from the construction. This concludes the proof.
	\end{proof}
	
	This theorem also allows us to describe the logarithmic de Rham cohomology for Stein varieties with log-structures. We begin with the following lemma.
	
	\begin{lem}\cite[Corollary 3.2]{grosse2004derham}
		Let $X$ be a smooth Stein variety over $L$, $D \subset X$ is a strictly normal crossing divisor, let $U=X-D.$ Endow  $X$ with the log-structure given by the open immersion $U \hookrightarrow X.$ Then the differentials $d:\Omega^{i-1,\log}(X)\to \Omega^{i,\log}(X)$ have closed images.
	\end{lem}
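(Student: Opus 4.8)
The plan is to reduce the claim to the corresponding statement for the \emph{ordinary} de Rham complex of the open part $U=X-D$, which is precisely \cite[Corollary 3.2]{grosse2004derham}, and then to transport it back along the comparison of Theorem \ref{logdr}. First, using the lemma identifying $\Omega_X^{i,\log}$ with $\Omega_X^{i}(\log D)$, I reformulate the assertion as: the differential $d\colon\Omega_X^{i-1}(\log D)(X)\to\Omega_X^{i}(\log D)(X)$ has closed image, for each $i$, where the global sections carry their natural Fréchet topology (coming from an affinoid exhaustion of the Stein space $X$). Next I observe that $U$ is itself a smooth Stein variety — the complement of a normal crossing divisor in a Stein space is again Stein, and separatedness, tautness and countability at infinity are inherited from $X$ — so \cite[Corollary 3.2]{grosse2004derham} applies verbatim to $U$ and tells us that $d\colon\Omega_U^{i-1}(U)\to\Omega_U^{i}(U)$ has closed image; equivalently $H^i_{\operatorname{dR}}(U)$ is Hausdorff in its quotient topology.

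To transport this back, I would first make the comparison explicit at the level of global sections. Since $X$ and $U$ are Stein and the sheaves $\Omega^j(\log D)$ and $\Omega_U^j$ are coherent, Kiehl's Theorem B (vanishing of higher coherent cohomology on (quasi-)Stein spaces), together with its condensed counterpart, identifies $\rg(X,\underline{\Omega^{\bullet}(\log D)})$ and $\rg_{\underline{\operatorname{dR}}}(U)$ with the complexes of global sections $\underline{\Omega_X^{\bullet}(\log D)(X)}$ and $\underline{\Omega_U^{\bullet}(U)}$. Under these identifications Theorem \ref{logdr} says exactly that the tautological inclusion of complexes $\underline{\Omega_X^{\bullet}(\log D)(X)}\hookrightarrow\underline{\Omega_U^{\bullet}(U)}$ is a quasi-isomorphism in $D(\Mod_L^{\cond})$; in particular $H^i(\underline{\Omega_X^{\bullet}(\log D)(X)})\cong H^i(\underline{\Omega_U^{\bullet}(U)})$ as condensed $L$-modules.

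Now I invoke the standard fact of condensed functional analysis that for a complex $V^{\bullet}$ of Fréchet $L$-vector spaces with continuous differentials, the condensed cohomology $H^i(\underline{V^{\bullet}})$ lies in the essential image of $W\mapsto\underline W$ if and only if the image of $d\colon V^{i-1}\to V^i$ is closed, in which case $H^i(\underline{V^{\bullet}})=\underline{H^i_{\mathrm{top}}(V^{\bullet})}$; this follows from the open mapping theorem and the description of (co)kernels of maps of Fréchet spaces after condensification. By \cite[Corollary 3.2]{grosse2004derham} the image of $d$ on $\Omega_U^{\bullet}(U)$ is closed in degree $i$, so $H^i(\underline{\Omega_U^{\bullet}(U)})=\underline{H^i_{\operatorname{dR}}(U)}$ is of the form $\underline W$; by the quasi-isomorphism above, $H^i(\underline{\Omega_X^{\bullet}(\log D)(X)})$ is then also of the form $\underline W$, whence, applying the fact in the other direction, the image of $d\colon\Omega_X^{i-1}(\log D)(X)\to\Omega_X^{i}(\log D)(X)=\Omega_X^{i,\log}(X)$ is closed. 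This is the claim.

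I do not expect a genuine obstacle here: the argument is a transfer along the already-established Theorem \ref{logdr}, and the one input used as a black box is the condensed-functional-analysis dictionary relating staticness of the cohomology to closedness of the differentials, which is standard and available from the sources on condensed sheaf cohomology used throughout the paper. The only points needing a little care are that $U$ genuinely satisfies the hypotheses of \cite[Corollary 3.2]{grosse2004derham} — i.e. that $U$ is a smooth Stein variety — and that the conclusion of Theorem \ref{logdr}, phrased there in terms of $\rg$, does descend to a quasi-isomorphism of the global-section condensed complexes on the Stein spaces $X$ and $U$; both are immediate from the Stein hypothesis and Theorem B. Should one prefer an argument not invoking \cite{grosse2004derham}, one can work directly on an admissible affinoid exhaustion $X=\bigcup_n X_n$ with $X_n\Subset X_{n+1}$: pass to the overconvergent (dagger) logarithmic de Rham complexes of the $X_n$, whose cohomology is finite dimensional by the finiteness theorem for smooth dagger affinoids, and deduce $H^i=\varprojlim_n H^i(X_n^{\dagger})$ together with Hausdorffness from the vanishing of $\varprojlim^1$ for an inverse system of finite-dimensional vector spaces — the delicate point there being to control $\varprojlim^1$ of the term-wise Banach systems using compactness of the restriction maps attached to $X_n\Subset X_{n+1}$.
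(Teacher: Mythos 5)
Your main route is genuinely different from the paper's (which adapts Grosse-Klönne's proof of \cite[Corollary 3.2]{grosse2004derham} to the log complex on $X$ itself, using Theorem \ref{logdr}, \cite[Theorem 3.1]{grosse2004derham} and coherence of $\Omega^{i,\log}$), and it hinges on a claim you dismiss as immediate but do not prove: that $U=X-D$ is again a smooth Stein variety. This is the linchpin of your argument twice over — you need it to apply \cite[Corollary 3.2]{grosse2004derham} to $U$ at all, and again to identify $\rg_{\underline{\operatorname{dR}}}(U)$ with the condensed global-section complex via Theorem B. In rigid geometry this Stein-ness is not immediate: there is no off-the-shelf analogue of the Docquier--Grauert theorem, and the obvious construction of an admissible affinoid exhaustion of $U$ runs into trouble. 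Given an exhaustion $X=\bigcup_n X_n$ by affinoids $X_n\Subset X_{n+1}$, the natural candidates are the ``complements of tubes'' $\{x\in X_n:\max_i|g_i(x)|\geq\epsilon\}$, where $g_1,\dots,g_r$ generate the ideal of $D$ on $X_n$; these are finite unions of rational subdomains and need not be affinoid unless $D$ is principal on $X_n$, which can fail — Picard groups of affinoids and of Stein spaces over $C$ are not trivial (already on the open unit disc over $\C_p$ there are divisors that are not principal, and these are perfectly good snc divisors in dimension one). One can patch things up by hand in simple cases, but in the stated generality ``$U$ is Stein'' needs a real argument or a reference, and without it the proof does not go through as written. (Note also that Stein-ness of open subsets of Stein spaces genuinely uses that $D$ has pure codimension one — e.g.\ the complement of a point in $\mathbb{A}^2$ is not Stein — so this cannot be waved away as inheritance of hypotheses.)

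Two secondary points. First, the condensed-functional-analysis dictionary you invoke is only standard in one direction: closed image of the differentials implies that $H^i$ of the condensified Fréchet complex is static and agrees with the classical quotient (this is what the paper itself uses, via \cite[Lemma 5.9, Lemma A.33]{bosco2023padicproetalecohomologydrinfeld}, in the proposition following the lemma). The converse direction — staticness forces closed image — is the one you actually need on the $X$-side; it is true for complexes of Fréchet spaces with ``static'' meaning in the image of Hausdorff topological vector spaces (one argues via full faithfulness of condensation on compactly generated spaces: the map $\ker d\to H^i\simeq\underline{W}$ comes from a continuous map whose zero fibre is the image), but it should be stated and proved with this care rather than cited as folklore. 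Second, your fallback sketch via a dagger affinoid exhaustion is essentially the paper's actual proof; there, however, the finiteness you quote (``finiteness for smooth dagger affinoids'') is a statement about ordinary de Rham cohomology, and transporting it to the logarithmic complexes of the dagger pieces is exactly where Theorem \ref{logdr} and \cite[Theorem 3.1]{grosse2004derham} enter — so that step, too, needs to be spelled out rather than asserted.
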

	
	\begin{proof}
		The proof of \cite[Corollary 3.2]{grosse2004derham} goes through by using Theorem \ref{logdr}, \cite[Theorem 3.1]{grosse2004derham}, and the fact that $\Omega^{i,\log}(X)$ are coherent sheaves over $X$ for all $i\geq 0$.
	\end{proof}
	
	With the notations in the above lemma, for $X$ a smooth Stein variety, endowing $\Omega^{i,\log}(X)^{d=0}\subset \Omega^{i,\log}(X)$ with the subspace topology, and $H^i_{\operatorname{logdR}}(X)=\Omega^{i,\log}(X)^{d=0}/d\Omega^{i-1,\log}(X)$ with the induced quotient topology, we have
	
	\begin{prop} \cite[Lemma 5.13]{bosco2023padicproetalecohomologydrinfeld}
		With the above notations, for all $i \geq 0$, we have $$H^i_{\underline{\operatorname{logdR}}}(X)=\underline{H^i_{\operatorname{logdR}}(X)}=\underline{\Omega^{i,\log}(X)^{d=0}/d\Omega^{i-1,\log}(X)}.$$
	\end{prop}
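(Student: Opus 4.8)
The plan is to reduce the statement to a concrete computation on a Stein exhaustion and then invoke the previous lemma together with the general formalism of condensed cohomology. First I would fix an increasing exhaustion $X=\bigcup_n X_n$ by affinoid (or quasi-Stein) subsets, so that $\rg_{\underline{\operatorname{logdR}}}(X)=R\lim_n \rg_{\underline{\operatorname{logdR}}}(X_n)$, and observe that since each $\Omega^{i,\log}_X$ is a coherent sheaf on the Stein space $X$, its global sections $\Omega^{i,\log}(X)$ form a nuclear Fréchet space whose condensification behaves well under the $R\lim$. Concretely, $\rg(X,\underline{\Omega^{\bullet,\log}_X})$ is computed by the complex of condensed $K$-vector spaces $\underline{\Omega^{0,\log}(X)}\to\underline{\Omega^{1,\log}(X)}\to\cdots$, using that higher coherent cohomology of a coherent sheaf on a Stein space vanishes (Kiehl) and that the condensification functor commutes with the relevant limits here (as in \cite[Lemma 5.11, Lemma 5.13]{bosco2023padicproetalecohomologydrinfeld}).

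Next I would compute the cohomology of this complex of condensed abelian groups. The key input is the preceding lemma (the log analogue of \cite[Corollary 3.2]{grosse2004derham}): the differentials $d\colon \Omega^{i-1,\log}(X)\to\Omega^{i,\log}(X)$ have closed image. This means that in the category of topological $K$-vector spaces (Fréchet spaces), the complex $\Omega^{\bullet,\log}(X)$ is \emph{strict}, i.e.\ the maps are strict morphisms of Fréchet spaces. For a strict complex of Fréchet spaces, passing to condensed vector spaces is exact on the relevant pieces: $\underline{\ker d}=\ker(\underline{d})$ automatically, and because the image is closed, $\underline{\operatorname{im} d}=\operatorname{im}(\underline{d})$ is a closed subobject, so the quotient $\underline{\Omega^{i,\log}(X)^{d=0}}/\underline{d\,\Omega^{i-1,\log}(X)}$ agrees with $\underline{\Omega^{i,\log}(X)^{d=0}/d\,\Omega^{i-1,\log}(X)}$ endowed with the quotient topology. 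Formally this is the statement that for a strict short exact sequence of Fréchet spaces the condensification is again exact, which one can cite from the condensed-mathematics toolbox or deduce from the fact that $\underline{(-)}$ is exact on nuclear Fréchet spaces when maps are strict. Combining this with the previous step gives $H^i_{\underline{\operatorname{logdR}}}(X)=\underline{\Omega^{i,\log}(X)^{d=0}}/\underline{d\,\Omega^{i-1,\log}(X)}=\underline{\Omega^{i,\log}(X)^{d=0}/d\,\Omega^{i-1,\log}(X)}=\underline{H^i_{\operatorname{logdR}}(X)}$, which is exactly the claim.

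The main obstacle, and the step that needs the most care, is the interchange of condensification with cohomology of the global-sections complex: a priori $H^i(\underline{C^\bullet})$ need not equal $\underline{H^i(C^\bullet)}$ for an arbitrary complex $C^\bullet$ of topological groups, and indeed the earlier remark in the excerpt warns that condensification does not commute with colimits. The point is precisely that the closed-image lemma removes this difficulty: strictness of the de Rham complex of a Stein space is what forces the two operations to commute. So the proof really is just: (1) identify $\rg_{\underline{\operatorname{logdR}}}(X)$ with the explicit complex $\underline{\Omega^{\bullet,\log}(X)}$ via Stein vanishing; (2) invoke the closed-image lemma to get strictness; (3) apply exactness of $\underline{(-)}$ on strict complexes of Fréchet spaces to pull the cohomology outside. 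Since $\underline{H^i_{\operatorname{logdR}}(X)}$ is by definition $\underline{\Omega^{i,\log}(X)^{d=0}/d\,\Omega^{i-1,\log}(X)}$ with the quotient topology, the chain of equalities closes up and we are done.
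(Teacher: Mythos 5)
Your proposal is correct and follows essentially the same route as the paper: the paper's proof simply observes that $\Omega^{i,\log}(X)^{d=0}$, $d\Omega^{i-1,\log}(X)$ and $H^i_{\operatorname{logdR}}(X)$ are Fréchet (via the closed-image lemma) and then cites \cite[Lemma 5.9]{bosco2023padicproetalecohomologydrinfeld} (condensed Stein vanishing, identifying $\rg_{\underline{\operatorname{logdR}}}(X)$ with the complex $\underline{\Omega^{\bullet,\log}(X)}$) and \cite[Lemma A.33]{bosco2023padicproetalecohomologydrinfeld} (exactness of condensification for quotients of Fréchet spaces by closed subspaces), which are precisely the two steps you spell out.
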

	
	\begin{proof}
		Since $\Omega^{i,\log}(X)^{d=0},d\Omega^{i-1,\log}(X)$ and $H^i_{\operatorname{logdR}}(X)$ are all $L$-Fr\'echet spaces, this proposition follows from \cite[Lemma 5.9]{bosco2023padicproetalecohomologydrinfeld} and \cite[Lemma A.33]{bosco2023padicproetalecohomologydrinfeld} 
	\end{proof}

	\section{Period sheaves} \label{period}
	
	We review the definition of period sheaves for log adic space and their local properties, which follow from \cite{dllz2023logrh}.
	
	Suppose $X$ is an analytic fs log adic space over $\Spa(\Q_p,\Z_p).$
	
	\subsection{Definitions}
	
	We begin with the basic definition.
	
	\begin{defn}
		The following are defined to be sheaves on $X_{\proket}$:
		
		(i) We define $\mathbb A_{\inf}:=W(\widehat{\mathcal{O}}^+_{X^{\flat}_{\proket}})$ and $\mathbb B_{\inf}:=\mathbb A_{\inf}\left[1/p\right],$ where the latter is equipped with a natural map $\theta:\mathbb B_{\inf} \to \widehat{\mathcal{O}}^+_{X_{\proket}}.$
		
		(ii) We define the positive de Rham sheaf $\mathbb B_{\operatorname{dR}}^+:=\lim_{n\in \N}B_{\inf,X}/(\ker \theta)^n,$ with filtration given by $\fil^r \mathbb B_{\operatorname{dR}}^+:=(\ker \theta)^r\mathbb B_{\operatorname{dR}}^+.$
		
		(iii) We define the de Rham sheaf $\mathbb B_{\operatorname{dR}}:=\mathbb B_{\operatorname{dR}}^+\left[1/t\right],$ where $t$ is a generator of $\fil^1 \mathbb B_{\operatorname{dR}}^+.$ The filtration of $\mathbb B_{\operatorname{dR}}$ is given by $\fil^r \mathbb B_{\operatorname{dR}}:=\sum_{j\in \Z,r+j \geq 0}t^{-j}(\ker \theta)^{r+j}\mathbb B_{\operatorname{dR}}^+.$
	\end{defn}
	
	The following proposition shows that Kummer pro-\'etale locally these period sheaves are given by the relative period rings.
	
	\begin{prop} \cite[Proposition 2.2.4]{dllz2023logrh}
		Suppose $U \in X_{\proket}$ is a log affinoid perfectoid object, with associated perfectoid space $\widehat{U}=\Spa(R,R^+),$ then 
		
		(i) For $\f \in \{\widehat{\mathcal O}^+,\widehat{\mathcal O}^{\flat,+},\mathbb A_{\inf}, \mathbb B_{\inf}, \mathbb B_{\operatorname{dR}}^+, \mathbb B_{\operatorname{dR}}\}$, we have $\f({U})=\f (R,R^+).$
		
		(ii) $H^j(U,\mathbb B_{\operatorname{dR}}^+)=0$ and $H^j(U,\mathbb B_{\operatorname{dR}})=0$ for $j>0.$
	\end{prop}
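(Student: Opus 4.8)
I would prove both parts at once by a dévissage climbing the tower of period sheaves, in the order $\widehat{\mathcal{O}}_X^+$, $\widehat{\mathcal{O}}_X^{\flat,+}$, $\mathbb{A}_{\inf}$, $\mathbb{B}_{\inf}$, $\mathbb{B}_{\operatorname{dR}}^+$, $\mathbb{B}_{\operatorname{dR}}$, starting from the foundational acyclicity of the completed structure sheaf on log affinoid perfectoid objects of $X_{\proket}$ proved in \cite{dllz2023logadic}. The input I would take for granted is: log affinoid perfectoid objects form a basis of coherent objects of $X_{\proket}$, so that $\rg(U,-)$ commutes with filtered colimits; and for such $U$ with $\widehat{U}=\Spa(R,R^+)$ one has $\widehat{\mathcal{O}}_X^+(U)=R^+$, $\widehat{\mathcal{O}}_X^{\flat,+}(U)=R^{\flat,+}$, while $H^j(U,\widehat{\mathcal{O}}_X^+/p^n)$ is almost zero for all $j>0$, $n\geq 1$ (and similarly for $\widehat{\mathcal{O}}_X^{\flat,+}$ modulo powers of a pseudo-uniformizer $\varpi^\flat$). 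Inverting $p$ already yields the honest vanishing $H^j(U,\widehat{\mathcal{O}}_X)=0$ for $j>0$ together with $\widehat{\mathcal{O}}_X(U)=R$, which settles the structure-sheaf cases.

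Next I would treat $\mathbb{A}_{\inf}$ and $\mathbb{B}_{\inf}$. As $\widehat{\mathcal{O}}_X^{\flat,+}$ is perfect, $\mathbb{A}_{\inf}/p^n\cong W_n(\widehat{\mathcal{O}}_X^{\flat,+})$ carries the $n$-step Witt filtration with graded pieces $\widehat{\mathcal{O}}_X^{\flat,+}$; feeding in the almost acyclicity of $\widehat{\mathcal{O}}_X^{\flat,+}$ gives $H^j(U,\mathbb{A}_{\inf}/p^n)$ almost zero for $j>0$ and $(\mathbb{A}_{\inf}/p^n)(U)=W_n(R^{\flat,+})$. Passing to $\varprojlim_n$ (transition maps on $H^0$ being surjective, so $R^1\varprojlim$ vanishes) yields $\mathbb{A}_{\inf}(U)=W(R^{\flat,+})=\mathbb{A}_{\inf}(R,R^+)$, with the error term in positive degrees still almost zero. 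Then $\mathbb{B}_{\inf}=\mathbb{A}_{\inf}[1/p]=\colim\big(\mathbb{A}_{\inf}\xrightarrow{\,p\,}\mathbb{A}_{\inf}\xrightarrow{\,p\,}\cdots\big)$ is a filtered colimit, so coherence of $U$ gives $\mathbb{B}_{\inf}(U)=\mathbb{B}_{\inf}(R,R^+)$, and the almost-zero cohomology dies once $p$ is inverted: $H^j(U,\mathbb{B}_{\inf})=0$ for $j>0$.

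For the de Rham sheaves, $\ker\theta\subset\mathbb{B}_{\inf}$ is generated, Kummer pro-étale locally on such $U$, by a non-zero-divisor $\xi$, so $\mathbb{B}_{\inf}/(\ker\theta)^n$ has a finite filtration whose graded pieces $(\ker\theta)^i\mathbb{B}_{\inf}/(\ker\theta)^{i+1}\mathbb{B}_{\inf}$ are isomorphic as abelian sheaves to $\widehat{\mathcal{O}}_X$ (up to a Tate twist, irrelevant for vanishing). Hence $H^j(U,\mathbb{B}_{\inf}/(\ker\theta)^n)=0$ for $j>0$ and the global sections are $B_{\inf}(R,R^+)/(\ker\theta)^n$. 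Because these transition maps are surjective, $\mathbb{B}_{\operatorname{dR}}^+=\varprojlim_n\mathbb{B}_{\inf}/(\ker\theta)^n$ agrees with the derived limit, so $\rg(U,\mathbb{B}_{\operatorname{dR}}^+)=R\varprojlim_n\rg(U,\mathbb{B}_{\inf}/(\ker\theta)^n)$; each term is a discrete group in degree $0$ with surjective transitions, so the $R\varprojlim$ is concentrated in degree $0$ and equals $B_{\operatorname{dR}}^+(R,R^+)$. This proves (i) and (ii) for $\mathbb{B}_{\operatorname{dR}}^+$. Finally $\mathbb{B}_{\operatorname{dR}}=\mathbb{B}_{\operatorname{dR}}^+[1/t]=\colim\big(\mathbb{B}_{\operatorname{dR}}^+\xrightarrow{\,t\,}\mathbb{B}_{\operatorname{dR}}^+\xrightarrow{\,t\,}\cdots\big)$, and one last application of coherence of $U$ transfers both statements to $\mathbb{B}_{\operatorname{dR}}$.

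The dévissage is essentially formal; the step I expect to be the main obstacle is the foundational input itself — the (log) almost purity and almost acyclicity of $\widehat{\mathcal{O}}_X^+$ on log affinoid perfectoid objects and the fact that these form a basis of coherent objects of $X_{\proket}$ — which is the log analogue of Scholze's perfectoid computations and is exactly the content of \cite{dllz2023logadic}. Two minor points of care: the ``almost zero'' error terms must be carried faithfully through the Witt-vector and $\varprojlim$ steps and discarded only when $p$ is inverted; and the interchanges of $\rg(U,-)$ with filtered colimits and with $R\varprojlim$ use, respectively, coherence of $U$ in $X_{\proket}$ and the degreewise surjectivity of the relevant towers. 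Since the statement is \cite[Proposition 2.2.4]{dllz2023logrh}, one may also simply cite it.
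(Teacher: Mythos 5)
The paper itself gives no proof of this proposition: it is quoted verbatim from \cite[Proposition 2.2.4]{dllz2023logrh}, and your dévissage (completed structure sheaf, Witt-vector filtration for $\mathbb A_{\inf}$, the $\ker\theta$-filtration with graded pieces $\widehat{\mathcal O}_X$, then $R\varprojlim$ over $n$ and inverting $t$) is exactly the standard argument, going back to Scholze, by which that reference establishes it — so your proposal is correct and matches the source's approach. One caveat: your aside that $H^j(U,\mathbb B_{\inf})=0$ for $j>0$ ``once $p$ is inverted'' is unjustified, since the almost-zero error for $\mathbb A_{\inf}$ is with respect to $[\mathfrak m^{\flat}]$ rather than $p$, and inverting $p$ does not kill such torsion (accordingly the cited proposition claims honest vanishing only for $\mathbb B_{\operatorname{dR}}^+$ and $\mathbb B_{\operatorname{dR}}$); fortunately your treatment of those two sheaves runs entirely through the $\ker\theta$-filtration and the acyclicity of $\widehat{\mathcal O}_X$, so this claim is never actually used.
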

	
	All these period rings carry a natural condensed sheaf structure defined as follows: given $X$ an analytic log adic space over $\Spa(C,\mathcal{O}_C)$ and $\f \in \{\widehat{\mathcal O}^+,\widehat{\mathcal O}^{\flat,+},\mathbb A_{\inf}, \mathbb B_{\inf}, \mathbb B_{\operatorname{dR}}^+, \mathbb B_{\operatorname{dR}}\}$, denoting $\mathcal{B}$ the basis of $X_{\proket}$ consisting of log affinoid perfectoid objects $U \in X_{\proket},$ then $\f(U)$ for $U \in \mathcal B$ carry a natural structure of topological abelian groups, by giving $\widehat{\mathcal O}^+(U)$ the $p$-adic topology, and endowing all other period sheaves with induced topology, as explained in \cite[Corollary 6.6]{scholze2013p}. We can then associate to $\f$ a condensed abelian sheaf $\underline{\f}$ on $X_{\proket}$, such that $\underline{\f}(U)=\underline{\f(U)}$ for all log affinoid perfectoid objects $U \in \mathcal B.$ 
	
	We pass now to condensed Kummer pro-\'etale cohomology. We begin with the definition of condensed Kummer pro-\'etale cohomology. 
	
	\begin{defn}
		Let $X$ be a analytic log adic space over $\Spa(\Z_p,\Z_p).$ Denote by $*_{\kappa-\proet}$ the site of $\kappa$-small profinite sets, with coverings given by finite families of jointly surjective maps. There is a natural morphism of sites $$f_{\operatorname{cond}}:X_{\proket}\to *_{\kappa-\proet}$$ defined by sending a profinite set $S$ to $X\times S \in X_{\proket}$. Let $\f$ be a sheaf of abelian groups on $X_{\proket},$ the condensed Kummer pro-\'etale cohomology of $\f$ is then defined by $$\rg_{\underline{\proket}}(X,\f):=Rf_{\operatorname{cond},*}\f.$$ Denote by $H^i_{\underline{\proket}}(X,\f):=R^if_{\operatorname{cond},*}\f$ its $i$-th cohomology group valued in $\operatorname{CondAb}.$
	\end{defn}
	
	\begin{rem}
		When $X$ is an analytic log adic space over $\Spa(C,\mathcal{O}_C)$, denote by $f:X\to \Spa(C,\mathcal{O}_C)$ the structure map, then the above definition coincidence with the derived pushforward of $f$, i.e. we have $\rg_{\underline{\proket}}(X,\f)=Rf_{*}\f.$
	\end{rem}
	
	\begin{prop} \cite[Corollary 6.6]{scholze2013p}
		Let $X$ be an analytic log adic space over $\Spa(C,\mathcal{O}_C)$ and $\f \in \{\widehat{\mathcal O}^+,\widehat{\mathcal O}^{\flat,+},\mathbb A_{\inf}, \mathbb B_{\inf},$ $\mathbb B_{\operatorname{dR}}^+, \mathbb B_{\operatorname{dR}}\}$. For any log affinoid perfectoid object $U \in \mathcal B$ and profinite set $S \in *_{\kappa-\proet}$, we have $$\f(U\times S)=\mathscr C(S,\f(U)).$$ In particular, we have $$H^i_{\underline{\proket}}(U,\f)=H^i_{{\proket}}(U,\underline{\f})$$ for all $i \geq 0.$
	\end{prop}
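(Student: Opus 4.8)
The plan is to establish the sectionwise identity $\f(U\times S)=\mathscr{C}(S,\f(U))$ by reducing it to the case $\f=\widehat{\mathcal O}^+$, and then to deduce the ``in particular'' formally from the acyclicity of log affinoid perfectoid objects. First I would observe that $U\times S$ is again a log affinoid perfectoid object: writing $S=\varprojlim_iS_i$ as a cofiltered limit of finite discrete sets, we have $U\times S=\varprojlim_i(U\times S_i)$ in $X_{\proket}$, where each $U\times S_i=\bigsqcup_{S_i}U$ is a finite disjoint union of copies of $U$. A finite disjoint union of log affinoid perfectoid objects is log affinoid perfectoid (since $\Spa(R_1,R_1^+)\sqcup\Spa(R_2,R_2^+)=\Spa(R_1\times R_2,R_1^+\times R_2^+)$ with $R_1\times R_2$ perfectoid), and so is a cofiltered limit of such along this diagram. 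Hence \cite[Proposition~2.2.4]{dllz2023logrh} applies to $U\times S$, so for every $\f$ in the list $\f(U\times S)$ is computed by the relative period ring of the perfectoid affinoid $\widehat{U\times S}$, whose structure ring $\widehat{\mathcal O}^+(U\times S)$ is the $p$-adic completion of $\varinjlim_i\widehat{\mathcal O}^+(U\times S_i)=\varinjlim_i\mathrm{Maps}(S_i,\widehat{\mathcal O}^+(U))$, i.e. of the locally constant functions $S\to\widehat{\mathcal O}^+(U)$.

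For $\f=\widehat{\mathcal O}^+$ I would then argue directly: since $\widehat{\mathcal O}^+(U)$ is $p$-adically complete and bounded and $S$ is profinite, the locally constant functions $S\to\widehat{\mathcal O}^+(U)$ are $p$-adically dense in the continuous ones, which themselves form a $p$-adically complete group, so the completion above is precisely $\mathscr{C}(S,\widehat{\mathcal O}^+(U))$. To pass to the remaining sheaves one checks that, for $S$ profinite, $\mathscr{C}(S,-)$ commutes with the operations from which they are assembled: finite products, filtered limits, $p$- and $\xi$-adic completion, inverting $p$ or $t$ (a continuous function on the compact $S$ has bounded image, so it lands in $p^{-n}$, resp. $t^{-n}$, times the relevant integral subring), Witt vectors ($W=\varprojlim_nW_n$, with $W_n$ built from finitely many copies by continuous Witt operations), and the formation of $\mathbb B_{\inf}/(\ker\theta)^n=\mathbb B^+_{\operatorname{dR}}/\fil^n\mathbb B^+_{\operatorname{dR}}$ (a successive extension of copies of $\widehat{\mathcal O}$) followed by $\varprojlim_n$. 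This gives, in turn, $\widehat{\mathcal O}(U\times S)=\mathscr{C}(S,\widehat{\mathcal O}(U))$, then $\widehat{\mathcal O}^{\flat,+}(U\times S)=\mathscr{C}(S,\widehat{\mathcal O}^{\flat,+}(U))$ (using $\widehat{\mathcal O}^{\flat,+}=\varprojlim_{x\mapsto x^p}\widehat{\mathcal O}^+/p$ and that $\widehat{\mathcal O}^+(U)/p$ is discrete), and then $\mathbb A_{\inf}$, $\mathbb B_{\inf}$, $\mathbb B^+_{\operatorname{dR}}$, and finally $\mathbb B_{\operatorname{dR}}=\mathbb B^+_{\operatorname{dR}}[1/t]$. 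This is the log analogue of \cite[Corollary~6.6]{scholze2013p}, and the log structure plays no role, since for the period sheaves log affinoid perfectoid objects behave exactly like affinoid perfectoids.

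For the cohomological statement, the identity just proved says that the presheaf $S\mapsto\f(U\times S)$ on $*_{\kappa-\proet}$ is the condensed abelian group $\underline{\f(U)}$, which by the construction of the condensed period sheaves in Section~\ref{period} equals $\underline{\f}(U)$; being already a sheaf, it is $f_{\operatorname{cond},*}\f$, so $H^0_{\underline{\proket}}(U,\f)=\underline{\f}(U)=H^0_{\proket}(U,\underline{\f})$. For $i>0$, $R^if_{\operatorname{cond},*}\f$ is the sheafification of $S\mapsto H^i(U\times S,\f)$, which vanishes — up to the usual almost-zero caveat for the integral sheaves $\widehat{\mathcal O}^+,\widehat{\mathcal O}^{\flat,+},\mathbb A_{\inf}$ — because $U\times S$ is log affinoid perfectoid, by \cite[Proposition~2.2.4]{dllz2023logrh}(ii) and its analogues; the same vanishing over the basis $\mathcal B$ makes $U$ acyclic for $\underline{\f}$, so that $H^i_{\proket}(U,\underline{\f})=0$ for $i>0$ as well. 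Combining the two computations gives $H^i_{\underline{\proket}}(U,\f)=H^i_{\proket}(U,\underline{\f})$ for all $i\ge 0$.

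The only step with genuine content is the bootstrapping above: one must verify that $p$- and $\xi$-adic completion commute with $\mathscr{C}(S,-)$ and that continuous functions from the profinite set $S$ lift along the open surjections $\widehat{\mathcal O}^+(U)\to\widehat{\mathcal O}^+(U)/p$ and $\mathbb B^+_{\operatorname{dR}}(U)/\fil^{n+1}\mathbb B^+_{\operatorname{dR}}(U)\to\mathbb B^+_{\operatorname{dR}}(U)/\fil^n\mathbb B^+_{\operatorname{dR}}(U)$ — equivalently, that $\mathscr{C}(S,-)$ is exact on the short exact sequences involved. As in \cite[Corollary~6.6]{scholze2013p}, this is done by successive approximation along the finite quotients $S_i$ of $S$, and that argument transfers without change.
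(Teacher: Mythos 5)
The paper gives no argument of its own for this proposition: it is quoted from \cite[Corollary 6.6]{scholze2013p} together with \cite[Proposition 2.2.4]{dllz2023logrh}, so there is nothing to compare line by line. Your first part, the identity $\f(U\times S)=\mathscr C(S,\f(U))$, is correct and is essentially the standard argument behind those citations: $U\times S=\varprojlim_i(U\times S_i)$ is again log affinoid perfectoid, the case $\f=\widehat{\mathcal O}^+$ follows from the description of the completed structure sheaf and density of locally constant functions, and the remaining sheaves are handled by checking that $\mathscr C(S,-)$ commutes with finite products, limits, completions, Witt vectors, and inverting $p$ or $t$ (using compactness of $S$ and successive approximation along the finite quotients $S_i$).

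The cohomological statement, however, is where your argument has a genuine gap. First, for the integral sheaves $\widehat{\mathcal O}^+$, $\widehat{\mathcal O}^{\flat,+}$, $\mathbb A_{\inf}$ the groups $H^i_{\proket}(U\times S,\f)$, $i>0$, are only almost zero; you flag this caveat, but then the conclusion ``both sides vanish, hence they agree'' is simply not available for these three sheaves (and sheafifying an almost-zero presheaf need not give zero), so for half of the list the equality is not established. Second, even for $\mathbb B_{\operatorname{dR}}^+$ and $\mathbb B_{\operatorname{dR}}$, the step ``the same vanishing over the basis $\mathcal B$ makes $U$ acyclic for $\underline{\f}$'' is a logical leap: vanishing of the abelian-sheaf cohomology $H^i_{\proket}(V\times S,\f)$ does not formally imply vanishing of the condensed-sheaf cohomology $H^i_{\proket}(V,\underline{\f})$ --- relating these two derived functors is exactly what the proposition asserts, so as written the argument is circular at this point. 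The way to convert your sections formula into the comparison is to observe that it identifies $\underline{\f}$ with $\mu_*\f$ for the morphism of sites $\mu:X_{\proket}\to X_{\proket,\cond}$ used later in the paper (equivalently, to evaluate everything on extremally disconnected $S$, for which every cover in $*_{\kappa-\proet}$ splits); then $H^i_{\underline{\proket}}(U,\f)=H^i_{\proket}(U,\underline{\f})$ follows once $R^{>0}\mu_*\f=0$, and this higher direct image is the sheafification of $(V,S)\mapsto H^{>0}_{\proket}(V\times S,\f)$, i.e.\ it is killed precisely by the honest (not almost) vanishing on the basis. Spelling this out repairs the argument for $\widehat{\mathcal O}$, $\mathbb B_{\operatorname{dR}}^+$, $\mathbb B_{\operatorname{dR}}$; for the integral sheaves you must either prove honest vanishing (false in general) or accept that the displayed equality only holds in the almost sense, which is how the corresponding statements are formulated in \cite{scholze2013p}.
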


	Now let $X$ be a locally noetherian fs log adic space over $\Spa(K,\mathcal O_K)$. We recall the definition of $\mathcal O\mathbb B_{\operatorname{dR,log}},$ a log version of the geometric de Rham period sheaves $\mathcal O\mathbb B_{\operatorname{dR}}.$
	
	Let $U=\lim_{i\in I}U_i \in X_{\proket}$ be a log affinoid perfectoid object, with $U_i=(\Spa(R_i,R_i^+),\M_i,\alpha_i).$ For each $i\in I,$ and with associated perfectoid space $\widehat{U}=\Spa(R_{\infty},R_{\infty}^+),$ where $(R_{\infty},R^+_{\infty})$ is the $p$-adic completion of $\lim_{i\in I}(R_i,R_i^+),$ which is perfectoid. For each $i \in I$, write $M_i:=\M_i(U_i).$ By \cite[Theorem 5.4.3]{dllz2023logadic}, $(\widehat{O}(U),\widehat{O}^+(U))=(R_{\infty},R^+_{\infty})$, and the tilt of $(\widehat{O}(U),\widehat{O}^+(U))$ is\ $(R^{\flat +}_{\infty},R^{\flat}_{\infty})$. Define $M:=\M_{X_{\proket}}(U)=\lim_{i\in I}M_i,$ and $M^{\flat}:=\M_{X^{\flat}_{\proket}}(U)=\lim_{a\mapsto a^p}M.$ 
	
	\begin{defn}
		(i) The period sheaf $\mathcal O\mathbb B_{\operatorname{dR,log}}^+$ is defined to be the sheaf associated to the presheaf $$U \mapsto \varinjlim_{i\in I}\left(R_i^+\widehat{\otimes}_{W(k)}\mathbb A_{\operatorname{inf}}(U)\right)\left[1/p\right]\left[\cfrac{\alpha_i(m_i)}{[\alpha^{\flat}(m^{\flat})]},(m_i,m_i^{\flat})\in M_i \times_MM^{\flat}\right]^{\wedge_{\ker\theta_{\log}}}.$$
		Here $\widehat{\otimes}$ is the $p$-adic completion of the tensor product, $[\alpha^{\flat}(m^{\flat})]$ is obtained by the multiplicative map $$R^{+\flat}\to W(R^{+\flat})[1/p]/\xi^r:f \mapsto [f]$$ induced by $R^{+\flat}\to W(R^{+\flat})$, for each $r \geq 1$, and $$\theta_{\log}:\left(R_i^+\widehat{\otimes}_{W(k)}\mathbb A_{\operatorname{inf}}(U)\right)\left[1/p\right]\left[\cfrac{\alpha_i(m_i)}{[\alpha^{\flat}(m^{\flat})]},(m_i,m_i^{\flat})\in M_i \times_MM^{\flat}\right]\to R_{\infty}$$
		is induced by the map $R_i^+\to R^+_{\infty}$, $\theta:A_{\operatorname{inf}}(U)\to R^+_{\infty}$ and $\theta_{\log}\left(\dfrac{\alpha_i(m_i)}{[\alpha^{\flat}(m^{\flat})]}\right)=1$ for any $(m_i,m_i^{\flat})\in M_i \times_MM^{\flat}$. ``$\wedge_{\ker\theta_{\log}}$" means the completion with respect to the $\ker\theta_{\log}$-adic topology. We equip $\mathcal O\mathbb B_{\operatorname{dR,log}}^+$ with the filtration $\fil^r\mathcal O\mathbb B_{\operatorname{dR,log}}^+:=(\ker \theta_{\log})^r\mathcal O\mathbb B_{\operatorname{dR,log}}^+.$
		
		(ii) $\mathcal O\mathbb B_{\operatorname{dR,log}}$ is the completion of the sheaf $\mathcal O\mathbb B_{\operatorname{dR,log}}^+\left[1/t\right]$  with respect to the filtration defined by $\fil^r\mathcal O\mathbb B_{\operatorname{dR,log}}^+\left[1/t\right]:=\sum_{j\in\Z,r+j\geq 0}t^{-j}(\ker \theta_{\log})^{r+j}\mathcal O\mathbb B_{\operatorname{dR,log}}^+.$
	\end{defn}
	
	Similar to $\mathcal O\mathbb B_{\operatorname{dR}}^+,$ one can equip $\mathcal O\mathbb B_{\operatorname{dR,log}}^+$ with a natural log connection $$\nabla:\mathcal O\mathbb B_{\operatorname{dR,log}}^+ \to \mathcal O\mathbb B_{\operatorname{dR,log}}^+ \otimes_{\mathcal O_{X_{\proket}}}\Omega_X^{\log}$$ as follows (By abuse of notation, we still denote by $\Omega_X^{\log}$ the pullback of log differential sheaf $\Omega_X^{\log}$ to $X_{\proket}$). We may write $\mathcal O\mathbb B_{\operatorname{dR,log}}^+(U)$ as $$\mathcal O\mathbb B_{\operatorname{dR,log}}^+(U)=\varinjlim_{i\in I}\widehat{S_i}=\varinjlim_{i\in I}\varprojlim_{r,s}(S_{i,r}/(\ker \theta_{\log})^s).$$ Here $$S_{i,r}=\left(R_i^+\widehat{\otimes}_{W(k)}(\mathbb A_{\operatorname{inf}}(U))/\xi^r\right)\left[1/p\right]\left[\cfrac{\alpha_i(m_i)}{[\alpha^{\flat}(m^{\flat})]},(m_i,m_i^{\flat})\in M_i \times_MM^{\flat}\right],$$ and $\widehat{S_i}:=\varprojlim_{r,s}(S_{i,r}/(\ker \theta_{\log})^s).$ Then there is a unique $\mathbb{B_{\operatorname{dR}}}^+(U)/\xi^r$-linear log connection $$\nabla_{i,r}:S_{i,r}\to S_{i,r}\otimes_{R_i}\Omega_X^{\log}(U_i)$$ extending $d:R_i \to \Omega_X^{\log}(U_i)$ and $\delta:M_i \to \Omega_X^{\log}(U_i)$ such that $$\nabla_{i,r}\left(\dfrac{\alpha_i(m_i)}{[\alpha^{\flat}(m^{\flat})]}\right)=\dfrac{\alpha_i(m_i)}{[\alpha^{\flat}(m^{\flat})]}\delta(a)$$ for any $(m_i,m_i^{\flat})\in M_i \times_MM^{\flat}.$ Then the definition of $\nabla_{i,r}$ gives $$\nabla_{i,r}\left((\ker \theta_{\log})^s\right) \subset (\ker \theta_{\log})^{s-1}\otimes_{R_i}\Omega_X^{\log}(U_i) $$ for all $s \geq 1.$ Then the $\mathbb B^+_{\operatorname{dR}}$-linear log connection $$\nabla:\mathcal O\mathbb B_{\operatorname{dR,log}}^+ \to \mathcal O\mathbb B_{\operatorname{dR,log}}^+ \otimes_{\mathcal O_{X_{\proket}}}\Omega_X^{\log}$$ is obtained by taking $$\nabla:=\varinjlim_{i\in I}\varprojlim_r\left(\nabla_{i,r}^{\wedge_{\ker\theta_{\log}}}\right).$$ By inverting $t$, $\nabla$ further extends to a $\mathbb{B_{\operatorname{dR}}}$-linear log connection $$\nabla:\mathcal O\mathbb B_{\operatorname{dR,log}} \to \mathcal O\mathbb B_{\operatorname{dR,log}} \otimes_{\mathcal O_{X_{\proket}}}\Omega_X^{\log},$$ satisfying $$\nabla\left(\fil^rO\mathbb B_{\operatorname{dR,log}}\right)\subset \left(\fil^{r-1}O\mathbb B_{\operatorname{dR,log}}\right)\otimes_{\mathcal O_{X_{\proket}}}\Omega_X^{\log}$$ for all $r\in \Z.$
	
	\subsection{Local study of $\mathbb B_{\operatorname{dR}}$ and $\mathcal{O}\mathbb B_{\operatorname{dR,log}}$}
	
	We briefly discuss the local properties of $\mathbb B_{\operatorname{dR}}$ and $\mathcal{O}\mathbb B_{\operatorname{dR,log}}$, as studied in \cite[2.3]{dllz2023logrh}. Let $P$ be a toric monoid (i.e. a fs sharp monoid). Denote by $$\mathbb E:=\Spa(K\langle P\rangle,\mathcal O_K\langle P\rangle)$$ with the log-structure induced by the natural map $P \to K\langle P\rangle$. $\mathbb E$ admits a pro-Kummer \'etale cover as follows. For each $m \in \Z_{>0},$ let $\dfrac{1}{m}P$ be the toric monoid such that $P \hookrightarrow \dfrac{1}{m}P$ can be identified with the multiple map $[m]:P\to P.$ Let $P_{\Q_{\geq 0}}:=\lim_m \left(\dfrac{1}{m}P\right)$ and $P_{\Q}^{\operatorname{gp}}:=\left(P_{\Q_{\geq 0}}\right)^{\operatorname{gp}}\simeq P^{\operatorname{gp}}\otimes \Q.$ Denote by $$\mathbb E_{m,C}:=\Spa\left(K\left\langle\dfrac{1}{m}P\right\rangle,\mathcal O_K\left\langle\dfrac{1}{m}P\right\rangle\right)$$ equipped with the log-structure induced by $\dfrac{1}{m}P \to K\langle\dfrac{1}{m}P\rangle.$ The morphism $\mathbb E_m \to \mathbb E$ is a finite Kummer \'etale cover with Galois group $$\Gamma_m=\operatorname{Hom}(P^{\operatorname{gp}},\boldsymbol{\mu}_m),$$ where $\boldsymbol{\mu}_m$ is the $m$-th roots of unity in $C$. Denote the log affinoid perfectoid object by $$\widetilde{\mathbb E}_{C}:=\lim_m \mathbb E_{m,C} \in \mathbb E_{C,\proket},$$ where for $m|m'$, the transition maps are induced by $\dfrac{1}{m}P \hookrightarrow \dfrac{1}{m'}P.$ Then we have the associated perfectoid space $$\widehat{\widetilde{\mathbb E}}_C:=\Spa(C\langle P_{\Q_{\geq 0}}\rangle,\mathcal O_C\langle P_{\Q_{\geq 0}}\rangle).$$ The morphism $\widehat{\widetilde{\mathbb E}}_C \to \mathbb E_C$ is a Galois pro-finite Kummer \'etale cover with Galois group $$\Gamma=\operatorname{Hom}(P^{\operatorname{gp}}_{\Q}/P^{\operatorname{gp}},\boldsymbol{\mu}_{\infty}),$$ where $\boldsymbol{\mu}_{\infty}:=\cup_{m\in \N} \boldsymbol{\mu}_{m}.$ The natural action of the Galois group $\Gamma$ on $\mathcal O_C\langle P_{\Q_{\geq 0}}\rangle$ is given by $$\gamma(T^a)=\gamma(a)T^a$$ for all $\gamma \in \Gamma$ and $a \in P_{\Q_{\geq 0}},$ where $T^a$ is the corresponding element of $a$ in $\mathcal O_C\langle P_{\Q_{\geq 0}}\rangle$.
	
	Suppose $X=\Spa(R,R^+)$ is an affinoid fs log adic space of finite type over $\Spa(K,\mathcal O_K)$, with a strictly \'etale morphism $X \to \mathbb E$, which can be written as a composite of rational embedding and finite étale maps. By pulling back to $X$ we get $$\widetilde{X}_C:=X_C\times_{\mathbb E_C}\widehat{\widetilde{\mathbb E}}_C=\Spa(R_{\infty},R^+_{\infty}),$$ which is  a Galois pro-finite Kummer \'etale cover with Galois group $\Gamma$.
	
	Let $\mathbb{B_{\operatorname{dR}}}|_{\widetilde{X}}[[P]]$ be the sheaf of monoid algebras. For $a\in P,$ denote by $e^a$ the image of $a$ in $\mathbb{B_{\operatorname{dR}}}|_{\widetilde{X}}[[P]]$ via the natural morphism $P \to \mathbb{B_{\operatorname{dR}}}|_{\widetilde{X}}[[P]].$ Let $\mathfrak m \subset \mathbb{B_{\operatorname{dR}}}|_{\widetilde{X}}[[P]]$ be the sheaf of ideas generated by $\{e^a-1\}_{a\in P},$ and denote by $$\mathbb{B_{\operatorname{dR}}}|_{\widetilde{X}}[[P-1]]:=\varprojlim_r(\mathbb{B_{\operatorname{dR}}}|_{\widetilde{X}}[[P]]/\mathfrak m^r).$$
	Consider the monoid homomorphism (with respect to the additive structure on $\mathbb{B_{\operatorname{dR}}}|_{\widetilde{X}}[[P-1]]$) $$P \to \mathbb{B_{\operatorname{dR}}} |_{\widetilde{X}}[[P-1]] :a\mapsto \log(e^a) :=\sum_{l=1}^{\infty}(-1)^{l-1}\dfrac{1}{l}(e^a-1)^l,$$ which uniquely extends to a group homomorphism $$P^{\operatorname{gp}} \to \mathbb{B_{\operatorname{dR}}} |_{\widetilde{X}}[[P-1]] :a\mapsto y_a:=\log(e^{a^+})-\log(e^{a^-}),$$ where $a=a^+-a^-$ for $a^+,a^- \in P.$ Choose a $\Z$-basis $\{a_1,...,a_n\}$ of $P^{\operatorname{gp}}$, and for each $j=1,2,...,n,$ denote by $y_j:=y_{a_j},$ then we have a canonical isomorphism of $\mathbb{B_{\operatorname{dR}}}|_{\widetilde{X}}$-algebras: $$\mathbb{B_{\operatorname{dR}}}|_{\widetilde{X}}[[y_1,...,y_n]]\xrightarrow{\simeq}\mathbb{B_{\operatorname{dR}}}|_{\widetilde{X}}[[P-1]]:y_j \mapsto y_j:=y_{a_j},$$ matching the ideals $(y_1,..,y_n)^r$ and $(\xi,y_1,..,y_n)^r$ of the source with the ideals $\mathfrak{m}^r$ and $(\xi,\mathfrak{m})^r$ of the target, respectively, for all $r \in \Z.$
	
	Now, similar to \cite[Proposition 6.10]{scholze2013p}, we can give a local description of $\mathcal{O}\mathbb B_{\operatorname{dR,log}}^+.$ Recall that we denote by $U=\varprojlim_{i\in I}U_i \in X_{\proket}/\widetilde{X}$ a log affinoid perfectoid object, with $U_i=(\Spa(R_i,R_i^+),\M_i,\alpha_i).$ Consider the map (note that the sheaf $\mathcal M^{\flat}|_{\widetilde{X}}$ is generated by $P_{\Q_{\geq 0}}$ and $\varprojlim_{f\mapsto f^p}\mathcal O^{\times}_{X_{\proket}}|_{\widetilde{X}}$, therefore $\alpha^{\flat}(a)$ makes sense) $$(\mathbb B_{\operatorname{dR}}^+(U)/\xi^r)[P]\to S_{i,r}:e^a \mapsto \dfrac{\alpha(a)}{[\alpha^{\flat}(a)]}, \text{ for all } a \in P,$$ which sends $(\xi,\mathfrak{m})$ to $\ker(\theta_{\log}),$ so this map induces a map $\mathbb{B_{\operatorname{dR}}}|_{\widetilde{X}}[[P-1]] \to \widehat{S_i}.$ After taking completion and sheafification, we obtain a map
	\begin{equation} \label{obdrlocal}
		\mathbb B^{+}_{\operatorname{dR}}|_{\widetilde{X}}[[P-1]] \to \mathcal O\mathbb B_{\operatorname{dR,log}}^+|_{\widetilde{X}}
	\end{equation}
	on $X_{\proket}/\widetilde{X}.$ The map is compatible with filtrations on both sides, where the filtration on $\mathbb B^{+}_{\operatorname{dR}}|_{\widetilde{X}}[[P-1]]$ is given by $\fil^r \mathbb B^{+}_{\operatorname{dR}}|_{\widetilde{X}}[[P-1]]:=(\xi,\mathfrak{m})^r\mathbb B^{+}_{\operatorname{dR}}|_{\widetilde{X}}[[P-1]]$ for all $r \in\Z.$ We have the following proposition.
	
	\begin{prop}\cite[Proposition 2.3.15]{dllz2023logadic} \label{obdrlocalprop}
		The map (\ref{obdrlocal}) is an isomorphism of filtered sheaves.
	\end{prop}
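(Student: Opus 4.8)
The plan is to carry over the proof of the non-logarithmic statement \cite[Proposition 6.10]{scholze2013p} to the present setting, the new geometric input being the toric chart $P$. Since both sides of (\ref{obdrlocal}) are sheaves on $X_{\proket}/\widetilde X$, it suffices to check that the map is an isomorphism on sections over the objects $U$ of the basis $\mathcal B$ of log affinoid perfectoid objects; by $\f(U\times S)=\mathscr C(S,\f(U))$ for such $\f$ one may then disregard the profinite-set directions, and by \cite[Proposition 2.2.4]{dllz2023logrh} the value $\mathbb B^{+}_{\operatorname{dR}}|_{\widetilde X}(U)$ is the relative period ring $\mathbb B^{+}_{\operatorname{dR}}(R_\infty,R_\infty^+)$. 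Moreover, because $X\to\mathbb E$ is strictly \'etale and both $\mathcal O\mathbb B_{\operatorname{dR,log}}^+$ and $\mathbb B^{+}_{\operatorname{dR}}|_{\widetilde X}[[P-1]]$ are constructed only from the chart $P$ and the structure sheaf, and a strictly \'etale morphism interacts well with the completed tensor products and $\ker\theta_{\log}$-adic completions entering the definition of $\mathcal O\mathbb B_{\operatorname{dR,log}}^+$ (exactly as in \cite[Proposition 6.10]{scholze2013p}), a base-change argument reduces the statement to $X=\mathbb E$, $\widetilde X=\widehat{\widetilde{\mathbb E}}_C$.

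Next I would unwind the definitions for $\mathbb E$. Taking $U=\varprojlim_m\mathbb E_{m,C}$, one has $R_m^+=\mathcal O_C\langle\tfrac1m P\rangle$, and for $a\in\tfrac1m P$ the element $\tfrac{\alpha(a)}{[\alpha^{\flat}(a)]}$ adjoined in $S_{m,r}$ is $u_a:=T^a/[T^a]$, with $\alpha^{\flat}(a)$ given by the compatible roots $T^{a/p^j}$. These satisfy $u_au_b=u_{a+b}$, $u_{ka}=u_a^k$ and $\theta_{\log}(u_a)=1$, while the identity $T^a=[T^a]\,u_a$ exhibits $R_m^+[1/p]$, in the $\ker\theta_{\log}$-adic topology, as topologically generated over $\mathbb A_{\operatorname{inf}}(U)[1/p]$ by the $u_a$'s. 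Feeding this into $S_{m,r}=\big(R_m^+\widehat{\otimes}_{W(k)}\mathbb A_{\operatorname{inf}}(U)/\xi^r\big)[1/p]\big[u_a:a\in\tfrac1m P\big]$, completing along $\ker\theta_{\log}$, and passing to $\varinjlim_m$, I would identify $\mathcal O\mathbb B_{\operatorname{dR,log}}^+(U)$ with the $(\xi,\mathfrak m)$-adic completion of the monoid algebra $\mathbb B^{+}_{\operatorname{dR}}(U)[\tfrac1m P]$; the identification of the coefficient ring with $\mathbb B^{+}_{\operatorname{dR}}(U)$ is precisely the computation of the non-logarithmic sheaf $\mathcal O\mathbb B^+_{\operatorname{dR}}$ on the perfectoid cover, which collapses to $\mathbb B^+_{\operatorname{dR}}$ in the directions not seen by $\Omega^{\log}$, as in \cite[Proposition 6.10]{scholze2013p}. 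The colimit over $m$ contributes nothing new: $u_{a/m}=\big(1+(u_a-1)\big)^{1/m}$ is already present after completion, the binomial series converging $(\xi,\mathfrak m)$-adically since we are in characteristic $0$. Choosing a $\Z$-basis $a_1,\dots,a_n$ of $P^{\operatorname{gp}}$ and writing $y_j=\log u_{a_j}$, this gives $\mathcal O\mathbb B_{\operatorname{dR,log}}^+|_{\widetilde X}\simeq \mathbb B^{+}_{\operatorname{dR}}|_{\widetilde X}[[y_1,\dots,y_n]]=\mathbb B^{+}_{\operatorname{dR}}|_{\widetilde X}[[P-1]]$, compatibly with (\ref{obdrlocal}). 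The filtered compatibility then follows: the map carries $\xi$ and each $e^a-1$ into $\ker\theta_{\log}$, and under the above identification $\ker\theta_{\log}$ is exactly the ideal generated by $\xi$ and $u_{a_1}-1,\dots,u_{a_n}-1$, so $(\xi,\mathfrak m)^r$ corresponds to $(\ker\theta_{\log})^r$.

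The hard part will be the identification of $\mathcal O\mathbb B_{\operatorname{dR,log}}^+|_{\widetilde X}$ with an honest power-series ring over $\mathbb B^{+}_{\operatorname{dR}}|_{\widetilde X}$, rather than a proper quotient — equivalently, the injectivity of (\ref{obdrlocal}). The cleanest way to handle this is to note that (\ref{obdrlocal}) is a filtered map and to verify instead that it is an isomorphism on associated gradeds: modulo $\xi$ and $\mathfrak m$ the assertion reduces to an explicit computation of $\mathcal O^+(U)$ over the perfectoid cover $\widehat{\widetilde{\mathbb E}}_C=\Spa(C\langle P_{\Q_{\geq 0}}\rangle,\mathcal O_C\langle P_{\Q_{\geq 0}}\rangle)$, which is the logarithmic analogue of, and is proved just as, the corresponding step in \cite[Proposition 6.10]{scholze2013p} and \cite{dllz2023logrh}; the only new feature is that the ``coordinate'' directions are now indexed by $P^{\operatorname{gp}}$ through the assignment $e^a\mapsto\log(e^a)$.
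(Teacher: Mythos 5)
First, note that the paper does not reprove this statement at all: it is quoted directly from \cite[Proposition 2.3.15]{dllz2023logrh} (cited in the text as \cite{dllz2023logadic}), so the relevant comparison is with that proof, which follows Scholze's \cite[Proposition 6.10]{scholze2013p}: one constructs an explicit inverse to (\ref{obdrlocal}) by using that $X\to\mathbb E$ is a composite of rational embeddings and finite \'etale maps, hence each $R_i$ is \'etale over $K\langle P\rangle$, and lifting the map $K\langle P\rangle\to \mathbb B^+_{\operatorname{dR}}(U)[[P-1]]$, $T^a\mapsto [\alpha^{\flat}(a)]e^a$, uniquely along this \'etale map into the $(\xi,\mathfrak m)$-adically complete target, compatibly with $R_i\to R_\infty$ on the quotient. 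Your proposal never constructs this inverse, and the two places where you avoid it are exactly where the gaps are.

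Concretely: (1) the claimed ``base-change reduction to $X=\mathbb E$'' is unjustified. The presheaf defining $\mathcal O\mathbb B^+_{\operatorname{dR,log}}(U)$ is built from the rings $R_i^+$ of the intermediate objects $U_i\in X_{\proket}$, which are \'etale over $X$ and not ``constructed only from the chart''; the assertion that a strictly \'etale morphism ``interacts well'' with the completed tensor products and the $\ker\theta_{\log}$-adic completion is essentially equivalent to the proposition itself (it is precisely the \'etale-invariance that the lifting argument establishes), so the reduction is circular. (2) The plan for injectivity via associated gradeds is also circular: beyond degree $0$, the graded pieces of the $\ker\theta_{\log}$-adic filtration on $\mathcal O\mathbb B^+_{\operatorname{dR,log}}$ are not known independently --- that computation is Corollary \ref{obdrlocalcor}, which is \emph{deduced from} the proposition; checking the iso on $\gr^0$ (i.e.\ computing $\widehat{\mathcal O}(U)$ on the perfectoid cover) only gives surjectivity-type information, and one would in addition need completeness of the colimit-of-completions $\varinjlim_i\widehat{S_i}$ to conclude from a graded isomorphism. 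Finally, the intermediate claim that $R_m^+[1/p]$ is ``topologically generated over $\mathbb A_{\operatorname{inf}}(U)[1/p]$ by the $u_a$'s'' in the $\ker\theta_{\log}$-adic topology is asserted, not proved; the interplay between the $p$-adic completion in $R_i^+\widehat\otimes_{W(k)}\mathbb A_{\operatorname{inf}}(U)$ and the subsequent $\ker\theta_{\log}$-adic completion is exactly the delicate point that the \'etale-lifting construction of the inverse is designed to bypass. To repair the argument you should drop the reduction to $\mathbb E$ and the $\gr$-argument, and instead build the inverse map on each $\widehat{S_i}$ by the formal-\'etale lifting described above, as in \cite[Proposition 6.10]{scholze2013p} and \cite[Proposition 2.3.15]{dllz2023logrh}.
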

	
	\begin{rem}
		In fact, for $U=\varprojlim_{i\in I}U_i$ as above, the natural map $\widehat{S_i}\to \mathcal O\mathbb B_{\operatorname{dR,log}}^+(U)$ is already an isomorphism.
	\end{rem}
	
	\begin{rem} \label{obdrconn}
		We can also describe the log connection of $\mathcal O\mathbb B_{\operatorname{dR,log}}^+$ using this isomorphism. In fact, choose $\Z$-basis $\{a_1,...,a_n\}$ of $P^{\operatorname{gp}}$, write $a_j=a_j^+-a_j^-$ for $j=1,2,...,n,$ then $y_j=b_j^+-b_j^-:=\log\left(e^{a^+_j}\right)-\log\left(e^{a^-_j}\right),$ and the isomorphism sends $y_j$ to $\dfrac{\alpha(b_j^+)}{[\alpha^{\flat}(b_j^+)]}-\dfrac{\alpha(b_j^-)}{[\alpha^{\flat}(b_j^-)]}.$ Therefore, $$\nabla(y_j)=\nabla\left(\dfrac{\alpha(b_j^+)}{[\alpha^{\flat}(b_j^+)]}\right)-\nabla\left(\dfrac{\alpha(b_j^-)}{[\alpha^{\flat}(b_j^-)]}\right)=\delta(a_j^+)-\delta(a_j^-)=\delta(a_j).$$ We note that, over $\widetilde{X},$ the log connection of $\mathcal O\mathbb B_{\operatorname{dR,log}}^+$ is compatible with the canonical one of the polynomial algebra $\mathbb{B_{\operatorname{dR}}}|_{\widetilde{X}}[[y_1,...,y_n]]$, since $\Omega_X^{\log} \simeq \bigoplus_{j=1}^n\mathcal O_X\delta(a_j)$ by \cite[Theorem 3.3.17, Corollary 3.3.18, Proposition 3.2.25, and Corollary 3.2.29]{dllz2023logadic}.
	\end{rem}
	
	\begin{cor}\cite[Corollary 2.3.16]{dllz2023logadic} \label{obdrlocalcor}
		The isomorphism \ref{obdrlocal} induces isomorphisms $$\fil^r \mathcal O\mathbb B_{\operatorname{dR,log}}^+ \simeq t^r \mathbb B_{\operatorname{dR}}^+\{W_1,...,W_n\}$$ over $X_{\proket}/\widehat{X},$ for all $r \in \Z.$ Here $\mathbb B_{\operatorname{dR}}^+\{W_1,...,W_n\}$ is the ring of power series that are $t$-adically convergent, and $$W_j=t^{-1}y_j$$ for each $1 \leq j \leq n$. In particular, $$\gr^r \mathcal O\mathbb B_{\operatorname{dR,log}} \simeq t^r \widehat{\mathcal O}_{X_{\proket}}[W_1,...,W_n],$$ for all $r \in \Z.$
	\end{cor}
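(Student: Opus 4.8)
The plan is to deduce the corollary formally from Proposition \ref{obdrlocalprop} by unwinding the filtration in the explicit power-series coordinates, and then to pass to the associated graded. The statement being local, by Proposition \ref{obdrlocalprop} it suffices to argue over $X_{\proket}/\widetilde X$, where the map (\ref{obdrlocal}) identifies $\mathcal O\mathbb B_{\operatorname{dR,log}}^+|_{\widetilde X}$ with $\mathbb B^{+}_{\operatorname{dR}}|_{\widetilde X}[[P-1]]$ compatibly with filtrations, the latter carrying the $(\xi,\mathfrak m)$-adic one. Composing with the canonical filtered isomorphism $\mathbb B^{+}_{\operatorname{dR}}|_{\widetilde X}[[y_1,\ldots,y_n]]\simeq \mathbb B^{+}_{\operatorname{dR}}|_{\widetilde X}[[P-1]]$ sending $y_j$ to $y_{a_j}$ (which, as recalled above, matches $(\xi,y_1,\ldots,y_n)^r$ with $(\xi,\mathfrak m)^r$), I am reduced to analysing the ideal $(\xi,y_1,\ldots,y_n)^r$ of $\mathbb B^{+}_{\operatorname{dR}}|_{\widetilde X}[[y_1,\ldots,y_n]]$ for each $r\in\Z$.

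The heart of the argument is the substitution $W_j:=t^{-1}y_j$. Since $t$ generates $\fil^1\mathbb B^{+}_{\operatorname{dR}}=\ker\theta$, it differs from $\xi$ by a unit of $\mathbb B^{+}_{\operatorname{dR}}$, so $(\xi,y_1,\ldots,y_n)=(t,tW_1,\ldots,tW_n)=t\,(1,W_1,\ldots,W_n)$. Rewritten in the variables $W_j$, the ring $\mathbb B^{+}_{\operatorname{dR}}|_{\widetilde X}[[P-1]]$ becomes the ring $\mathbb B^{+}_{\operatorname{dR}}|_{\widetilde X}\{W_1,\ldots,W_n\}$ of $t$-adically convergent power series, i.e. the $t$-adic completion of $\mathbb B^{+}_{\operatorname{dR}}|_{\widetilde X}[W_1,\ldots,W_n]$; the point I would have to check is that under $y_j\leftrightarrow tW_j$ the $(\xi,\mathfrak m)$-adic topology goes over to the $t$-adic topology of this polynomial algebra, using that $\mathbb B^{+}_{\operatorname{dR}}$ is $t$-adically complete and $t$-torsion free. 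Granting this, in $\mathbb B^{+}_{\operatorname{dR}}|_{\widetilde X}\{W_1,\ldots,W_n\}$ the ideal $(1,W_1,\ldots,W_n)$ is the unit ideal, so $(\xi,y_1,\ldots,y_n)^r=t^r\,\mathbb B^{+}_{\operatorname{dR}}|_{\widetilde X}\{W_1,\ldots,W_n\}$ for every $r\in\Z$, and transporting back through the previous isomorphisms yields $\fil^r\mathcal O\mathbb B_{\operatorname{dR,log}}^+\simeq t^r\mathbb B^{+}_{\operatorname{dR}}\{W_1,\ldots,W_n\}$.

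For the last assertion I would pass to the associated graded of $\mathcal O\mathbb B_{\operatorname{dR,log}}$. Since $\mathcal O\mathbb B_{\operatorname{dR,log}}$ is the filtered completion of $\mathcal O\mathbb B_{\operatorname{dR,log}}^+[1/t]$, completion leaves $\gr^\bullet$ unchanged and $t$ acts as a non-zero-divisor raising the filtration degree by one; hence locally $\gr^r\mathcal O\mathbb B_{\operatorname{dR,log}}\simeq t^r\mathbb B^{+}_{\operatorname{dR}}\{W_1,\ldots,W_n\}/t^{r+1}\mathbb B^{+}_{\operatorname{dR}}\{W_1,\ldots,W_n\}\simeq t^r\bigl(\mathbb B^{+}_{\operatorname{dR}}\{W_1,\ldots,W_n\}/t\bigr)$. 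Because $t$ is topologically nilpotent, the convergence condition forces every $t$-adically convergent power series to reduce modulo $t$ to an honest polynomial, so $\mathbb B^{+}_{\operatorname{dR}}\{W_1,\ldots,W_n\}/t\simeq \widehat{\mathcal O}_{X_{\proket}}[W_1,\ldots,W_n]$, and we conclude $\gr^r\mathcal O\mathbb B_{\operatorname{dR,log}}\simeq t^r\widehat{\mathcal O}_{X_{\proket}}[W_1,\ldots,W_n]$.

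I expect the main obstacle to be the identification in the second paragraph: that after the change of variables $W_j=t^{-1}y_j$ the completed monoid algebra $\mathbb B^{+}_{\operatorname{dR}}|_{\widetilde X}[[P-1]]$ is exactly the $t$-adically complete polynomial ring $\mathbb B^{+}_{\operatorname{dR}}|_{\widetilde X}\{W_1,\ldots,W_n\}$, with the $(\ker\theta_{\log})$-adic filtration passing to the $t$-adic one. This bookkeeping between power series in the $y_j$ and $t$-adically convergent power series in the $W_j$ is precisely where the content beyond Proposition \ref{obdrlocalprop} lies; if one prefers to avoid it, the same identification can be read off directly from the construction of $\mathcal O\mathbb B_{\operatorname{dR,log}}^+$ as $\varinjlim_i\widehat{S_i}$, the rings $\widehat{S_i}$ being visibly $t$-adically completed polynomial algebras over $\mathbb B^{+}_{\operatorname{dR}}(U)$ in the elements $\tfrac{\alpha_i(m_i)}{[\alpha^{\flat}(m^{\flat})]}-1$. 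Everything else is formal.
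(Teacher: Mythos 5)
Your key step fails, and it fails at precisely the point where the real content of the corollary lies. Under the substitution $y_j=tW_j$, the ring $\mathbb B^{+}_{\operatorname{dR}}|_{\widetilde X}[[P-1]]\simeq\mathbb B^{+}_{\operatorname{dR}}|_{\widetilde X}[[y_1,\ldots,y_n]]$ does \emph{not} become $\mathbb B^{+}_{\operatorname{dR}}\{W_1,\ldots,W_n\}$: one only gets a strict inclusion $\mathbb B^{+}_{\operatorname{dR}}[[y_1,\ldots,y_n]]\subsetneq\mathbb B^{+}_{\operatorname{dR}}\{W_1,\ldots,W_n\}$, since for instance $\sum_{k\geq 1}t^{k}W_1^{2k}=\sum_{k\geq 1}t^{-k}y_1^{2k}$ is a $t$-adically convergent series in $W_1$ but is not a power series in $y_1$ with coefficients in $\mathbb B^{+}_{\operatorname{dR}}$. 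Likewise your identity $(\xi,y_1,\ldots,y_n)^r=t^r\mathbb B^{+}_{\operatorname{dR}}\{W_1,\ldots,W_n\}$ is false: the left-hand side is an ideal of $\mathbb B^{+}_{\operatorname{dR}}[[y_1,\ldots,y_n]]$, explicitly $\{\sum_\alpha b_\alpha y^\alpha:\ b_\alpha\in t^{\,r-|\alpha|}\mathbb B^{+}_{\operatorname{dR}}\ \text{for}\ |\alpha|<r\}$, which equals $t^r\mathbb B^{+}_{\operatorname{dR}}\{W_1,\ldots,W_n\}\cap\mathbb B^{+}_{\operatorname{dR}}[[y_1,\ldots,y_n]]$ but not $t^r\mathbb B^{+}_{\operatorname{dR}}\{W_1,\ldots,W_n\}$ itself. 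The manipulation ``$(1,W_1,\ldots,W_n)$ is the unit ideal'' is only legitimate once you are inside a ring that already contains the $W_j$ and is complete for the relevant topology, and constructing that ring is exactly what cannot be bypassed. Your fallback via $\varinjlim_i\widehat{S_i}$ has the same defect: $\widehat{S_i}$ is the $\ker\theta_{\log}$-adic completion, hence again a power-series-type ring in the $y$-variables, not a $t$-adically completed polynomial ring in the $W_j$.

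What is actually true (and what the cited result of Diao--Lan--Liu--Zhu proves; the superscript $+$ in the displayed formula of the corollary should be read as a typo, consistently with the ``in particular'' clause about $\gr^r\mathcal O\mathbb B_{\operatorname{dR,log}}$ and with the erratum to Scholze's Corollary 6.15 in the non-log case) is the statement for $\fil^r\mathcal O\mathbb B_{\operatorname{dR,log}}$, where $\mathcal O\mathbb B_{\operatorname{dR,log}}$ is the completion of $\mathcal O\mathbb B^+_{\operatorname{dR,log}}[1/t]$ with respect to the filtration $\fil^r=\sum_{j}t^{-j}(\ker\theta_{\log})^{r+j}$. Via Proposition \ref{obdrlocalprop}, before completion this filtration step equals $t^r$ times the $\mathbb B^{+}_{\operatorname{dR}}[[y_1,\ldots,y_n]]$-subalgebra of $\mathbb B^{+}_{\operatorname{dR}}[[y_1,\ldots,y_n]][1/t]$ generated by $W_1,\ldots,W_n$, and it is the subsequent completion with respect to the filtration that produces $t^r\mathbb B^{+}_{\operatorname{dR}}\{W_1,\ldots,W_n\}$; your argument omits both the inversion of $t$ and this completion. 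Once the filtered identification is established in this corrected form, your final paragraph (passing to graded pieces and using that convergence forces all but finitely many coefficients into $t\mathbb B^{+}_{\operatorname{dR}}$, so $\mathbb B^{+}_{\operatorname{dR}}\{W_1,\ldots,W_n\}/t\simeq\widehat{\mathcal O}_{X_{\proket}}[W_1,\ldots,W_n]$) does go through. Note also that the paper itself offers no proof of this corollary but quotes it, so the entire burden of proof rests on the completion step you skipped.
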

	
	The above discussion allows us to deduce the Poincar\'e lemma for $\mathcal{O}\mathbb B_{\operatorname{dR,log}}^+$ and $\mathcal{O}\mathbb B_{\operatorname{dR,log}}$ with log connections.
	
	\begin{prop}\cite[Corollary 2.4.2]{dllz2023logrh}
		Let $X$ be a log smooth rigid analytic variety defined over $K$, with fine log-structure, and is of dimension $n$. Then, we have an exact sequence of sheaves on $X_{\proket}:$ $$0\to \mathbb B_{\operatorname{dR}}^+ \to \mathcal{O}\mathbb B_{\operatorname{dR,log}}^+ \xrightarrow{\nabla} \mathcal{O}\mathbb B_{\operatorname{dR,log}}^+\otimes_{\mathcal O_X}\Omega^{\operatorname{log},1}_X \xrightarrow{\nabla} \cdots \xrightarrow{\nabla} \mathcal{O}\mathbb B_{\operatorname{dR,log}}^+\otimes_{\mathcal O_X}\Omega^{\operatorname{log},n}_X \to 0. $$ The exact sequence of sheaves also holds when we replace $\mathbb B_{\operatorname{dR}}^+$ and $\mathcal{O}\mathbb B_{\operatorname{dR,log}}^+$ with $\mathbb B_{\operatorname{dR}}$ and $\mathcal{O}\mathbb B_{\operatorname{dR,log}}$ respectively. For $r \in \Z$ we also have compatible exact sequences of sheaves on $X_{\proket}:$ $$0\to \fil^r\mathbb B_{\operatorname{dR}} \to \fil^r\mathcal{O}\mathbb B_{\operatorname{dR,log}} \xrightarrow{\nabla} (\fil^{r-1}\mathcal{O}\mathbb B_{\operatorname{dR,log}})\otimes_{\mathcal O_X}\Omega^{\operatorname{log},1}_X \xrightarrow{\nabla} \cdots \xrightarrow{\nabla} (\fil^{r-n}\mathcal{O}\mathbb B_{\operatorname{dR,log}})\otimes_{\mathcal O_X}\Omega^{\operatorname{log},n}_X \to 0.$$
	\end{prop}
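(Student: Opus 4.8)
The plan is to reduce this statement, which asserts exactness of a complex of pro-Kummer-\'etale sheaves on $X$, to a \emph{formal Poincar\'e lemma} for a completed polynomial algebra over $\mathbb B_{\operatorname{dR}}^+$, using the explicit local description of $\mathcal O\mathbb B_{\operatorname{dR,log}}^+$ provided by Corollary \ref{obdrlocalcor}. Since exactness of a complex of sheaves can be tested on a basis of $X_{\proket}$, and since $X$ is log smooth with fine log structure, I would first cover $X$ in the \'etale topology by affinoids $X'$ carrying a strictly \'etale map $X'\to\mathbb E$ to a standard chart as in the paragraph preceding Proposition \ref{obdrlocalprop}. Pulling back the Galois pro-finite Kummer \'etale cover $\widehat{\widetilde{\mathbb E}}_C\to\mathbb E_C$ produces $\widetilde{X'}$, and it then suffices to prove the exact sequences after evaluation on the log affinoid perfectoid objects $U\in X_{\proket}/\widetilde{X'}$, which form a basis.

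Over $X_{\proket}/\widetilde{X'}$, Corollary \ref{obdrlocalcor} and Remark \ref{obdrconn} identify $\mathcal O\mathbb B_{\operatorname{dR,log}}^+$ with the ring $\mathbb B_{\operatorname{dR}}^+\{W_1,\dots,W_n\}$ of $t$-adically convergent power series over $\mathbb B_{\operatorname{dR}}^+$, compatibly with filtrations, and identify $\Omega_X^{\log}$ with the free module $\bigoplus_j\mathcal O_X\,\delta(a_j)$ on which $\nabla$ acts through the evident partial-derivative connection in the variables $W_j$ (the normalization being $\nabla(y_j)=\delta(a_j)$, $y_j=tW_j$, as recorded in Remark \ref{obdrconn}). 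Under these identifications the displayed complex becomes the relative de Rham / Koszul complex of $A\{W_1,\dots,W_n\}$ over $A:=\mathbb B_{\operatorname{dR}}^+(U)$, and the filtered complex becomes its filtered analogue with the shifts $\fil^{r-i}$ dictated by the Griffiths transversality $\nabla(\fil^r)\subset\fil^{r-1}\otimes\Omega_X^{\log}$. Thus everything reduces to showing this (filtered) complex resolves $A$ (resp.\ $\fil^r\mathbb B_{\operatorname{dR}}(U)$).

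For the formal Poincar\'e lemma I would run the classical homotopy: integrate a form in its highest $W$-variable and induct on the number of variables. The only delicate point is the $t$-adic convergence of the integrated series, and here the decisive fact is that $A=\mathbb B_{\operatorname{dR}}^+(U)$ is a $\mathbb Q$-algebra in which $t$ is a non-zero-divisor and which is $t$-adically complete, so every positive integer is a unit of $t$-adic order $0$; dividing the coefficients of a convergent series by such integers affects neither the $t$-adic orders nor convergence, whence the homotopy operator is well defined on $A\{W_1,\dots,W_n\}$, is continuous for the relevant topologies, and respects the filtration correctly. Exactness at the left-hand term is immediate, since $\nabla f=0$ forces $f\in\mathbb B_{\operatorname{dR}}^+(U)$. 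This gives the unfiltered exact sequence for $\mathcal O\mathbb B_{\operatorname{dR,log}}^+$; passing to $\gr^\bullet$ — where, by Corollary \ref{obdrlocalcor}, the complex is the ordinary Koszul complex of $\widehat{\mathcal O}_{X_{\proket}}[W_1,\dots,W_n]$ over $\widehat{\mathcal O}_{X_{\proket}}$ — gives a clean derivation of the filtered statements, and inverting $t$ and $t$-adically completing (using exactness of filtered colimits) yields the versions with $\mathbb B_{\operatorname{dR}}$ and $\mathcal O\mathbb B_{\operatorname{dR,log}}$ in place of $\mathbb B_{\operatorname{dR}}^+$ and $\mathcal O\mathbb B_{\operatorname{dR,log}}^+$.

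I expect the main obstacle to be bookkeeping rather than conceptual content: carrying the Poincar\'e homotopy compatibly through the successive $(\ker\theta_{\log})$-adic and $t$-adic completions that define $\mathcal O\mathbb B_{\operatorname{dR,log}}^+$ and $\mathcal O\mathbb B_{\operatorname{dR,log}}$, while simultaneously tracking filtration degrees so that the output is precisely the shifted complexes $(\fil^{r-i}\mathcal O\mathbb B_{\operatorname{dR,log}})\otimes_{\mathcal O_X}\Omega_X^{\log}$. All of this is carried out in \cite[\S2.4]{dllz2023logrh}; I would only verify in addition that the condensed/topological refinements are preserved, which is automatic since the homotopy is given by an explicit continuous formula.
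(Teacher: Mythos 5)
Your proposal is correct and follows essentially the same route as the source this paper relies on: the paper itself gives no argument but cites \cite[Corollary 2.4.2]{dllz2023logrh}, whose proof is exactly your reduction to the local chart description $\mathcal O\mathbb B_{\operatorname{dR,log}}^+\simeq \mathbb B_{\operatorname{dR}}^+[[y_1,\dots,y_n]]$ (Proposition \ref{obdrlocalprop}, Corollary \ref{obdrlocalcor}, Remark \ref{obdrconn}) followed by the formal Poincar\'e homotopy over a $\Q$-algebra, with the filtered case handled via the $t$-adically convergent variables $W_j=t^{-1}y_j$.
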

	
	
	\section{Kummer pro-\'etale cohomology of $\mathbb B_{\operatorname{dR}}^+$ and $\mathbb B_{\operatorname{dR}}$}
	
	We will establish an analog version of \cite[Theorem 1.8]{bosco2023padicproetalecohomologydrinfeld} for log adic spaces. Moreover, we will also construct a fully faithful functor from the category of filtered $ \mathcal{O}_X $-modules with integrable connections to the category of $ \mathbb{B}_{\operatorname{dR}} $-local systems, thereby generalizing \cite[Theorem 7.6]{scholze2013p}. 
	
	We start with the definition of local systems for log adic spaces. Let $X$ be a log smooth rigid analytic variety over $K$, equipped with a fine log structure.
	
	\begin{defn}
		(i) A $\mathbb B_{\operatorname{{dR}}}^+$-local system is a sheaf of $\mathbb B_{\operatorname{{dR}}}^+$-module $\mathbb M^+$ that is locally on $X_{\proket}$ free of finite rank. 
		
		(ii) An $\mathcal O \mathbb B_{\operatorname{{dR,log}}}^+$-module with integrable log connection is a sheaf of $\mathcal O \mathbb B_{\operatorname{{dR,log}}}^+$-module $\mathcal M$ that is locally on $X_{\proket}$ free of finite rank, together with an integrable log connection $$\nabla_{\mathcal M}:\mathcal M \to \mathcal M \otimes_{\mathcal{O}_X}\Omega_X^{\log},$$ satisfying the Leibniz rule with respect to the derivation $\nabla$ of $\mathcal O \mathbb B_{\operatorname{{dR,log}}}^+$. 
	\end{defn}
	
	\begin{theorem}
		The functor $$\mathbb M^+ \mapsto (\mathcal M, \nabla_{\mathcal M}):=\left(\mathbb M^+ \otimes_{\mathbb B_{\operatorname{dR}}^+}\mathcal O \mathbb B_{\operatorname{{dR,log}}}^+,\operatorname{id}\otimes \nabla\right)$$ gives an equivalence between the category of $\mathbb B_{\operatorname{{dR}}}^+$-local systems and the category of $\mathcal O \mathbb B_{\operatorname{{dR,log}}}^+$-modules with integrable log connection. The inverse is given by $(\mathcal M, \nabla_{\mathcal M})\mapsto \mathbb M^+ :=\left(\mathcal M \right)^{\nabla_{\mathcal M}=0}.$
	\end{theorem}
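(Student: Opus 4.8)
The plan is to adapt the proof of \cite[Theorem 7.6]{scholze2013p} to the logarithmic setting, using $\mathcal O\mathbb B_{\operatorname{dR,log}}^+$ in place of the geometric de Rham period sheaf and the logarithmic coordinates $y_1,\dots,y_n$ of Section~\ref{period} in place of the usual \'etale coordinates. The assignment $\mathbb M^+\mapsto(\mathcal M,\nabla_{\mathcal M})=(\mathbb M^+\otimes_{\mathbb B_{\operatorname{dR}}^+}\mathcal O\mathbb B_{\operatorname{dR,log}}^+,\operatorname{id}\otimes\nabla)$ is well defined: base change along $\mathbb B_{\operatorname{dR}}^+\to\mathcal O\mathbb B_{\operatorname{dR,log}}^+$ preserves ``locally free of finite rank on $X_{\proket}$'', and $\operatorname{id}\otimes\nabla$ is an integrable log connection because $\nabla$ on $\mathcal O\mathbb B_{\operatorname{dR,log}}^+$ is. The inverse assignment sends $(\mathcal M,\nabla_{\mathcal M})$ to the sheaf of $\mathbb B_{\operatorname{dR}}^+$-modules $\mathbb M^+:=\mathcal M^{\nabla_{\mathcal M}=0}$. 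To see these are mutually inverse equivalences it suffices to prove: \emph{(a)} the unit $\mathbb M^+\to(\mathbb M^+\otimes_{\mathbb B_{\operatorname{dR}}^+}\mathcal O\mathbb B_{\operatorname{dR,log}}^+)^{\nabla=0}$ is an isomorphism; and \emph{(b)} for every $(\mathcal M,\nabla_{\mathcal M})$, the sheaf $\mathcal M^{\nabla_{\mathcal M}=0}$ is a $\mathbb B_{\operatorname{dR}}^+$-local system and the counit $\mathcal M^{\nabla_{\mathcal M}=0}\otimes_{\mathbb B_{\operatorname{dR}}^+}\mathcal O\mathbb B_{\operatorname{dR,log}}^+\to\mathcal M$ is an isomorphism (it respects connections automatically, by the Leibniz rule). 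Assertion \emph{(a)} is immediate: the claim is local on $X_{\proket}$ and $\mathbb M^+$ is locally free, so it reduces to $\mathbb M^+=\mathbb B_{\operatorname{dR}}^+$, i.e.\ to $(\mathcal O\mathbb B_{\operatorname{dR,log}}^+)^{\nabla=0}=\mathbb B_{\operatorname{dR}}^+$, which is the exactness of the first two terms of the $\mathcal O\mathbb B_{\operatorname{dR,log}}^+$-Poincar\'e lemma recalled in Section~\ref{period}.

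For \emph{(b)} I would argue locally on $X_{\proket}$. Cover $X$ by affinoids $\Spa(R,R^+)$ admitting a strictly \'etale chart to $\mathbb E=\Spa(K\langle P\rangle,\mathcal O_K\langle P\rangle)$ as in Section~\ref{period}, with $P$ a toric monoid and $n=\dim X=\operatorname{rank}P^{\operatorname{gp}}$, and pass to the log affinoid perfectoid pro-Kummer \'etale cover $\widetilde X_C$ of Section~\ref{period}. By Corollary~\ref{obdrlocalcor} and Remark~\ref{obdrconn}, over $\widetilde X_C$ the sheaf $\mathcal O\mathbb B_{\operatorname{dR,log}}^+$ is identified with $\mathbb B_{\operatorname{dR}}^+|_{\widetilde X_C}\{W_1,\dots,W_n\}$ ($t$-adically convergent power series, $W_j=t^{-1}y_j$), carrying the continuous $\mathbb B_{\operatorname{dR}}^+|_{\widetilde X_C}$-linear connection determined by $\nabla(y_j)=\delta(a_j)$, where $\{\delta(a_j)\}_{j=1}^n$ is an $\mathcal O_X$-basis of $\Omega_X^{\log}$. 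Shrinking further, I may assume $\mathcal M$ is free over $\mathcal O\mathbb B_{\operatorname{dR,log}}^+$ on this cover, with basis $e_1,\dots,e_r$. Contracting $\nabla_{\mathcal M}$ against the $\delta(a_j)$ produces pairwise commuting $\mathbb B_{\operatorname{dR}}^+|_{\widetilde X_C}$-linear operators $\nabla_1,\dots,\nabla_n\colon\mathcal M\to\mathcal M$ (commutativity is the integrability of $\nabla_{\mathcal M}$). Now set
\[
\phi(m):=\sum_{\alpha\in\N^n}\frac{(-y)^{\alpha}}{\alpha!}\,\nabla^{\alpha}(m),\qquad \nabla^{\alpha}:=\nabla_1^{\alpha_1}\cdots\nabla_n^{\alpha_n},\ \ y^{\alpha}:=y_1^{\alpha_1}\cdots y_n^{\alpha_n},\ \ \alpha!:=\alpha_1!\cdots\alpha_n!.
\]
This series converges in $\mathcal M$: each $\alpha!$ is a unit in the $\Q_p$-algebra $\mathbb B_{\operatorname{dR}}^+$, one has $y^{\alpha}\in(\ker\theta_{\log})^{|\alpha|}$ (that is, in $\fil^{|\alpha|}\mathcal O\mathbb B_{\operatorname{dR,log}}^+$, since $y_j=tW_j$), and $\mathcal M$ is complete and separated for the $(\ker\theta_{\log})$-adic filtration, being free over the $(\ker\theta_{\log})$-adically complete ring $\mathcal O\mathbb B_{\operatorname{dR,log}}^+$. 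A direct computation with the commuting $\nabla_j$, formally identical to the smooth case, shows that $\phi(m)\in\mathcal M^{\nabla_{\mathcal M}=0}=\mathbb M^+|_{\widetilde X_C}$, that $\phi$ restricts to the identity on $\mathbb M^+|_{\widetilde X_C}$, and that $\phi(m)\equiv m$ modulo $\ker\theta_{\log}\cdot\mathcal M$.

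It remains to conclude. Since $\phi(e_i)\equiv e_i$ modulo $\ker\theta_{\log}\cdot\mathcal M$ and $\mathcal O\mathbb B_{\operatorname{dR,log}}^+$ is $(\ker\theta_{\log})$-adically complete, Nakayama's lemma shows that $\phi(e_1),\dots,\phi(e_r)$ is again an $\mathcal O\mathbb B_{\operatorname{dR,log}}^+|_{\widetilde X_C}$-basis of $\mathcal M$, now consisting of horizontal sections. Applying $(-)^{\nabla=0}$ and using \emph{(a)} for the trivial local system, we get $\mathbb M^+|_{\widetilde X_C}=\bigoplus_{i=1}^r\mathbb B_{\operatorname{dR}}^+|_{\widetilde X_C}\cdot\phi(e_i)$, free of rank $r$; hence $\mathbb M^+$ is a $\mathbb B_{\operatorname{dR}}^+$-local system. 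Moreover the counit sends $\phi(e_i)\otimes1$ to $\phi(e_i)$, so it is an isomorphism over $\widetilde X_C$, and therefore --- isomorphy being local on $X_{\proket}$ and these covers generating $X_{\proket}$ --- an isomorphism of sheaves on $X_{\proket}$. This establishes \emph{(b)}, and the theorem follows.

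I expect the bulk of the work to lie in \emph{(b)}: one must invoke the local model of $\mathcal O\mathbb B_{\operatorname{dR,log}}^+$ over the pro-Kummer \'etale cover (Corollary~\ref{obdrlocalcor}, Remark~\ref{obdrconn}) correctly and, with careful bookkeeping on the logarithmic variables $y_j=tW_j\in\fil^1\mathcal O\mathbb B_{\operatorname{dR,log}}^+$, verify that the factorially divided Taylor series $\phi$ is well defined and valued in horizontal sections. Once the local picture is set up, the argument runs exactly as in the smooth case of \cite[Theorem 7.6]{scholze2013p} (equivalently, one may quote the logarithmic analogue of \cite[Lemma 7.3]{scholze2013p}); the logarithmic structure introduces no essential new difficulty beyond this bookkeeping and the use of the decomposition $\Omega_X^{\log}\simeq\bigoplus_{j=1}^n\mathcal O_X\delta(a_j)$ to obtain the commuting operators $\nabla_j$.
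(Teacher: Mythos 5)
Your proposal is correct and follows essentially the same route as the paper: reduce to the local model $\mathcal O\mathbb B_{\operatorname{dR,log}}^+|_{\widetilde X}\simeq \mathbb B_{\operatorname{dR}}^+|_{\widetilde X}[[y_1,\dots,y_n]]$ with $\nabla(y_j)=\delta(a_j)$ (Proposition \ref{obdrlocalprop}, Remark \ref{obdrconn}) and then settle the formal power-series case. The only difference is that where you carry out the factorially divided Taylor-series construction of a horizontal basis by hand, the paper simply cites Katz's Proposition 8.9, which is exactly that argument.
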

	
	\begin{proof}
		It is clear that for a $\mathbb B_{\operatorname{{dR}}}^+$-local system $\mathbb M^+,$ $$\left(\mathbb M^+ \otimes_{\mathbb B_{\operatorname{dR}}^+}\mathcal O \mathbb B_{\operatorname{{dR,log}}}^+\right)^{\nabla=0}=\mathbb M^+.$$ We need to show that for $(\mathcal M,\nabla)$ a $\mathcal O \mathbb B_{\operatorname{{dR,log}}}^+$-module with integrable log connection, the natural morphism $$ \left(\mathcal M \right)^{\nabla_{\mathcal M}=0}\otimes_{\mathbb B_{\operatorname{dR}}^+}\mathcal O  \mathbb B_{\operatorname{{dR,log}}}^+ \xrightarrow{ \simeq} \mathcal M, $$ is an isomorphism. The problem is local, so we can use Proposition \ref{obdrlocalprop} and reduce to the case that $X=\Spa(R,R^+) \to \mathbb E$ is a strictly \'etale morphism, which can be written as a composite of rational embedding and finite étale maps. Now suppose $M$ is a locally free $\mathbb B_{\operatorname{dR}}^+(R^{\infty})[[y_1,...,y_n]]$-module, with an integrable log connection $\nabla_M$, using Remark \ref{obdrconn} and \cite[Proposition 8.9]{katz1970connection}, we have $M \simeq M^{\nabla=0}\otimes_{\mathbb B_{\operatorname{dR}}^+(R^{\infty})}\mathbb B_{\operatorname{dR}}^+(R^{\infty})[[y_1,...,y_n]],$ which concludes the proof.
	\end{proof}
	
	\begin{defn}\cite[Definition 7.4 and Definition 7.5]{scholze2013p}
		(i) A filtered $\mathcal O_X$-module with integrable log connection is a locally free $\mathcal O_X$-module $\mathcal{E}$ on $X$ (with \'etale, pro-\'etale or analytic topology, since they are all equivalent), together with a separated and exhaustive decreasing filtration $\fil^r \mathcal{E}, i \in \Z,$ by locally direct summands, and an integrable log connection $\nabla$ satisfying the Griffiths transversality.
		
		(ii) We say $\mathcal{E}$ and an $\mathcal O \mathbb B_{\operatorname{{dR,log}}}^+$-module with integrable log connection $\mathcal M$ are associated if there is an isomorphism of sheaves on $X_{\proket}$ $$\mathcal M \otimes_{\mathcal O \mathbb B_{\operatorname{{dR,log}}}^+}\mathcal O \mathbb B_{\operatorname{{dR,log}}}\simeq \mathcal E \otimes_{\mathcal{O}_X}O \mathbb B_{\operatorname{{dR,log}}},$$ compatible with filtrations and connections, where the filtration on the left side is the one on $O\mathbb B_{\operatorname{dR,log}}$, i.e., $$\fil^r(\mathcal M \otimes_{O \mathbb B_{\operatorname{{dR,log}}}^+}\mathcal O \mathbb B_{\operatorname{{dR,log}}}):=\sum_{j\in\Z}t^{-j}(\ker \theta_{\log})^{r+j}\mathcal M.$$
	\end{defn}
	
	\begin{thm}
		(i) If $\mathcal M$ is an $\mathcal O \mathbb B_{\operatorname{{dR,log}}}^+$-module with integrable log connection and horizontal section $\mathbb M^+,$ which is associated to a filtered $\mathcal O_X$-module with integrable connection $\mathcal E,$ then $$\mathbb M^+=\fil^0(\mathcal{E}\otimes_{\mathcal O_X}\mathcal{O}\mathbb B_{\operatorname{dR,log}})^{\nabla=0}.$$ Similarly, one can reconstruct $\mathcal E$ with filtration and log connection by $$\mathcal E_{\et}\simeq \nu_*\left(\mathbb M^+ \otimes_{\mathbb B_{\operatorname{dR}}^+}\mathcal O \mathbb B_{\operatorname{{dR,log}}}\right)^{\nabla=0},$$ where $\nu$ is the morphism of site $X_{\proket}\to X_{\et}$.
		
		(ii) If $\mathcal E$ is an $\mathcal O_X$-module with integrable log connection, the sheaf $$\mathbb M^+=\fil^0(\mathcal{E}\otimes_{\mathcal O_X}\mathcal{O}\mathbb B_{\operatorname{dR,log}})^{\nabla=0}$$ is a $\mathbb B_{\operatorname{{dR}}}^+$-local system such that $\mathcal E$ is associated to $\mathcal M=\mathbb M^+ \otimes_{\mathbb B_{\operatorname{dR}}^+}\mathcal O \mathbb B_{\operatorname{{dR,log}}}.$
		
		In particular, the functor sending $\mathcal E$ to $\mathbb M^+$ is a fully faithful functor from the category of filtered $\mathcal O_X$-module with integrable log connection to the category of $\mathbb B_{\operatorname{{dR}}}^+$-local systems.
	\end{thm}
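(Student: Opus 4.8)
\emph{Approach.} This is the logarithmic analogue of \cite[Theorem 7.6]{scholze2013p} (compare \cite[\S3.2]{dllz2023logrh}), and I would follow that strategy: take for granted the first theorem of this section (the equivalence between $\mathbb B_{\operatorname{dR}}^+$-local systems and $\mathcal O\mathbb B_{\operatorname{dR,log}}^+$-modules with integrable log connection) and graft onto it the analysis of the filtration. Every assertion is local on $X_{\proket}$ and compatible with the equivalences, so after reducing via Proposition \ref{obdrlocalprop} to the basis of log affinoid perfectoid objects lying over the pro-finite Kummer \'etale cover $\widetilde X$ attached to a standard chart $X=\Spa(R,R^+)\to\mathbb E$, I would use the explicit local descriptions $\Omega_X^{\log}\simeq\bigoplus_{j=1}^n\mathcal O_X\,\delta(a_j)$, $\fil^r\mathcal O\mathbb B_{\operatorname{dR,log}}\simeq t^r\,\mathbb B_{\operatorname{dR}}^+\{W_1,\dots,W_n\}$ for $r\in\Z$ (Corollary \ref{obdrlocalcor}), and the connection $\nabla(W_j)=t^{-1}\delta(a_j)$ (Remark \ref{obdrconn}), which in particular satisfies Griffiths transversality for this filtration by construction.

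\emph{Part (ii).} Starting from a filtered $\mathcal O_X$-module with integrable log connection $(\mathcal E,\nabla_{\mathcal E},\fil^\bullet)$, first invert $t$. The sheaf $\mathcal E\otimes_{\mathcal O_X}\mathcal O\mathbb B_{\operatorname{dR,log}}$ with its total connection is locally a free $\mathcal O\mathbb B_{\operatorname{dR,log}}$-module with integrable log connection, so the argument proving the first theorem of this section (equivalently, the Poincar\'e lemma \cite[Corollary 2.4.2]{dllz2023logrh} together with \cite[Proposition 8.9]{katz1970connection}, with $t$ inverted) shows that $\mathbb M:=(\mathcal E\otimes_{\mathcal O_X}\mathcal O\mathbb B_{\operatorname{dR,log}})^{\nabla=0}$ is a $\mathbb B_{\operatorname{dR}}$-local system and $\mathbb M\otimes_{\mathbb B_{\operatorname{dR}}}\mathcal O\mathbb B_{\operatorname{dR,log}}\xrightarrow{\simeq}\mathcal E\otimes_{\mathcal O_X}\mathcal O\mathbb B_{\operatorname{dR,log}}$. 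Set $\mathbb M^+:=\fil^0(\mathcal E\otimes_{\mathcal O_X}\mathcal O\mathbb B_{\operatorname{dR,log}})^{\nabla=0}$; the substantive point — which also supplies the filtered compatibility of the association — is that $\mathbb M^+$ is a $\mathbb B_{\operatorname{dR}}^+$-lattice in $\mathbb M$, locally free of the same rank as $\mathcal E$. To prove this I would choose, locally, a basis $e_1,\dots,e_m$ of $\mathcal E$ adapted to the filtration, say $\fil^r\mathcal E=\bigoplus_{k_i\ge r}\mathcal O_X e_i$, so that $\fil^0(\mathcal E\otimes_{\mathcal O_X}\mathcal O\mathbb B_{\operatorname{dR,log}})=\bigoplus_i e_i\otimes t^{-k_i}\mathbb B_{\operatorname{dR}}^+\{W\}$; writing a section as $v=\sum_i e_i\otimes f_i$ and $\nabla_{\mathcal E}(e_i)=\sum_j e_j\otimes\omega_{ij}$, the condition $\nabla v=0$ becomes the linear system $\nabla f_j=-\sum_i\omega_{ij}f_i$, which by Griffiths transversality ($\omega_{ij}=0$ unless $k_i\le k_j+1$) can be solved triangularly; by formal integration in $W_1,\dots,W_n$ it has a unique solution for each prescribed constant term $f_j|_{W=0}\in t^{-k_j}\mathbb B_{\operatorname{dR}}^+$, and the same bound together with the factor $t^{-1}$ in $\nabla(W_k)$ forces the solution to remain in $t^{-k_j}\mathbb B_{\operatorname{dR}}^+\{W\}$. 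Thus $\mathbb M^+$ is freely generated over $\mathbb B_{\operatorname{dR}}^+$ by the horizontal lifts of the $t^{-k_i}e_i$, and the same computation shows the association isomorphism carries $\mathbb M^+\otimes_{\mathbb B_{\operatorname{dR}}^+}\fil^r\mathcal O\mathbb B_{\operatorname{dR,log}}$ onto $\fil^r(\mathcal E\otimes_{\mathcal O_X}\mathcal O\mathbb B_{\operatorname{dR,log}})$; independence of the chart and descent follow from uniqueness of horizontal sections. Then $\mathcal M:=\mathbb M^+\otimes_{\mathbb B_{\operatorname{dR}}^+}\mathcal O\mathbb B_{\operatorname{dR,log}}^+$, with connection $\operatorname{id}\otimes\nabla$, is an $\mathcal O\mathbb B_{\operatorname{dR,log}}^+$-module with integrable log connection whose associated filtered module is $\mathcal E$.

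\emph{Part (i) and full faithfulness.} Suppose $\mathcal M$ has horizontal sections $\mathbb M^+$ and is associated to the filtered module $\mathcal E$. By definition of associated there is a connection- and filtration-compatible isomorphism $\mathcal E\otimes_{\mathcal O_X}\mathcal O\mathbb B_{\operatorname{dR,log}}\simeq\mathcal M\otimes_{\mathcal O\mathbb B_{\operatorname{dR,log}}^+}\mathcal O\mathbb B_{\operatorname{dR,log}}=\mathbb M^+\otimes_{\mathbb B_{\operatorname{dR}}^+}\mathcal O\mathbb B_{\operatorname{dR,log}}$, under which the filtration on the right is $\fil^r=\mathbb M^+\otimes_{\mathbb B_{\operatorname{dR}}^+}\fil^r\mathcal O\mathbb B_{\operatorname{dR,log}}$. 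Applying $(-)^{\nabla=0}$ gives $\mathbb M^+[1/t]$, since the horizontal sections of $\mathcal O\mathbb B_{\operatorname{dR,log}}$ over the base are $\mathbb B_{\operatorname{dR}}$ by the Poincar\'e lemma; and since locally $\fil^0\mathcal O\mathbb B_{\operatorname{dR,log}}=\mathbb B_{\operatorname{dR}}^+\{W\}$, one gets $\fil^0(\mathbb M^+\otimes_{\mathbb B_{\operatorname{dR}}^+}\mathcal O\mathbb B_{\operatorname{dR,log}})\cap\mathbb M^+[1/t]=\mathbb M^+$, i.e. the first formula $\mathbb M^+=\fil^0(\mathcal E\otimes_{\mathcal O_X}\mathcal O\mathbb B_{\operatorname{dR,log}})^{\nabla=0}$. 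To recover $\mathcal E$ together with its filtration and log connection I would push forward along $\nu\colon X_{\proket}\to X_{\et}$, computing $R\nu_*$ of the (filtered) logarithmic de Rham complex of $\mathcal M$ via \cite[Corollary 2.4.2]{dllz2023logrh} and using the standard facts $\nu_*\widehat{\mathcal O}_{X_{\proket}}=\mathcal O_X$ and $\gr^r\mathcal O\mathbb B_{\operatorname{dR,log}}\simeq t^r\widehat{\mathcal O}_{X_{\proket}}[W_1,\dots,W_n]$ (Corollary \ref{obdrlocalcor}) to identify the degree-zero term with $\mathcal E_{\et}$ and its extra structure, exactly as in \cite[Theorem 7.6]{scholze2013p}. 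Finally, full faithfulness of $\mathcal E\mapsto\mathbb M^+$ is formal from (i) and (ii): (ii) makes the functor well defined with $\mathcal E$ associated to $\mathbb M^+\otimes_{\mathbb B_{\operatorname{dR}}^+}\mathcal O\mathbb B_{\operatorname{dR,log}}^+$, while (i) provides a two-sided inverse on its essential image.

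\emph{Main obstacle.} I expect the hard part to be the local Griffiths-transversality computation in part (ii): one must check, while carefully tracking the log differentials $\delta(a_j)$ and the variables $W_j$ coming from the monoid chart, that $\fil^0\cap\ker\nabla$ is exactly the expected $\mathbb B_{\operatorname{dR}}^+$-lattice — neither too large nor too small — which amounts to verifying that the filtration on $\mathcal O\mathbb B_{\operatorname{dR,log}}$ of Corollary \ref{obdrlocalcor} interacts with the connection of Remark \ref{obdrconn} precisely as Griffiths transversality demands. The filtration-adapted local trivializations of $\mathcal E$ used throughout are available because $\Omega_X^{\log}$ is locally free (\cite[Lemma 3.3.15]{dllz2023logadic}).
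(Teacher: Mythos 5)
Your proof is correct and takes essentially the same approach as the paper: the paper's proof is a one-line reduction to \cite[Theorem 7.6]{scholze2013p}, replacing $\mathcal O\mathbb B_{\operatorname{dR}}$ by $\mathcal O\mathbb B_{\operatorname{dR,log}}$ and $\Omega_X$ by $\Omega_X^{\log}$, and your argument is exactly that transported proof, carried out locally via Proposition \ref{obdrlocalprop}, Corollary \ref{obdrlocalcor}, Remark \ref{obdrconn} and the logarithmic Poincar\'e lemma. The only difference is that you spell out the filtration-adapted integration step that the paper leaves implicit by citation.
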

	
	\begin{proof}
		The proof goes as the same way as the proof of \cite[Theorem 7.6]{scholze2013p}, by replacing $\mathcal{O}\mathbb B_{\operatorname{dR}}$ with $\mathcal{O}\mathbb B_{\operatorname{dR,log}}$ and $\Omega_X$ with $\Omega_X^{\log}.$
	\end{proof}
	
	\begin{defn}
		Let $X$ be a log smooth rigid-analytic variety over $K$ with fine log-structure. Let $(\mathcal{E},\nabla,\fil^{\bullet})$ be a filtered $\mathcal O_X$-module with integrable log connection. We define the log de Rham complex of $X$ by $$\logdr_X^{\mathcal E}:=\left[\mathcal E \xrightarrow{\nabla} \mathcal E \otimes_{\mathcal O_X}\Omega^{\operatorname{log},1}_X \xrightarrow{\nabla} \mathcal E \otimes_{\mathcal O_X}\Omega^{\operatorname{log},2}_X \xrightarrow{\nabla}  \cdots   \right].$$ We equip $\logdr_X^{\mathcal E}$ with the filtration given by $$\fil^r\logdr_X^{\mathcal E}:=\left[\fil^r\mathcal E \xrightarrow{\nabla} \fil^{r-1}\mathcal E \otimes_{\mathcal O_X}\Omega^{\operatorname{log},1}_X \xrightarrow{\nabla} \fil^{r-2}\mathcal E \otimes_{\mathcal O_X}\Omega^{\operatorname{log},2}_X \xrightarrow{\nabla}  \cdots \right]$$ for $r \in \Z$.
		
		(i) For $i \geq 0$, the condensed de Rham cohomology group $H^i_{\underline{\operatorname{logdR}}}(X,\mathcal E)$ with coefficient is defined to be the $i$-th cohomology group of the complex $$\rg_{\underline{\operatorname{logdR}}}(X,\mathcal E):=\rg(X,\underline{\logdr_X^{\mathcal E}})$$ of $D(\Mod_K^{\cond})$.
		
		(ii) We define the complex of $D(\Mod_K^{\cond})$ $$\rg_{\underline{\operatorname{logdR}}}(X_{B_{\operatorname{dR}}},\mathcal E):=\rg(X,\underline{\logdr_X^{\mathcal E}}\otimes_K^{\blacksquare}B_{\operatorname{dR}}),$$ and we endow it with the filtration induced from the tensor product filtration.
	\end{defn}

	We will prove the following theorem.
	
	\begin{thm} \label{bdrcoh}
		Let $X$ be a log smooth rigid analytic variety defined over $K$, with fine saturated log-structure. Let $(\mathcal{E},\nabla,\fil^{\bullet})$ be a filtered $\mathcal{O}_X$-module with integrable log connection, with associated $\mathbb B_{\operatorname{dR}}^+$-local system $\mathbb M^+$. Denote by $\mathbb M:=\mathbb M^+[1/t].$
		
		(i) We have a $\mathscr{G}_K$-equivariant, compatible with filtrations, natural quasi-isomorphisms in $D(\Mod_K^{\cond})$: $$\rg_{\underline{\proket}}(X_C,\mathbb{M})\simeq \rg_{\underline{\operatorname{logdR}}}(X_{B_{\operatorname{dR}}},\mathcal{E}).$$
		
		(ii) Assume $X$ is connected and paracompact. Then, for each $r\in \Z$, we have a natural $\mathscr{G}_K$-equivariant quasi-isomorphisms in $D(\Mod_K^{\cond})$: $$\rg_{\underline{\proket}}(X_C,\fil^r\mathbb{M})\simeq \fil^r(\rg_{\underline{\operatorname{logdR}}}(X,\mathcal{E})\otimes_K^{L_\blacksquare}B_{\operatorname{dR}}),$$ where $\rg_{\underline{\operatorname{logdR}}}(X)$ is the log de Rham cohomology complex in $D(\Mod_K^{\cond})$.
	\end{thm}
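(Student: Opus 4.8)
The plan is to follow the strategy of Scholze's original de Rham comparison theorem (and Bosco's generalization to the non-proper discretely-valued case), but now in the logarithmic setting, using the local structure theory for $\mathcal{O}\mathbb{B}_{\operatorname{dR,log}}^+$ developed in Section~\ref{period} as the key input. The heart of the argument is the logarithmic Poincaré lemma quoted above: the log de Rham complex of $\mathcal{O}\mathbb{B}_{\operatorname{dR,log}}$ (resp.\ its filtered pieces) is a resolution of $\mathbb{B}_{\operatorname{dR}}$ (resp.\ $\fil^r\mathbb{B}_{\operatorname{dR}}$) on $X_{\proket}$. Tensoring this resolution with the filtered module-with-connection $(\mathcal{E},\nabla,\fil^\bullet)$ and using that $\mathcal{E}$ is associated to $\mathbb{M}^+$, one gets a quasi-isomorphism of complexes of sheaves on $X_{C,\proket}$
\[
\mathbb{M} \;\xrightarrow{\ \simeq\ }\; \left[\,\mathcal{E}\otimes_{\mathcal{O}_X}\mathcal{O}\mathbb{B}_{\operatorname{dR,log}} \xrightarrow{\nabla} \mathcal{E}\otimes_{\mathcal{O}_X}\Omega^{\log,1}_X\otimes\mathcal{O}\mathbb{B}_{\operatorname{dR,log}} \xrightarrow{\nabla}\cdots\,\right],
\]
and similarly for the filtered versions. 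So the first step is to set this up carefully, tracking the $\mathscr{G}_K$-action and the filtrations (for (ii) one uses the compatible filtered Poincaré lemma, with the convolution/tensor-product filtration on the right).

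Next, I would apply $\rg_{\underline{\proket}}(X_C,-)$ to both sides and compute the right-hand side. The point is a projection-formula/base-change computation: pushing the complex $\mathcal{E}\otimes_{\mathcal{O}_X}\Omega^{\log,\bullet}_X\otimes_{\mathcal{O}_X}\mathcal{O}\mathbb{B}_{\operatorname{dR,log}}$ forward along $\nu\colon X_{C,\proket}\to X_{C,\et}$ (or directly to the base), one wants to see that $R\nu_*\mathcal{O}\mathbb{B}_{\operatorname{dR,log}}$ is, in the appropriate filtered sense, $\mathcal{O}_{X_C}\otimes_K^{\blacksquare}B_{\operatorname{dR}}$ — concretely, $\gr^r\mathcal{O}\mathbb{B}_{\operatorname{dR,log}}\simeq t^r\widehat{\mathcal{O}}_{X_{\proket}}[W_1,\dots,W_n]$ by Corollary~\ref{obdrlocalcor}, and the higher pushforwards of $\widehat{\mathcal{O}}_{X_{\proket}}(j)$ are controlled by the Hodge--Tate comparison / Tate's computation, so that after passing to the total complex the polynomial variables $W_i$ get killed (a Poincaré-lemma-type contraction in the $W_i$-direction) and one is left with $\mathcal{E}\otimes_{\mathcal{O}_X}\Omega^{\log,\bullet}_X$ tensored up to $B_{\operatorname{dR}}$ over $K$. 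Here the condensed/solid tensor product $\otimes_K^{\blacksquare}B_{\operatorname{dR}}$ and the results of \cite{bosco2023padicproetalecohomologydrinfeld} on its interaction with $\rg_{\underline{\proket}}$ and with taking cohomology of coherent sheaves on rigid varieties are exactly what let this computation go through without properness or quasi-compactness hypotheses; this is where the hypotheses ``connected and paracompact'' in (ii) enter, ensuring the relevant $\rg_{\underline{\operatorname{logdR}}}(X,\mathcal{E})$ behaves well and the filtration is strict.

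For part (ii) specifically, after establishing (i) one must identify $\fil^r$ of both sides. On the pro-Kummer-étale side $\fil^r\mathbb{M}=\fil^r\mathbb{B}_{\operatorname{dR}}^+\otimes_{\mathbb{B}_{\operatorname{dR}}^+}\mathbb{M}^+$, and its cohomology is computed by the filtered log Poincaré lemma; on the de Rham side $\fil^r(\rg_{\underline{\operatorname{logdR}}}(X,\mathcal{E})\otimes_K^{L_\blacksquare}B_{\operatorname{dR}})$ is the tensor-product (convolution) filtration. The matching of these two reduces, via the $\gr$-computation above, to checking that the spectral sequence associated to the filtration degenerates appropriately, i.e.\ that the filtration on $\rg_{\underline{\operatorname{logdR}}}(X,\mathcal{E})$ is strict — which in the proper log smooth case is the degeneration of the Hodge--log-de-Rham spectral sequence (Theorem~\ref{drcomparison}(i)), and in the Stein/paracompact case follows from the closed-image results for the log de Rham differentials quoted in Section~2.

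The main obstacle I anticipate is the filtered comparison in (ii): keeping the convolution filtration on $\otimes_K^{L_\blacksquare}B_{\operatorname{dR}}$ compatible with the $(\ker\theta_{\log})$-adic filtration on $\mathcal{O}\mathbb{B}_{\operatorname{dR,log}}$ through the $W_i$-contraction, and proving strictness (no hidden failure of exactness when passing to associated graded), is delicate precisely because $X$ is allowed to be non-proper — one cannot simply invoke finiteness of cohomology. The resolution is to work locally on a Stein/affinoid cover where the closed-image lemmas of Gro\ss e-Kl\"onne (as recalled in Section~2) apply, deduce the filtered statement there, and then glue; the condensed formalism is what makes the gluing and the ``completed'' tensor products behave, so the bookkeeping — rather than any single hard estimate — is the real work.
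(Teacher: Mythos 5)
Your outline for part (i) is essentially the paper's argument: resolve $\mathbb M$ by the filtered log Poincar\'e lemma for $\mathcal E\otimes_{\mathcal O_X}\mathcal O\mathbb B_{\operatorname{dR,log}}$, push forward to the \'etale site, and identify $(R\lambda_*\mathbb M)^{\blacktriangledown}$ with $\underline{\logdr}_X^{\mathcal E}\otimes_K^{\blacksquare}\underline{B_{\operatorname{dR}}}$ by checking on graded pieces (the filtrations being complete and exhaustive). The only cosmetic difference is that where you propose to contract the variables $W_i$ directly, the paper factors $X_{C,\proket}\xrightarrow{\alpha}X_{C,\proet}\to X_{C,\et}$ and uses Corollary \ref{obdrlocalcor} to show $\gr^j\mathcal O\mathbb B_{\operatorname{dR}}\simeq R\alpha_*\gr^j\mathcal O\mathbb B_{\operatorname{dR,log}}$, thereby quoting Bosco's non-logarithmic graded computation instead of redoing it; the substance is the same.

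Part (ii) is where your proposal has a genuine gap. The point to be proved is \emph{not} strictness of the Hodge filtration on $\rg_{\underline{\operatorname{logdR}}}(X,\mathcal E)$ or any spectral-sequence degeneration: the right-hand side carries the convolution filtration defined at the level of filtered complexes, and the sheaf-level comparison (the analogue of Proposition \ref{bdrdirectimage}) is already filtered. What actually remains is to show that derived global sections commute with the completed solid tensor product and its filtration, i.e.\ that the natural map $\fil^r\bigl(\rg_{\underline{\operatorname{logdR}}}(X,\mathcal E)\otimes_K^{L_\blacksquare}B_{\operatorname{dR}}\bigr)\to\rg\bigl(X,\fil^r(\underline{\logdr}_X^{\mathcal E}\otimes_K^{\blacksquare}\underline{B_{\operatorname{dR}}})\bigr)$ is an equivalence; this is exactly what the paracompactness (countability at infinity, quasi-Stein exhaustions, $R\lim$ control) is for, and the paper settles it by citing Bosco's Theorem 5.20. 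Your proposed substitutes do not supply this: invoking Theorem \ref{drcomparison}(i) is circular inside this paper (its proof goes through the $B^+_{\operatorname{dR}}$-cohomology comparison, which itself re-runs the argument of the present theorem) and in any case only covers proper $X$, while the Gro\ss e-Kl\"onne closed-image lemmas give Fr\'echet strictness on Stein pieces but say nothing about the infinite sums and completions in the convolution filtration when you glue over a paracompact cover --- that gluing is precisely the nontrivial step you defer to ``bookkeeping,'' and it is where the cited condensed-mathematics input is indispensable.
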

	
	To prove the theorem, we need the following proposition, which comes from the local study of $\mathbb B_{\operatorname{dR}}$ and $\mathcal{O}\mathbb B_{\operatorname{dR,log}}$, as Section \ref{period}. We follow the notations in \ref{period}. Suppose that $X=\Spa(R,R^+)$ is an affinoid fs log adic space of finite type over $\Spa(K,\mathcal O_K)$, with a strictly \'etale morphism $X \to \mathbb E$, which can be written as a composite of rational embeddings and finite étale maps. By pulling back $\widehat{\widetilde{\mathbb E}}_C \to \mathbb E_C$ to $X$ we get $$\widetilde{X}_C:=X_C\times_{\mathbb E_C}\widehat{\widetilde{\mathbb E}}_C=\Spa(R_{\infty},R^+_{\infty}).$$
	
	\begin{prop}
		Suppose $S \in *_{\kappa,\proet},$ then we have $$H^i_{\proket}(X_C\times S,\gr^j\mathcal{O}\mathbb B_{\operatorname{dR,log}})=\left\{\begin{aligned}
			& \mathscr C^0(S,R\widehat{\otimes}_KC(j)) & \text{ if }i=0  \\
			& 0 & \text{ if }i>0
		\end{aligned}\right.,$$ where $j \in \Z$ is the Tate twist.
	\end{prop}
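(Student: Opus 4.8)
The plan is to reduce the global computation to a local one via the local structure theory from Section~\ref{period}, and then to compute a continuous group cohomology by hand. First I would use Corollary~\ref{obdrlocalcor}, which gives an isomorphism of graded sheaves $\gr^j\mathcal{O}\mathbb B_{\operatorname{dR,log}}\simeq t^j\widehat{\mathcal{O}}_{X_{\proket}}[W_1,\dots,W_n]$ over $X_{\proket}/\widetilde{X}$. Since polynomials in $W_1,\dots,W_n$ are a countable direct sum of copies of $\widehat{\mathcal O}_{X_{\proket}}(j)$, and since the relevant cohomology commutes with such direct sums (the objects are coherent enough that one may pass the direct sum through $R\Gamma$ over a qcqs space), the problem reduces to computing $H^i_{\proket}(X_C\times S,\widehat{\mathcal O}_{X_{\proket}}(j))$. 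By the pro-Kummer-\'etale descent along the Galois cover $\widetilde{X}_C\times S\to X_C\times S$ with group $\Gamma=\operatorname{Hom}(P^{\operatorname{gp}}_{\Q}/P^{\operatorname{gp}},\boldsymbol\mu_\infty)$, combined with the vanishing of higher cohomology of $\widehat{\mathcal O}^+$ on log affinoid perfectoid objects (so that $R\Gamma_{\proket}(\widetilde X_C\times S,\widehat{\mathcal O})$ is concentrated in degree $0$ and equals $\mathscr C(S,R_\infty\widehat\otimes_C C)$ — more precisely $\mathscr{C}(S,R_\infty)$ with its Tate twist), this becomes the continuous cohomology $H^i_{\cont}(\Gamma,\mathscr C(S,R_\infty)(j))$.

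**The group cohomology computation.** The next step is the heart of the matter: computing $H^*_{\cont}(\Gamma,R_\infty(j))$ and showing it is $R\widehat\otimes_K C(j)$ in degree $0$ and vanishes in higher degrees. Here one uses the explicit decomposition $R_\infty=\bigoplus_{a\in P^{\operatorname{gp}}_{\Q}/P^{\operatorname{gp}}} R\cdot T^a$ (completed), on which $\gamma\in\Gamma$ acts on the summand indexed by $a$ by the scalar $\gamma(a)\in\boldsymbol\mu_\infty$; the $a=0$ summand is $R$ with trivial action, contributing $R\widehat\otimes_K C$ after accounting for the $\widehat{\mathcal O}^{+}$-coefficients, and each $a\neq 0$ summand is a nontrivial character of $\Gamma$, hence acyclic for continuous cohomology of this profinite group (the averaging/telescoping argument: pick $\gamma$ with $\gamma(a)\neq 1$, then $\gamma-1$ is invertible on that line). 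Passing to the $\mathscr C(S,-)$-valued version only tensors everything with $\mathscr C(S,\Z)$, which is exact, so the vanishing and the degree-$0$ identification persist. Finally one twists by $t^j$ to recover the Tate twist $(j)$, giving exactly the stated formula, with the $H^0$ being $\mathscr C^0(S,R\widehat\otimes_K C(j))$.

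**Main obstacle.** The technical crux is interchanging the infinite direct sum $\bigoplus_{m\ge 0}$ over monomials in $W_1,\dots,W_n$ (equivalently the sum over $P^{\operatorname{gp}}_{\Q}/P^{\operatorname{gp}}$) with the cohomology functors and the $t$-adic completions, and controlling topologies throughout — the graded pieces $\gr^j\mathcal{O}\mathbb B_{\operatorname{dR,log}}$ are genuinely infinite-dimensional over $\widehat{\mathcal O}_{X_{\proket}}$, so one must argue that $R\Gamma_{\proket}$ commutes with the relevant colimit. On a spectral space like $X_C\times S$ (qcqs) this is fine for \'etale-type cohomology, but one should be careful that the completion in the definition of $\mathcal O\mathbb B_{\operatorname{dR,log}}$ is already accounted for by passing to the associated graded, where no completion survives — this is precisely why one works with $\gr^j$ rather than with $\fil^r$ directly. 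A secondary point requiring care is the $\Gamma$-equivariance of the identifications and the fact that $\Gamma$ acts trivially on the profinite factor $S$, so the $\mathscr C(S,-)$ decorations are simply carried along; I expect this to be routine once the untwisted, $S$-free case is settled. The argument is essentially the logarithmic analogue of \cite[Proposition 6.16]{scholze2013p}, adapted to the cover $\widehat{\widetilde{\mathbb E}}_C\to\mathbb E_C$ and its Galois group $\Gamma$.
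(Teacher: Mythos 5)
Your overall skeleton (Cartan--Leray along the $\Gamma$-cover $\widetilde X_C\times S\to X_C\times S$ followed by an explicit continuous group cohomology computation) matches the paper's proof, but the core computation is wrong and, as stated, would prove a false statement. The splitting of Corollary \ref{obdrlocalcor} holds only over the localized site $X_{\proket}/\widetilde X$ and is \emph{not} $\Gamma$-equivariant: $\Gamma$ acts on the variables $W_1,\dots,W_n$ nontrivially (by the affine/unipotent action coming from Remark \ref{obdrconn}, $\gamma$ translating the $y_j$ by cocycle terms), so $\gr^j\mathcal{O}\mathbb B_{\operatorname{dR,log}}$ does not decompose on $X_{C,\proket}$ as a direct sum of copies of $\widehat{\mathcal O}_{X_{\proket}}(j)$, and the problem does not reduce to $H^i_{\proket}(X_C\times S,\widehat{\mathcal O}_{X_{\proket}}(j))$. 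If it did, it would contradict the proposition itself: that cohomology is nonzero for $0<i\leq n$ --- it is exactly what produces $R^i\lambda_*\widehat{\mathcal O}_{X_{\proket}}\simeq \underline{\Omega_X^{i,\log}(-i)}$ in Proposition \ref{pushforward}. For the same reason, your central claim that $H^i_{\operatorname{cont}}(\Gamma,R_\infty(j))$ vanishes for $i>0$ is false: the integral-weight (in particular weight-zero) summand $R$ carries the trivial $\Gamma$-action, and $\Gamma\simeq\widehat{\Z}(1)^n$ has nonvanishing higher continuous cohomology with trivial coefficients in characteristic-zero Banach modules; only the non-integral-weight summands are killed by your ``$\gamma-1$ invertible'' argument.

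The missing idea is precisely the polynomial variables you dropped after the first step. After Cartan--Leray the coefficient module is $\gr^0\mathcal{O}\mathbb B_{\operatorname{dR,log}}(\widetilde X_C\times S)$, i.e.\ (a continuous-functions-on-$S$ version of) $R_\infty\widehat{\otimes}\,C[W_1,\dots,W_n]$ with $\Gamma$ acting on the $W_i$ by translations; it is this unipotent action that simultaneously kills the higher $\Gamma$-cohomology of the integral-weight part and cuts the invariants down from $R_\infty$ to $R\widehat{\otimes}_K C$ (the model case being $H^*_{\operatorname{cont}}(\Z_p, C[W])$ with $\gamma W=W+1$, which is $C$ in degree $0$ and zero above). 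This is exactly how the paper argues: twist to reduce to $j=0$; for $S=*$ invoke \cite[Lemma 3.3.15]{dllz2023logrh}; for general $S$ apply Cartan--Leray to $\widetilde X_C\times S\to X_C\times S$ together with the acyclicity of $\gr^0\mathcal{O}\mathbb B_{\operatorname{dR,log}}$ on log affinoid perfectoids, rerun the proof of loc.\ cit.\ with $\mathscr C^0(S,-)$-coefficients, and finish with the identification $\mathscr C^0(S,R)\widehat{\otimes}_K C\simeq \mathscr C^0(S,R\widehat{\otimes}_K C)$. Your treatment of the profinite set $S$ and the acyclicity of nontrivial characters are fine, but without the $W_i$ and their twisted action the stated vanishing for $i>0$ does not hold, so the proposal has a genuine gap at its heart.
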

	
	\begin{proof}
		By twisting we can reduce to the case $j=0$. When $S=*,$ this is \cite[Lemma 3.3.15]{dllz2023logrh}. For the general case, since $\widehat{\widetilde{\mathbb E}}_C \to \mathbb E_C$ is a Galois pro-finite Kummer \'etale cover with Galois group $\Gamma$, the Cartan–Leray spectral sequence associated to the affinoid perfectoid $\Gamma$-cover $\widetilde{X}_C \times S\to X_C \times S$, combined with the vanishing theorem for $\gr^0\mathcal{O}\mathbb B_{\operatorname{dR,log}}$ gives $$H^i_{\proket}(X_C\times S,\gr^0\mathcal{O}\mathbb B_{\operatorname{dR,log}})=H^i_{\operatorname{cont}}(\Gamma,\gr^0\mathcal{O}\mathbb B_{\operatorname{dR,log}}(X_C\times S))$$ for $i \geq 0.$ Note that $X_C\times S=\Spa(\mathscr C^0(S,R_{\infty}),\mathscr C^0(S,R^+_{\infty})).$ The same proof as in \cite[Lemma 3.3.15]{dllz2023logrh} gives $H^i_{\proket}(X_C\times S,\gr^0\mathcal{O}\mathbb B_{\operatorname{dR,log}})=0$ for $i>0$ and $$H^0_{\proket}(X_C\times S,\gr^0\mathcal{O}\mathbb B_{\operatorname{dR,log}})=\mathscr C^0(S,R) \widehat{\otimes}_KC.$$ The proposition then follows from the fact that $\mathscr C^0(S,R) \widehat{\otimes}_KC \simeq \mathscr C^0(S,R\widehat{\otimes}_KC),$ which follows from \cite[
		Corollary 10.5.4]{perez2010lcs}.
	\end{proof}
	
	\begin{prop} \label{bdrdirectimage}
		Let $X$ be a log smooth rigid analytic variety defined over $K$, with fine log-structure. Let $(\mathcal{E},\nabla,\fil^{\bullet})$ be a filtered $\mathcal O_X$-module with integrable log connection, with associated $\mathbb B_{\operatorname{dR}}^+$-local system $\mathbb M^+:=\fil^0(\mathcal{E}\otimes_{\mathcal O_X}\mathcal{O}\mathbb B_{\operatorname{dR,log}})^{\nabla=0},$ and $\mathbb M=\mathbb M^+[1/t].$ Denote by $\lambda$ the morphism of sites $$X_{\proket}/X_C \simeq X_{C,\proket} \to X_{C,\et,\cond}.$$ Then, we have a natural quasi-isomorphism of complexes of sheaves valued in $\Mod_K^{\cond}$ on $X_{C,\et}$: $$(R\lambda_*\mathbb M)^{\blacktriangledown}\simeq \underline{\logdr}_X^{\mathcal E} \otimes_K^{\blacksquare}\underline{B_{\operatorname{dR}}}.$$ Moreover, the quasi-isomorphism is compatible with filtrations, where the filtration on the right side is given by the tensor product filtration.
	\end{prop}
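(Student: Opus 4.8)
The statement is the logarithmic analog of the local computation underlying \cite[Theorem 1.1.8]{bosco2023padicproetalecohomologydrinfeld}, and I would follow the same strategy: reduce to a toric chart, resolve $\mathbb{M}$ by a log de Rham complex via the Poincar\'e lemma for $\mathcal{O}\mathbb{B}_{\operatorname{dR,log}}$, compute $R\lambda_*$ using the graded computation just established, and then contract the auxiliary power-series variables $W_1,\dots,W_n$. \textbf{Local reduction and Poincar\'e lemma.} Since the assertion is \'etale-local on $X_C$, I may assume $X=\Spa(R,R^+)\to\mathbb{E}$ is strictly \'etale (a composite of rational embeddings and finite \'etale maps) with toric chart $P$, so the descriptions of Section \ref{period} apply over $\widetilde{X}_C$. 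As $\mathcal{E}$ is associated to $\mathcal{M}=\mathbb{M}^+\otimes_{\mathbb{B}_{\operatorname{dR}}^+}\mathcal{O}\mathbb{B}_{\operatorname{dR,log}}$, we have a filtered, connection-compatible identification $\mathbb{M}\otimes_{\mathbb{B}_{\operatorname{dR}}}\mathcal{O}\mathbb{B}_{\operatorname{dR,log}}\simeq\mathcal{E}\otimes_{\mathcal{O}_X}\mathcal{O}\mathbb{B}_{\operatorname{dR,log}}$; applying the filtered Poincar\'e lemma for $\mathcal{O}\mathbb{B}_{\operatorname{dR,log}}$ (tensored with $\mathbb{M}^+$ and with $t$ inverted) gives a filtered quasi-isomorphism on $X_{C,\proket}$ between $\mathbb{M}$ (resp. $\fil^r\mathbb{M}$) and $\mathcal{E}\otimes_{\mathcal{O}_X}[\mathcal{O}\mathbb{B}_{\operatorname{dR,log}}\to\mathcal{O}\mathbb{B}_{\operatorname{dR,log}}\otimes_{\mathcal{O}_X}\Omega^{\log,1}_X\to\cdots]$ (resp. its $\fil^r$), with differential the total connection $\nabla_{\mathcal{E}}\otimes 1+1\otimes\nabla$. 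So it suffices to compute $R\lambda_*$ of this complex and apply $(-)^\blacktriangledown$.

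\textbf{Pushing forward the period sheaf.} Next I would compute $R\lambda_*\mathcal{O}\mathbb{B}_{\operatorname{dR,log}}$ together with its filtration. By Corollary \ref{obdrlocalcor}, $\fil^r\mathcal{O}\mathbb{B}_{\operatorname{dR,log}}\simeq t^r\mathbb{B}_{\operatorname{dR}}^+\{W_1,\dots,W_n\}$ and $\gr^j\mathcal{O}\mathbb{B}_{\operatorname{dR,log}}\simeq t^j\widehat{\mathcal{O}}_{X_{\proket}}[W_1,\dots,W_n]$, and the proposition preceding this one shows $R\lambda_*\gr^j\mathcal{O}\mathbb{B}_{\operatorname{dR,log}}$ is concentrated in degree $0$ with value $S\mapsto\mathscr{C}^0(S,R\widehat{\otimes}_KC(j))[W_1,\dots,W_n]$. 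Passing from the graded pieces to the filtration — the filtration is complete and the positive-degree graded cohomology vanishes, so the filtered computation reduces to the graded pieces up to completion — and applying the decompletion functor $(-)^\blacktriangledown$, which by construction replaces the $R\widehat{\otimes}_KC$'s occurring in the graded pieces by the solid tensor $\underline{R}\otimes_K^\blacksquare\underline{C}$ and reassembles the $B_{\operatorname{dR}}$-structure, I would obtain a filtered quasi-isomorphism of condensed \'etale sheaves $(R\lambda_*\mathcal{O}\mathbb{B}_{\operatorname{dR,log}})^\blacktriangledown\simeq\underline{\mathcal{O}_X}\otimes_K^\blacksquare\underline{B_{\operatorname{dR}}}\{W_1,\dots,W_n\}$, compatibly with the connection, which on $W_j=t^{-1}y_j$ is $t^{-1}\delta(a_j)$ by Remark \ref{obdrconn}, i.e. the canonical connection of the power-series ring.

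\textbf{Contracting the $W_j$ and globalizing.} Finally, the de Rham (Koszul) complex of $\underline{\mathcal{O}_X}\otimes_K^\blacksquare\underline{B_{\operatorname{dR}}}\{W_1,\dots,W_n\}$ for the operators $\partial/\partial W_j$ is a resolution of $\underline{\mathcal{O}_X}\otimes_K^\blacksquare\underline{B_{\operatorname{dR}}}$ placed in degree $0$ — a formal Poincar\'e lemma for $t$-adically convergent power series, valid because $t$ is invertible, so one may integrate formally in the $W_j$. Combining this with the local reduction of the first step and splitting the total connection, the $W$-directions of the log de Rham complex contract, leaving $\underline{\logdr}_X^{\mathcal{E}}\otimes_K^\blacksquare\underline{B_{\operatorname{dR}}}$; since $W_j$ sits in filtration degree $-1$, matching the shift built into the de Rham differential on $\fil^\bullet$, the contraction is strictly filtered and yields the $\fil^r$ statement. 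I would then globalize using that $\mathcal{O}\mathbb{B}_{\operatorname{dR,log}}$, its log connection, and $(-)^\blacktriangledown$ are functorial, so the local quasi-isomorphisms glue and are independent of the chosen chart.

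\textbf{Main obstacle.} The hard part will be the condensed/solid bookkeeping in the second step: passing from the clean graded computation to the $t$-adically completed, filtered statement while showing that $R\lambda_*$ commutes with the $\ker\theta_{\log}$-adic completion defining $\mathcal{O}\mathbb{B}_{\operatorname{dR,log}}$ after applying $(-)^\blacktriangledown$ — equivalently, that the relevant complexes are derived complete for $\fil^\bullet$, that the higher derived limits vanish, and that the solid tensor product really does compute, globally, what the completed tensor products compute on graded pieces. In particular one must verify that the tensor-product filtration on $\underline{\logdr}_X^{\mathcal{E}}\otimes_K^\blacksquare\underline{B_{\operatorname{dR}}}$ agrees with the one transported from $\fil^\bullet\mathcal{O}\mathbb{B}_{\operatorname{dR,log}}$; this is the technical core and the reason the solid formalism and the functor $(-)^\blacktriangledown$ are needed.
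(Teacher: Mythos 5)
Your overall scaffolding (Poincar\'e lemma for $\mathcal{O}\mathbb{B}_{\operatorname{dR,log}}$, reduction to a toric chart, use of Corollary \ref{obdrlocalcor}) is the right starting point, but the pivotal intermediate claim in your second step is false, and it is exactly where the real content lies. You assert that $(R\lambda_*\mathcal{O}\mathbb{B}_{\operatorname{dR,log}})^{\blacktriangledown}\simeq\underline{\mathcal O_X}\otimes_K^{\blacksquare}\underline{B_{\operatorname{dR}}}\{W_1,\dots,W_n\}$, i.e.\ that the pushforward retains the power-series variables, and then you contract them afterwards by a ``formal Poincar\'e lemma in the $W_j$''. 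But the $W_j=t^{-1}y_j$ are only sections over the pro-Kummer-\'etale cover $\widetilde X_C$, not over $X_C$: the Galois group $\Gamma$ acts on them by unipotent shifts $\gamma(W_j)=W_j-c_j(\gamma)$ (since $\gamma$ rescales $[\alpha^\flat(a_j)]$ by a root of unity, so $\gamma(y_j)=y_j-c_j(\gamma)t$). Hence $R\lambda_*$, computed by Cartan--Leray as continuous $\Gamma$-cohomology, does not produce a $B_{\operatorname{dR}}\{W\}$-algebra at all. What actually happens is that the $W$-variables cancel against the higher $\Gamma$-cohomology \emph{inside} the pushforward: this is the content of the proposition preceding Proposition \ref{bdrdirectimage} (the log analogue of \cite[Proposition 6.16]{scholze2013p}, \cite[Lemma 3.3.15]{dllz2023logrh}), which says $R\lambda_*\gr^j\mathcal{O}\mathbb{B}_{\operatorname{dR,log}}$ is concentrated in degree $0$ with value $R\widehat{\otimes}_KC(j)$ --- no $W$'s survive, and the higher direct images vanish precisely because the shift operators make the relevant Koszul complex acyclic. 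So your Step 3 contracts variables that are no longer there, while the genuine cancellation --- the heart of the proof --- has been displaced into the ``bookkeeping'' paragraph; and if your Step 2 were taken at face value one would also have to confront nonvanishing higher direct images, which your degree-$0$ description silently discards.

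For comparison, the paper proceeds differently: it first constructs a \emph{global} natural filtered morphism $\underline{\logdr}_X^{\mathcal E}\otimes_K^{\blacksquare}\underline{B_{\operatorname{dR}}}\to R\lambda_*(\mathcal M'\otimes\Omega_X^{\log,\bullet})^{\blacktriangledown}$ (via the map $\mathcal O_X\otimes_K^{\blacksquare}\underline{B_{\operatorname{dR}}}\to(\lambda_*\mathcal{O}\mathbb{B}_{\operatorname{dR,log}})^{\blacktriangledown}$ of \cite[Corollary 6.12]{bosco2023padicproetalecohomologydrinfeld}, \cite[Lemma 3.3.2]{dllz2023logrh}), then uses completeness and exhaustiveness of the filtrations together with \cite[Lemma 5.2]{bms2} to reduce to graded pieces, and finally deduces the graded statement from Bosco's non-logarithmic result by factoring $\lambda$ through the pro-\'etale site and checking $\gr^j\mathcal{O}\mathbb{B}_{\operatorname{dR}}\xrightarrow{\ \simeq\ }R\alpha_*\gr^j\mathcal{O}\mathbb{B}_{\operatorname{dR,log}}$ via Corollary \ref{obdrlocalcor}. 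Constructing the map globally first also disposes of your gluing/chart-independence issue, which in your write-up is only asserted. Your route (redoing the full filtered computation in the log setting) can be made to work, but only after replacing your Step 2 by the correct statement that the graded pushforwards are concentrated in degree $0$ without the $W$'s, and performing the $W$-contraction inside the $\Gamma$-cohomology rather than after it.
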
 
	
	\begin{proof}
		Suppose $X$ is of dimension $n$ and connected. Denote by $\mathcal M$ the $\mathcal O \mathbb B_{\operatorname{{dR,log}}}^+$-module with integrable log connection associated to $(\mathcal{E},\nabla,\fil^{\bullet})$ , and $$\mathcal M':=\mathcal M \otimes_{O \mathbb B_{\operatorname{{dR,log}}}^+}O \mathbb B_{\operatorname{{dR,log}}}\simeq \mathcal E \otimes_{\mathcal{O}_X}O \mathbb B_{\operatorname{{dR,log}}}.$$ The logarithmic Poincar\'e lemma gives an exact sequence of sheaves on $X_{C,\proket}$: $$0\to \mathbb M \to \mathcal M' \xrightarrow{\nabla} \mathcal M' \otimes_{\mathcal O_X}\Omega^{\operatorname{log},1}_X \xrightarrow{\nabla} \cdots \xrightarrow{\nabla} \mathcal M'\otimes_{\mathcal O_X}\Omega^{\operatorname{log},n}_X \to 0,$$ which is also exact after taking $\fil^r:$ $$0\to \fil^r\mathbb M' \to \fil^r\mathcal{M'} \xrightarrow{\nabla} (\fil^{r-1}\mathcal{M'})\otimes_{\mathcal O_X}\Omega^{\operatorname{log},1}_X \xrightarrow{\nabla} \cdots \xrightarrow{\nabla} (\fil^{r-n}\mathcal{M'})\otimes_{\mathcal O_X}\Omega^{\operatorname{log},n}_X \to 0.$$ Therefore, we have a natural quasi-isomorphism between $(R\lambda_*\mathbb M)^{\blacktriangledown}$ and $$\mathcal C^{\bullet}:=R\lambda_*\left[\mathcal M' \xrightarrow{\nabla} \mathcal M' \otimes_{\mathcal O_X}\Omega^{\operatorname{log},1}_X \xrightarrow{\nabla} \cdots \xrightarrow{\nabla} \mathcal M'\otimes_{\mathcal O_X}\Omega^{\operatorname{log},n}_X \right]^{\blacktriangledown},$$ which is compatible with filtrations. 
		
		We claim we have a natural filtered quasi-isomorphism 
		\begin{equation} \label{naturalbdr}
			\underline{\logdr}_X^{\mathcal E} \otimes_K^{\blacksquare}\underline{B_{\operatorname{dR}}} \xrightarrow{\simeq} \mathcal C^{\bullet}
		\end{equation}
		of complexes of sheaves on $X_{C,\et}$ (as explained in the Lemma 2.5 of \cite{lz2017rh}, we can regard these sheaves as sheaves over $X_{C,\et}$). To construct this morphism, it suffices to define a natural morphism $\mathcal O_X \otimes_K^{\blacksquare} \underline{B_{\operatorname{dR}}} \to (\lambda_*\mathcal{O}\mathbb  B_{\operatorname{dR,log}})^{\blacktriangledown}$ of sheaves on $X_{C,\et},$ which is compatible with filtration. This is already constructed in the proof of \cite[Corollary 6.12]{bosco2023padicproetalecohomologydrinfeld} (also in \cite[Lemma 3.7]{lz2017rh} and \cite[Lemma 3.3.2]{dllz2023logrh}), by noting that the pushforward of $\mathcal{O}\mathbb B_{\operatorname{dR,log}}$ along the natural morphism of sites $X_{C,\proket}\to X_{C,\proet}$ is $\mathcal{O}\mathbb B_{\operatorname{dR}}.$
		
		As explained in \cite[Corollary 6.12]{bosco2023padicproetalecohomologydrinfeld}, since the filtration on $\underline{\logdr}_X^{\mathcal E} \otimes_K^{\blacksquare} \underline{B_{\operatorname{dR}}}$ and $\mathcal C^{\bullet}$ is exhaustive and complete, by \cite[Lemma 5.2]{bms2}, it suffices to show the natural morphism \ref{naturalbdr} is a quasi-isomorphism on graded pieces, which means that for any $r \in \Z$, we have a natural isomorphism $$\gr^j(\underline{\logdr}_X^{\mathcal E} \otimes_K^{\blacksquare}\gr^j\underline{B_{\operatorname{dR}}})\to \gr^j \mathcal C^{\bullet}$$ on $X_{C,\et}.$ Since (\ref{naturalbdr}) is a filtered morphism, it suffices to show that for any locally free $\mathcal O_X$-module $\f$ on $X_{\et}$ of finite rank, we have a natural quasi-isomorphism $$\underline{\f} \otimes_K^{\blacksquare} \underline{B_{\operatorname{dR}}} \xrightarrow{\simeq} R\lambda_*\left(\nu^*\f\otimes_{\mathcal{O}_X}\gr^j\mathcal{O}\mathbb B_{\operatorname{dR,log}}\right)^{\blacktriangledown}$$ on $X_{C,\et},$ for any $r \in \Z$. Write $\lambda:X_{C,\proket} \to X_{C,\et,\cond}$ as 
		$$X_{C,\proket} \xrightarrow{\alpha} X_{C,\proet} \xrightarrow{\lambda'} X_{C,\et,\cond},$$
		and write $\nu:X_{\proket} \to X_{\et}$ as $$X_{\proket} \to X_{\proet} \xrightarrow{\nu'} X_{\et}.$$ 
		According to the proof of \cite[Corollary 6.12]{bosco2023padicproetalecohomologydrinfeld}, we have a natural quasi-isomorphism $$\underline{\f} \otimes_K^{\blacksquare} \gr^j\underline{B_{\operatorname{dR}}} \xrightarrow{\simeq} R\lambda'_*\left(\nu'^*\f\otimes_{\mathcal{O}_X}\gr^j\mathcal{O}\mathbb B_{\operatorname{dR}}\right)^{\blacktriangledown}$$ on $X_{C,\et},$ for any $r \in \Z$. Therefore, it suffices to show that we have a natural quasi-isomorphism $$\nu'^*\f\otimes_{\mathcal{O}_X}\gr^j\mathcal{O}\mathbb B_{\operatorname{dR}} \xrightarrow{\simeq} R\alpha_*\left(\nu^*\f\otimes_{\mathcal{O}_X}\gr^j\mathcal{O}\mathbb B_{\operatorname{dR,log}}\right)$$ on $X_{C,\proet},$ for any $r \in \Z$. Since $\f$ is locally free, it suffices to show that $$\gr^j\mathcal{O}\mathbb B_{\operatorname{dR}} \to R\alpha_*\gr^j\mathcal{O}\mathbb B_{\operatorname{dR,log}}$$ is an isomorphism for any $r \in \Z$, which follows from the local description of $\gr^j\mathcal{O}\mathbb B_{\operatorname{dR,log}}$, i.e., Corollary \ref{obdrlocalcor} (Note that $ R\alpha_*\mathbb B_{\operatorname{dR,log}}=\mathbb B_{\operatorname{dR}}$).
	\end{proof}

	Now we can prove Theorem \ref{bdrcoh}.
	
	\begin{proof}[Proof of theorem \ref{bdrcoh}]
		The morphism of sites $\lambda:X_{C,\proket} \to X_{C,\et,\cond}$ factors as $$\lambda:X_{C,\proket} \xrightarrow{\mu} X_{C,\proket,\cond} \to X_{C,\et,\cond}.$$             We have $$\rg_{\underline{\proket}}(X_C,\mathbb{M})\simeq \rg_{\proket,\cond}(X_C,R\mu_*\mathbb{M})\simeq \rg_{\et,\cond}(X_C,R\lambda_*\mathbb{M})\simeq \rg_{\et}(X_C,\left(R\lambda_*\mathbb{M}\right)^{\blacktriangledown}).$$ Let $\epsilon:X_{C,\et}\to X_{\et}$ be the base change morphism, then the above property, combined with \cite[Lemma 5.6]{bosco2023padicproetalecohomologydrinfeld} and \cite[Lemma 6.13]{bosco2023padicproetalecohomologydrinfeld}, we have $$\rg_{\underline{\proket}}(X_C,\mathbb{M})\simeq \rg_{\et}(X_C,\left(R\lambda_*\mathbb{M}\right)^{\blacktriangledown}) \simeq \rg_{\et}(X_C,\underline{\logdr}_X^{\mathcal E} \otimes_K^{\blacksquare}\underline{B_{\operatorname{dR}}}),$$ which conclude the proof of (i).
		
		For the second part, it suffices to show that the natural morphism $$\fil^r(\rg_{\underline{\operatorname{logdR}}}(X,\mathcal{E})\otimes_K^{L_\blacksquare}B_{\operatorname{dR}})\to \rg(X,\fil^r(X_C,\underline{\logdr}_X^{\mathcal E} \otimes_K^{\blacksquare}\underline{B_{\operatorname{dR}}})) \simeq\rg_{\underline{\proket}}(X_C,\fil^r\mathbb{M}) $$ is a quasi-isomorphism. We may assume $X$ is connected and paracompact, then the claim follows from \cite[Theorem 5.20]{bosco2023padicproetalecohomologydrinfeld}.
	\end{proof}	
	
	\begin{rem}
		If $X$ is a quasi-compact smooth rigid analytic varieties over $K$, we have a $\mathscr{G}_K$-equivariant, compatible with filtrations, natural isomorphisms in $D(\Mod_K^{\cond})$:
		\begin{align*}
			\rg_{\underline{\proket}}(X_C,\mathbb{M})&\simeq  \colim_{j\in \N} \rg_{\underline{\proket}}(X_C,\fil^{-j}\mathbb{M})\\
			&\simeq \colim_{j\in \N}\fil^{-j}(\rg_{\underline{\operatorname{logdR}}}(X,\mathcal{E})\otimes_K^{L_\blacksquare}B_{\operatorname{dR}}) \\
			&\simeq \rg_{\underline{\operatorname{logdR}}}(X,\mathcal{E})\otimes_K^{L_\blacksquare}B_{\operatorname{dR}}.
		\end{align*}  
		The first isomorphism follows from the fact that the site $X_{\proket}$  is coherent, see \cite[Proposition 5.1.5]{dllz2023logadic}. The second one is the above theorem. The last one follows from the fact that $\otimes_K^{L_\blacksquare}$ commutes with filtered colimits, and filtered colimits are exact in $\Mod_K^{\cond}$.
	\end{rem}
	
	\begin{rem}
		This theorem allows us to compare the pro \'etale cohomology and pro Kummer \'etale cohomology of period sheaves. Suppose that $X$ is a smooth rigid analytic varieties over $K$, $D \subset X$ is a strictly normal crossing divisor, and $U=X-D$. Endow $X$ with the log-structure coming from $D$. Then the canonical morphism  $$\rg_{\underline{\proket}}(X,\mathbb{M}) \xrightarrow{\simeq} \rg_{\underline{\proet}}(U,\mathbb{M})$$ is a (non filtered) quasi-isomorphisms in $D(\Mod_K^{\cond})$. To see this, we can assume that $X$ is quasi-compact, then the claim follows from the above remark, Theorem \ref{logdr} and \cite[Remark 6.14]{bosco2023padicproetalecohomologydrinfeld}.
	\end{rem}
	
	\begin{rem}
		One can define filtered $\mathcal O_{X_{\ket}}$-modules with integrable connections over $X_{\ket}$ in the same way, leading to analogous results. In fact, by \cite[Definition 4.3.6]{dllz2023logadic}, a coherent $\mathcal O_{X_{\ket}}$-module is locally the inverse image of a coherent sheaf on the analytic site of $X$. 
	\end{rem}
	
	\begin{rem}
		If $X$ is a log smooth, proper rigid analytic varieties over $K$, $(\mathcal{E},\nabla,\fil^{\bullet})$ be a filtered $\mathcal O_X$-module with integrable log connection, then by Lemma \ref{finitenessderham} and the finiteness of coherent sheaf cohomology on proper rigid analytic varieties, we have $H^i_{\underline{\operatorname{logdR}}}(X,\mathcal{E})=\underline{H^i_{{\operatorname{logdR}}}(X,\mathcal{E})}$ for all $i\geq 0$. Therefore $$H^i_{\underline{\operatorname{logdR}}}(X,\mathcal{E})\otimes_K^{L_\blacksquare}B_{\operatorname{dR}}=\underline{H^i_{{\operatorname{logdR}}}(X,\mathcal{E})\otimes_KB_{\operatorname{dR}}}.$$
		In particular, this recovers the comparison theorem in \cite[Theorem 3.2.7]{dllz2023logrh}, where they generalize \cite[Corollary 1.8]{scholze2013p}.
	\end{rem}
	
	\section{Logarithmic $B_{\operatorname{dR}}^+$-cohomology for rigid analytic varieties}
	
	In this chapter, we focus on logarithmic rigid analytic varieties over $C$. As outlined in the introduction, establishing a meaningful log de Rham-\'etale comparison theorem requires developing a $B_{\operatorname{dR}}^+$-cohomology theory for such varieties. To achieve this, we adopt the approaches presented in \cite{bms1} and \cite{bosco2023rational}.
	
	This chapter is dedicated to proving Theorem \ref{drcomparison}, which is essential in formulating the semistable conjecture for the geometric case as stated in \cite{xshk}. To maintain focus, we limit our exploration of logarithmic $B_{\operatorname{dR}}^+$-cohomology to the essentials. A more comprehensive study of logarithmic $B_{\operatorname{dR}}^+$-cohomology, along with logarithmic $B$-cohomology, will be presented in a subsequent article.

	\subsection{Preliminary: The $L\eta_f$ functor}
	
	We review a construction introduced in \cite{berthelot1978crys} and generalized in \cite{bms1}, which is essential for the construction of logarithmic $B_{\operatorname{dR}}^+$-cohomology.
	
	Let $(T,\mathcal O_T)$ be a ringed topos. Denote by $D(\mathcal O_T)$ (resp. $K(\mathcal O_T)$) the derived category (resp. homotopy category) of $\mathcal O_T$-modules. Let $(f)\subset \mathcal O_T$ be an invertible ideal sheaf.
	
	\begin{defn} \cite[Definition 8.6]{berthelot1978crys}
		Let $M^{\bullet}\in K(\mathcal O_T)$ be a $f$-torsion-free complex of $\mathcal O_T$-modules. Define a new $f$-torsion-free complex $\eta_fM^{\bullet}\in K(\mathcal O_T)$ as $$\eta_f(M^{\bullet})^i:=\{x \in f^iM^i|dx \in f^{i+1}M^{i+1}\}$$ for all $i\in\Z$.
	\end{defn}
	
	The definition of $\eta_f$ depends only on the ideal sheaf $(f)$, and it is independent of the generator of $(f)$. According to \cite[Proposition 8.19]{berthelot1978crys}, $\eta_f$ factors canonically over a (not triangulated!) functor $$L\eta_f:D(\mathcal O_T)\to D(\mathcal O_T)$$ on derived categories.
	
	We now define a filtration on $L\eta_fM$. We refer the reader to \cite[Chapter 5]{bms2} for notations and basic proportions on filtered derived $\infty$-category of $R$-modules. Recall that the filtered derived $\infty$-category of $\mathcal O_T$-modules is defined to be $$DF(\mathcal O_T):=\operatorname{Fun}(\Z^{\operatorname{op}},D(\mathcal O_T)).$$ Let $F\in DF(\mathcal O_T)$, we denote by $$\gr^i(F):=F(i)/F(i+1)$$ the $i$-th graded piece of $F$. 
	
	\begin{defn} \label{decalagefiltration}
		Let $M\in D(\mathcal O_T)$, we define $\fil^{\star}L\eta_fM$ a filtration on $L\eta_fM$ as follows: choose a representation $M^{\bullet}\in K(\mathcal O_T)$ of $M$ such that each $M^i$ is $f$-torsion-free. Then the $i$-th level of $\fil^{\star}L\eta_fM$ is given by $$\fil^{i}L\eta_fM:=f^iM^{\bullet}\cap \eta_fM^{\bullet}.$$
	\end{defn}
	
	To understand the filtration on $L\eta_fM$, we recall the following definition introduced in \cite{bms2}, an elaboration of \cite[Appendix A]{beilinson1987perverse}.	
	
	\begin{defn} \cite[Definition 5.3]{bms2}
		Let $ DF^{\leq 0}(\mathcal O_T) \subset DF(\mathcal O_T)$ denote the full subcategory consisting of objects $ F $ such that $ \operatorname{gr}^i(F) \in D^{\leq i}(\mathcal O_T) $ for all $ i $. Dually, $ DF^{\geq 0}(\mathcal O_T) \subset DF(\mathcal O_T)$ is the full subcategory consisting of objects $ F $ such that $ F(i) \in D^{\geq i}(\mathcal O_T) $ for all $ i $. We call the pair $(DF^{\leq 0}(\mathcal O_T), DF^{\geq 0}(\mathcal O_T)) $ the Beilinson $ t $-structure on $ DF(\mathcal O_T)$.
	\end{defn}
	
	This definition is justified by Theorem \ref{beilinsont} below.
	
	\begin{thm} \cite[Theorem 5.4]{bms2} \label{beilinsont}		
		(1) The Beilinson $t$-structure $(DF^{\leq 0}(\mathcal O_T), DF^{\geq 0}(\mathcal O_T))$ is a $t$-structure on $DF(\mathcal O_T)$.
		
		(2) Let $\tau^{\leq 0}_B$ denote the connective cover functor associated with the $t$-structure from (1). Then, there exists a natural isomorphism $$\operatorname{gr}^i \circ \tau^{\leq 0}_B(-) \simeq \tau^{\leq i} \circ \operatorname{gr}^i(-).$$ 
		
		(3) The heart of the Beilinson $t $-structure, defined as $$DF(\mathcal O_T)^\heartsuit := DF^{\leq 0}(\mathcal O_T) \cap DF^{\geq 0}(\mathcal O_T),$$ which is equivalent to the abelian category $ \operatorname{Ch}(\mathcal O_T)$ of chain complexes of $\mathcal O_T$-modules. This equivalence can be described as follows: for $ F \in DF(\mathcal O_T)$, its 0-th cohomology $ H^0_B(F)$ in the Beilinson $t$-structure corresponds to the chain complex $(H^\bullet(\gr^\bullet(F)), d)$, where \( d \) is the boundary map induced by the exact triangle $$\gr^{i+1}(F) = {F(i+1)}/{F(i+2)} \to {F(i)}/{F(i+2)} \to  {F(i)}/{F(i+1)}=\gr^i(F).$$
	\end{thm}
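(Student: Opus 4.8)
The plan is to establish the three assertions in order: (1) by verifying the axioms of a $t$-structure on the stable $\infty$-category $DF(\mathcal O_T)=\operatorname{Fun}(\Z^{\operatorname{op}},D(\mathcal O_T))$ and reading (2) off from the resulting truncation functors, and (3) by an explicit computation in the heart. Throughout I use that $\operatorname{gr}^i(F)=\operatorname{cofib}(F(i+1)\to F(i))$, that evaluation $\operatorname{ev}_i\colon F\mapsto F(i)$ admits a left adjoint $\operatorname{ins}_i\colon D(\mathcal O_T)\to DF(\mathcal O_T)$ with $\operatorname{ins}_i(M)(n)=M$ for $n\le i$ and $0$ for $n>i$ (so that $\operatorname{gr}^j\operatorname{ins}_i(M)$ is $M$ if $j=i$ and $0$ otherwise), and that a $t$-structure on $DF(\mathcal O_T)$ is the same as a compatible $t$-structure on its homotopy category.

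For (1), the two shift inclusions $DF^{\le 0}(\mathcal O_T)[1]\subset DF^{\le 0}(\mathcal O_T)$ and $DF^{\ge 0}(\mathcal O_T)[-1]\subset DF^{\ge 0}(\mathcal O_T)$ are immediate, since $\operatorname{gr}^i$ and $\operatorname{ev}_i$ commute with shifts and $D^{\le i}[1]=D^{\le i-1}\subset D^{\le i}$. The substantive points are the orthogonality $\operatorname{Hom}_{DF(\mathcal O_T)}(F,G)=0$ for $F\in DF^{\le 0}(\mathcal O_T)$ and $G\in DF^{\ge 1}(\mathcal O_T):=DF^{\ge 0}(\mathcal O_T)[-1]$, and the existence of truncation triangles. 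For the orthogonality I would d\'evissage $F$ through its graded pieces: using the functorial fiber sequences $\sigma_{\ge k+1}F\to\sigma_{\ge k}F\to\operatorname{ins}_k(\operatorname{gr}^kF)$ (where $\sigma_{\ge k}F$ has graded pieces $\operatorname{gr}^iF$ in levels $i\ge k$ and $0$ below), together with the adjunction identity $\operatorname{Map}_{DF}(\operatorname{ins}_kM,G)\simeq\operatorname{Map}_{D(\mathcal O_T)}(M,G(k))$, everything reduces to the orthogonality $\operatorname{Hom}_{D(\mathcal O_T)}(D^{\le k},D^{\ge k+1})=0$, which is the $t$-structure on $D(\mathcal O_T)$, combined with the vanishing $\lim_N G(N+1)=0$ (the tower $(G(N+1))_N$ being degreewise eventually zero). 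Here I would first dispose of filtrations with finitely many nonzero graded pieces, where the d\'evissage terminates, and then bootstrap; the bookkeeping of the colimits and limits involved is the technical (but essentially routine) part of this step, handled as in Beilinson's appendix.

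Constructing the truncation functors is where I expect the main obstacle to lie: one cannot simply put $(\tau^{\le 0}_BF)(n):=\tau^{\le n}(F(n))$, because these levels do not assemble into a functor on $\Z^{\operatorname{op}}$. Instead I would follow Beilinson and use \emph{d\'ecalage}: represent $F$ by a strict filtered complex $(K^\bullet,\fil^\bullet K^\bullet)$ of $\mathcal O_T$-modules (every object of $DF(\mathcal O_T)$ admits such a model) and let $\tau^{\le 0}_BF$ be the d\'ecalage of this filtration --- exactly the operation that, applied to the $f$-adic filtration of an $f$-torsion-free complex, produces $\fil^{\star}L\eta_fM$ as in Definition \ref{decalagefiltration}. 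Deligne's d\'ecalage lemma then computes the associated graded, $\operatorname{gr}^i(\tau^{\le 0}_BF)\simeq\tau^{\le i}(\operatorname{gr}^iF)$, so that $\tau^{\le 0}_BF\in DF^{\le 0}(\mathcal O_T)$, while $\tau^{\ge 1}_BF:=\operatorname{cofib}(\tau^{\le 0}_BF\to F)$ has $\operatorname{gr}^i(\tau^{\ge 1}_BF)\simeq\tau^{\ge i+1}(\operatorname{gr}^iF)\in D^{\ge i+1}$, hence lies in $DF^{\ge 1}(\mathcal O_T)$; this same computation of graded pieces is exactly assertion (2), once $\tau^{\le 0}_B$ is identified with the connective cover. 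The real work, and Beilinson's key lemma, is to check that d\'ecalage descends to a well-defined, functorial endofunctor of $DF(\mathcal O_T)$, independent of the strict model chosen --- this is where the derived-category machinery (resolutions, homotopy coherence) enters in earnest.

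For (3): if $F\in DF(\mathcal O_T)^{\heartsuit}=DF^{\le 0}(\mathcal O_T)\cap DF^{\ge 0}(\mathcal O_T)$, then in the triangle $F(i+1)\to F(i)\to\operatorname{gr}^iF$ one has $F(i+1)\in D^{\ge i+1}$, $F(i)\in D^{\ge i}$, and $\operatorname{gr}^iF\in D^{\le i}$; the long exact cohomology sequence, using $H^{i-1}(F(i))=0$ and $H^i(F(i+1))=0$, then forces $\operatorname{gr}^iF$ to be concentrated in cohomological degree $i$, equal to $H^i(\operatorname{gr}^iF)[-i]$. Hence $F\mapsto(H^\bullet(\operatorname{gr}^\bullet F),d)$ --- with differential induced by the connecting map of the triangle $\operatorname{gr}^{i+1}F\to F(i)/F(i+2)\to\operatorname{gr}^iF$, and $d^2=0$ because three consecutive such connecting maps compose to zero --- is an exact functor $DF(\mathcal O_T)^{\heartsuit}\to\operatorname{Ch}(\mathcal O_T)$; its quasi-inverse sends a chain complex $C^\bullet$ to the filtered object of brutal truncations $n\mapsto\sigma^{\ge n}C^\bullet$, whose $i$-th graded piece is $C^i[-i]$ and whose connecting maps recover $d_C$. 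I would conclude by writing down the unit and counit natural isomorphisms levelwise, which is a direct verification.
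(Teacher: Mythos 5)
The paper itself gives no proof of this statement: it is imported verbatim from \cite[Theorem 5.4]{bms2} (ultimately Beilinson's appendix), and your outline is essentially a reconstruction of that standard argument, with the correct ingredients in all three parts. One concrete caution on the orthogonality step, where the bookkeeping as you describe it would not close: the tower $\sigma_{\geq k}F$ does \emph{not} converge to $0$ (its levelwise limit is the constant filtered object on $R\varprojlim_j F(j)$, and constant filtered objects have vanishing graded pieces, hence lie in $DF^{\leq 0}(\mathcal O_T)$), so the descending d\'evissage of $F$ has no base case by itself; nor can you write $F$ as the limit of its bounded quotients, since that would require mapping \emph{out} of a limit. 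The vanishing you invoke, $R\varprojlim_N G(N)\simeq 0$, must be used first and on the $G$-side: replace $G$ by its quotients $G/\fil^N G$, which still lie in $DF^{\geq 1}(\mathcal O_T)$ and vanish in filtration degrees $\geq N$, so that $\operatorname{Map}(F,G)\simeq \varprojlim_N\operatorname{Map}(F,G/\fil^N G)$ (mapping into a limit is harmless); against each such bounded quotient your finite $\sigma_{\geq k}$/$\operatorname{ins}_k$ d\'evissage and the adjunction $\operatorname{Map}(\operatorname{ins}_k M,H)\simeq\operatorname{Map}(M,H(k))$ apply verbatim. Two further remarks: part (2) is actually formal from part (1), since $\gr^i$ carries $DF^{\geq 1}(\mathcal O_T)$ into $D^{\geq i+1}(\mathcal O_T)$, so applying $\gr^i$ to $\tau^{\leq 0}_BF\to F\to\tau^{\geq 1}_BF$ already exhibits the canonical truncation triangle of $\gr^iF$; the strict d\'ecalage model you appeal to is only needed for the comparison with $L\eta_f$, i.e.\ \cite[Proposition 5.8]{bms2}, which the paper cites separately. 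Your treatment of (3) is correct, but note that full faithfulness and essential surjectivity of the brutal-truncation functor use that heart objects are automatically complete ($F(n)\in D^{\geq n}(\mathcal O_T)$ forces $R\varprojlim_nF(n)\simeq 0$), which is what allows $F$ to be rebuilt from its graded data.
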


	We cite the following proposition of \cite{bms2}, which describe the structure of the filtration of the d\'ecalage functor.
	
	\begin{prop} \cite[Proposition 5.8]{bms2}
		Fix $M\in D(\mathcal O_T).$ Let $f^{\star}\otimes M$ be the $f$-adic filtration on $M$, i.e., the $i$-th level of the filtration of $K$ is $f^i\otimes M$ with obvious maps. Then $L\eta_fM$ is filtered isomorphic with $\tau_B^{\leq 0}(f^{\star}\otimes M)$ in $DF(\mathcal O_T).$
	\end{prop}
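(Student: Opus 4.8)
**Proof plan for Proposition (BMS2, Proposition 5.8): $L\eta_fM \simeq \tau_B^{\leq 0}(f^{\star}\otimes M)$ in $DF(\mathcal{O}_T)$.**

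The plan is to verify the claimed identification at the level of explicit complexes and then recognize the right-hand side as the Beilinson connective cover via Theorem \ref{beilinsont}. First I would choose a representative $M^{\bullet}\in K(\mathcal{O}_T)$ of $M$ with each $M^i$ being $f$-torsion-free; this is harmless since such representatives exist (e.g.\ termwise flat or even free resolutions), and both functors $L\eta_f$ and $L$ of the $f$-adic filtration are well-defined on the derived/filtered derived category, so the choice does not matter in the end. The $f$-adic filtration $f^{\star}\otimes M$ is then represented by the filtered complex $i\mapsto f^iM^{\bullet}$ (with $f^iM^{\bullet}=M^{\bullet}$ for $i\leq 0$), using that tensoring the $f$-adic filtration on $\mathcal{O}_T$ with the $f$-torsion-free complex $M^{\bullet}$ is already derived, i.e.\ $f^i\otimes^{L}M^{\bullet}\simeq f^iM^{\bullet}$.

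Next I would compute $\tau_B^{\leq 0}$ of this filtered object using part (2) of Theorem \ref{beilinsont}, namely $\gr^i\circ\tau_B^{\leq 0}\simeq \tau^{\leq i}\circ\gr^i$. Since $\gr^i(f^{\star}\otimes M)\simeq (f^i/f^{i+1})\otimes M^{\bullet}$, which (after trivializing the invertible sheaf $f^i/f^{i+1}$) is just $M^{\bullet}$ placed with no shift, its naive truncation $\tau^{\leq i}$ in cohomological degree $i$ is the complex $(\cdots\to M^{i-1}\to \ker(M^i\to M^{i+1})\to 0\to\cdots)$. On the other hand, one computes directly from Definition \ref{decalagefiltration} that $\fil^iL\eta_fM = f^iM^{\bullet}\cap \eta_fM^{\bullet}$, and its graded piece in degree $i$ is $(f^iM^{\bullet}\cap\eta_fM^{\bullet})/(f^{i+1}M^{\bullet}\cap\eta_fM^{\bullet})$; unwinding the definition of $\eta_f$ shows that in cohomological degree $j$ this is $f^{i}M^j/f^{i+1}M^j$ for $j<i$ and $\{x\in f^iM^i : dx\in f^{i+1}M^{i+1}\}/f^{i+1}M^i$ for $j=i$ and zero for $j>i$, which visibly matches $\tau^{\leq i}\gr^i$ term by term. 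So I would first establish that the associated graded of $\fil^{\star}L\eta_fM$ agrees with $\gr^{\star}\tau_B^{\leq 0}(f^{\star}\otimes M)$, compatibly with the boundary maps.

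Then, to upgrade from an isomorphism on graded pieces to a genuine filtered isomorphism in $DF(\mathcal{O}_T)$, I would exhibit a natural filtered map $\fil^{\star}L\eta_fM\to f^{\star}\otimes M$ (the inclusions $f^iM^{\bullet}\cap\eta_fM^{\bullet}\hookrightarrow f^iM^{\bullet}$ at each filtration level) and check that it factors through $\tau_B^{\leq 0}(f^{\star}\otimes M)$ — for this it suffices that $\fil^{\star}L\eta_fM$ lies in $DF^{\leq 0}(\mathcal{O}_T)$, which is exactly the statement that $\gr^i\fil^{\star}L\eta_fM\in D^{\leq i}(\mathcal{O}_T)$, already verified above. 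The resulting map $\fil^{\star}L\eta_fM\to\tau_B^{\leq 0}(f^{\star}\otimes M)$ is then a filtered morphism that is an isomorphism on all graded pieces, hence a filtered isomorphism since the filtrations are exhaustive and complete (bounded below by the degreewise structure and exhausting as $i\to-\infty$). I expect the main obstacle to be the bookkeeping in the graded-piece computation: one must carefully identify $\gr^i(f^iM^{\bullet}\cap\eta_fM^{\bullet})$ in each cohomological degree and check that the connecting differential in the heart of the Beilinson $t$-structure (described via the triangle $\gr^{i+1}\to F(i)/F(i+2)\to\gr^i$ in part (3) of Theorem \ref{beilinsont}) is precisely the differential of $\eta_fM^{\bullet}$ under the identification $H^i(\gr^i L\eta_f M)\cong (\eta_f M^{\bullet})^i/(\text{exactness})$; tracking signs and the twist by the invertible ideal through all of this is the delicate part, but it is a finite local computation once a trivialization of $(f)$ is fixed.
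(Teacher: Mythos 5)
The paper gives no proof of this statement (it is quoted from \cite[Proposition 5.8]{bms2}), so I am comparing your proposal with the standard argument there. Your skeleton is the right one: represent $M$ by an $f$-torsion-free complex, note that the natural inclusions $f^iM^{\bullet}\cap\eta_fM^{\bullet}\hookrightarrow f^iM^{\bullet}$ give a filtered map, check connectivity of the source for the Beilinson $t$-structure, and your computation of $\gr^i\fil^{\star}L\eta_fM$ (which is $f^iM^j/f^{i+1}M^j$ in degrees $j<i$, $\{x\in f^iM^i: dx\in f^{i+1}M^{i+1}\}/f^{i+1}M^i$ in degree $i$, zero above) is correct and does match $\tau^{\leq i}\gr^i(f^{\star}\otimes M)$ — note this is the canonical truncation, not the ``naive'' one, though the formula you wrote is the right one.

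The genuine gap is the final step. You conclude that the induced map $\fil^{\star}L\eta_fM\to\tau_B^{\leq 0}(f^{\star}\otimes M)$ is an equivalence because it is an isomorphism on graded pieces, ``since the filtrations are exhaustive and complete.'' Detection of equivalences in $DF(\mathcal O_T)$ by graded pieces requires completeness (exhaustiveness does not help), and neither $f^{\star}\otimes M$ nor $\fil^{\star}L\eta_fM$ is complete in general: for $M=\mathcal O_T[1/f]$ in degree $0$ both are the constant filtered object with value $M$, all graded pieces vanish, and yet the object is nonzero — so a graded-pieces isomorphism by itself cannot distinguish it from $0$. The standard repair, which is essentially how \cite{bms2} argues, avoids introducing $\tau_B^{\leq 0}$ as a black box: show directly that the inclusion $\fil^{\star}L\eta_fM\hookrightarrow f^{\star}\otimes M$ has connective source (your graded computation) and cofiber in $DF^{\geq 1}$, i.e.\ the levelwise quotient complex $Q(i)=f^iM^{\bullet}/\fil^iL\eta_fM$ lies in $D^{\geq i+1}$; this is a one-line check, since $Q(i)$ vanishes in degrees $<i$ and a degree-$i$ cocycle of $Q(i)$ is represented by $x\in f^iM^i$ with $dx\in\eta_fM^{i+1}\subset f^{i+1}M^{i+1}$, hence $x\in(\eta_fM)^i$ and the class is zero. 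By the $t$-structure axioms these two properties characterize the map as exhibiting the source as $\tau_B^{\leq 0}(f^{\star}\otimes M)$, with no completeness hypothesis needed. With that substitution (and the terminological fix above) your argument goes through.
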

	
	We list some results of $L\eta_f$, which will be used in the rest of this chapter.
	
	\begin{prop} \label{propleta}
		(1)\cite[Lemma 6.10]{bms1} If $M \in D^{\geq 0}(\mathcal O_T)$ such that $H^0(M)[f]=0.$ Then there is a canonical map $\fil^{\star}L\eta_fM\to f^{\star}\otimes M$ in $DF(\mathcal O_T)$.
		
		(2)\cite[Proposition 6.12]{bms1} Let $M \in D(\mathcal O_T)$, construct a complex $H^{\bullet}(M/(t))$ with terms $$H^i(M/(t)):=H^i(M/^{ L}(t))\otimes_{\mathcal{O}_T}(t^i),$$ and with differential induced by the boundary map corresponding to the short exact sequence $$0\to (t)/(t^2)\to \mathcal{O}_T/(t^2) \to \mathcal{O}_T/(t)\to 0.$$ Then there is a natural quasi-isomorphism $$L\eta_fM\otimes^L_{\mathcal{O}_T}\mathcal{O}_T/(t)\xrightarrow{\simeq}H^{\bullet}(M/(t)).$$
		
		(3)\cite[Lemma 6.14]{bms1} Let $u:(T,\mathcal O_T)\to (T',\mathcal O_{T'})$ be a flat map of ringed topoi. Let $(f)\subset \mathcal O_T$ be an invertible ideal sheaf with $(f'):=u^*(f)\subset \mathcal O_{T'}$, which is still invertible. Then the diagram 
		$$\begin{tikzcd}
			D(\mathcal O_T) \arrow[r,"u^*"] \arrow[d,"L\eta_f"] & D(\mathcal O_{T'}) \arrow[d,"L\eta_{f'}"] \\
			D(\mathcal O_T) \arrow[r,"u^*"] & D(\mathcal O_{T'})
		\end{tikzcd}$$
		commutes, i.e., there is a natural quasi-isomorphism $L\eta_{f'}u^*M\to u^*L\eta_fM$ in $D(\mathcal O_{T'})$ for all $M \in D(\mathcal O_T)$.
	\end{prop}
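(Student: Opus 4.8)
The plan is to deduce all three parts from the chain-level description of $\eta_f$ together with the identification of its filtration via the Beilinson $t$-structure; indeed these are exactly \cite[Lemma 6.10]{bms1}, \cite[Proposition 6.12]{bms1} and \cite[Lemma 6.14]{bms1}, and one recovers their proofs as follows. Throughout I would fix a generator $t$ of $(f)$ and, for $M \in D(\mathcal O_T)$, a representative $M^{\bullet}\in K(\mathcal O_T)$ each of whose terms $M^i$ is $f$-torsion-free, so that $\eta_f M^{\bullet}$ computes $L\eta_f M$, its $i$-th filtration level is $f^i M^{\bullet}\cap \eta_f M^{\bullet}$, and (by $f$-torsion-freeness, which forces $f^i M^i = f^i\otimes M^i$) the $f$-adic filtration $f^{\star}\otimes M$ is modelled by the subcomplexes $f^{\bullet}M^{\bullet}$.

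For (1), I would first use the hypotheses $M \in D^{\geq 0}(\mathcal O_T)$ and $H^0(M)[f]=0$ to choose the representative $M^{\bullet}$ above concentrated in degrees $\geq 0$ with all terms $f$-torsion-free; without the vanishing of $H^0(M)[f]$ such a model need not exist, as $M=\mathcal O_T/(f)$ shows. With such a model the tautological inclusions $f^i M^{\bullet}\cap \eta_f M^{\bullet}\hookrightarrow f^i M^{\bullet} = f^i\otimes M$ assemble into a morphism $\fil^{\star}L\eta_f M \to f^{\star}\otimes M$ in $DF(\mathcal O_T)$. The only real point is canonicity, i.e. independence of the chosen model: here I would invoke the filtered isomorphism $\fil^{\star}L\eta_f M \simeq \tau^{\leq 0}_B(f^{\star}\otimes M)$ of \cite[Proposition 5.8]{bms2}, under which the map above is identified with the counit $\tau^{\leq 0}_B(f^{\star}\otimes M)\to f^{\star}\otimes M$ of the Beilinson connective cover (Theorem \ref{beilinsont}), which is functorial in $M$ by construction.

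For (2), again with the $f$-torsion-free model, $L\eta_f M\otimes^L_{\mathcal O_T}\mathcal O_T/(t)$ is computed by $\eta_f M^{\bullet}/t\,\eta_f M^{\bullet}$. The comparison map would send the class of $x\in(\eta_f M^{\bullet})^i\subseteq f^i M^i$ to $[\bar y]\otimes t^i$, where $x=t^i y$ with $y$ unique by $f$-torsion-freeness and $\bar y$ is the image of $y$ in $M^{\bullet}/t$; here $dy\in tM^{i+1}$ since $dx\in f^{i+1}M^{i+1}$, so $\bar y$ is a cocycle and defines a class in $H^i(M\otimes^L_{\mathcal O_T}\mathcal O_T/(t))$. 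One then checks this is well defined on $\eta_f M^{\bullet}/t$, intertwines the differential induced by $d$ with the Bockstein differential attached to $0\to(t)/(t^2)\to\mathcal O_T/(t^2)\to\mathcal O_T/(t)\to 0$ (a short diagram chase), and is a quasi-isomorphism (comparing cohomology on both sides). For (3), given $u$ flat I would use the same model $M^{\bullet}$: then $u^{*}M^{\bullet}$ is a complex of $f'$-torsion-free $\mathcal O_{T'}$-modules computing $u^{*}M$, and exactness of $u^{*}$ together with $u^{*}(f^i M^i)=(f')^i u^{*}M^i$ yields $u^{*}\bigl((\eta_f M^{\bullet})^i\bigr)=(\eta_{f'}u^{*}M^{\bullet})^i$ term by term, hence the natural quasi-isomorphism $L\eta_{f'}u^{*}M\xrightarrow{\simeq}u^{*}L\eta_f M$.

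I expect the main obstacle to be the verification in (2) that the chain-level comparison intertwines $d$ with the Bockstein differential and is a quasi-isomorphism, and, on the conceptual side, the canonicity assertion in (1); parts (1) and (3) are otherwise formal consequences of the chain-level definition and, respectively, the Beilinson $t$-structure formalism and exactness of pullback. All of these are carried out in \cite[\S 6]{bms1}.
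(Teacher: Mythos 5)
Your proposal is correct and follows essentially the same route as the paper, which simply defers all three parts to \cite[\S 6]{bms1} (with the filtered refinement of (1) obtained from the same proof, via the identification $\fil^{\star}L\eta_fM\simeq\tau^{\leq 0}_B(f^{\star}\otimes M)$ of \cite[Proposition 5.8]{bms2} already quoted in the text). Your sketch is a faithful reconstruction of those arguments, including the correct observation that the torsion-freeness hypothesis in (1) is what allows a coconnective $f$-torsion-free model and that canonicity is cleanest via the Beilinson connective-cover counit.
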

	
	\begin{proof}
		The compatibility for filtration of (1) follows from the corresponding proof in \cite{bms1}.
	\end{proof}

	\subsection{Logarithmic $B_{\operatorname{dR}}^+$-cohomology}
	
	Similar to \cite{bosco2023rational}, we can define the logarithmic $B_{\operatorname{dR}}^+$-cohomology by using the $L\eta_f$ functor.
	
	\begin{defn}
		Let $X$ be a log smooth rigid analytic varieties over $C$. We denote by $\lambda:X_{\proket}\to X_{\et,\operatorname{cond}}$ the natural morphism of sites. We define the logarithmic $B_{\operatorname{dR}}^+$-cohomology of $X$ as the complex in $D(\Mod_{B_{\operatorname{dR}}^+}^{\operatorname{solid}})$: $$\rg_{B_{\operatorname{dR}}^+}(X):=\rg_{\et,\mathrm{cond}}(X,L\eta_tR\lambda_*\mathbb B_{\operatorname{dR}}^+).$$ We endow $\rg_{B_{\operatorname{dR}}^+}(X)$ with the filtration given by Definition \ref{decalagefiltration}.
	\end{defn}
	
	\begin{rem}
		By introducing the logarithmic infinitesimal sites, one is able to define $\rg_{\underline{\operatorname{logdR}}}(X/B_{\operatorname{dR}}^+)$, and there should be a canonical filtered quasi-isomorphism: $$\rg_{B_{\operatorname{dR}}^+}(X) \simeq \rg_{\underline{\operatorname{logdR}}}(X/B_{\operatorname{dR}}^+).$$
	\end{rem}
	
	\begin{prop} \label{bdrboundness}
		Let $X$ be a log smooth rigid analytic varieties of dimension $d$ over $C$, then $R^i\lambda_*\mathbb B_{\operatorname{dR}}^+=0$ for all $i>d.$
	\end{prop}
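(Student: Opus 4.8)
The plan is to reduce this to a local, explicit computation. The question is local on $X_{\et}$, so I would choose a covering of $X$ by affinoids of the form $X=\Spa(R,R^+)$ admitting a strictly étale map $X\to\mathbb E=\Spa(K\langle P\rangle,\mathcal O_K\langle P\rangle)$ to a standard log affine toric space (a composite of rational embeddings and finite étale maps), exactly as in Section~\ref{period}. After base change to $C$, pulling back the pro-Kummer-étale cover $\widehat{\widetilde{\mathbb E}}_C\to\mathbb E_C$ gives a Galois pro-finite Kummer étale cover $\widetilde X_C=\Spa(R_\infty,R^+_\infty)\to X_C$ with Galois group $\Gamma\cong\operatorname{Hom}(P^{\operatorname{gp}}_{\Q}/P^{\operatorname{gp}},\boldsymbol\mu_\infty)$, which is topologically generated by $n=\operatorname{rank}P^{\operatorname{gp}}$ elements. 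Since $\widetilde X_C$ is log affinoid perfectoid, the Cartan--Leray spectral sequence for this cover computes $R^i\lambda_*\mathbb B_{\operatorname{dR}}^+$ in terms of the continuous group cohomology $H^i_{\operatorname{cont}}(\Gamma,\mathbb B_{\operatorname{dR}}^+(\widetilde X_C))$.

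The key step is then the vanishing $H^i_{\operatorname{cont}}(\Gamma,\mathbb B_{\operatorname{dR}}^+(\widetilde X_C))=0$ for $i>n=\dim X$ (here I would invoke that the dimension $d$ in the statement is the dimension of $X$, and the rank $n$ of $P^{\operatorname{gp}}$ contributing to the log part is bounded by $d$; more carefully one bounds the cohomological dimension of $\Gamma$ together with that of the affinoid, but for the stated bound $i>d$ it suffices to note $\Gamma\cong\widehat{\Z}(1)^{\oplus n}$ up to a profinite-prime-to-$p$ part acting through roots of unity with $n\le d$). This is a Koszul-complex computation: $\Gamma$ acts on $\mathbb B_{\operatorname{dR}}^+(\widetilde X_C)$ through commuting topologically nilpotent operators $\gamma_1-1,\dots,\gamma_n-1$ (after passing to a suitable open subgroup, or directly using the $t$-adic filtration and the description $\gr^r\mathbb B_{\operatorname{dR}}^+|_{\widetilde X}\simeq \widehat{\mathcal O}_{X_{\proket}}(r)$), and the continuous cohomology is computed by the associated Koszul complex of length $n$, which vanishes in degrees $>n$. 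One reduces modulo the filtration $\fil^r\mathbb B_{\operatorname{dR}}^+/\fil^{r+1}$ whose graded pieces are twists of $\widehat{\mathcal O}^+$, for which the relevant vanishing of higher group cohomology is Scholze's computation (as used in the proof of the preceding Proposition for $\gr^j\mathcal O\mathbb B_{\operatorname{dR,log}}$), and then takes the inverse limit over $r$, using that each step is a limit of complexes concentrated in degrees $\le n$ and that $R\lim$ adds at most one to the cohomological amplitude---but here the transition maps are surjective on the relevant cohomology, so no extra degree appears, or one simply observes $\mathbb B_{\operatorname{dR}}^+=\lim_r\mathbb B_{\operatorname{dR}}^+/\fil^r$ and the $R^1\lim$ term lands in degree $n+1$ only if the tower is not Mittag--Leffler, which one checks it is.

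The main obstacle I expect is controlling the interaction of the inverse limit $\mathbb B_{\operatorname{dR}}^+=\varprojlim_r B_{\operatorname{inf}}/(\ker\theta)^r$ with the group cohomology: naively $R\lim$ of complexes of amplitude $[0,n]$ has amplitude $[0,n+1]$, so one must genuinely show the extra $R^1\lim$ term vanishes. The cleanest way is to establish the vanishing degreewise on the graded pieces $\gr^r\mathbb B_{\operatorname{dR}}^+\simeq\widehat{\mathcal O}_{X_{\proket}}(r)$ (where the bound $i>d$ is the known Hodge--Tate statement, e.g.\ as in \cite[Lemma 3.3.15]{dllz2023logrh}, or Scholze's \cite[Proposition 3.23]{scholze2013p} in the trivial-log case), deduce it for each quotient $\mathbb B_{\operatorname{dR}}^+/\fil^r$ by dévissage, and then note that the system $\{H^n_{\operatorname{cont}}(\Gamma,\mathbb B_{\operatorname{dR}}^+/\fil^r(\widetilde X_C))\}_r$ has surjective (in fact eventually isomorphic, by the same dévissage since the relevant $H^{n+1}$ of the graded pieces vanish) transition maps, killing the $R^1\lim$. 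Globalizing via the Cartan--Leray spectral sequence and the fact that $\lambda_*$ commutes with the chosen affinoid cover finishes the argument.
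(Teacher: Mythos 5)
Your proposal is correct and follows essentially the same route as the paper: localize to an affinoid with a strictly étale map to the toric model $\mathbb E$, apply Cartan--Leray for the Galois pro-finite Kummer étale cover $\widetilde X \to X$ with group $\Gamma\simeq\widehat{\Z}(1)^d$, and conclude from the fact that $\Gamma$ has cohomological dimension $d$. The only difference is that the paper directly cites \cite[B.3]{bosco2023padicproetalecohomologydrinfeld} for this cohomological dimension statement, whereas you re-derive the needed vanishing (including the $R\lim$/Mittag--Leffler point for $\mathbb B_{\operatorname{dR}}^+=\varprojlim_r\mathbb B_{\inf}/(\ker\theta)^r$) by dévissage along the filtration with graded pieces $\widehat{\mathcal O}_{X_{\proket}}(r)$.
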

	
	\begin{proof} 
		The problem is local, following the notations in Section 3, we may assume $X=\Spa(R,R^+)$ is an affinoid log rigid space over $C$, with a strictly \'etale morphism $X \to \mathbb E$, which can be written as a composite of rational embeddings and finite étale maps. By pulling back $\widehat{\widetilde{\mathbb E}} \to \mathbb E$ to $X$ we get $$\widetilde{X}:=X\times_{\mathbb E}\widehat{\widetilde{\mathbb E}}=\Spa(R_{\infty},R^+_{\infty}),$$a Galois pro-finite Kummer \'etale cover with Galois group $\Gamma$. The Cartan–Leray spectral sequence gives an quasi-isomorphism $$\rg_{\operatorname{cond}}(\Gamma,H^0(\widetilde{X},\mathbb{B^+_{\operatorname{dR}}}))\xrightarrow{\simeq}\rg_{\underline{\proket}}(X,\mathbb{B^+_{\operatorname{dR}}}),$$ which concludes the proof since $\Gamma \simeq \widehat{\Z}(1)^d$ has cohomological dimension $d$ by \cite[B.3]{bosco2023padicproetalecohomologydrinfeld}.
	\end{proof}
	
	This proposition allows us to deduce the following two propositions.
	
	\begin{prop} \label{bdrcompleteness}
		The natural map $$L\eta_tR\lambda_*\mathbb B^+_{\operatorname{dR}}\to R\varprojlim_{m}L\eta_tR\lambda_* (\mathbb B^+_{\operatorname{dR}}/\fil^m)$$ is a quasi-isomorphism compatible with filtration.
	\end{prop}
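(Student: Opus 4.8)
The plan is to reduce to the standard behavior of the décalage functor with respect to the Beilinson $t$-structure, following \cite{bosco2023rational}. Work in the ringed topos $(T,\mathcal O_T)=(X_{\et,\cond},B_{\operatorname{dR}}^+)$ and set $C:=R\lambda_*\mathbb B_{\operatorname{dR}}^+$; by Proposition \ref{bdrboundness}, $C\in D^{[0,d]}(\mathcal O_T)$. Since $t$ is a nonzerodivisor on $\mathbb B_{\operatorname{dR}}^+$ and $\fil^m\mathbb B_{\operatorname{dR}}^+=t^m\mathbb B_{\operatorname{dR}}^+\cong\mathbb B_{\operatorname{dR}}^+$, applying $R\lambda_*$ to $0\to\fil^m\mathbb B_{\operatorname{dR}}^+\to\mathbb B_{\operatorname{dR}}^+\to\mathbb B_{\operatorname{dR}}^+/\fil^m\to 0$ gives a natural identification $R\lambda_*(\mathbb B_{\operatorname{dR}}^+/\fil^m)\simeq C\otimes^L_{\mathcal O_T}\mathcal O_T/t^m=:C_m$; and since $\mathbb B_{\operatorname{dR}}^+$ is derived $t$-complete so is $C$, hence $C\simeq R\varprojlim_m C_m$. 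Thus the assertion becomes: the natural map $L\eta_t C\to R\varprojlim_m L\eta_t C_m$ is an equivalence in $DF(\mathcal O_T)$, for the décalage filtrations.

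Now I would invoke $L\eta_t(-)\simeq\tau_B^{\leq0}(t^{\star}\otimes^L(-))$ from \cite[Proposition 5.8]{bms2}, which also matches the décalage filtration with the Beilinson one, so that working in $DF(\mathcal O_T)$ delivers the filtered compatibility automatically. The functor $M\mapsto t^{\star}\otimes^LM$ commutes with $R\varprojlim$ — its levels are tensorings with invertible $\mathcal O_T$-modules, and limits in $DF(\mathcal O_T)$ are computed level-wise — so $t^{\star}\otimes^LC\simeq R\varprojlim_m(t^{\star}\otimes^LC_m)$. As $\tau_B^{\leq0}\colon DF(\mathcal O_T)\to DF^{\leq0}(\mathcal O_T)$ is right adjoint to the inclusion, it commutes with limits; hence the problem reduces to showing that $R\varprojlim_m L\eta_t C_m$, computed in $DF(\mathcal O_T)$, already lies in $DF^{\leq0}(\mathcal O_T)$, so that it agrees with the limit formed in $DF^{\leq0}(\mathcal O_T)$.

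By Theorem \ref{beilinsont}(2), $\gr^i L\eta_t C_m\simeq\tau^{\leq i}\bigl(C_m\otimes^L_{\mathcal O_T}\mathcal O_T/t\bigr)$ (up to twist by the invertible ideal $(t^i)$), and $\gr^i$ commutes with limits, so the required membership is equivalent to $R^1\varprojlim_m H^i\bigl(C_m\otimes^L_{\mathcal O_T}\mathcal O_T/t\bigr)=0$ for all $i$. The computational heart is the tower $K_m:=\mathcal O_T/t^m\otimes^L_{\mathcal O_T}\mathcal O_T/t$: using the length-one free resolution $[\mathcal O_T\xrightarrow{\,t\,}\mathcal O_T]$ of $\mathcal O_T/t$ one computes $H^0(K_m)=H^{-1}(K_m)=\mathcal O_T/t$ and $H^j(K_m)=0$ otherwise; since $\mathcal O_T/t$ has projective dimension $\leq1$ there is no $k$-invariant, so $K_m\simeq\mathcal O_T/t\oplus(\mathcal O_T/t)[1]$, and the transition $K_{m+1}\to K_m$ is an isomorphism on $H^0$ and the \emph{zero} map on $H^{-1}=\operatorname{Tor}_1(\mathcal O_T/t^{m+1},\mathcal O_T/t)\to\operatorname{Tor}_1(\mathcal O_T/t^{m},\mathcal O_T/t)$. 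Tensoring with $C$ and taking cohomology, the tower $\{H^i(C_m\otimes^L_{\mathcal O_T}\mathcal O_T/t)\}_m$ becomes $\{H^i(C\otimes^L_{\mathcal O_T}\mathcal O_T/t)\oplus H^{i+1}(C\otimes^L_{\mathcal O_T}\mathcal O_T/t)\}_m$ with transition maps that are an isomorphism on the first summand and zero on the second (up to a lower-triangular $\operatorname{Ext}^1$-term irrelevant to $\varprojlim^1$), so $R^1\varprojlim_m$ of it vanishes. This gives the required $DF^{\leq0}$-membership and finishes the proof; incidentally it shows $R\varprojlim_m(C_m\otimes^L_{\mathcal O_T}\mathcal O_T/t)\simeq C\otimes^L_{\mathcal O_T}\mathcal O_T/t$, the reduction of the proposition modulo $t$.

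I expect the main obstacle to be this last step — establishing the splitting of $K_m$ and computing the transition maps of $\{K_m\}_m$ precisely enough (in particular, that the possibly nonzero $\operatorname{Ext}^1$-contributions appear only lower-triangularly) to force the relevant $\varprojlim^1$-groups to vanish. Proposition \ref{bdrboundness} enters by keeping all the derived limits in sight well-behaved and the décalage/Beilinson filtrations exhaustive and complete, which is what legitimizes checking everything on graded pieces.
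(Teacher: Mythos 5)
Your reduction is sound and, in spirit, it is the same dévissage that the paper delegates to \cite[Lemma 4.3]{bras2018overconvergentrelativerhamcohomology} and \cite[Lemma 2.41]{bosco2023rational}: identify $R\lambda_*(\mathbb B^+_{\operatorname{dR}}/\fil^m)$ with $C\otimes^L\mathcal O_T/t^m$, use derived $t$-completeness of $C$, and then check that the décalage commutes with the limit by working on graded pieces; packaging this through $L\eta_t\simeq\tau_B^{\leq 0}(t^{\star}\otimes-)$ is a nice way to get the filtered statement for free. However, two steps are not justified as written. First, the splitting $K_m:=\mathcal O_T/t^m\otimes^L\mathcal O_T/t\simeq\mathcal O_T/t\oplus(\mathcal O_T/t)[1]$ does not follow from ``$\mathcal O_T/t$ has projective dimension $\leq 1$, so there is no $k$-invariant'': the obstruction to splitting a two-term complex lives in $\operatorname{Hom}_{D(\mathcal O_T)}(H^0,H^{-1}[2])$, i.e.\ in a \emph{hyper}-Ext group which, besides the sheaf $\mathcal{E}xt^2$ (which does vanish), also receives contributions from $H^1(X,\mathcal{E}xt^1)$ and $H^2(X,\mathcal{H}om)$; sheaf-level projective dimension does not kill these. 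Second, the translation ``membership in $DF^{\leq 0}$ is equivalent to $R^1\varprojlim_m H^i(C_m\otimes^L\mathcal O_T/t)=0$'' quietly uses the Milnor sequence for $R\varprojlim$ of towers, i.e.\ that $R\varprojlim$ has cohomological amplitude $\leq 1$; this requires exactness of countable products (AB4* for countable products), which is not automatic in the category of sheaves on $X_{\et,\cond}$.

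Both points are repairable by one observation that also streamlines the computation: the tower $\{K_m\}_m$, with its transition maps, is pulled back along the (exact, symmetric monoidal) projection from $D(B^+_{\operatorname{dR}})$, so all assertions about it may be checked over the discrete valuation ring $B^+_{\operatorname{dR}}$, where they are classical. Better yet, avoid the splitting entirely: the augmentation $K_m\to\mathcal O_T/t$ has levelwise fibre $(\mathcal O_T/t)[1]$ with \emph{zero} transition maps (the map $\operatorname{Tor}_1(\mathcal O_T/t^{m+1},\mathcal O_T/t)\to\operatorname{Tor}_1(\mathcal O_T/t^{m},\mathcal O_T/t)$ vanishes), hence is a pro-isomorphism of towers in $D(\mathcal O_T)$. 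Applying $C\otimes^L-$ and $\tau^{\leq i}$ levelwise preserves pro-isomorphisms, and $R\varprojlim$ of a tower depends only on its pro-isomorphism class, so $R\varprojlim_m\tau^{\leq i}(C_m\otimes^L\mathcal O_T/t)\simeq\tau^{\leq i}(C\otimes^L\mathcal O_T/t)\in D^{\leq i}$ directly, with no appeal to $\varprojlim^1$-vanishing or AB4*. With this substitution your argument goes through; you should also add the (routine) check that the equivalence produced by the adjunction $\tau_B^{\leq 0}\dashv$ (inclusion) is the canonical comparison map of the statement.
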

	
	\begin{proof}
		The proof is similar to \cite[Lemma 4.3]{bras2018overconvergentrelativerhamcohomology} or \cite[Lemma 2.41]{bosco2023rational}.
	\end{proof}
	
	\begin{prop} \label{bdraffinoid}
		If $X$ is an affinoid log smooth rigid analytic variety over $C$, the natural morphism $$L\eta_t\rg_{\underline{\proket}}(X,\mathbb B_{\operatorname{dR}}^+)\to \rg_{\et,\mathrm{cond}}(X,L\eta_tR\lambda_*\mathbb B_{\operatorname{dR}}^+)=\rg_{B_{\operatorname{dR}}^+}(X)$$ in $D(\Mod_{B_{\operatorname{dR}}^+}^{\operatorname{solid}})$ is a filtered quasi-isomorphism, where the filtration on both sides are given by the filtration of $L\eta_t$ defined above.
	\end{prop}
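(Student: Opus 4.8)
The plan is to produce the map of the statement as a map of \emph{filtered} objects and then verify it on associated graded pieces. Recall from \cite[Proposition 5.8]{bms2} that $L\eta_t(-)\simeq\tau_B^{\le 0}(t^\star\otimes-)$ in the filtered derived category; since $\rg_{\et,\mathrm{cond}}(X,-)=Rg_*$ for $g\colon X_{\et,\mathrm{cond}}\to\ast$ is left $t$-exact for the Beilinson $t$-structure, there is a canonical natural transformation $L\eta_t\circ Rg_*\to Rg_*\circ L\eta_t$ compatible with the décalage filtrations (Definition \ref{decalagefiltration}); applied to $R\lambda_*\mathbb B_{\operatorname{dR}}^+$ and combined with $\rg_{\underline{\proket}}(X,\mathbb B_{\operatorname{dR}}^+)=\rg_{\et,\mathrm{cond}}(X,R\lambda_*\mathbb B_{\operatorname{dR}}^+)$ this is exactly the morphism in the proposition. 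By \cite[Lemma 5.2]{bms2} it suffices to show it is an equivalence on $\gr^\star$, provided both filtrations are exhaustive and complete. Exhaustiveness holds by construction, and completeness of $\fil^\star\rg_{B_{\operatorname{dR}}^+}(X)$ is Proposition \ref{bdrcompleteness}; completeness of $\fil^\star L\eta_t\rg_{\underline{\proket}}(X,\mathbb B_{\operatorname{dR}}^+)$ follows from the same argument, i.e.\ from the fact that $\rg_{\underline{\proket}}(X,\mathbb B_{\operatorname{dR}}^+)=R\varprojlim_m\rg_{\underline{\proket}}(X,\mathbb B_{\operatorname{dR}}^+/\fil^m)$ and that $L\eta_t$ commutes with this limit here, using that $\rg_{\underline{\proket}}(X,\mathbb B_{\operatorname{dR}}^+)$ is derived $t$-complete and, since $X$ is affinoid, bounded — this last point from Proposition \ref{bdrboundness} together with the coherence of $\gr^j\mathbb B_{\operatorname{dR}}^+\simeq\widehat{\mathcal O}_{X_{\proket}}(j)$.

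The next step is to identify the graded pieces. By \cite[Proposition 5.8]{bms2} and Theorem \ref{beilinsont}(2) (equivalently Proposition \ref{propleta}(2)), for any $M$ admitting a $t$-torsion-free representative one has $\gr^i\fil^\star L\eta_tM\simeq\tau^{\le i}\big(M\otimes_{\mathbb B_{\operatorname{dR}}^+}^L\mathbb B_{\operatorname{dR}}^+/t\big)(i)$, where $\otimes_{\mathbb B_{\operatorname{dR}}^+}^L\mathbb B_{\operatorname{dR}}^+/t$ denotes tensoring with the cone of multiplication by $t$, and $(i)$ is the evident (Tate) twist. Both $R\lambda_*$ and $\rg_{\et,\mathrm{cond}}(X,-)$ commute with this operation, because the cone of $\cdot\,t$ is the perfect complex $[\mathbb B_{\operatorname{dR}}^+\xrightarrow{t}\mathbb B_{\operatorname{dR}}^+]$, and since $t$ is a non-zero-divisor on $\mathbb B_{\operatorname{dR}}^+$ we have $\mathbb B_{\operatorname{dR}}^+\otimes_{\mathbb B_{\operatorname{dR}}^+}^L\mathbb B_{\operatorname{dR}}^+/t=\mathbb B_{\operatorname{dR}}^+/t\mathbb B_{\operatorname{dR}}^+=\widehat{\mathcal O}_{X_{\proket}}$. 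As $\rg_{\et,\mathrm{cond}}(X,-)$ also commutes with the cofibres computing $\gr^i$, the induced map on $\gr^i$ becomes, up to the twist $(i)$, the natural comparison
\[
\tau^{\le i}\,\rg_{\et,\mathrm{cond}}\big(X,\,R\lambda_*\widehat{\mathcal O}_{X_{\proket}}\big)\;\longrightarrow\;\rg_{\et,\mathrm{cond}}\big(X,\,\tau^{\le i}R\lambda_*\widehat{\mathcal O}_{X_{\proket}}\big).
\]

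It remains to show this comparison is an equivalence for every $i$, which is where the affinoid hypothesis is used. By the logarithmic Hodge–Tate comparison (cf.\ \cite[\S3.3]{dllz2023logrh}) each cohomology sheaf $R^j\lambda_*\widehat{\mathcal O}_{X_{\proket}}$ is a coherent $\mathcal O_X$-module, isomorphic to $\Omega^{\operatorname{log},j}_X(-j)$, and in particular vanishes for $j>\dim X$. Since $X$ is affinoid, $H^p_{\et,\mathrm{cond}}(X,R^j\lambda_*\widehat{\mathcal O}_{X_{\proket}})=0$ for all $p>0$ by Tate's acyclicity theorem (in its condensed form). Hence the hypercohomology spectral sequence of $\rg_{\et,\mathrm{cond}}(X,R\lambda_*\widehat{\mathcal O}_{X_{\proket}})$ degenerates, $\rg_{\et,\mathrm{cond}}(X,-)$ is exact on the finitely many cohomology sheaves of $R\lambda_*\widehat{\mathcal O}_{X_{\proket}}$, and therefore commutes with every truncation $\tau^{\le i}$; thus the displayed map is an equivalence, and \cite[Lemma 5.2]{bms2} concludes. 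I expect the main difficulty to lie in the second step — making the identification of the graded pieces precise and transporting it correctly through $R\lambda_*$ and $\rg_{\et,\mathrm{cond}}(X,-)$, keeping track of twists, shifts, and the solid/condensed enhancements — while the affinoid input of the last step is standard.
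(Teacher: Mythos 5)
Your argument is correct and is essentially the proof the paper invokes: the paper simply defers to \cite[Proposition 3.11]{bras2018overconvergentrelativerhamcohomology} and \cite[Proposition 2.42]{bosco2023rational}, whose proofs proceed exactly as you do — reduce via completeness (Proposition \ref{bdrcompleteness}, boundedness from Proposition \ref{bdrboundness}) and \cite[Lemma 5.2]{bms2} to graded pieces $\tau^{\le i}(-\otimes^L \mathbb B_{\operatorname{dR}}^+/t)(i)$, then use the coherence of $R^j\lambda_*\widehat{\mathcal O}_{X_{\proket}}\simeq \Omega_X^{\log,j}(-j)$ and affinoid (Tate/Kiehl) acyclicity to commute $\rg_{\et,\mathrm{cond}}(X,-)$ with truncations. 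Your identification of the graded map and the use of the logarithmic Hodge–Tate comparison (Proposition \ref{pushforward}, or \cite{dllz2023logrh}) is precisely the log analogue of the input used there, so no further comment is needed.
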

	
	\begin{proof}
		The proof is similar to \cite[Proposition 3.11]{bras2018overconvergentrelativerhamcohomology} or \cite[Proposition 2.42]{bosco2023rational}.
	\end{proof}
	
	\begin{thm} \label{bdrK}
		Let $X$ be a log smooth rigid analytic varieties over $K$ then we have a natural isomorphism in $D(\Mod_K^{\operatorname{solid}}):$ $$\rg_{B_{\operatorname{dR}}^+}(X_C)\xrightarrow{\simeq}\rg_{\underline{\operatorname{logdR}}}(X)\otimes_{K}^{L_{\blacksquare}}\underline{B_{\operatorname{dR}}^+},$$ compatible with filtration and $G_K$-action.
	\end{thm}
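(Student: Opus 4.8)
The plan is to reduce this to the already-established filtered comparison between Kummer pro-étale cohomology of $\mathbb B_{\operatorname{dR}}^+$ and log de Rham cohomology tensored with $B_{\operatorname{dR}}^+$. The key observation is that the trivial filtered $\mathcal O_X$-module $(\mathcal O_X, d, \fil^\bullet)$ with $\fil^0 = \mathcal O_X \supset \fil^1 = 0$ has associated $\mathbb B_{\operatorname{dR}}^+$-local system $\mathbb M^+ = \mathbb B_{\operatorname{dR}}^+$ itself, and correspondingly $\mathbb M = \mathbb B_{\operatorname{dR}}$. Theorem~\ref{bdrcoh}(ii) applied with $\mathcal E = \mathcal O_X$ and $r = 0$ gives, after base change along $K \to C$, a $\mathscr G_K$-equivariant filtered quasi-isomorphism $\rg_{\underline{\proket}}(X_C, \fil^0 \mathbb B_{\operatorname{dR}}) \simeq \fil^0(\rg_{\underline{\operatorname{logdR}}}(X) \otimes_K^{L_\blacksquare} B_{\operatorname{dR}})$. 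The right-hand side is precisely the $B_{\operatorname{dR}}^+$-cohomology we want on the de Rham side, since $\fil^0$ of the tensor product filtration picks out exactly $\rg_{\underline{\operatorname{logdR}}}(X) \otimes_K^{L_\blacksquare} B_{\operatorname{dR}}^+$ (up to the décalage adjustment — see below).

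The second ingredient is to identify $L\eta_t R\lambda_* \mathbb B_{\operatorname{dR}}^+$ with the appropriate filtered truncation. By Proposition~\ref{bdrdirectimage} applied to $\mathcal E = \mathcal O_X$, we have $(R\lambda_* \mathbb B_{\operatorname{dR}})^{\blacktriangledown} \simeq \underline{\logdr}_X \otimes_K^{\blacksquare} \underline{B_{\operatorname{dR}}}$ compatibly with filtrations, where $\underline{\logdr}_X = \underline{\Omega_X^{\bullet,\log}}$. The Proposition~\ref{bms2}-style description (Proposition 5.8 cited in the excerpt) identifies $L\eta_t$ of an $f$-adically filtered object with $\tau_B^{\leq 0}$ in the Beilinson $t$-structure; concretely, I would use that $L\eta_t R\lambda_* \mathbb B_{\operatorname{dR}}^+$ is the subcomplex of $R\lambda_* \mathbb B_{\operatorname{dR}}$ obtained by the décalage recipe from the $t$-adic (equivalently $\fil^\bullet$) filtration on $R\lambda_* \mathbb B_{\operatorname{dR}}^+$. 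Since on graded pieces the log de Rham complex $\underline{\Omega_X^{i,\log}}$ sits in degree $i$ and contributes to $\gr^j$ via the twist $C(j)$ (as in the Hodge–Tate proposition preceding Theorem~\ref{bdrcoh}), the Beilinson truncation $\tau_B^{\leq 0}$ of the $\fil$-adic filtration on $(R\lambda_*\mathbb B_{\operatorname{dR}})^{\blacktriangledown} \simeq \underline{\logdr}_X \otimes_K^\blacksquare \underline{B_{\operatorname{dR}}}$ is exactly the subcomplex $\fil^0(\underline{\logdr}_X \otimes_K^\blacksquare \underline{B_{\operatorname{dR}}})$, which is $\underline{\logdr}_X \otimes_K^\blacksquare \underline{B_{\operatorname{dR}}^+}$ with the correct filtration. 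Then $\rg_{B_{\operatorname{dR}}^+}(X_C) = \rg_{\et,\mathrm{cond}}(X_C, L\eta_t R\lambda_*\mathbb B_{\operatorname{dR}}^+)$, and applying $\rg_{\et}$ combined with $\rg_{\et}(X_C, \epsilon^* (-)) \simeq (-)$ over $K$ (via \cite[Lemma 5.6]{bosco2023padicproetalecohomologydrinfeld} and the base-change morphism $\epsilon: X_{C,\et} \to X_{\et}$, exactly as in the proof of Theorem~\ref{bdrcoh}) yields $\rg_{\underline{\operatorname{logdR}}}(X) \otimes_K^{L_\blacksquare} \underline{B_{\operatorname{dR}}^+}$.

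Concretely the steps are: (1) unwind the definition $\rg_{B_{\operatorname{dR}}^+}(X_C) = \rg_{\et,\mathrm{cond}}(X_C, L\eta_t R\lambda_*\mathbb B_{\operatorname{dR}}^+)$ and use Propositions~\ref{bdrboundness} and~\ref{bdrcompleteness} to reduce to a computation with bounded, complete objects; (2) invoke Proposition~\ref{bdrdirectimage} with $\mathcal E = \mathcal O_X$ to write $R\lambda_*\mathbb B_{\operatorname{dR}}$ as $\underline{\logdr}_X \otimes_K^\blacksquare \underline{B_{\operatorname{dR}}}$ filtered-compatibly; (3) apply the Beilinson $t$-structure description of $L\eta_t$ (Proposition 5.8 of \cite{bms2} as quoted) to identify $L\eta_t R\lambda_*\mathbb B_{\operatorname{dR}}^+$ with $\fil^0$ of this filtered complex, i.e.\ $\underline{\logdr}_X \otimes_K^\blacksquare \underline{B_{\operatorname{dR}}^+}$ with the décalage/tensor-product filtration (here the identification of $\fil^\bullet \mathbb B_{\operatorname{dR}}^+$ with the $t$-adic filtration and the fact that the log de Rham complex is already a genuine complex of sheaves in the heart of the Beilinson $t$-structure are used); (4) descend from $C$ to $K$ using the base change $\epsilon$ exactly as in the proof of Theorem~\ref{bdrcoh}, keeping track of the $\mathscr G_K$-action throughout. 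The main obstacle I expect is step (3): carefully matching the décalage filtration on $L\eta_t R\lambda_*\mathbb B_{\operatorname{dR}}^+$ coming from Definition~\ref{decalagefiltration} with the tensor-product filtration on $\underline{\logdr}_X \otimes_K^\blacksquare \underline{B_{\operatorname{dR}}^+}$ — this requires knowing that $R\lambda_* \mathbb B_{\operatorname{dR}}^+$ is represented by a $t$-torsion-free complex whose cohomology sheaves $H^i$ satisfy $H^i[t] = 0$ (so that Proposition~\ref{propleta}(1) applies and the map to the $t$-adic filtration is well-behaved), and that $\gr^i$ of the décalage filtration is the truncation $\tau^{\leq i}\gr^i$, which by the Hodge–Tate computation is concentrated in degree $i$ as the single sheaf $\underline{\Omega_X^{i,\log}}$. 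Once this local identification of graded pieces is in hand, the filtered quasi-isomorphism follows by completeness of both filtrations and \cite[Lemma 5.2]{bms2}.
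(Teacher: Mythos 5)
Your route is essentially the paper's: both rest on Proposition \ref{bdrdirectimage} (the filtered Poincar\'e-lemma identification of $R\lambda_*\mathbb B_{\operatorname{dR}}$ with $\underline{\Omega_X^{\bullet,\log}}\otimes_K^{\blacksquare}\underline{B_{\operatorname{dR}}}$), an explicit computation of the d\'ecalage $L\eta_t$ on the resulting filtered complex, and the descent from $C$ to $K$ via \cite[Lemma 5.6, Lemma 6.13, Theorem 5.20]{bosco2023padicproetalecohomologydrinfeld}, exactly as in the proof of Theorem \ref{bdrcoh}. The only structural difference is that the paper first reduces to affinoid $X$ and uses Proposition \ref{bdraffinoid} to commute $L\eta_t$ past $\rg_{\et}$, computing the d\'ecalage of the explicit complex of global sections, whereas you perform the d\'ecalage at the level of sheaves on $X_{C,\et}$ via the Beilinson $t$-structure; both work, and your detour through Theorem \ref{bdrcoh}(ii) in the first paragraph is not really needed for this.

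One concrete misstatement should be corrected: the graded pieces of the d\'ecalage filtration are \emph{not} concentrated in degree $i$ as the single sheaf $\underline{\Omega_X^{i,\log}}$. By Theorem \ref{beilinsont}(2), $\gr^i$ of $\fil^{\star}L\eta_tR\lambda_*\mathbb B_{\operatorname{dR}}^+$ is $\tau^{\leq i}R\lambda_*\widehat{\mathcal O}_{X_{\proket}}(i)$ (compare Theorem \ref{bdrC}(3)), whose cohomology sheaves are $\underline{\Omega_X^{j,\log}(i-j)}$ in \emph{every} degree $0\leq j\leq i$ by Proposition \ref{pushforward}. This is still what you want, since it agrees with $\gr^i$ of the tensor-product filtration on $\underline{\Omega_X^{\bullet,\log}}\otimes_K^{\blacksquare}\underline{B_{\operatorname{dR}}^+}$, namely $\bigoplus_{j\leq i}\underline{\Omega_X^{j,\log}(i-j)}[-j]$; but the check as you wrote it would fail to match the two sides. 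The cleanest repair (and in effect what the paper does after passing to an affinoid) is the termwise computation: Proposition \ref{bdrdirectimage} represents $R\lambda_*\mathbb B_{\operatorname{dR}}^+=R\lambda_*\fil^0\mathbb B_{\operatorname{dR}}$ by the $t$-torsion-free complex whose degree-$j$ term is $\underline{\Omega_X^{j,\log}}\otimes_K^{\blacksquare}t^{-j}\underline{B_{\operatorname{dR}}^+}$, and applying $\eta_t$ termwise gives literally $\underline{\Omega_X^{\bullet,\log}}\otimes_K^{\blacksquare}\underline{B_{\operatorname{dR}}^+}$, with $\fil^r$ from Definition \ref{decalagefiltration} equal in degree $j$ to $\underline{\Omega_X^{j,\log}}\otimes_K^{\blacksquare}t^{\max(r-j,0)}\underline{B_{\operatorname{dR}}^+}$, i.e.\ exactly the tensor-product filtration; this gives the filtered compatibility with no graded-piece bookkeeping. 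Relatedly, your first-paragraph claim that $\fil^0$ of the tensor-product filtration ``picks out'' $\rg_{\underline{\operatorname{logdR}}}(X)\otimes_K^{L_{\blacksquare}}B_{\operatorname{dR}}^+$ is only true after this d\'ecalage, as you yourself flag.
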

	
	\begin{proof}
		We may assume that $X$ is affinoid. In this case, we have 
		\begin{align*}
			\rg_{B_{\operatorname{dR}}^+}(X_C)&\simeq  L\eta_t\rg_{\underline{\proket}}(X_C,\mathbb B_{\operatorname{dR}}^+)\\
			&\simeq L\eta_t\rg_{\et}(X_C,(R\lambda_*B_{\operatorname{dR}}^+)^{\blacktriangledown}) \\
			&\simeq L\eta_t \rg_{\et}(X,\fil^0(\underline{\logdr}_X^{\mathcal E} \otimes_K^{\blacksquare}\underline{B_{\operatorname{dR}}})) \\
			&\simeq \left(\underline{\mathcal{O}_X(X)}\to \underline{\Omega_X^{1,\log}(X)}\to \underline{\Omega_X^{2,\log}(X)}\to \cdots \right) \otimes_{K}^{L_\blacksquare}\underline{B_{\operatorname{dR}}^+}\\
			&\simeq  \rg_{\underline{\operatorname{logdR}}}(X)\otimes_{K}^{L_\blacksquare}\underline{B_{\operatorname{dR}}^+},
		\end{align*}  
		where the first quasi-isomorphism follows from Proposition \ref{bdraffinoid}, the second quasi-isomorphism follows from the same argument as in the proof of Theorem \ref{bdrcoh}, the third quasi-isomorphism follows from Proposition \ref{bdrdirectimage}, the forth quasi-isomorphism follows from \cite[Lemma 6.13]{bosco2023padicproetalecohomologydrinfeld} and \cite[Theorem 5.20]{bosco2023padicproetalecohomologydrinfeld}, and the last quasi-isomorphism follows from \cite[Lemma 5.6]{bosco2023padicproetalecohomologydrinfeld}. This concludes the proof.
	\end{proof}
	
	When $X$ is defined over $\mathbb{C}$, our construction of the $B_{\operatorname{dR}}^+$-cohomology theory indeed provides a deformation of the logarithmic de Rham cohomology. Moreover, we can describe its filtration in a manner analogous to \cite[Proposition 3.13]{CN4.3} in the rigid analytic setting, as follows.
	
	\begin{thm}  \label{bdrC}
		(1)	Let $X$ be a log smooth rigid analytic varieties over $C$, then we have a natural isomorphism in $D(\Mod_C^{\operatorname{solid}}):$ $$\theta:\rg_{B_{\operatorname{dR}}^+}(X)\otimes_{B_{\operatorname{dR}}^+}^{L_\blacksquare}C\xrightarrow{\simeq}\rg_{\underline{\operatorname{logdR}}}(X).$$
		
		(2) More generally, for $r \geq 0,$ we have a natural distinguished triangle in $D(\Mod_C^{\operatorname{solid}}):$ $$\fil^{r-1}\rg_{B_{\operatorname{dR}}^+}(X) \xrightarrow{t}\fil^{r}\rg_{B_{\operatorname{dR}}^+}(X) \xrightarrow{\theta}\fil^{r}\rg_{\logdr}(X).$$
		
		(3) For $r \geq 0,$ we have a natural distinguished triangle in $D(\Mod_C^{\operatorname{solid}}):$ $$\fil^{r+1}\rg_{B_{\operatorname{dR}}^+}(X) \xrightarrow{}\fil^{r}\rg_{B_{\operatorname{dR}}^+}(X)\xrightarrow{}\rg(X,\tau^{\leq r}R\lambda_*\widehat{\mathcal O}_{X_{\proket}}(r)).$$
	\end{thm}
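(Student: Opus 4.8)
The plan is to carry out the whole argument at the level of complexes of sheaves on $X_{\et,\mathrm{cond}}$, with $M:=R\lambda_*\mathbb B_{\operatorname{dR}}^+$ and the décalage filtration $\fil^\star L\eta_t M$ of Definition \ref{decalagefiltration}. Two facts are needed at the outset. First, since $\mathbb B_{\operatorname{dR}}^+$ is $t$-torsion free with $\mathbb B_{\operatorname{dR}}^+/t\cong\widehat{\mathcal O}_{X_{\proket}}$ and $R\lambda_*$ is exact, there is a natural identification $M\otimes^L\mathcal O_T/(t)\simeq R\lambda_*\widehat{\mathcal O}_{X_{\proket}}$; moreover $M$ is bounded by Proposition \ref{bdrboundness}. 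Second, by \cite[Proposition 5.8]{bms2} (cf. Proposition \ref{propleta}) the filtered object $(L\eta_t M,\fil^\star)$ is canonically $\tau^{\leq 0}_B$ of the $t$-adic filtration $t^\star\otimes M$, so Theorem \ref{beilinsont}(2) identifies the graded pieces as $\gr^r L\eta_t M\simeq\tau^{\leq r}(\gr^r(t^\star\otimes M))\simeq\tau^{\leq r}(R\lambda_*\widehat{\mathcal O}_{X_{\proket}}(r))$, the Tate twist coming from $(t^r)/(t^{r+1})\cong\underline C(r)$. Given this, part (3) is immediate: apply the exact functor $\rg_{\et,\mathrm{cond}}(X,-)$ to the distinguished triangle $\fil^{r+1}L\eta_t M\to\fil^r L\eta_t M\to\gr^r L\eta_t M$ and substitute the graded piece.

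For part (1) I would first prove the sheaf-level assertion $L\eta_t M\otimes^L\mathcal O_T/(t)\simeq\Omega_X^{\bullet,\log}$. By Proposition \ref{propleta}(2) the left side is the complex $H^\bullet(M/(t))$ whose $i$-th term is $R^i\lambda_*\widehat{\mathcal O}_{X_{\proket}}\otimes(t^i)$, with Bockstein differential. The logarithmic Hodge--Tate comparison --- which one reads off from Corollary \ref{obdrlocalcor} and the filtered logarithmic Poincaré lemma exactly as in the smooth case --- gives $R^i\lambda_*\widehat{\mathcal O}_{X_{\proket}}\cong\Omega_X^{i,\log}(-i)$, so the twist by $(t^i)\cong\underline C(i)$ cancels the Tate twist and the $i$-th term is $\Omega_X^{i,\log}$; that the Bockstein differential agrees with the logarithmic de Rham differential follows from the cup-product computation of \cite[Section 8]{bms1} (equivalently, from the $\mathbb B_{\operatorname{dR}}^+/\fil^2$-structure and the connection on $\mathcal O\mathbb B_{\operatorname{dR,log}}$). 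Applying $\rg_{\et,\mathrm{cond}}(X,-)$ and using that $-\otimes^{L_\blacksquare}_{B_{\operatorname{dR}}^+}C$ is computed by the cofiber of $t$ (the solid tensor agreeing with the derived one, as $C$ has the two-term free resolution $[\mathbb B_{\operatorname{dR}}^+\xrightarrow{t}\mathbb B_{\operatorname{dR}}^+]$, and cofibers commuting with $\rg$) yields $\rg_{B_{\operatorname{dR}}^+}(X)\otimes^{L_\blacksquare}_{B_{\operatorname{dR}}^+}C\simeq\rg_{\et,\mathrm{cond}}(X,\Omega_X^{\bullet,\log})=\rg_{\underline{\operatorname{logdR}}}(X)$, with $\theta$ the induced reduction map.

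Part (2) is the main point. The map $\theta\colon\fil^r\rg_{B_{\operatorname{dR}}^+}(X)\to\fil^r\rg_{\logdr}(X)$ is the restriction to $\fil^r$ of the map from part (1), once one notes that $\fil^r\rg_{\logdr}(X)=\rg_{\et,\mathrm{cond}}(X,\sigma^{\geq r}\Omega_X^{\bullet,\log})$ is the Hodge filtration (by the definition of $\fil^r\logdr_X^{\mathcal O_X}$, the structure sheaf carrying its trivial filtration). Since $L\eta_t M\xrightarrow{t}L\eta_t M\to L\eta_t M\otimes^L\mathcal O_T/(t)$ is null, one has $\theta\circ t=0$ at the sheaf level, so multiplication by $t$ factors through $\fib(\theta)$, and (2) reduces to showing that the natural map $\fil^{r-1}L\eta_t M\to\fib(\fil^r L\eta_t M\xrightarrow{\theta}\sigma^{\geq r}\Omega_X^{\bullet,\log})$ is an equivalence in $D(X_{\et,\mathrm{cond}})$. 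This is local on $X_{\et}$, so I would reduce to a standard chart $X=\Spa(R,R^+)\to\mathbb E$ as in Section \ref{period}, where $\rg_{\underline{\proket}}(X_C,\mathbb B_{\operatorname{dR}}^+)$ is computed by the continuous cohomology of $\Gamma\simeq\widehat{\Z}(1)^n$ acting on $\mathbb B_{\operatorname{dR}}^+(R_\infty)$ --- a Koszul complex whose differentials are divisible by $t$ --- and use Propositions \ref{bdraffinoid} and \ref{bdrcompleteness} to replace the sheaf-level claim by the corresponding one for $L\eta_t$ of this explicit Koszul complex with its décalage filtration. That last claim is then a direct, if delicate, computation with $L\eta_t$ of Koszul complexes over $\mathbb B_{\operatorname{dR}}^+$, following the proof of \cite[Proposition 3.13]{CN4.3} (and the Koszul-complex arguments of \cite{bms1}). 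The hard part is exactly this filtered bookkeeping: showing that the extra factor of $t$ separating $\fil^{r-1}$ from $\fil^r$ accounts precisely for the stupid truncation $\sigma^{\geq r}$ of the logarithmic de Rham complex, with no spurious cohomology in degree $r-1$, and that everything is compatible with the identifications of part (1). Finally, $\mathscr G_K$-equivariance, compatibility with the $K$-structure, and compatibility of $\theta$ with part (1) all follow from the naturality of the constructions above (and, for the $K$-rational statements, from Theorem \ref{bdrK}).
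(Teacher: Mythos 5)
Your parts (1) and (3) are essentially the paper's argument. For (1) you reduce, via Proposition \ref{propleta}(2), to identifying the Bockstein complex $H^{\bullet}((R\lambda_*\mathbb B_{\operatorname{dR}}^+)/t)$ with $\underline{\Omega_X^{\bullet,\log}}$ using $R^i\lambda_*\widehat{\mathcal O}_{X_{\proket}}\simeq\Omega_X^{i,\log}(-i)$ (Proposition \ref{pushforward}); this is exactly what the paper does (you are in fact more explicit than the paper about why the Bockstein differential is the log de Rham differential). For (3) the paper simply cites \cite[Lemma 3.3]{wu2024decalage}, while you rederive the same fact from the Beilinson $t$-structure description of $\fil^{\star}L\eta_t$ ($\gr^rL\eta_tM\simeq\tau^{\leq r}(R\lambda_*\widehat{\mathcal O}_{X_{\proket}}(r))$) and then apply $\rg_{\et,\cond}(X,-)$ to the triangle $\fil^{r+1}\to\fil^r\to\gr^r$; that is a correct and self-contained substitute for the citation.

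Part (2) is where your proposal has a genuine gap. The paper deduces (2) from a purely formal property of the d\'ecalage filtration (\cite[Proposition 3.4]{wu2024decalage}): for a $t$-torsion-free representative $M^{\bullet}$ one checks degreewise that the quotient $\fil^rL\eta_tM/\,t\cdot\fil^{r-1}L\eta_tM$ is naturally quasi-isomorphic to the brutal truncation $\sigma^{\geq r}H^{\bullet}(M/t)$ of the Bockstein complex; combined with the identification in (1) of $H^{\bullet}(M/t)$ with $\underline{\Omega_X^{\bullet,\log}}$ (so that $\sigma^{\geq r}$ becomes the Hodge filtration $\fil^r\rg_{\logdr}$), the triangle in (2) follows at once, with no local analysis. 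You instead propose to verify the identification $\fil^{r-1}L\eta_tM\xrightarrow{\simeq}\fib\bigl(\fil^rL\eta_tM\to\sigma^{\geq r}\Omega_X^{\bullet,\log}\bigr)$ by reducing to a toric chart and computing $L\eta_t$ of the explicit Koszul complex for $\Gamma\simeq\widehat{\Z}(1)^n$ acting on $\mathbb B_{\operatorname{dR}}^+(R_\infty)$, "following CN \cite[Proposition 3.13]{CN4.3}". But the decisive step --- the filtered bookkeeping showing that multiplication by $t$ accounts exactly for $\sigma^{\geq r}$ with no spurious cohomology below degree $r$ --- is precisely the content of the statement, and you assert it rather than prove it (you yourself flag it as ``the hard part''). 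Moreover this route forces you through an extra layer of care that the formal argument avoids: since $L\eta_t$ does not commute with $\rg$ or with pushforward in general, passing from the sheaf-level claim on $X_{\et,\cond}$ to the Koszul model requires the filtered versions of Propositions \ref{bdraffinoid} and \ref{bdrcompleteness} at every filtration level, on a basis of charts, before any computation can start. So as written the proof of (2) is incomplete; the quickest repair is to replace the local Koszul computation by the general d\'ecalage lemma (prove it once for torsion-free representatives, exactly as you did the graded-piece statement in (3)) and then quote your part (1).
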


	\begin{proof}
		We have $$\rg_{B_{\operatorname{dR}}^+}(X)\otimes_{B_{\operatorname{dR}}^+}^{L_\blacksquare}C \simeq \rg_{\et,\operatorname{cond}}(X,L\eta_tR\lambda_*\mathbb B_{\operatorname{dR}}^+\otimes_{B_{\operatorname{dR}}^+}^{L_\blacksquare}(B_{\operatorname{dR}}^+/t)).$$ We are reduced to showing $$\left(L\eta_tR\lambda_*\mathbb B_{\operatorname{dR}}^+\otimes_{B_{\operatorname{dR}}^+}^{L_\blacksquare}(B_{\operatorname{dR}}^+/t)\right)^{\blacktriangledown} \simeq \underline{\Omega_X^{\bullet,\log}}.$$ By Proposition \ref{propleta} (2), we have
		\begin{align*}
			\left(L\eta_tR\lambda_*\mathbb B_{\operatorname{dR}}^+\otimes_{B_{\operatorname{dR}}^+}^{L_\blacksquare}(B_{\operatorname{dR}}^+/t)\right)^{\blacktriangledown}&\simeq  H^{\bullet}((R\lambda_*\mathbb B_{\operatorname{dR}}^+)/t)^{\blacktriangledown}
			\simeq H^{\bullet}(R\lambda_*(\mathbb B_{\operatorname{dR}}^+/t))^{\blacktriangledown}
			\simeq H^{\bullet}(R\lambda_*\widehat{\mathcal O}_{X_{\proket}})^{\blacktriangledown}  \\
			&\simeq \left(R^{\bullet}\lambda_*\widehat{O}_{X_{\proket}}\right)^{\blacktriangledown}\otimes_{B_{\operatorname{dR}}^+}\fil^{\bullet}B_{\operatorname{dR}}^+/\fil^{\bullet+1}B_{\operatorname{dR}}^+\\
			&\simeq  \underline{\Omega_X^{\bullet,\log}(-\bullet)}\otimes_CC(\bullet) \\
			&\simeq \underline{\Omega_X^{\bullet,\log}(-\bullet)},
		\end{align*} 
		where the second-to-last isomorphism follows from the proposition below. This proves (1). Then (2) follows from \cite[Proposition 3.4]{wu2024decalage}, and (3) follows from \cite[Lemma 3.3]{wu2024decalage}.
	\end{proof}
	
	\begin{rem}
		We will see that when $X$ descends to $K$, we have a natural quasi-isomorphism $$\tau^{\leq r}R\lambda_*\widehat{\mathcal O}_{X_{\proket}}(r)\simeq \bigoplus_{i\leq r}\Omega_X^{i,\log}(r-i)[-i],$$ and in general, a choice of a log smooth $B_{\operatorname{dR}}^+/t^2$-lift $\mathbb X$ of $X$ via the map $B_{\operatorname{dR}}^+/t^2 \to C$ gives such a quasi-isomorphism.
	\end{rem}
	
	\begin{prop} \label{pushforward}
		Let $X$ be a log smooth rigid analytic varieties over $C$, then we have a natural isomorphism $$\left(R^i\lambda_*\widehat{\mathcal O}_{X_{\proket}}\right)^{\blacktriangledown}\simeq \underline{\Omega_X^{i,\log}(-i)}$$ for all $i \geq 0.$
	\end{prop}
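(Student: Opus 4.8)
The plan is to identify $R\lambda_*\widehat{\mathcal O}_{X_{\proket}}$ by pushing the graded logarithmic Poincar\'e lemma down to $X_{\et}$ and reusing the vanishing already established in the preceding proposition, rather than redoing a Cartan--Leray computation from scratch (although the latter, the logarithmic analogue of \cite[Proposition 3.23]{scholze2013p}, would work too). Since cohomology sheaves are detected on a basis, the assertion is \'etale local on $X$, so we may assume $X=\Spa(R,R^+)$ is affinoid with a strictly \'etale map $X\to\mathbb E$ that is a composite of rational embeddings and finite \'etale maps, and form the pro-finite Kummer \'etale cover $\widetilde X\to X$ with group $\Gamma\simeq\widehat\Z(1)^n$, $n=\dim X$, as in Section \ref{period}.

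Taking $\gr^0$ of the filtered logarithmic Poincar\'e lemma exhibits $\widehat{\mathcal O}_{X_{\proket}}=\gr^0\mathbb B_{\operatorname{dR}}$, in $D(X_{\proket})$, as the Higgs complex $\bigl[\gr^0\mathcal O\mathbb B_{\operatorname{dR,log}}\xrightarrow{\bar\nabla}\gr^{-1}\mathcal O\mathbb B_{\operatorname{dR,log}}\otimes_{\mathcal O_X}\Omega^{1,\log}_X\xrightarrow{\bar\nabla}\cdots\xrightarrow{\bar\nabla}\gr^{-n}\mathcal O\mathbb B_{\operatorname{dR,log}}\otimes_{\mathcal O_X}\Omega^{n,\log}_X\bigr]$. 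Now apply $R\lambda_*$. Using that each $\Omega^{i,\log}_X$ is locally free (so the projection formula applies) together with the analogue over $C$ of the preceding proposition — which, via $\gr^j\mathbb B_{\operatorname{dR}}=\widehat{\mathcal O}_{X_{\proket}}(j)$, gives $R^q\lambda_*\gr^j\mathcal O\mathbb B_{\operatorname{dR,log}}=0$ for $q>0$ and $\underline{\mathcal O_X(j)}$ for $q=0$ — the hypercohomology spectral sequence degenerates and $R\lambda_*\widehat{\mathcal O}_{X_{\proket}}$ is represented by the honest complex $\bigl[\underline{\mathcal O_X}\to\underline{\Omega^{1,\log}_X(-1)}\to\cdots\to\underline{\Omega^{n,\log}_X(-n)}\bigr]$ with differential $\lambda_*(\bar\nabla)$.

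It remains — and this is the crux — to check that $\lambda_*(\bar\nabla)=0$, so that $R^i\lambda_*\widehat{\mathcal O}_{X_{\proket}}\simeq\underline{\Omega^{i,\log}_X(-i)}$; the claim then follows after applying $(-)^{\blacktriangledown}$, which fixes $\underline{\Omega^{i,\log}_X}$ since $\Omega^{i,\log}_X$ is coherent. For this one uses the explicit local description: by Corollary \ref{obdrlocalcor} and Remark \ref{obdrconn}, over $\widetilde X$ one has $\gr^j\mathcal O\mathbb B_{\operatorname{dR,log}}\cong t^j\widehat{\mathcal O}_{X_{\proket}}[W_1,\dots,W_n]$ with $\bar\nabla=\sum_{l}t^{-1}\delta(a_l)\otimes\partial_{W_l}$, which annihilates the $W$-degree-zero part $t^j\,\gr^0\mathbb B_{\operatorname{dR}}^+$ (equivalently, $\mathbb B_{\operatorname{dR}}|_{\widetilde X}$ is horizontal). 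On the other hand, $\gamma$ acts on the variables $W_l$ by translations and on $\widehat{\mathcal O}_{X_{\proket}}(\widetilde X)=R_\infty$ with $R_\infty^{\Gamma}=R$, so a standard computation (as in the proof of the preceding proposition, and of \cite[Lemma 3.3.15]{dllz2023logrh}) identifies the $\Gamma$-invariants — which compute $\lambda_*\gr^j\mathcal O\mathbb B_{\operatorname{dR,log}}$ — with exactly this $W$-degree-zero part. Hence $\bar\nabla$ kills them and $\lambda_*(\bar\nabla)=0$. The main obstacle is precisely this last step: one is asserting that the logarithmic Hodge--Tate decomposition already holds on a toric chart, and the care lies in pinning down the $\Gamma$-invariants of the graded period sheaf together with the action on the $W_l$; the compatibility of $\mathscr C(S,-)$ with the relevant completed tensor products, needed for the condensed enhancement, is handled as before via \cite[Corollary 10.5.4]{perez2010lcs}.
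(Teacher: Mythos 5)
Your strategy (push the graded logarithmic Poincar\'e lemma down, use acyclicity of $\gr^j\mathcal O\mathbb B_{\operatorname{dR,log}}$, and check that the induced Higgs differential dies on $\Gamma$-invariants) is internally coherent, and the step you single out as the crux ($\lambda_*(\bar\nabla)=0$ via the $W$-degree-zero description of the invariants) is fine \emph{granted your inputs}. The genuine gap is in the inputs themselves: Proposition \ref{pushforward} is a statement about an arbitrary log smooth rigid variety over $C$, while the entire $\mathcal O\mathbb B_{\operatorname{dR,log}}$-package you invoke --- the definition, the local structure results (Proposition \ref{obdrlocalprop}, Corollary \ref{obdrlocalcor}, Remark \ref{obdrconn}), the filtered/graded Poincar\'e lemma, and the acyclicity proposition you call ``the preceding proposition'' --- is set up in Section \ref{period} and in \cite{dllz2023logrh} only for $X$ locally noetherian over $\Spa(K,\mathcal O_K)$ with $K$ discretely valued (the acyclicity statement is moreover about $X_C$ for $X$ defined over $K$, not about varieties over $C$). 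Declaring ``the analogue over $C$ holds by the same proof'' is not a routine remark: the local description of $\mathcal O\mathbb B^+_{\operatorname{dR}}$-type sheaves as power series rings over $\mathbb B^+_{\operatorname{dR}}$ (Scholze's Proposition 6.10 and its log analogue) genuinely uses the discretely valued base --- over $C$ the kernel of $\theta_{\log}$ on $\bigl(R^+\widehat\otimes_{W(k)}\mathbb A_{\operatorname{inf}}\bigr)[1/p][\cdots]$ is no longer visibly generated by $\xi$ and the $y_j$, and a general log smooth $X$ over $C$ does not \'etale-locally descend to a discretely valued subfield. This is precisely the obstruction that forces the paper (following Bosco) to define $B^+_{\operatorname{dR}}$-cohomology over $C$ via $L\eta_tR\lambda_*\mathbb B^+_{\operatorname{dR}}$ rather than via a Poincar\'e-lemma/period-sheaf route, so it cannot be waved away inside the proof of this proposition.

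The paper's own proof avoids period sheaves altogether: following \cite[Proposition 3.23]{scholze2013perfectoidspacessurvey}, which is valid over $C$, one constructs the map $\underline{\Omega_X^{\log}}\to R^1\lambda_*\widehat{\mathcal O}_{X_{\proket}}(1)^{\blacktriangledown}$ directly from the boundary map of the log-Kummer sequence (Lemma \ref{constructpushforward}), extends it multiplicatively to all exterior powers, and verifies it is an isomorphism by the Cartan--Leray computation of continuous $\Gamma$-cohomology of $R_\infty$ on a toric chart. If you want to salvage your route, you would either have to restrict to $X$ defined over $K$ (where your argument is a correct and genuinely different derivation, essentially deducing the Hodge--Tate decomposition of $R\lambda_*\widehat{\mathcal O}$ from the graded Poincar\'e lemma), or first establish the local structure and acyclicity of $\gr^j\mathcal O\mathbb B_{\operatorname{dR,log}}$ over $C$ --- which is additional foundational work, not a citation.
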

	
	\begin{proof}
		The same argument as in the proof of \cite[Proposition 3.23]{scholze2013perfectoidspacessurvey} applies, with a log-version of \cite[Lemma 3.24]{scholze2013perfectoidspacessurvey} stated below.
	\end{proof}
	
	\begin{lem} \label{constructpushforward}
		Consider the exact sequence (for example, see \cite[Proposition 2.3]{kato1999logcoh}) $$0\to \Z_p(1)\to \varprojlim_{\times p}\nu^{-1}\mathcal M_X^{gp}\to \nu^{-1}\mathcal M_X^{gp}\to 0$$ on $X_{\proket}.$ Here $\nu$ is the natural morphism of sites $\nu:X_{\proket}\to X_{\et}$. It induces a boundary map ${\mathcal M_X^{gp}}=\nu_*\nu^{-1}{\mathcal M_X^{gp}} \to  R^1\lambda_*\Z_p(1).$ Then there is a unique $\underline{\mathcal O_{X}}$-linear map $\underline{\Omega_X^{\log}}\to R^1\lambda_*\widehat{\mathcal O}_{X_{\proket}}(1)^{\blacktriangledown}$ such that the diagram
		$$\begin{tikzcd}
			\mathcal M_X^{gp} \arrow[r] \arrow[d,"\delta"] & R^1\nu_*\Z_p(1) \arrow[d] \\
			\underline{\Omega_X^{\log}}(*) \arrow[r] & R^1\lambda_*\widehat{\mathcal O}_{X_{\proket}}(1)^{\blacktriangledown}(*)
		\end{tikzcd}$$ commutes. This map is an isomorphism.
	\end{lem}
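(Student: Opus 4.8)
The plan is to mirror the proof of \cite[Lemma 3.24]{scholze2013perfectoidspacessurvey} (compare also \cite[Lemma 3.3.2]{dllz2023logrh}), systematically replacing the Kummer sequence of $\mathcal O_X^{\times}$ by the logarithmic one for $\mathcal M_X^{gp}$ displayed in the statement, the sheaf $\Omega_X$ by $\Omega_X^{\log}$, and the standard toric pro-\'etale cover by the pro-Kummer-\'etale cover $\widehat{\widetilde{\mathbb E}}_C\to \mathbb E_C$ recalled in Section \ref{period}, while carrying along the condensed bookkeeping as in \cite{bosco2023padicproetalecohomologydrinfeld}.

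First I would construct the map. Applying $R\nu_*$ to the displayed short exact sequence and using $\nu_*\nu^{-1}\mathcal M_X^{gp}=\mathcal M_X^{gp}$ gives the connecting homomorphism $\mathcal M_X^{gp}\to R^1\nu_*\Z_p(1)$, which after composing with the map induced by $\Z_p(1)\hookrightarrow \widehat{\mathcal O}_{X_{\proket}}(1)$ yields a homomorphism $\beta\colon \mathcal M_X^{gp}\to R^1\lambda_*\widehat{\mathcal O}_{X_{\proket}}(1)^{\blacktriangledown}$ realizing the top and right edges of the square. The target is a sheaf of $\underline{\mathcal O_X}$-modules since $\lambda_*\widehat{\mathcal O}_{X_{\proket}}=\underline{\mathcal O_X}$ by the perfectoid description of the period sheaves in Section \ref{period}. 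To factor $\beta$ through $\delta$ by an $\underline{\mathcal O_X}$-linear map out of $\underline{\Omega_X^{\log}}$, I would argue as in \emph{loc.\ cit.}: the restriction of $\beta$ to the units $\mathcal O_X^{\times}\subset \mathcal M_X^{gp}$ is induced by a derivation $\mathcal O_X\to R^1\lambda_*\widehat{\mathcal O}_{X_{\proket}}(1)^{\blacktriangledown}$, so the pair consisting of this derivation together with $\beta|_{\mathcal M_X}$ defines a morphism out of $\Omega_X^{\log}$ by the universal property of the log cotangent sheaf, \cite[Construction 3.3.2]{dllz2023logadic}. The construction is functorial in $X$, hence independent of all choices; uniqueness follows from the local description $\underline{\Omega_X^{\log}}\cong \bigoplus_j \underline{\mathcal O_X}\,\delta(a_j)$ over a strictly \'etale chart $X\to \mathbb E$ (a $\Z$-basis $a_1,\dots,a_n$ of $P^{gp}$), the $\delta(a_j)$ lying in the image of $\delta$.

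To prove the resulting map is an isomorphism I would work on such a chart $X=\Spa(R,R^+)\to \mathbb E$, pull back to the Galois pro-finite Kummer \'etale cover $\widetilde X_C\to X_C$ with group $\Gamma\simeq \widehat{\Z}(1)^{n}$, and — as in the proof of Proposition \ref{bdrboundness} — use that $\widehat{\mathcal O}_{X_{\proket}}$ is acyclic on the log affinoid perfectoid $\widetilde X_C$, so that the Cartan--Leray spectral sequence identifies $R^i\lambda_*\widehat{\mathcal O}_{X_{\proket}}$ with the condensed continuous $\Gamma$-cohomology of $\widehat{\mathcal O}(\widetilde X_C)$. Computing this group cohomology by the Koszul complex on topological generators of $\Gamma$, exactly as in Scholze's proof, shows that $R^1\lambda_*\widehat{\mathcal O}_{X_{\proket}}(1)^{\blacktriangledown}$ is free over $\underline R$ on classes $c_j$ which are, via the boundary map, the images of the toric coordinates $T^{a_j}\in \mathcal M_X^{gp}$; since $\delta(T^{a_j})=\delta(a_j)$, the commuting square forces our map to carry the $\underline R$-basis $(\delta(a_j))_j$ of $\underline{\Omega_X^{\log}(X)}$ to the $\underline R$-basis $(c_j)_j$, and is therefore an isomorphism.

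The step I expect to be the main obstacle is not a single hard idea but the combination of two routine-but-delicate points: checking that the condensed enhancements $(-)^{\blacktriangledown}$, condensed continuous group cohomology, and the Cartan--Leray spectral sequence interact well enough that Scholze's classical identifications pass verbatim to the condensed/solid level; and pinning down the derivation factoring $\beta|_{\mathcal O_X^{\times}}$ in a chart-independent way rather than only on a toric model (so that the map is genuinely global and the square commutes on all of $\mathcal M_X^{gp}$, not merely on the toric coordinates). Both are modeled on the corresponding arguments in \cite{scholze2013p}, \cite{dllz2023logrh} and \cite{bosco2023padicproetalecohomologydrinfeld}, which is where the care is needed.
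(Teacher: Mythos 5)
Your proposal is correct and is essentially the paper's argument: the paper's proof is literally just ``Similar to \cite[Lemma 3.24]{scholze2013perfectoidspacessurvey}'', and your sketch is exactly that adaptation — boundary map from the log Kummer sequence, factorization through $\Omega_X^{\log}$ via the universal property of log derivations, then the local check on a toric chart $X\to\mathbb E$ using the $\Gamma$-cover, Cartan--Leray, and the Koszul computation of continuous $\Gamma$-cohomology, with the condensed bookkeeping handled as in Bosco's work. You in fact supply more detail than the paper does, so there is nothing to correct.
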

	
	\begin{proof}
		Similar to \cite[Lemma 3.24]{scholze2013perfectoidspacessurvey}.
	\end{proof}
	
	Similar to the log de Rham cohomology, we can compare the logarithmic $B_{\operatorname{dR}}^+$-cohomology with the $B_{\operatorname{dR}}^+$-cohomology of the trivial locus when the log structure comes from a strictly normal crossing divisor.
	
	\begin{prop}
		Let $X$ be a log smooth rigid analytic varieties over $C$, with log-structure giving by a strictly normal crossing divisor $D \subset X$. Denote by $U:=X-D$. Then, we have a natural (non filtered!) isomorphism in $D(\Mod_{B_{\operatorname{dR}}^+}^{\operatorname{solid}}):$ $$\rg_{B_{\operatorname{dR}}^+}(X)\simeq \rg_{\underline{\operatorname{dR}}}(U/B_{\operatorname{dR}}^+).$$
	\end{prop}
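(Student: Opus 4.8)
The plan is to realize the isomorphism as the restriction morphism to the open subspace $U\subset X$, and then to prove it is a quasi-isomorphism by reducing it modulo $t$ to the classical (non-filtered) comparison of Kiehl recalled in Theorem~\ref{logdr}.

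First recall that, by Bosco's comparison \cite{bosco2023rational} (together with the equivalences of geometric $B_{\operatorname{dR}}^+$-cohomology theories cited in the introduction), the smooth rigid analytic variety $U$ over $C$, with its trivial log structure, satisfies $\rg_{\underline{\operatorname{dR}}}(U/B_{\operatorname{dR}}^+)=\rg_{\et,\mathrm{cond}}(U,L\eta_tR\lambda_*\mathbb B_{\operatorname{dR}}^+)$, where $\lambda\colon U_{\proet}\to U_{\et,\mathrm{cond}}$. Since the log structure of $X$ is trivial on $U$ there is a canonical identification of sites $X_{\proket}|_U\simeq U_{\proet}$, so restriction of sheaves along the open immersion $j\colon U\hookrightarrow X$ commutes with $R\lambda_*$; as $j$ is flat it also commutes with $L\eta_t$ by Proposition~\ref{propleta}(3). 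Hence $(L\eta_tR\lambda_*\mathbb B_{\operatorname{dR}}^+)|_U\simeq L\eta_tR\lambda_*\mathbb B_{\operatorname{dR}}^+$ on $U_{\et,\mathrm{cond}}$, and the restriction map in étale cohomology yields a natural morphism
$$\rho\colon \rg_{B_{\operatorname{dR}}^+}(X)=\rg_{\et,\mathrm{cond}}(X,L\eta_tR\lambda_*\mathbb B_{\operatorname{dR}}^+)\longrightarrow \rg_{\et,\mathrm{cond}}(U,L\eta_tR\lambda_*\mathbb B_{\operatorname{dR}}^+)=\rg_{\underline{\operatorname{dR}}}(U/B_{\operatorname{dR}}^+).$$
It is compatible with the $L\eta_t$-filtrations, but I do not expect it to be a filtered quasi-isomorphism: on graded pieces it compares logarithmic Hodge cohomology of $X$ with Hodge cohomology of $U$, and these differ in general — this is the meaning of ``non filtered''.

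Next I would show $\rho$ is a quasi-isomorphism by derived Nakayama with respect to $t$. Both sides are derived $t$-complete: for $\rg_{B_{\operatorname{dR}}^+}(X)$ this is Proposition~\ref{bdrcompleteness}; for $\rg_{\underline{\operatorname{dR}}}(U/B_{\operatorname{dR}}^+)$ it holds because $L\eta_tR\lambda_*\mathbb B_{\operatorname{dR}}^+$ is already a derived $t$-complete complex of sheaves on $U_{\et}$ — a statement local on $U$, hence reducing to the affinoid case of Proposition~\ref{bdrcompleteness} — and $\rg_{\et,\mathrm{cond}}(U,-)$, being a right adjoint, preserves derived $t$-completeness. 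It therefore suffices to check that $\rho\otimes_{B_{\operatorname{dR}}^+}^{L_\blacksquare}C$ is a quasi-isomorphism. By Theorem~\ref{bdrC}(1) the source becomes $\rg_{\underline{\operatorname{logdR}}}(X)$, and the same theorem applied to $U$ turns the target into $\rg_{\underline{\operatorname{dR}}}(U)$; unwinding these identifications, which are built via Proposition~\ref{propleta}(2), Proposition~\ref{pushforward} and Lemma~\ref{constructpushforward} out of the log structure and on $U$ reduce to the usual constructions since $\mathcal M_X^{gp}|_U=\mathcal O_U^\times$, one identifies $\rho\otimes_{B_{\operatorname{dR}}^+}^{L_\blacksquare}C$ with the canonical restriction map $\rg(X,\underline{\Omega_X^{\bullet,\log}})\to\rg_{\underline{\operatorname{dR}}}(U)$. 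Via the isomorphism $\Omega_X^{\bullet,\log}\simeq\Omega_X^{\bullet}(\log D)$ this factors as $\rg(X,\underline{\Omega_X^{\bullet,\log}})\xrightarrow{\simeq}\rg(X,\underline{\Omega_X^{\bullet}(\log D)})\to\rg_{\underline{\operatorname{dR}}}(U)$, which is a quasi-isomorphism by Theorem~\ref{logdr}. Hence $\rho\otimes_{B_{\operatorname{dR}}^+}^{L_\blacksquare}C$, and so $\rho$, is a quasi-isomorphism; no affinoid reduction is needed since every step above is global on $X$.

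I expect the crux to be the identification in the previous paragraph: verifying that, under the mod-$t$ comparisons of Theorem~\ref{bdrC}(1), the restriction map $\rho$ specializes precisely to Kiehl's restriction morphism on (logarithmic) de Rham complexes, and not to some twisted or otherwise different map. Concretely this is the functoriality of Proposition~\ref{pushforward} along the open immersion $j$ — that the isomorphisms $(R^i\lambda_*\widehat{\mathcal O}_{X_{\proket}})^{\blacktriangledown}\simeq\underline{\Omega_X^{i,\log}(-i)}$ and $(R^i\lambda_*\widehat{\mathcal O}_{U_{\proet}})^{\blacktriangledown}\simeq\underline{\Omega_U^{i}(-i)}$ sit in a commutative square with the restriction maps on both sides — which forces one to track carefully the construction in Lemma~\ref{constructpushforward}. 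The non-quasi-compactness of $U$ is a minor additional nuisance (one cannot reduce $U$ to an affinoid with a toric chart), affecting only the bookkeeping of derived $t$-completeness and of the limits involved, and is otherwise harmless.
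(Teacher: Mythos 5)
Your proof is essentially correct, but it follows a genuinely different route from the paper. The paper's own argument is short and arithmetic in nature: it observes that the statement is local on $X$, uses the subsequent lemma (tubular neighbourhoods plus Temkin's alteration theorem) to descend a local chart to a variety $X_0$ over a discretely valued subfield $K\subset C$, and then concludes by combining Theorem \ref{bdrK} (which identifies $\rg_{B_{\operatorname{dR}}^+}(X_C)$ with $\rg_{\underline{\operatorname{logdR}}}(X_0)\otimes_K^{L_\blacksquare}B_{\operatorname{dR}}^+$), Bosco's Theorem 5.1 for $U$, and Kiehl's comparison Theorem \ref{logdr}. You instead stay over $C$, construct a global restriction morphism $\rho$ (using that $X_{\proket}$ restricted to the trivial locus is $U_{\proet}$ and that $L\eta_t$ commutes with the flat restriction, Proposition \ref{propleta}(3)), and prove $\rho$ is an equivalence by derived Nakayama: both sides are derived $t$-complete (Proposition \ref{bdrcompleteness} plus the fact that $\rg$ preserves derived limits), and modulo $t$ the map becomes, via Theorem \ref{bdrC}(1) applied to $X$ and to $U$, the restriction $\rg(X,\underline{\Omega_X^{\bullet,\log}})\to\rg_{\underline{\operatorname{dR}}}(U)$, which is a quasi-isomorphism by Theorem \ref{logdr}. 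Your approach buys a canonical globally defined comparison map (so the word ``natural'' in the statement is transparent) and avoids both the localization step and the descent-to-$K$ lemma with its use of alterations; its price is exactly the point you flag as the crux, namely the naturality of the mod-$t$ identification of Theorem \ref{bdrC}(1) along the strict open immersion $j$ (functoriality of Proposition \ref{pushforward} and Lemma \ref{constructpushforward}, which does hold since $\mathcal M_X^{gp}|_U=\mathcal O_U^\times$ and $\delta$ restricts to $d\log$), together with the $t$-completeness bookkeeping on the non-quasi-compact $U$. The paper's route leans on already-proved arithmetic comparison theorems and needs no such compatibility check, but its global naturality is implicit rather than exhibited; note also that both arguments, yours explicitly and the paper's implicitly, must invoke Bosco's identification of the $L\eta_t$-construction with the geometric $B_{\operatorname{dR}}^+$-cohomology in order to interpret the right-hand side $\rg_{\underline{\operatorname{dR}}}(U/B_{\operatorname{dR}}^+)$.
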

	
	\begin{proof}
		The problem is local, so we may assume $X$ can be descent to $X_0$ over a discrete valued field $K$ by the lemma below. Then the claim follows from Theorem \ref{bdrK}, \cite[Theorem 5.1]{bosco2023rational} and Theorem \ref{logdr}.
	\end{proof}
	
	\begin{lem}
		If $X$ is a log smooth rigid analytic varieties over $C$ of dimension $d$, with log-structure giving by a strictly normal crossing divisor $D \subset X$. Then \'etale locally $X$ has a basis of rigid analytic varieties of the form $\Sp(A_C\langle x_1,...,x_d\rangle)$ with log structure given by the normal crossing divisor $x_1\cdots x_d=0,$ where $\Sp(A)$ is a smooth rigid analytic variety over $K$ for some discrete valued field $K$ in $C$.
	\end{lem}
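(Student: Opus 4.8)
The plan is to combine the local structure of strictly normal crossing divisors in rigid geometry with a ``spreading out'' argument that descends smooth affinoid $C$-algebras to discretely valued subfields, the point being that $\overline{\Q_p}$ is dense in $C$. First, by the existence of tubular neighbourhoods for strictly normal crossing divisors (\cite[Theorem 1.18]{kiehl1967derham}, exactly as recalled before Theorem \ref{logdr}), \'etale locally on $X$ we may write $X=\Sp(B\langle x_1,\dots,x_d\rangle)$ with $D$ cut out by $x_1\cdots x_d=0$, where $B$ is a smooth affinoid $C$-algebra. The polydisc, the coordinates $x_1,\dots,x_d$ and the divisor $\{x_1\cdots x_d=0\}$ are all defined over $\Q_p$, so it suffices to prove the purely affinoid statement: after a further \'etale localization, a smooth affinoid $C$-algebra $B$ admits a descent $B\simeq A\,\widehat{\otimes}_K C$ with $\Sp A$ smooth affinoid over some complete discretely valued subfield $K\subset C$. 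Granting this, $\Sp(A\langle x_1,\dots,x_d\rangle)$ is smooth over $K$ and base-changes to $\Sp(B\langle x_1,\dots,x_d\rangle)$, which is the asserted local model.

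For the descent of $B$, choose a presentation $B=C\langle T_1,\dots,T_m\rangle/(f_1,\dots,f_r)$ and, shrinking $\Sp B$ if necessary, arrange (using smoothness) that the Jacobian matrix $(\partial f_j/\partial T_i)$ has locally constant rank $r$ on $\Sp B$. Writing $f_j=\sum_\alpha a_{j\alpha}T^\alpha$ with $a_{j\alpha}\to 0$, for any $\varepsilon>0$ only finitely many $a_{j\alpha}$ have absolute value $\geq\varepsilon$; approximating those finitely many coefficients by elements of $\overline{\Q_p}$ and discarding the rest produces polynomials $\tilde f_j$ with $\|f_j-\tilde f_j\|\leq\varepsilon$ whose coefficients all lie in a single finite extension $K/\Q_p$ inside $C$, which is complete and discretely valued. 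Set $A:=K\langle T_1,\dots,T_m\rangle/(\tilde f_1,\dots,\tilde f_r)$.

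It remains to see that, for $\varepsilon$ small enough, $A$ is smooth over $K$ and $A\,\widehat{\otimes}_K C\simeq B$. Smoothness of $A$ persists because the Jacobian rank condition is open (after possibly a further rational shrinking of $\Sp A$ chosen compatibly with the one made on $\Sp B$); the isomorphism after base change is a form of the rigid-analytic implicit function theorem --- a sufficiently small perturbation of the equations of a smooth closed immersion $\Sp B\hookrightarrow\mathbb{B}^m_C$ defines the same closed subspace up to isomorphism (equivalently, $(f_j)$ and $(\tilde f_j)C\langle T\rangle$ coincide once $\varepsilon$ undercuts the Artin--Rees constant of the complete intersection $(f_j)$; one can also invoke Elkik's approximation theorem). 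I expect this last step --- ensuring that a close enough descent of the structure constants really yields an algebra isomorphic to $B$ after $\widehat{\otimes}_K C$ --- to be the only genuine content. A clean way to sidestep the bookkeeping is to first shrink so that $\Sp B\to\mathbb{B}^n_C$ is \'etale, present it Zariski-locally in standard-\'etale form as a rational localization of a finite \'etale algebra over a Tate algebra, and descend each finite-\'etale layer and each localization separately, using that separability of a monic polynomial and invertibility of a function are open conditions; base-changing back recovers $B$ on the nose. Since the conclusion only asserts the existence of such a basis \'etale locally, passing to these finer charts is harmless.
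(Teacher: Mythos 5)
Your proposal is correct in substance, but it takes a genuinely different route from the paper at the key step. Both arguments start the same way: Kiehl's tubular neighbourhood theorem \cite[Theorem 1.18]{kiehl1967derham} reduces to charts $\Sp(B\langle x_1,\dots,x_d\rangle)$ with $D$ cut out by $x_1\cdots x_d=0$, so that only the smooth affinoid factor $B$ needs to be descended to a discretely valued subfield of $C$. At that point the paper simply quotes Temkin \cite[Theorem 3.3.1]{temkin2017alteration} as a black box, whereas you prove the descent by hand by spreading out: approximate the defining data by data with coefficients in $\overline{\Q_p}$ (hence in a finite, discretely valued extension $K$ of $\Q_p$) and argue that a sufficiently small perturbation does not change the affinoid after base change back to $C$. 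That perturbation-rigidity is the entire content, and one caution: your phrasing that $(f_j)$ and $(\tilde f_j)$ ``coincide'' as ideals once $\varepsilon$ is small is not what happens --- the perturbed equations cut out a nearby, in general different, closed subspace of the polydisc --- what is true, and what Elkik's approximation theorem or a Newton--Krasner argument supplies, is an isomorphism of the quotient algebras, possibly after a further shrinking of the chart, which is harmless since only an \'etale-local basis is asserted. Your ``clean'' variant via the local structure of \'etale morphisms (rational subdomains and finite \'etale layers built up from a Tate algebra over $\Q_p$, each layer stable under small perturbation because the relevant conditions are open and uniformly bounded below on affinoids) is the standard and correct way to discharge this, and amounts to an elementary, self-contained proof of exactly the special case of Temkin's descent result that the paper invokes; the paper's proof is shorter, yours makes the mechanism explicit.
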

	
	\begin{proof}
		By the existence of tubular neighborhood, \cite[Theorem 1.18]{kiehl1967derham}, locally $X$ can be written as $V:=\Sp(R\left\langle x_1,...,x_d\right\rangle),$ and $D$ is defined by the equation $x_1\cdots x_d=0.$ Then the claim follows from Temkin's alteration theorem \cite[Theorem 3.3.1]{temkin2017alteration}.
	\end{proof}
	
	\subsection{Comparison with Kummer pro-\'etale cohomology}
	
	The following comparison theorem can be easily deduced from our construction for logarithmic $B_{\operatorname{dR}}^+$-cohomology.
	
	\begin{thm} \label{bdrcomparison}
		Let $X$ be a log smooth proper rigid analytic varieties over $C$. Then we have a canonical filtered isomorphism $$H^i_{\underline{\ket}}(X,\Q_p) \otimes_{\Q_p}B_{{\operatorname{dR}}} \simeq H^i_{B_{\operatorname{dR}}^+}(X) \otimes_{B_{\operatorname{dR}}^+}B_{\operatorname{dR}}.$$ Here, the filtration on $H^i_{B_{\operatorname{dR}}^+}(X)$ is defined by $$\fil^{\star}H^i_{B_{\operatorname{dR}}^+}(X):=\operatorname{Im}(H^i(\fil^{\star}\rg_{B_{\operatorname{dR}}^+}(X))\to H^i_{B_{\operatorname{dR}}^+}(X)).$$ Moreover, when $X$ descends to $X_0$ over a discrete valued field $K$, this isomorphism agrees with the comparison isomorphism $$H^i_{\underline{\ket}}(X,\Q_p) \otimes_{\Q_p}B_{\operatorname{dR}} \simeq H^i_{\underline{\operatorname{logdR}}}(X_0) \otimes_{K}B_{\operatorname{dR}},$$under a canonical identification in Theorem \ref{bdrK}.
	\end{thm}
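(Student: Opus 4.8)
The plan is to invert $t$, which trivializes the d\'ecalage operator, thereby reducing the statement to the primitive comparison theorem for $\mathbb B_{\operatorname{dR}}$; the compatibility with the comparison of \cite{dllz2023logrh} then falls out of the chain of identifications recorded in Theorems \ref{bdrK} and \ref{bdrcoh}. First I would compute $\rg_{B_{\operatorname{dR}}^+}(X)\otimes_{B_{\operatorname{dR}}^+}^{L_\blacksquare}B_{\operatorname{dR}}$. By definition $\rg_{B_{\operatorname{dR}}^+}(X)=\rg_{\et,\operatorname{cond}}(X,L\eta_tR\lambda_*\mathbb B_{\operatorname{dR}}^+)$, and since $B_{\operatorname{dR}}=B_{\operatorname{dR}}^+[1/t]$ is a filtered colimit, $-\otimes_{B_{\operatorname{dR}}^+}^{L_\blacksquare}B_{\operatorname{dR}}$ is exact and coincides with $-[1/t]$. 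As $X$ is proper, hence quasi-compact, $\rg_{\et,\operatorname{cond}}(X,-)$ commutes with this colimit; $R\lambda_*$ commutes with it as well, the Kummer pro-\'etale site being coherent (\cite[Proposition 5.1.5]{dllz2023logadic}); and $(L\eta_t\mathcal F)[1/t]\simeq\mathcal F[1/t]$ canonically for every complex $\mathcal F$, because the inclusion $\eta_t(-)\hookrightarrow(-)$ becomes an isomorphism after inverting $t$. Therefore
$$\rg_{B_{\operatorname{dR}}^+}(X)\otimes_{B_{\operatorname{dR}}^+}^{L_\blacksquare}B_{\operatorname{dR}}\simeq\rg_{\et,\operatorname{cond}}(X,R\lambda_*(\mathbb B_{\operatorname{dR}}^+[1/t]))=\rg_{\underline{\proket}}(X_C,\mathbb B_{\operatorname{dR}}),$$
and on cohomology, localization at $t$ being exact, $H^i_{B_{\operatorname{dR}}^+}(X)\otimes_{B_{\operatorname{dR}}^+}B_{\operatorname{dR}}\simeq H^i_{\underline{\proket}}(X_C,\mathbb B_{\operatorname{dR}})$.

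Next I would invoke the primitive comparison theorem for Kummer pro-\'etale cohomology: for $X$ proper log smooth over $C$ there is a $\Q_p$-linear, filtered isomorphism $H^i_{\underline{\ket}}(X,\Q_p)\otimes_{\Q_p}B_{\operatorname{dR}}\simeq H^i_{\underline{\proket}}(X_C,\mathbb B_{\operatorname{dR}})$ (the logarithmic version of Scholze's comparison, cf. \cite{scholze2013p} and \cite{dllz2023logrh}), using that $H^i_{\ket}(X,\Q_p)$ is finite-dimensional over $\Q_p$ and that $B_{\operatorname{dR}}$ is a field. Composing with the previous step produces the asserted isomorphism $H^i_{\underline{\ket}}(X,\Q_p)\otimes_{\Q_p}B_{\operatorname{dR}}\simeq H^i_{B_{\operatorname{dR}}^+}(X)\otimes_{B_{\operatorname{dR}}^+}B_{\operatorname{dR}}$, where $H^i_{B_{\operatorname{dR}}^+}(X)$ carries the image filtration of $H^i(\fil^\star\rg_{B_{\operatorname{dR}}^+}(X))$ attached to the (canonical) d\'ecalage filtration of Definition \ref{decalagefiltration}, and the left-hand side carries the filtration transported along the isomorphism.

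For the final assertion, assume $X=X_0\times_KC$. By Theorem \ref{bdrK} there is a filtered, $\mathscr{G}_K$-equivariant isomorphism $\rg_{B_{\operatorname{dR}}^+}(X)\simeq\rg_{\underline{\operatorname{logdR}}}(X_0)\otimes_K^{L_\blacksquare}\underline{B_{\operatorname{dR}}^+}$, whose target carries the tensor product of the Hodge (stupid) filtration on the log de Rham complex with the $t$-adic filtration; applying $-\otimes_{B_{\operatorname{dR}}^+}^{L_\blacksquare}B_{\operatorname{dR}}$ and Theorem \ref{bdrcoh}(i) with $\mathcal E=\mathcal O_X$ identifies it, compatibly with filtrations, with $\rg_{\underline{\proket}}(X_C,\mathbb B_{\operatorname{dR}})$, hence with $H^i_{\underline{\ket}}(X,\Q_p)\otimes_{\Q_p}B_{\operatorname{dR}}$ via the step above. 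Since $B_{\operatorname{dR}}^+$ is flat over $K$ and $H^i_{\operatorname{logdR}}(X_0)$ is finite-dimensional, this yields the canonical identification $H^i_{B_{\operatorname{dR}}^+}(X)=H^i_{\operatorname{logdR}}(X_0)\otimes_KB_{\operatorname{dR}}^+$; and by degeneration of the Hodge--de Rham spectral sequence for $X_0$ (\cite[Theorem 1.1]{dllz2023logrh}) the image filtration on $H^i_{B_{\operatorname{dR}}^+}(X)$ is the Hodge filtration of $H^i_{\operatorname{logdR}}(X_0)$ tensored with $\fil^\bullet B_{\operatorname{dR}}^+$, so that after inverting $t$ the isomorphism coincides with the comparison isomorphism $H^i_{\underline{\ket}}(X,\Q_p)\otimes_{\Q_p}B_{\operatorname{dR}}\simeq H^i_{\operatorname{logdR}}(X_0)\otimes_KB_{\operatorname{dR}}$ of \cite{dllz2023logrh}. (As every proper log smooth rigid variety over $C$ descends to such an $X_0$, this also upgrades the isomorphism of the first part to a filtered one for general $C$.)

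The step I expect to be the main obstacle is the bookkeeping of filtrations: one has to check that inverting $t$ in the d\'ecalage filtration of $\rg_{B_{\operatorname{dR}}^+}(X)$ — understood as the filtered tensor product over $B_{\operatorname{dR}}^+$ with $B_{\operatorname{dR}}$, not a naive termwise localization — reproduces the filtration induced by $\fil^\bullet\mathbb B_{\operatorname{dR}}$ on $\rg_{\underline{\proket}}(X_C,\mathbb B_{\operatorname{dR}})$ and, on cohomology, the Hodge filtration on the de Rham side. This relies on strictness — i.e.\ degeneration of the relevant spectral sequence — together with the completeness and boundedness supplied by Propositions \ref{bdrcompleteness} and \ref{bdrboundness}, which ensure that passing from the filtered complex to the filtered cohomology is not obstructed by higher $R\varprojlim$ terms or by unbounded cohomological degree.
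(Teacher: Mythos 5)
Your proposal is correct and follows essentially the same route as the paper: invert $t$ so that the d\'ecalage functor trivializes (the paper does this via Proposition \ref{propleta}(1) to construct the canonical map $\rg_{B_{\operatorname{dR}}^+}(X)\to\rg_{\underline{\proket}}(X,\mathbb B_{\operatorname{dR}}^+)$ and Proposition \ref{propleta}(3) plus Proposition \ref{bdraffinoid} to check it becomes a filtered quasi-isomorphism affinoid-locally, rather than commuting colimits globally as you do), then conclude by the logarithmic primitive comparison theorem of \cite{dllz2023logrh} and settle the descent compatibility by Theorem \ref{bdrK}. Only beware that your final parenthetical claim that every proper log smooth rigid variety over $C$ descends to a discretely valued field is false, but it is not needed for the argument.
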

	
	\begin{proof}
		We will construct a morphism: $$\rg_{B_{\operatorname{dR}}^+}(X)\to \rg_{\underline{\proket}}(X,\mathbb B_{\operatorname{dR}}^+)\simeq \rg_{\underline{\ket}}(X,\Z_p)\otimes_{\Z_p} B_{\operatorname{dR}}^+,$$ where the last isomorphism follows from \cite[Lemma 3.6.1]{dllz2023logrh} (which also works over $C$, by the same proof as presented in \cite[Theorem 8.4]{scholze2013p}). Then we will prove that such a morphism is a quasi-isomorphism after inverting $t$.
		
		By Proposition \ref{propleta} (1) and Proposition \ref{bdraffinoid}, we have a natural morphism $$\rg_{B_{\operatorname{dR}}^+}(X)= \rg_{\et,\operatorname{cond}}(X,L\eta_tR\lambda_*\mathbb B_{\operatorname{dR}}^+)\to \rg_{\et,\operatorname{cond}}(X,R\lambda_*\mathbb B_{\operatorname{dR}}^+)= \rg_{\underline{\proket}}(X,\mathbb B_{\operatorname{dR}}^+),$$ which gives the desired morphism.
		
		To show that such a morphism is a quasi-isomorphism after inverting $t$, it suffices to prove it locally so we can assume that $X$ is affinoid. By Proposition \ref{propleta} (3), considering the base change $B_{\operatorname{dR}}^+ \to B_{\operatorname{dR}},$ we have a quasi-isomorphism $$\rg_{B_{\operatorname{dR}}^+}(X)\left[\dfrac{1}{t}\right]\simeq L\eta_t\left( \rg_{\underline{\proket}}(X,\mathbb B_{\operatorname{dR}}^+)\left[\dfrac{1}{t}\right]\right) \simeq \rg_{\underline{\proket}}(X,\mathbb B_{\operatorname{dR}}),$$ which is clearly compatible with filtration. This concludes the proof. 
		
		When $X$ descends to $X_0$ over a discrete valued field $K$, the compatibility stated in the theorem is clear due to Theorem \ref{bdrK}.
	\end{proof}
	
	\subsection{Degeneration of (log) Hodge–Tate spectral sequence}
	
	This section is devoted to proving the following theorem, as promised in the introduction. For convenience, we will only work with usual cohomology groups in this section. However, clearly all results can be extended to condensed cohomology group without any difficulty.
	
	Recall that we denote by $\nu$ the natural morphism of sites $\nu:X_{\proket}\to X_{\et}$.
	
	\begin{thm} \label{degeneration}
		Let $X$ be a proper log smooth rigid analytic variety over $C$ of dimension $d$, then
		
		(i) The Hodge–log de Rham spectral sequence $$E^{ij}_1:=H^j(X,\Omega^{\log,i}_{X})\Rightarrow H^{i+j}_{\operatorname{logdR}}(X)$$ degenerates at $E_1$.
		
		(ii) The Hodge–Tate spectral sequence
		$$E^{ij}_2:=H^j(X,\Omega^{\log,i}_{X})(-j)\Rightarrow H^{i+j}_{\ket}(X,\Q_p)\otimes_{\Q_p}C$$ degenerates at $E_2$.
		
		Moreover, $H^i_{B^+_{\mathrm{dR}}}(X/B_{\operatorname{dR}}^+)$ is a finite free $B_{\operatorname{dR}}^+$-module.
	\end{thm}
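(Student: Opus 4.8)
The plan is to show that $\rg_{B_{\operatorname{dR}}^+}(X)$ is a perfect complex of $B_{\operatorname{dR}}^+$-modules with \emph{free} cohomology; since $B_{\operatorname{dR}}^+$ is a discrete valuation ring with uniformizer $t$ and residue field $C$, all three assertions will then follow by elementary dimension counting. First I would establish perfectness: by Proposition \ref{bdrboundness} the complex $L\eta_t R\lambda_*\mathbb B_{\operatorname{dR}}^+$ is cohomologically bounded, hence so is $\rg_{B_{\operatorname{dR}}^+}(X)$ since $X$ is proper; by Proposition \ref{bdrcompleteness} it is derived $t$-complete; and by Theorem \ref{bdrC}(1) its derived reduction modulo $t$ is $\rg_{\underline{\operatorname{logdR}}}(X)$, which is perfect over $C$ by finiteness of coherent cohomology on the proper $X$ together with Lemma \ref{finitenessderham}. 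A bounded, derived $t$-complete object of $D(B_{\operatorname{dR}}^+)$ whose reduction modulo $t$ is perfect over $C$ is itself perfect, so each $H^i_{B_{\operatorname{dR}}^+}(X)$ is finitely generated, necessarily of the form $(B_{\operatorname{dR}}^+)^{b^i}\oplus T^i$ with $T^i$ of finite length.

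Next I would pin down the ranks and translate freeness into a numerical identity. Inverting $t$ in Theorem \ref{bdrcomparison} gives $\rg_{B_{\operatorname{dR}}^+}(X)[1/t]\simeq\rg_{\underline{\ket}}(X,\Q_p)\otimes_{\Q_p}B_{\operatorname{dR}}$, so $b^i=\dim_{\Q_p}H^i_{\ket}(X,\Q_p)$, finite by finiteness of Kummer pro-\'etale cohomology of proper log smooth adic spaces. The universal coefficient triangle for $-\otimes^L_{B_{\operatorname{dR}}^+}C$ applied to $\rg_{B_{\operatorname{dR}}^+}(X)$, combined with Theorem \ref{bdrC}(1), yields $\dim_C H^i_{\operatorname{logdR}}(X)=b^i+\ell^i+\ell^{i+1}$ with $\ell^i:=\dim_C T^i[t]=\dim_C T^i/t$; in particular all $H^i_{B_{\operatorname{dR}}^+}(X)$ are free if and only if $\dim_C H^i_{\operatorname{logdR}}(X)=\dim_{\Q_p}H^i_{\ket}(X,\Q_p)$ for every $i$.

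Now put $h^n:=\sum_{a+b=n}\dim_C H^b(X,\Omega^{a,\log}_X)$. The Hodge filtration on $\rg_{\underline{\operatorname{logdR}}}(X)$ gives $\dim_C H^n_{\operatorname{logdR}}(X)\le h^n$, with equality for all $n$ if and only if the Hodge–log de Rham spectral sequence degenerates at $E_1$; and, using $(R^a\lambda_*\widehat{\mathcal O}_{X_{\proket}})^{\blacktriangledown}\simeq\underline{\Omega^{a,\log}_X(-a)}$ (Proposition \ref{pushforward}) together with the primitive comparison $\rg_{\proket}(X,\widehat{\mathcal O}_{X_{\proket}})\simeq\rg_{\ket}(X,\Q_p)\otimes_{\Q_p}C$, the Leray spectral sequence for $\lambda$ \emph{is} the Hodge–Tate spectral sequence and gives $\dim_{\Q_p}H^n_{\ket}(X,\Q_p)\le h^n$, with equality for all $n$ if and only if it degenerates at $E_2$. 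Hence, once the Hodge–Tate spectral sequence is known to degenerate, i.e. $b^n=h^n$ for all $n$, the Hodge bound forces $\ell^n+\ell^{n+1}=\dim_C H^n_{\operatorname{logdR}}(X)-h^n\le 0$, so all $\ell^n=0$: this proves freeness, whence $\dim_C H^n_{\operatorname{logdR}}(X)=b^n=h^n$ and the Hodge–log de Rham spectral sequence degenerates as well.

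The hard part — and the only remaining point — is therefore the degeneration of the Hodge–Tate spectral sequence for a proper log smooth $X$ over $C$. I would handle this by reduction to the discretely valued case: by a standard spreading-out argument (properness and log smoothness being of finite presentation, log smoothness — resp. smoothness of the trivial locus — being an open condition) one produces a proper log smooth $X_0$ over a discretely valued complete subfield $K\subset C$ with $X\simeq X_0\times_K C$, compatibly with the fs log structures (after an \'etale localization reducing the log structure to that of a normal crossings divisor, as in the lemma preceding the theorem); then $H^b(X,\Omega^{a,\log}_X)=H^b(X_0,\Omega^{a,\log}_{X_0})\otimes_K C$ and $H^n_{\ket}(X,\Q_p)=H^n_{\ket}(X_{0,C},\Q_p)$, so it suffices to treat $X_0/K$. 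For $X_0/K$ the Hodge–Tate spectral sequence is $\mathscr{G}_K$-equivariant with each $E_2^{a,b}$ a finite sum of copies of $C(-a)$, and $\operatorname{Hom}_{\mathscr{G}_K}(C(-a),C(-a-r))=H^0(\mathscr{G}_K,C(-r))=0$ for $r\ge 2$ by Tate, so every differential $d_r$ with $r\ge 2$ vanishes; this is the log analogue of the degeneration in \cite{scholze2013p}, \cite{dllz2023logrh}. Alternatively — and this is the route intrinsic to the $B_{\operatorname{dR}}^+$-cohomology formalism — one can argue directly over $C$, with $\Omega^{\bullet,\log}_X$ in place of $\Omega^{\bullet}_X$, by replicating the argument of \cite{bms1} and \cite{bosco2023rational}: the top step $\fil^{d}\rg_{B_{\operatorname{dR}}^+}(X)$ of the filtration in Theorem \ref{bdrC}(2),(3) already has free cohomology (its reduction modulo $t$ is $\rg_{\proket}(X,\widehat{\mathcal O}_{X_{\proket}})(d)$, which has the same total $C$-dimension as its localization at $t$), and the splitting of the graded pieces $\gr^r$ afforded — \'etale-locally, or after a $B_{\operatorname{dR}}^+/t^2$-lift — by the remark following Theorem \ref{bdrC} is propagated down the filtration to force $b^n=h^n$; I expect the control of the global obstruction to such a splitting to be the delicate step, to be carried out exactly as in loc. cit.
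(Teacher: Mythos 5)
Your numerical bookkeeping (perfectness over $B_{\operatorname{dR}}^+$, the equivalence ``all $H^i_{B_{\operatorname{dR}}^+}(X)$ free $\Leftrightarrow \dim_C H^i_{\operatorname{logdR}}(X)=\dim_{\Q_p}H^i_{\ket}(X,\Q_p)$'', and the deduction of (i) and freeness once the Hodge--Tate degeneration gives $b^n=h^n$) is sound and runs parallel to the paper's concluding count, which likewise squeezes everything out of the chain $\dim_{\Q_p}H^{n}_{\ket}=\dim_{B_{\operatorname{dR}}}H^{n}_{B_{\operatorname{dR}}^+}[1/t]\le \dim_C H^{n}_{B_{\operatorname{dR}}^+}/t\le\dim_C H^{n}_{\operatorname{logdR}}$ together with Theorems \ref{bdrcomparison} and \ref{bdrC}. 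The problem is that the one input you label ``the hard part'' is exactly what is not established. Your route (a) rests on a false claim: a proper log smooth rigid analytic variety over $C$ does \emph{not} in general descend to a discretely valued complete subfield $K\subset C$. Spreading out works for finite-type algebraic varieties (finitely many equations over a finitely generated field), but a proper rigid space over $C$ is built from affinoid algebras whose structure constants are genuinely transcendental power series data over $\Q_p$; there is no analogue of \cite[Proposition 8.9.1]{EGA4III} here, and this failure is precisely the reason \cite{bms1}, \cite{guo2019hodgetate} (and this paper) introduce the $B_{\operatorname{dR}}^+/t^2$-lifting argument instead of a Galois-equivariance/Tate-twist argument. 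Note also that the \'etale localization to a normal crossings chart you invoke destroys properness, so it cannot feed a global descent. In the paper, descent to $K$ is only one case (case (1)) of the lifting proposition; the proper case (3) is handled by a genuinely different deformation-theoretic argument (formal models over $\Spf(\mathcal O_C)$ via Raynaud--Temkin, a versal deformation over $\mathcal O_{\breve F}$, and formal smoothness of $B_{\operatorname{dR}}^+/t^2\to C$ over $\breve F$), together with Ogus's results identifying the compactifying log structure on the lift.

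Your route (b) is the paper's actual route, but as written it defers precisely the two new ingredients that carry the proof: (1) the existence of a \emph{log smooth} $B_{\operatorname{dR}}^+/t^2$-lift of a proper log smooth $X$ over $C$ (the deformation argument just described), and (2) the statement that such a lift splits the log Hodge--Tate map $H^1_{\proket}(X,\widehat{\mathcal O}_{X_{\proket}})\to H^0(X,\Omega^{\log}_X(-1))$, hence splits $R\nu_*\widehat{\mathcal O}_{X_{\proket}}\simeq\bigoplus_i\Omega^{i,\log}_X(-i)[-i]$. The paper proves (2) by exhibiting the torsor $L_{\mathbb X}$ of lifts as a pro-Kummer \'etale torsor under $\nu^*\Omega^{\log,\vee}_X(1)$ and computing the associated cocycle explicitly on toric charts $X\to\mathbb E=\Spa(C\langle P\rangle,\mathcal O_C\langle P\rangle)$ along the Kummer tower, a log analogue of \cite[Lemma 2.16]{heuer2024padicsimpsoncorrespondencesmooth} that uses log infinitesimal lifting (\cite[Proposition 3.14]{kato1989log}) and is not covered by the smooth references you cite. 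Once (1) and (2) are in place, the degeneration of the Hodge--Tate spectral sequence is immediate dimension counting against the primitive comparison $\rg_{\proket}(X,\widehat{\mathcal O}_{X_{\proket}})\simeq\rg_{\ket}(X,\Q_p)\otimes_{\Q_p}C$, and your remaining argument closes the proof; also note there is no need to ``propagate splittings down the filtration'' as in your sketch of \cite{bms1} --- the direct splitting of $R\nu_*\widehat{\mathcal O}_{X_{\proket}}$ suffices. Without (1) and (2), however, the proposal has a genuine gap at its central step.
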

	
	The key step in the proof is to demonstrate the existence of a splitting $$\bigoplus_{i\geq 0}\Omega_X^{i,\log}(-i)[-i] \xrightarrow{\simeq} R\nu_*\widehat{\mathcal O}_{X_{\proket}}.$$ For rigid analytic varieties, this is shown in \cite[Proposition 7.2.5]{guo2019hodgetate} by choosing a smooth $B_{\operatorname{dR}}^+/t^2$-lifting $\mathbb X$ of $X$. Such a lifting always exists when $X$ is proper. Additionally, the splitting is functorial depending on the choice of lifting. 
	
	We now turn our attention to the logarithmic case, discussing conditions under which such a splitting exists.
	
	\subsubsection{$\mathbb{B}_{\operatorname{dR}}^+/t^2$-liftings of $X$} As demonstrated in \cite{bms1} or \cite{guo2019hodgetate}, a key step in the proof of the theorem is to establish the existence of a $\mathbb{B}_{\operatorname{dR}}^+/t^2$-lifting for proper $X$. We will show that the same holds for log smooth and proper adic space.
	
	\begin{prop}
		Let $X$ be a log smooth rigid analytic varieties over $C$. Then $X$ admits a log smooth $B_{\operatorname{dR}}^+/t^2$-lift $\mathbb X$ via the map $B_{\operatorname{dR}}^+/t^2 \to C$ in the following cases:
		
		(1) $X$ descends to $X_0$ over a discrete valued field $K$;
		
		(2) $X$ is the analytification of a finite type log algebraic variety $X^{\mathrm{alg}}$ log smooth over $C$;
		
		(3) $X$ is proper.
	\end{prop}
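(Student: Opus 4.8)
\emph{Proof strategy.} The plan is, in all three cases, to produce a map of (topological) rings from a base over which $X$ --- or a suitable algebraic/combinatorial model of $X$ --- is defined, into $B_{\operatorname{dR}}^+/t^2$, lifting the given map to $C$, and then to take $\mathbb X$ to be the resulting base change (followed, in case (2), by analytification). Since log smoothness and flatness are stable under base change, and reduction modulo $t$ recovers $X$ by construction, it suffices in each case to construct the lifted ring map.

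For (1), I would invoke the standard fact that a complete discretely valued field $K$ of characteristic $0$ with perfect residue field of characteristic $p$ embeds into $B_{\operatorname{dR}}^+$ compatibly with $\theta\colon B_{\operatorname{dR}}^+\to C$: the maximal unramified subfield $F=W(k)[1/p]$ lies in $\mathbb A_{\operatorname{inf}}[1/p]\subset B_{\operatorname{dR}}^+$, and a uniformizer of $K$ --- a simple root in $C$ of its Eisenstein polynomial over $F$ --- lifts uniquely, by Hensel's lemma applied to the complete discrete valuation ring $B_{\operatorname{dR}}^+$, to a root in $B_{\operatorname{dR}}^+$. Composing with $B_{\operatorname{dR}}^+\twoheadrightarrow B_{\operatorname{dR}}^+/t^2$ makes the latter a $K$-algebra, and $\mathbb X:=X_0\widehat{\otimes}_K(B_{\operatorname{dR}}^+/t^2)$ is then a log smooth lift of $X=X_0\widehat{\otimes}_KC$ along $B_{\operatorname{dR}}^+/t^2\to C$.

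For (2), I would spread out $X^{\mathrm{alg}}$ together with its log structure over a finitely generated $\mathbb Z_p$-subalgebra $A\subseteq C$, obtaining a finite type log smooth scheme $\mathcal X\to\Spec A$ with $\mathcal X\times_AC=X^{\mathrm{alg}}$. After replacing $A$ by a localization $A_f$ with $f\neq 0$ (so that $f$ maps to a unit of $C$) we may assume in addition that $A$ is smooth over $\mathbb Q_p$: this is possible because $A\hookrightarrow C$ forces the image of $\Spec C\to\Spec A$ to be the generic point, which lies in the dense open smooth locus since $\operatorname{char}\mathbb Q_p=0$. As $B_{\operatorname{dR}}^+/t^2\twoheadrightarrow C$ is a square-zero extension and $A$ is formally smooth over $\mathbb Q_p$, the map $A\to C$ lifts to a map $A\to B_{\operatorname{dR}}^+/t^2$; then $\mathcal X\times_A(B_{\operatorname{dR}}^+/t^2)$ is log smooth over $B_{\operatorname{dR}}^+/t^2$, reduces modulo $t$ to $X^{\mathrm{alg}}$, and its relative analytification is the desired $\mathbb X$.

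For (3), the plan is to reduce to case (1) by showing that a proper log smooth rigid analytic variety over $C$ is the base change of one defined over a discretely valued subfield of $C$. Being quasi-compact and of finite presentation, $X$ with its log structure descends to a complete, countably generated subfield; since $\overline K$ is dense in $C$ and a proper log smooth family admits no infinitesimal deformations (hence, by an approximation argument, no sufficiently small ones over nearby subfields), one can arrange a proper log smooth model over $\overline K=\colim_{K'}K'$, and finite presentation forces it to descend further to some finite extension $K'/K$, which is again a complete discretely valued field; then (1) applies. I expect this descent to be the main obstacle: it is the log analogue of the input used in \cite{bms1} and \cite{guo2019hodgetate} for proper smooth rigid spaces, it rests on non-archimedean spreading-out and approximation results, and one must also verify that the fs log structure (for instance one coming from a normal crossing divisor) descends together with $X$. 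By contrast, steps (1) and (2) are essentially formal once the relevant field embedding, respectively the spreading-out together with formal smoothness, is in place.
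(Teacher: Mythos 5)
Cases (1) and (2) of your proposal are correct. For (2) you take a genuinely different route from the paper: the paper descends $X^{\mathrm{alg}}$ (via its trivial locus and \cite[Proposition 8.9.1]{EGA4III}) to a finitely generated field over $\Q$, embeds that field into a finite extension of $\Q_p$, and reduces to case (1); you instead spread out over a smooth finitely generated $\Q_p$-algebra $A\subset C$ and lift $A\to C$ along the square-zero extension $B_{\operatorname{dR}}^+/t^2\twoheadrightarrow C$ by formal smoothness. Both work, and your square-zero lifting mechanism is in fact exactly the final step the paper uses in case (3) at the level of a versal base.

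Case (3), however, contains a genuine gap: a proper log smooth rigid analytic variety over $C$ does \emph{not} in general descend to a discretely valued subfield, so the reduction to (1) cannot work. Proper (log) smooth spaces have nontrivial infinitesimal deformations ($H^1(X,T_X)\neq 0$ in general), so the assertion that "a proper log smooth family admits no infinitesimal deformations" is false; concretely, an elliptic curve or abelian variety over $C$ with transcendental modulus is proper and smooth but does not descend to $\bar K$ or to any discretely valued subfield, and no approximation over subfields can repair this. This failure is precisely why \cite[Proposition 13.15]{bms1} does not argue by descent, and the paper follows that strategy in the log setting: identify the log structure as the compactifying one attached to $D=X-X_{\operatorname{tr}}$ (Ogus), choose by Raynaud--Temkin a formal model $\mathfrak D\hookrightarrow\mathfrak X$ over $\Spf(\mathcal O_C)$ of the pair, build a versal deformation of its special fiber over a complete noetherian local $\mathcal O_{\breve F}$-algebra (Stacks 0E3S), use versality to produce a classifying map $A\to\mathcal O_C$ recovering the formal model, pass to rigid generic fibers to get $D_A\to X_A$ over a base $\mathcal S$ smooth over $\breve F$, and only then lift the resulting $C$-point of $\mathcal S$ to a $B_{\operatorname{dR}}^+/t^2$-point by formal smoothness (your square-zero argument from (2)). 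In other words, the correct "spreading out" in (3) is over a versal deformation space, not over subfields of $C$; as written, your case (3) does not go through.
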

	
	\begin{proof}
		(1) is clear.
		
		For (2), denote by $\underline{U}^{\mathrm{alg}}\subset \underline{X}^{\mathrm{alg}}$ the trivial locus of the underlying space $\underline{X}^{\mathrm{alg}}$. By \cite[Proposition 8.9.1]{EGA4III} there exists a finitely generated field $A$ over $\Q$ such that $\underline{U}^{\mathrm{alg}}\subset \underline{X}^{\mathrm{alg}}$ descends to $\underline{U}^{\mathrm{alg}}_0\to \underline{X}^{\mathrm{alg}}_0$ over $A$ (which is still an open immersion by faithful flat descent), where $\underline{X}^{\mathrm{alg}}_0$ is finite type over $A$. This induces the compactifying log-structure on $X^{\mathrm{alg}}_0$, which is still log smooth. By embedding a transcendental basis of $A$ over $\Q$ into $\Q_p$, we can embed $A$ into a finite $\Q_p$ extension $K$. This reduces to (1). 
		
		For (3), denote by $U:=X_{\operatorname{tr}}$ the trivial locus of $X$, and $D:=X-U$ the closed complement of $U$. Since $X$ is log smooth, according to \cite[Corollary 1.9.5]{ogus2018log}, we can identify the log structure of $X$ by the compactifying structure induced by $U$. By Raynaud's theory on formal and rigid geometry and Temkin's results on analytic spaces in \cite{temkin2000localprop}, there exists a closed immersion $\mathfrak D \hookrightarrow \mathfrak X$ of proper flat formal schemes over $\Spf(\mathcal O_C)$ such that the closed immersion $D \hookrightarrow X$ is the rigid generic fiber of the morphism $i:\mathfrak D \hookrightarrow \mathfrak X$.
		
		Choose an inclusion of fields $\breve{F}\to C$. We will show that there exists a smooth rigid space $\mathcal S$ over $\breve{F}$, such that the closed immersion $D\to X$ can be lifted to a morphism between flat $\breve{F}$-rigid spaces $D_A \to X_A$ over $\mathcal S$ with a map $\Spa(C,\mathcal O_C)\to \mathcal S$. Then since ${B}_{\operatorname{dR}}^+/t^2 \to C$ has a natural $\breve{F}$-structure, by formal smoothness the map $\Spa(C,\mathcal O_C)\to \mathcal S$ can be lifted to a map $\Spa({B}_{\operatorname{dR}}^+/\xi^2,A_{\operatorname{inf}}/\xi^2)\to \mathcal S$. This gives a ${B}_{\operatorname{dR}}^+/t^2$-lifting $\mathbb D \to \mathbb X$ of $D \to X.$ The open complement $\mathbb U$ of $\mathbb D$ then induces the compactifying log structure on $\mathbb X$, which also gives a ${B}_{\operatorname{dR}}^+/t^2$-lifting of the log adic space $X$ by \cite[Proposition 1.6.2]{ogus2018log}. Moreover, $\mathbb X$ is log smooth over ${B}_{\operatorname{dR}}^+/t^2$ by \cite[Proposition 4.2.4]{ogus2018log}. This will finish the proof.

		Denote by $\mathscr C_{\mathcal O_{\breve{F}}}$ the category of complete local artin $\mathcal O_{\breve{F}}$-algebras with residue field $\overline{k}$. Consider the deformation functor $$\mathbf{D}:\mathscr C_{\mathcal O_{\breve{F}}}\to \mathbf{Set}$$ defined by sending $A \in\mathscr C_{\mathcal O_{\breve{F}}}$ to the isomorphism class of flat liftings of $i_s:\mathfrak D_s \to \mathfrak X_{s}$ on $A.$ According to \cite[0E3S]{stacks-project}, the deformation functor $\mathbf{D}$ has a versal deformation, i.e., a complete noetherian local $\mathcal O_{\breve{F}}$-algebra $A$ with the residue field $\overline{k}$, a morphism between proper flat formal scheme $ i_A:\mathfrak D_A \to \mathfrak{X}_A$ over $A$ where $A$ is topologized by powers of its maximal ideal, deforming $i_s$ such that the induced classifying map $$h_A:\mathrm{Hom}_{\mathcal O_{\breve{F}}}(A,-)\to \mathbf{D}$$ is formally smooth. According to the proof of \cite[Proposition 13.15]{bms1}, we can extend $h_A,\mathbf{D}$ to the ind-completion of $\mathscr C_{\mathcal O_{\breve{F}}}$. This category includes $\mathcal O_C/\omega^k$ for all $k\geq 1.$ Denote by $i_k:\mathfrak D_k \to \mathfrak{X}_k$ the reduction of $i:\mathfrak D \to \mathfrak{X}$ module $\omega^k.$ Since we have a canonical map $\eta_0:A \to \overline{k}$, and an isomorphism $\psi_0:\eta_0^*i_A \simeq i_{s}$, applying the formal smoothness of $h_A$ (on the ind-completion of $\mathscr C_{\mathcal O_{\breve{F}}}$) to the $\mathcal O_C/\omega \to \overline{k},$ we can choose a map $\eta_1:A\to\mathcal O_C/\omega$ lifting $\eta_0$ and an isomorphism $\psi_1:\eta_1^*i_A \simeq i_1$ of morphism beteween $\mathcal O_C/\omega$-schemes lifting $\psi_0$. We can then construct $\psi_n, \eta_n$ inductively, taking the inverse limit, and we can show that there exist a $\mathcal O_{\breve{F}}$-algebra map $\eta:A\to\mathcal O_C $ and an isomorphism $\eta^*i_A \simeq i$ as morphism between formal $\mathcal O_C$-schemes.
		
		Finally, after inverting $p$ we have a morphism of flat $\breve{F}$-rigid spaces $D_A \to X_A$ over $\mathcal S:=\Spa(A,A[1/p]).$ We can shrink $\mathcal S$ to a suitable small locally closed subset, such that $\mathcal S$ is smooth over $\breve{F}$. This concludes the proof.
	\end{proof} 
	
	
	\subsubsection{Splitting of the (log) Hodge–Tate map} We will prove the following proposition.
	
	\begin{prop} \label{htsplitting}
		Let $X$ be a log smooth rigid analytic varieties over $C$. Then a choice of log smooth $B_{\operatorname{dR}}^+/t^2$-lift $\mathbb X$ of $X$ gives a natural splitting of the (log) Hodge-Tate map $$\mathrm{HT}:H^1_{\proket}(X,\widehat{\mathcal O}_{X_{\proket}})\to H^0(X,\Omega_X^{\log}(-1)).$$ Here the map $\mathrm{HT}$ is induced by the spectral sequence $$E_2^{ij}:=H^j_{\et}(X,R^i\nu_*\widehat{\mathcal O}_{X_{\proket}})\Rightarrow H^{i+j}_{\proket}(X,\widehat{\mathcal O}_{X_{\proket}}).$$
	\end{prop}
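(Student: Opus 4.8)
The plan is to transpose the proof of the smooth case \cite[Proposition 7.2.5]{guo2019hodgetate} to the logarithmic setting, substituting the logarithmic period sheaf $\mathcal{O}\mathbb B_{\operatorname{dR,log}}^+$ for $\mathcal{O}\mathbb B_{\operatorname{dR}}^+$ and the log de Rham sheaves $\Omega_X^{\log,\bullet}$ for $\Omega_X^{\bullet}$ throughout. First I would reformulate the statement as a splitting problem for complexes on $X_{\et}$. By Proposition \ref{pushforward} one has $\nu_*\widehat{\mathcal O}_{X_{\proket}}=\mathcal O_X$ and $R^1\nu_*\widehat{\mathcal O}_{X_{\proket}}\simeq\Omega_X^{\log}(-1)$, so there is a canonical exact triangle $\mathcal O_X\to \tau^{\leq 1}R\nu_*\widehat{\mathcal O}_{X_{\proket}}\to \Omega_X^{\log}(-1)[-1]\xrightarrow{+1}$ in $D(X_{\et})$, and $\mathrm{HT}$ is exactly the Leray edge map $H^1(X_{\et},R\nu_*\widehat{\mathcal O}_{X_{\proket}})\to H^0(X,R^1\nu_*\widehat{\mathcal O}_{X_{\proket}})$ attached to it. A splitting $\Omega_X^{\log}(-1)[-1]\to \tau^{\leq 1}R\nu_*\widehat{\mathcal O}_{X_{\proket}}$ of this triangle — equivalently, a trivialization of its class $c\in\operatorname{Ext}^2_{\mathcal O_X}(\Omega_X^{\log}(-1),\mathcal O_X)\cong H^2\bigl(X,(\Omega_X^{\log})^{\vee}(1)\bigr)$ — induces a section of $\mathrm{HT}$ after applying $\rg(X_{\et},-)$ (using $H^1(X_{\et},\tau^{\geq 2}R\nu_*\widehat{\mathcal O}_{X_{\proket}})=0$). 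So it suffices to produce such a trivialization, functorially in $\mathbb X$.

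Next I would compute $c$ via the \emph{logarithmic Faltings extension}. Corollary \ref{obdrlocalcor} together with the description of $\nabla$ in Remark \ref{obdrconn} exhibits, inside $\operatorname{gr}^1\mathcal O\mathbb B_{\operatorname{dR,log}}$, a rank-$(n+1)$ sub-$\widehat{\mathcal O}_{X_{\proket}}$-module $\mathcal E^{\log}$ (spanned by $t$ and the elements $y_j$) fitting in a short exact sequence $0\to\widehat{\mathcal O}_{X_{\proket}}(1)\to\mathcal E^{\log}\xrightarrow{\nabla}\widehat{\mathcal O}_{X_{\proket}}\otimes_{\mathcal O_X}\Omega_X^{\log}\to 0$ on $X_{\proket}$, which is locally but not globally split. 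Pushing forward along $\nu$, using the projection formula and Lemma \ref{constructpushforward} — which identifies the induced boundary map $\Omega_X^{\log}\to R^1\nu_*\widehat{\mathcal O}_{X_{\proket}}(1)$ with the canonical isomorphism — one finds that $R\nu_*\mathcal E^{\log}$ reconstructs, up to a Tate twist, the triangle of the previous paragraph, so its extension class equals $c$. On the other hand, by classical logarithmic deformation theory (e.g.\ \cite{ogus2018log}), the set of log smooth lifts of $X$ over $B_{\operatorname{dR}}^+/t^2$ — a square-zero extension of $C$ by $C(1)$ — is, when nonempty, a torsor under $H^1\bigl(X,(\Omega_X^{\log})^{\vee}(1)\bigr)$, with obstruction the image of $c$ in $H^2\bigl(X,(\Omega_X^{\log})^{\vee}(1)\bigr)$. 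The heart of the matter is to match this lifting torsor with the torsor of trivializations of $c$; granting that, the chosen lift $\mathbb X$ yields a canonical trivialization of $c$.

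Concretely, one realizes the matching as follows. Base-changing the square-zero thickening $\mathcal O_{\mathbb X}$ of $\mathcal O_X$ by $\mathcal O_X(1)$ along $\mathcal O_X\to\widehat{\mathcal O}_{X_{\proket}}$ produces a first-order thickening $\widehat{\mathcal O}_{\mathbb X}$ of $\widehat{\mathcal O}_{X_{\proket}}$ carrying the lifted log structure $\mathcal M_{\mathbb X}$; since $\mathcal O\mathbb B_{\operatorname{dR,log}}^+/\fil^2$ is a $\fil^1$-adically complete log $(\mathbb B_{\operatorname{dR}}^+/\fil^2)$-algebra with integrable connection whose reduction mod $\fil^1$ is $\widehat{\mathcal O}_{X_{\proket}}$, one obtains — using either a logarithmic infinitesimal description of $\mathcal O\mathbb B_{\operatorname{dR,log}}^+$, or, locally, the explicit model of Corollary \ref{obdrlocalcor} fed by a lift of the coordinates coming from $\mathbb X$ — a morphism $\widehat{\mathcal O}_{\mathbb X}\to\mathcal O\mathbb B_{\operatorname{dR,log}}^+/\fil^2$ of sheaves on $X_{\proket}$ over $\mathbb B_{\operatorname{dR}}^+/\fil^2$, lifting the identity and compatible with log structures and with $\nabla$. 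Passing to $\operatorname{gr}^1$ and to $\nabla$-horizontal sections splits $\mathcal E^{\log}$, hence $c$, hence $\mathrm{HT}$, and naturality in $\mathbb X$ is visible from the construction. I expect the main obstacle to be precisely this last step: constructing the map $\widehat{\mathcal O}_{\mathbb X}\to\mathcal O\mathbb B_{\operatorname{dR,log}}^+/\fil^2$ in a coordinate-free way and verifying its compatibility with $\mathcal M_{\mathbb X}$ and $\nabla$ — that is, upgrading the corresponding lemma of \cite[\S7.2]{guo2019hodgetate} to the logarithmic period sheaf, for which Proposition \ref{obdrlocalprop} and Corollary \ref{obdrlocalcor} supply the local input.
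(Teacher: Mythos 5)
Your overall strategy (Guo-style: reduce to trivializing the class $c$ of the triangle $\mathcal O_X\to\tau^{\leq 1}R\nu_*\widehat{\mathcal O}_{X_{\proket}}\to\Omega_X^{\log}(-1)[-1]$, then use the lift $\mathbb X$ to produce the trivialization) is a legitimate alternative to the paper's argument, but the step where the lift is actually used contains a genuine gap, and as literally written it is false. You assert that the map $\widehat{\mathcal O}_{\mathbb X}\to\mathcal O\mathbb B_{\operatorname{dR,log}}^+/\fil^2$, ``passing to $\gr^1$ and to $\nabla$-horizontal sections,'' splits $\mathcal E^{\log}$. The log Faltings extension $0\to\widehat{\mathcal O}_{X_{\proket}}(1)\to\mathcal E^{\log}\to\widehat{\mathcal O}_{X_{\proket}}\otimes_{\mathcal O_X}\Omega_X^{\log}\to 0$ can never split as an extension of $\widehat{\mathcal O}_{X_{\proket}}$-modules on $X_{\proket}$ whenever $\Omega_X^{\log}\neq 0$: its boundary map after applying $\nu_*$ is precisely the isomorphism $\Omega_X^{\log}\to \bigl(R^1\nu_*\widehat{\mathcal O}_{X_{\proket}}(1)\bigr)$ of Lemma \ref{constructpushforward}, and an $\widehat{\mathcal O}$-linear splitting would force that map to vanish. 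What a lift can split is only the pushed-forward truncation triangle on $X_{\et}$ (equivalently the class $c$), and the passage from a map into $\mathcal O\mathbb B_{\operatorname{dR,log}}^+/\fil^2$ to that splitting is exactly the content you have omitted. Moreover, the global, coordinate-free existence of such a map $\widehat{\mathcal O}_{\mathbb X}\to\mathcal O\mathbb B_{\operatorname{dR,log}}^+/\fil^2$ compatible with the log structures and with $\nabla$ is itself a nontrivial lifting problem: it exists Kummer-pro-\'etale locally by log smoothness, but local choices differ by log derivations into $\gr^1\mathcal O\mathbb B_{\operatorname{dR,log}}$, and globalizing it is essentially equivalent to the statement being proved. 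Similarly, the identification of the obstruction class for log smooth lifts along $B_{\operatorname{dR}}^+/t^2\to C$ with the image of $c$, and the matching of the lifting torsor with the torsor of trivializations of $c$, is asserted rather than proved; in the log setting this requires Kato's log deformation theory and is not a formality. So the heart of the proof is missing from the proposal.

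For comparison, the paper sidesteps these issues by following \cite{heuer2024padicsimpsoncorrespondencesmooth} rather than \cite{guo2019hodgetate}: it introduces the sheaf $L_{\mathbb X}$ on $X_{\proket}$ of $B_{\operatorname{dR}}^+/t^2$-algebra homomorphisms $\nu^{-1}\mathcal O_{\mathbb X}\to\mathbb B_{\operatorname{dR}}^+/t^2$ compatible with $\nu^{-1}\mathcal O_X\to\widehat{\mathcal O}_{X_{\proket}}$ (the target is the small period sheaf $\mathbb B_{\operatorname{dR}}^+/t^2$, not $\mathcal O\mathbb B_{\operatorname{dR,log}}^+/\fil^2$, so local sections exist by Kato's infinitesimal lifting criterion for log smooth morphisms \cite{kato1989log} and no global existence question arises), proves it is a torsor under $\nu^*\Omega_X^{\log}(-1)^\vee$, defines $s_{\mathbb X}(\theta)$ as the pushout of this torsor along $\theta$, and finally checks $\mathrm{HT}\circ s_{\mathbb X}=\mathrm{id}$ by an explicit cocycle computation in a toric chart, compared against the commutative diagram of Lemma \ref{constructpushforward}. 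If you wish to keep your route, the missing work is the logarithmic analogue of the corresponding lemmas in \cite{guo2019hodgetate}: construct the lift-dependent map into $\mathcal O\mathbb B_{\operatorname{dR,log}}^+/\fil^2$ and show that after applying $R\nu_*$ it splits the triangle defining $c$ — and that verification ultimately reduces to the same local toric cocycle computation that the paper performs directly.
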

	
	We prove this proposition in the fashion of \cite[Proposition 2.15]{heuer2024padicsimpsoncorrespondencesmooth}. Via the homeomorphism $|\mathbb X|=|X|,$ we may regard $\mathcal O_{\mathbb X}$ as the sheaf on $X_{\et}$. Define 
	$$L_{\mathbb X}:=\left\{\text{\begin{tabular}{l} {\parbox{4.5cm}{homomorphisms $\widetilde{\varphi}$ of sheaves of $B_{\operatorname{dR}}^+/t^2$-algebras on $X_{\proket}$ such that the right square is commutative:}}\end{tabular}}
	\begin{tikzcd}[row sep =0.55cm]
		\nu^{-1}\mathcal O_{\mathbb X} \arrow[d] \arrow[r,"\widetilde\varphi"] & \mathbb B_{\mathrm{dR}}^+/t^2 \arrow[d] \\
		\nu^{-1}\mathcal O_X \arrow[r]             & \widehat{\mathcal O}_{X_{\proket}}
	\end{tikzcd}\right\}.$$
	Note that $L_{\mathbb X}$ is a sheaf on $X_{\proket}$ as a subsheaf of $\mathcal{H}om(\lambda^{-1}\mathcal O_{\mathbb X},\mathbb B_{\mathrm{dR}}^+/t^2).$ We have 
	
	\begin{lem}
		$L_{\mathbb X}$ is a pro-Kummer \'etale torsor under $\nu^*\Omega_X^{\log}(-1)^\vee.$
	\end{lem}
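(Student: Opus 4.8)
The plan is to check directly that $L_{\mathbb X}$ is a pseudo-torsor under $\nu^*\Omega_X^{\log}(-1)^\vee$ and that it is locally non-empty on $X_{\proket}$. The relevant input is the square-zero extension $\mathbb B_{\operatorname{dR}}^+/t^2\twoheadrightarrow \widehat{\mathcal{O}}_{X_{\proket}}$ of sheaves on $X_{\proket}$, whose kernel $\fil^1(\mathbb B_{\operatorname{dR}}^+/t^2)=\gr^1\mathbb B_{\operatorname{dR}}^+\cong \widehat{\mathcal{O}}_{X_{\proket}}(1)$ squares to zero, and which lies over the square-zero extension $B_{\operatorname{dR}}^+/t^2\twoheadrightarrow C$ with kernel $C(1)$. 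Given two local sections $\widetilde\varphi_1,\widetilde\varphi_2\in L_{\mathbb X}(U)$, I would first observe that the difference $\widetilde\varphi_1-\widetilde\varphi_2\colon \nu^{-1}\mathcal{O}_{\mathbb X}\to \widehat{\mathcal{O}}_{X_{\proket}}(1)$ takes values in the kernel and, as both maps are $B_{\operatorname{dR}}^+/t^2$-algebra homomorphisms with the same reduction that are compatible with the log structures (recall $|\mathbb X|=|X|$, so the log structure of $\mathbb X$ restricts to that of $X$), is a $B_{\operatorname{dR}}^+/t^2$-linear log derivation over the structure map $\nu^{-1}\mathcal{O}_X\to \widehat{\mathcal{O}}_{X_{\proket}}$. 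Since $\widehat{\mathcal{O}}_{X_{\proket}}(1)$ is killed by $t$, such a log derivation is the same datum as a $C$-linear log derivation $\nu^{-1}\mathcal{O}_X\to \widehat{\mathcal{O}}_{X_{\proket}}(1)$, that is, an element of $\mathcal{H}om_{\nu^{-1}\mathcal{O}_X}(\nu^{-1}\Omega_X^{\log},\widehat{\mathcal{O}}_{X_{\proket}}(1))\cong \nu^*\Omega_X^{\log}(-1)^\vee$, the last isomorphism holding because $\Omega_X^{\log}$ is locally free of finite rank.

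Conversely, given $\widetilde\varphi\in L_{\mathbb X}(U)$ and a log derivation $\delta$ as above, viewed as a map into the square-zero ideal, the sum $\widetilde\varphi+\delta$ is again a $B_{\operatorname{dR}}^+/t^2$-algebra homomorphism lifting the structure map and compatible with the log structures --- a routine check using $t^2=0$ and the Leibniz rule. Hence $\nu^*\Omega_X^{\log}(-1)^\vee$ acts on $L_{\mathbb X}$, and by the previous paragraph this action is simply transitive wherever $L_{\mathbb X}$ is non-empty; thus $L_{\mathbb X}$ is a pseudo-torsor under $\nu^*\Omega_X^{\log}(-1)^\vee$ on $X_{\proket}$.

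It then remains to prove that $L_{\mathbb X}$ is locally non-empty, and this is the main obstacle. I would work on the basis of $X_{\proket}$ given by log affinoid perfectoid objects $U=\varprojlim_i U_i$ with $\widehat{\mathcal{O}}(U)=R_\infty$ perfectoid. Over such a $U$ the map $\mathbb B_{\operatorname{dR}}^+(U)/t^2\to R_\infty$ is a surjection with square-zero kernel, and every element of the monoid $\mathcal{M}_{X_{\proket}}(U)$ has compatible $p$-power roots in $\widehat{\mathcal{O}}^+(U)$, whose Teichm\"uller lifts realize the induced log structure through this extension --- precisely the mechanism underlying the construction of $\mathcal{O}\mathbb B_{\operatorname{dR,log}}^+$ and its local description in Proposition \ref{obdrlocalprop}. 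Since $\mathbb X$ is log smooth over $B_{\operatorname{dR}}^+/t^2$, the infinitesimal lifting property for log smooth morphisms then produces, over each such $U$, a log-structure-compatible algebra homomorphism $\nu^{-1}\mathcal{O}_{\mathbb X}\to \mathbb B_{\operatorname{dR}}^+/t^2$ lifting the structure map, i.e. a section of $L_{\mathbb X}$ over $U$; a bookkeeping argument then shows these lifts are compatible with restriction along the basis below $U$, so they assemble into a genuine section of the sheaf $L_{\mathbb X}$ over $U$. The delicate part is exactly this log lifting --- lifting the chart monoid via Teichm\"uller maps and checking sheafification of the local lifts; granting it, the argument runs parallel to \cite[Proposition 2.15]{heuer2024padicsimpsoncorrespondencesmooth} and \cite[Proposition 7.2.5]{guo2019hodgetate} with $\Omega_X$ replaced by $\Omega_X^{\log}$. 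Combining local non-emptiness with the pseudo-torsor structure established above shows that $L_{\mathbb X}$ is a pro-Kummer \'etale torsor under $\nu^*\Omega_X^{\log}(-1)^\vee$.
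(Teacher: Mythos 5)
Your proposal is correct and follows essentially the same route as the paper, which simply cites the analogous argument of \cite[Lemma 2.13]{heuer2024padicsimpsoncorrespondencesmooth} together with the infinitesimal lifting theory for log smooth morphisms \cite[Proposition 3.14]{kato1989log}; your two steps (pseudo-torsor structure via log derivations valued in the square-zero ideal $\widehat{\mathcal O}_{X_{\proket}}(1)$, and local non-emptiness via log smooth lifting over the log affinoid perfectoid basis) are exactly what that reference carries out. The only caveat is that, as in the paper, the sections of $L_{\mathbb X}$ should be understood as compatible with the log structures, so that the difference of two sections is a log derivation and the torsor is under $\nu^*\Omega_X^{\log}(-1)^\vee$ rather than $\nu^*\Omega_X(-1)^\vee$ --- a point you handle correctly.
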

	
	\begin{proof}
		Similar to the proof of \cite[Lemma 2.13]{heuer2024padicsimpsoncorrespondencesmooth} by using the theory of infinitesimal liftings for log smooth morphism, see \cite[Proposition 3.14]{kato1989log}.
	\end{proof}
	
	\begin{proof}[Proof of Propostion \ref{htsplitting}]
		For $\theta \in H^0(X,\Omega_X^{\log}(-1)),$ define a map $$s_{\mathbb X}:H^0(X,\Omega_X^{\log}(-1)) \to H^1_{\proket}(X,\widehat{\mathcal O}_{X_{\proket}})$$ by $$s_{\mathbb X}:\theta \mapsto L_{\mathbb X,\theta}:=L_{\mathbb X}\times^{\nu^*\Omega_X^{\log}(-1)^\vee}\nu^*\mathcal O_X$$ via identifying $\Omega_X^{\log}(-1)^\vee \to \mathcal O_X$. We need to check $\mathrm{HT}(s_{\mathbb X}(\theta))=\theta.$
		
		The problem is local, so we may assume $X=\Spa(R,R^+)$ is an affinoid log rigid space over $C$, with a strictly \'etale morphism $X \to \mathbb E=\Spa(C\langle P \rangle,\mathcal O_C\langle P \rangle)$, which can be written as a composite of rational embeddings and finite étale maps. Let $T^a$ be the induced coordinate on $X$, for any $a \in P.$ Let $\widetilde{X}=\Spa(R_{\infty},R_{\infty}^+) \to X$ be the Galois pro-finite Kummer \'etale cover with Galois group $\Gamma$ as before. Choose a $\Z$-basis $\{a_1,...,a_n\}$ of $P^{\operatorname{gp}}$, where $d=\dim X.$ Then $\delta(a_1),...,\delta(a_d)$ form a basis of $\Omega_R^{\log}.$ Denote by $\partial_1,...,\partial_d$ its dual basis in $\Omega_R^{\log,\vee}$.
		
		For any lift $\mathbb X=\Spa({R}',{R}'^+)$, by formal log smoothness there exists a lift of the map $C\langle P \rangle \to R$ induced by $X \to \mathbb E$ to an \'etale morphism $$B_{\operatorname{{dR}}}^+/t^2\langle P \rangle \to {R'}$$ of $B_{\operatorname{{dR}}}^+/t^2$-algebras. Any choice of such a lift induces a section of $L_{\mathbb X}(\widetilde{X})$ as follows: the morphism $$B_{\operatorname{{dR}}}^+/t^2\langle P\rangle \to B_{\operatorname{{dR}}}^+/t^2\langle P_{\Q>0}\rangle:T^a\to [T^{a/p^{\infty}}]\text{ for any } a \in P$$ extends by formal log \'etaleness to a unique map $R' \to B_{\operatorname{{dR}}}^+/t^2\langle P_{\Q>0}\rangle$ lifting the map $R\to R_{\infty}$. The following lemma describe the $\Gamma$-action on $L_{\mathbb X}(\widetilde{X})$.
		
		\begin{lem}
			Write $a_i=a_i^+-a_i^-$ with $a_i^+,a_i^-\in P$ for $i=1,2,...,d$. Let $c_i^+,c_i^-:\Gamma \to \Z_p(1)=\varprojlim_{n}\boldsymbol{\mu}_{p^n}$ be the maps such that for any $\gamma \in \Gamma,$ we have $$\gamma[T^{a_i^+/p^{\infty}}]=[c_i^+(\gamma)][T^{a_i^+/p^{\infty}}],\gamma[T^{a_i^-/p^{\infty}}]=[c_i^-(\gamma)][T^{a_i^-/p^{\infty}}].$$ Then under the identification $L_{\mathbb X}(\widetilde{X})=\Omega_R^{\log}(-1)^\vee\otimes_RR_{\infty}$ induced by $R' \to B_{\operatorname{{dR}}}^+/t^2\langle P_{\Q>0}\rangle$, the $\Gamma$-action on the right is given by the continuous 1-cocycle $$\Gamma\to \Omega_R^{\log,\vee}(1),\gamma \mapsto \sum_{i=1}^d \dfrac{c_i^+(\gamma)}{c_i^-(\gamma)}\partial_i.$$
		\end{lem}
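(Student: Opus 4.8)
The plan is to transcribe the computation behind the non-logarithmic case \cite[Proposition 2.15]{heuer2024padicsimpsoncorrespondencesmooth}: trivialise the torsor $L_{\mathbb X}$ over the cover $\widetilde X$ by the section coming from the chosen lift, and read off the resulting affine $\Gamma$-action.

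First I would record why the prescription of the preceding paragraph defines a section $\sigma\in L_{\mathbb X}(\widetilde X)$: by formal log étaleness of $R'$ over $B^+_{\operatorname{dR}}/t^2\langle P\rangle$ there is a unique $B^+_{\operatorname{dR}}/t^2\langle P\rangle$-algebra map $\sigma\colon\mathcal O_{\mathbb X}=R'\to\mathbb B^+_{\operatorname{dR}}/t^2(\widetilde X)$ sending $T^a\mapsto[T^{a/p^\infty}]$ for $a\in P$, it is a morphism of log rings, and it reduces modulo $t$ to the structure map $R\to R_\infty=\widehat{\mathcal O}_{X_{\proket}}(\widetilde X)$ since $\theta([T^{a/p^\infty}])=T^a$. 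As $L_{\mathbb X}$ is a torsor under $\nu^*\Omega_X^{\log}(-1)^\vee$ (the preceding lemma), the section $\sigma$ yields the stated identification $L_{\mathbb X}(\widetilde X)\xrightarrow{\sim}\Omega_R^{\log}(-1)^\vee\otimes_R R_\infty$, $\tau\mapsto\tau-\sigma$, under which the $\Gamma$-action takes the affine form $v\mapsto\gamma(v)+\xi_\gamma$: here $\gamma$ acts on $\Omega_R^{\log}(-1)^\vee\otimes_R R_\infty$ only through $R_\infty$, and $\xi_\gamma:=\gamma\sigma-\sigma$ is a log derivation $R\to\widehat{\mathcal O}_{X_{\proket}}(1)$, i.e.\ an element of $\Omega_R^{\log}(-1)^\vee\otimes_R R_\infty$.

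It then suffices to compute $\xi_\gamma$. For $a\in P^{\operatorname{gp}}$ let $c_a(\gamma)\in\Z_p(1)$ be the scalar by which $\gamma$ acts on the compatible system of $p$-power roots of $T^a$ in $R_\infty$; multiplicativity of the Teichmüller lift gives $\gamma\sigma(T^a)=[c_a(\gamma)]\cdot\sigma(T^a)$, so $\xi_\gamma(T^a)=([c_a(\gamma)]-1)\,\sigma(T^a)$. Since the target ideal is killed by $t$, this depends only on $[c_a(\gamma)]-1$ and on $\sigma(T^a)$ modulo $t$; the former equals $\log[c_a(\gamma)]\bmod\fil^2$, which under the canonical inclusion $\Z_p(1)\hookrightarrow\fil^1\mathbb B^+_{\operatorname{dR}}/\fil^2=C(1)$ (from $\log[\epsilon]=t$) is the image of $c_a(\gamma)$, and the latter is $T^a$. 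Hence the log derivation $\xi_\gamma$ sends $\delta(a)=d\log T^a$ to $c_a(\gamma)$; taking $a=a_i$ and using $c_{a_i}(\gamma)=c_i^+(\gamma)/c_i^-(\gamma)$ (multiplicativity again, with $a_i=a_i^+-a_i^-$) gives $\xi_\gamma=\sum_{i=1}^d\frac{c_i^+(\gamma)}{c_i^-(\gamma)}\partial_i$. Continuity and the cocycle identity are then immediate: the $c_i^\pm$ are continuous, $\Gamma$ acts trivially on $\Omega_R^{\log,\vee}(1)$, and $c_{(-)}$ is a group homomorphism, so $\gamma\mapsto\xi_\gamma$ is a continuous homomorphism $\Gamma\to\Omega_R^{\log,\vee}(1)$.

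I expect the only delicate part to be the bookkeeping of identifications, not any genuine obstacle: the equality $\mathbb B^+_{\operatorname{dR}}/t^2=\mathbb B^+_{\operatorname{dR}}/\xi^2$, the inclusion $\Z_p(1)\hookrightarrow\fil^1/\fil^2=C(1)$ together with the congruence $[c]-1\equiv\log[c]\pmod{\fil^2}$, and the dictionary between log derivations into a square-zero ideal $N$ and elements of $\Omega_R^{\log,\vee}\otimes_R N$ via $\delta(a)\mapsto\sigma(T^a)^{-1}(\gamma\sigma-\sigma)(T^a)$. One should also make explicit that $\sigma$ and $\gamma\sigma$ are morphisms of \emph{log} rings, so that $\xi_\gamma$ is a log derivation valued in $\Omega_R^{\log}(-1)^\vee\otimes_R R_\infty$; this is exactly what forces the cocycle into the constant subobject $\Omega_R^{\log,\vee}(1)$. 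Modulo these routine checks the argument is a direct transcription of the one in \cite{heuer2024padicsimpsoncorrespondencesmooth}.
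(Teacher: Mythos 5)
Your proposal is correct and follows essentially the same route as the paper's proof (which itself transcribes Heuer's Lemma 2.16): trivialize $L_{\mathbb X}(\widetilde X)$ by the Teichmüller section $T^a\mapsto[T^{a/p^\infty}]$ coming from the lift, compute $\gamma\sigma-\sigma$ on the coordinates $T^{a_i^\pm}$, and convert $[c]-1$ into $t\cdot c$ via the identification $[\epsilon]-1\equiv\log[\epsilon]=t$ modulo $\fil^2$. The bookkeeping points you flag (log-ring compatibility of the sections, the dictionary between log derivations into the square-zero ideal and $\Omega_R^{\log,\vee}(1)\otimes_R R_\infty$, multiplicativity giving $c_{a_i}=c_i^+/c_i^-$) are exactly the implicit steps in the paper's argument.
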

		
		\begin{proof}
			The proof is similar to \cite[Lemma 2.16]{heuer2024padicsimpsoncorrespondencesmooth}, let $R_{\infty}\{1\}:=\ker(B_{\operatorname{dR}}/t^2(R_{\infty})\to R_{\infty})$. For any $\Sigma x_i\partial_i \in \operatorname{Hom}(\Omega_R^{\log},R_{\infty}\{1\})$, denote by $x_i^+:=x_i\partial_i(\delta(a_i^+)),x_i^-:=x_i\partial_i(\delta(a_i^-))$ for all $i$, then the corresponding element of be the lifting $L_{\mathbb X}(R_{\infty})$ is given by sending $T^{a_i^+} \mapsto [T^{a_i^+/p^{\infty}}]+x_i^+$ and $T^{a_i^-} \mapsto [T^{a_i^-/p^{\infty}}]+x_i^-.$ Then the cocycle induced by $\gamma$ can be described as:
			\begin{align*}
				\gamma([T^{a_i^+/p^{\infty}}]+x_i^+)-[T^{a_i^+/p^{\infty}}]=([c_i^+(\gamma)]-1)[T^{a_i^+/p^{\infty}}]+\gamma x_i^+,\\
				\gamma([T^{a_i^-/p^{\infty}}]+x_i^-)-[T^{a_i^-/p^{\infty}}]=([c_i^-(\gamma)]-1)[T^{a_i^-/p^{\infty}}]+\gamma x_i^-.
			\end{align*}
			This concludes the proof, since we can identify $[\epsilon]-1$ by $t$ in $B_{\operatorname{dR}}/t^2$, for $\epsilon=(1,\cdots) \in \Z_p(1)$.			
		\end{proof}
		
		For any $i=1,...,d,$ consider the $\mathcal O(1)$ torsor $L_{\mathbb X}\times^{\nu^*\Omega_X^{\log,\vee}(1)}\nu^*\mathcal O_X(1)$ induced by $\delta(a_i):\Omega_X^{\log,\vee}\to \mathcal O_X.$ We need to check its image under $\mathrm{HT}(1)$ is $\delta(a_i)$. This follows from the commutative diagram in Lemma \ref{constructpushforward}: $\delta(a_i)$ maps to $\mathrm{HT}(1)^{-1}\delta(a_i)$ through the map over the lower left corner, and it maps to the class defined by the 1-cocycle $\Gamma\to R(1):\gamma\to {c_i^+(\gamma)}/{c_i^-(\gamma)}$ through the map over the upper right corner, which is exactly the cocycle associated to $L_{\mathbb X}\times^{\nu^*\Omega_X^{\log,\vee}(1)}\nu^*\mathcal O_X(1)$ by the lemma above. This concludes the proof.
	\end{proof}
	
	\subsubsection{Splitting of $R\nu_*\widehat{O}_{X_{\proket}}$}
	
	The splitting of the Hodge–Tate map induces a natural decomposition of $R\lambda_*\widehat{O}_{X_{\proket}}$ as follows.
	
	\begin{prop} \label{decomposition}
		Let $X$ be a log smooth rigid analytic varieties over $C$. Then a choice of log smooth $B_{\operatorname{dR}}^+/t^2$-lift $\mathbb X$ of $X$ gives a natural splitting of $R\nu_*\widehat{\mathcal O}_{X_{\proket}}$ as $\bigoplus_{i\geq 0}\Omega_X^{i,\log}(-i)[-i]$ in the derived category.
	\end{prop}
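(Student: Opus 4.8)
\textbf{Proof proposal for Proposition \ref{decomposition}.}

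The plan is to follow the strategy of \cite[Proposition 3.23]{scholze2013perfectoidspacessurvey} (see also \cite[Proposition 7.2.5]{guo2019hodgetate} and the analogous argument in \cite{heuer2024padicsimpsoncorrespondencesmooth}): produce from the lift $\mathbb X$ a single morphism in degree one, extend it multiplicatively using the ring structure on $R\nu_*\widehat{\mathcal O}_{X_{\proket}}$, and then check the resulting map is an equivalence on cohomology sheaves. Throughout, $R\nu_*\widehat{\mathcal O}_{X_{\proket}}$ is regarded as a derived graded-commutative $\mathcal O_X$-algebra in $D(X_{\et})$, since $\widehat{\mathcal O}_{X_{\proket}}$ is a sheaf of commutative rings; by Proposition \ref{pushforward} its cohomology sheaves are $R^i\nu_*\widehat{\mathcal O}_{X_{\proket}}\simeq \Omega_X^{i,\log}(-i)$, and (as in Proposition \ref{bdrboundness}) it is concentrated in degrees $[0,d]$ with $d=\dim X$.

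First I would construct, from the choice of $\mathbb X$, a morphism $\eta_{\mathbb X}\colon \Omega_X^{\log}(-1)[-1]\to R\nu_*\widehat{\mathcal O}_{X_{\proket}}$ in $D(X_{\et})$ lifting the canonical isomorphism $\Omega_X^{\log}(-1)\xrightarrow{\simeq}R^1\nu_*\widehat{\mathcal O}_{X_{\proket}}$ of Lemma \ref{constructpushforward}. The truncation triangle
$$\mathcal O_X\to \tau^{\leq 1}R\nu_*\widehat{\mathcal O}_{X_{\proket}}\to \Omega_X^{\log}(-1)[-1]\xrightarrow{+1}\mathcal O_X[1]$$
has connecting class in $\operatorname{Ext}^2_{X_{\et}}(\Omega_X^{\log}(-1),\mathcal O_X)$, and the content of Proposition \ref{htsplitting} — more precisely, the fact that $L_{\mathbb X}$ is a pro-Kummer étale torsor under $\nu^*\Omega_X^{\log}(-1)^{\vee}$, which uses the infinitesimal lifting criterion for log smooth morphisms \cite[Proposition 3.14]{kato1989log} — is exactly that this class is trivialised by $\mathbb X$. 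Composing the resulting splitting of $\tau^{\leq 1}R\nu_*\widehat{\mathcal O}_{X_{\proket}}\to \Omega_X^{\log}(-1)[-1]$ with the inclusion $\tau^{\leq 1}R\nu_*\widehat{\mathcal O}_{X_{\proket}}\hookrightarrow R\nu_*\widehat{\mathcal O}_{X_{\proket}}$ yields $\eta_{\mathbb X}$, and its naturality in $\mathbb X$ is inherited from that of the torsor $L_{\mathbb X}$.

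Next I would extend $\eta_{\mathbb X}$ multiplicatively. Since $\Omega_X^{\log}(-1)[-1]$ lies in odd cohomological degree and we work over a $\mathbb Q$-algebra, the free derived graded-commutative $\mathcal O_X$-algebra it generates is $\bigoplus_{i\geq 0}\bigl(\textstyle\bigwedge^i\Omega_X^{\log}(-1)\bigr)[-i]=\bigoplus_{i\geq 0}\Omega_X^{i,\log}(-i)[-i]$ (derived exterior powers of a shifted vector bundle being the naive ones in characteristic zero, with Tate twists adding up correctly). The algebra structure on $R\nu_*\widehat{\mathcal O}_{X_{\proket}}$ then provides a morphism
$$\Phi_{\mathbb X}\colon \bigoplus_{i\geq 0}\Omega_X^{i,\log}(-i)[-i]\to R\nu_*\widehat{\mathcal O}_{X_{\proket}}$$
in $D(X_{\et})$, natural in $\mathbb X$. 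It remains to see $\Phi_{\mathbb X}$ is a quasi-isomorphism, which we check on cohomology sheaves. On $H^i$ the map is the $i$-fold cup product $\textstyle\bigwedge^i\bigl(R^1\nu_*\widehat{\mathcal O}_{X_{\proket}}\bigr)\to R^i\nu_*\widehat{\mathcal O}_{X_{\proket}}$ precomposed with the isomorphism of Lemma \ref{constructpushforward}; by Proposition \ref{pushforward} source and target are locally free of the same rank, so it suffices to prove this cup-product map is an isomorphism, which is local. Choosing a toric chart $X\to \mathbb E$ as in Section \ref{period}, $R\nu_*\widehat{\mathcal O}_{X_{\proket}}$ is computed by the continuous cochains of $\Gamma\simeq\widehat{\mathbb Z}(1)^{d}$ on $\widehat{\mathcal O}(\widetilde X)$, whose Koszul form makes the identification $\textstyle\bigwedge^{\bullet}H^1\xrightarrow{\simeq}H^{\bullet}$ transparent. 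Since both sides of $\Phi_{\mathbb X}$ are bounded and the map is an isomorphism on all cohomology sheaves, it is an equivalence.

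The main obstacle is the second step: upgrading the Hodge–Tate splitting from a section of a map of global sections (Proposition \ref{htsplitting}) to an honest morphism $\Omega_X^{\log}(-1)[-1]\to R\nu_*\widehat{\mathcal O}_{X_{\proket}}$ in the derived category, functorial in $\mathbb X$. Concretely, one must turn the torsor $L_{\mathbb X}$ into a canonical class in $H^1_{\proket}(X,\widehat{\mathcal O}_{X_{\proket}}\otimes\nu^*\Omega_X^{\log}(-1))$ and then, via $R\nu_*$ and adjunction, identify its image with the connecting map of the above truncation triangle — it is this identification, rather than the subsequent multiplicative bookkeeping and the local Koszul computation, that carries the real content and requires the log smoothness of $\mathbb X$ over $\Spa(B_{\operatorname{dR}}^+/t^2,\mathbb A_{\operatorname{inf}}/\xi^2)$.
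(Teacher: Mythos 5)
Your proposal is correct and follows essentially the same route as the paper: split off $\Omega_X^{1,\log}(-1)[-1]$ from $\tau^{\leq 1}R\nu_*\widehat{\mathcal O}_{X_{\proket}}$ using the torsor $L_{\mathbb X}$ attached to the lift, extend multiplicatively via cup products together with the characteristic-zero (anti)symmetrization identifying $\Omega_X^{i,\log}$ as a direct summand of $(\Omega_X^{1,\log})^{\otimes i}$, and conclude that the map is a quasi-isomorphism because Proposition \ref{pushforward} identifies $R^i\nu_*\widehat{\mathcal O}_{X_{\proket}}$ with $\Omega_X^{i,\log}(-i)$ precisely through these cup products. The "obstacle" you flag at the end is already absorbed by the construction, since $L_{\mathbb X}$ is a sheaf-level torsor on $X_{\proket}$ whose class yields the derived-category splitting by adjunction and the projection formula, exactly as the paper implicitly uses.
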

	
	\begin{proof}
		The splitting of the (log) Hodge–Tate map gives a natural isomorphism $$\mathcal O_X \oplus \Omega_X^{1,\log}(-1)[-1] \xrightarrow{\simeq} \tau^{\leq 1}R\nu_*\widehat{\mathcal O}_{X_{\proket}}.$$
		
		The quotient map $$\left(\Omega_X^{1,\log}\right)^{\otimes i}\to\bigwedge^i \Omega_X^{1,\log}=\Omega_X^{i,\log}$$ has a canonical $\mathcal O_X$-linear section $s_i$ given by $$\omega_1\wedge...\wedge\omega_i \mapsto \dfrac{1}{n!}\sum_{\sigma\in S_i}\mathrm{sgn}(\sigma)\omega_{\sigma(1)}\otimes...\otimes\omega_{\sigma(i)}.$$
		Therefore, we can construct a map $$\Omega_X^{i,\log}(-i)[-i]\to (\Omega_X^{1,\log}(-1)[-1])^{\otimes_{\mathcal O_X} i}\to (R\lambda_*\widehat{\mathcal O}_{X_{\proket}})^{\otimes_{\mathcal O_X} i} \to R\nu_*\widehat{\mathcal O}_{X_{\proket}},$$ which induces a natural map $$\bigoplus_{i\geq 0}\Omega_X^{i,\log}(-i)[-i] \to R\nu_*\widehat{\mathcal O}_{X_{\proket}}.$$
		
		The map constructed above is a quasi-isomorphism by (the construction of) Proposition \ref{pushforward}.
	\end{proof}

	\subsubsection{Proof of Theorem \ref{degeneration}}
	
	With all these preparations, the proof for Theorem \ref{degeneration} is then followed by counting dimensions.
	
	We start with (2). The spectral sequence $$E_2^{ij}:=H^j_{\et}(X,R^i\nu_*\widehat{\mathcal O}_{X_{\proket}}) \Rightarrow H^{i+j}(X_{\proket},\widehat{\mathcal O}_{X_{\proket}})\simeq H^{i+j}_{\ket}(X,\Q_p)\otimes_{\Q_p}C$$ converges, where the last isomorphism follows from the primitive comparison theorem in \cite[Theorem 6.2.1]{dllz2023logadic}. The claim then follows from Proposition \ref{decomposition} by counting dimensions.
	
	To show (1), we need to show $$\sum_{i=1}^d\dim_CH^j(X,\Omega_X^{i,\log})=\dim_{C}H^{i+j}_{\mathrm{logdR}}(X).$$
	By (2), we have $$\sum_{i=1}^d\dim_CH^j(X,\Omega_X^{i,\log})=\dim_{\Q_p}H^{i+j}_{\ket}(X,\Q_p).$$ On the other hands, Theorem \ref{bdrcomparison} shows 
	\begin{align*}
		\dim_{\Q_p}H^{i+j}_{\ket}(X,\Q_p)&=\dim_{B_{\operatorname{dR}}}H^{i+j}_{B_{\operatorname{dR}}^+}(X)(*)\left[\dfrac{1}{t}\right]  \\
		&= \dim_{B_{\operatorname{dR}}^+}H^{i+j}_{B_{\operatorname{dR}}^+}(X)(*)/{torsion}\\
		&\leq  \dim_{B_{\operatorname{dR}}^+}H^{i+j}_{B_{\operatorname{dR}}^+}(X)(*)/t \\
		&\leq\dim_{C}H^{i+j}_{\mathrm{logdR}}(X),
	\end{align*} 
	where the second equality holds since $H^i_{B_{\operatorname{dR}}^+}(X)(*)/{torsion}$ is a finite rank torsion-free module over $B_{\operatorname{dR}}^+$ without nontrivial divisible submodule, so it must be free by \cite[Theorem 3]{rotman1960dvr}. The last inequality follows from Theorem \ref{bdrC}, which induces a short exact sequence $$0\to H^{i+j}_{B_{\operatorname{dR}}^+}(X)(*)/t\to H^{i+j}_{\mathrm{logdR}}(X) \to  H^{i+j+1}_{B_{\operatorname{dR}}^+}(X)(*)[t]\to 0.$$
	
	This proves (1) as all inequalities must be equality.
	
	Finally, the above computation also shows that $H^{i}_{B_{\operatorname{dR}}^+}(X)$ is a finite free $B_{\operatorname{dR}}^+$-module as it is torsion-free. This finishes the proof.

	\bibliography{ref}
	
	\bibliographystyle{halpha}	
	
\end{document}